\theoremstyle{definition}
 \newtheorem{defi}{Définition}[chapter]
\theoremstyle{remark}
 \newtheorem{remark}[defi]{Remarque}
 \newtheorem{exam}[defi]{Exemple}
 \newtheorem{question}[defi]{Question}
\theoremstyle{plain}
\newtheorem{theo}[defi]{Théorème}
\newtheorem{prop}[defi]{Proposition}
\newtheorem{cor}[defi]{Corollaire}
\newtheorem{lemma}[defi]{Lemme}
\newtheorem{conj}[defi]{Conjecture}
\newcommand{\zz}{\mathbb{Z}}
\newcommand{\qq}{\mathbb{Q}}
\newcommand{\rr}{\mathbb{R}}
\newcommand{\cc}{\mathbb{C}}
\newcommand{\Z}[1]{\zz_{#1}}
\newcommand{\abs}[1]{\left\vert #1 \right\vert}
\newcommand{\red}{/\!\!/}
\newcommand{\A}{\mathscr{A}}
\newcommand{\Mg}{\mathscr{M}^\mathfrak{g}}
\newcommand{\G}{\mathscr{G}}
\newcommand{\Gc}{\mathscr{G}^c}
\newcommand{\N}{\mathscr{N}}
\newcommand{\Nc}{\mathscr{N}^c}
\newcommand{\I}{\mathcal{I}}
\DeclareMathOperator{\supp}{supp}
\newcommand{\Symp}{\textbf{Symp}}
\newcommand{\Cob}{\textbf{Cob}}  
\newcommand{\Cobelem}{\textbf{Cobelem}}
\newcommand{\Coincob}{\textbf{Coincob}}  
\newcommand{\Coincobelem}{\textbf{Coincobelem}} 
\newcommand{\arnaque}{\begin{flushright}
$\Box$
\end{flushright}}
\newcommand{\OSz}{Ozsv{\'a}th et Szab{\'o} }
\newcommand{\MW}{Manolescu et Woodward }
\newcommand{\WW}{Wehrheim et Woodward }
\title{Homologie Instanton Symplectique : somme connexe, chirurgie entière, et applications induites par cobordismes}
\author{Guillem Cazassus}
\begin{document}

\maketitle

\frontmatter
\chapter*{Preamble}
Except from this preamble, this is  the manuscript of the author's PhD. thesis, which was defended at Université Toulouse 3 Paul Sabatier (UT3 Paul Sabatier) on April 12, 2016. 

Chapters~\ref{chapfft} to \ref{chapchir} have been translated in \cite{surgery}, and chapter~\ref{chapcob} and section~\ref{sectionnaturalite} will appear in \cite{cobordisms}.

\vspace{.2in}

\textbf{Updates on Chapter~\ref{chappersp} (Prospects).} 
\begin{itemize}
\item After this thesis was defended, C. Manolescu pointed out to the author that the three differentials $\partial ^\infty$, $\partial ^+$ and $\partial ^-$ of section~\ref{sec:autresversions} might not be well-defined if we don't replace the rings $\Z{2}[U]$,  $\Z{2}[U,U^{-1}]$ and $\Z{2}[U,U^{-1}]/ U \Z{2}[U]$ by their power series completion, as the sum that defines them might not be finite. We thank him for this remark.
\item The spectral sequence of section\ref{sectionsspec} has now been proven in \cite{Schmaschke}.
\end{itemize}

\chapter*{Remerciements}
Je tiens en premier lieu à remercier chaleureusement mes deux directeurs d'avoir accompagné mes premiers pas dans la recherche, de mes premiers mémoires de master, jusqu'à la rédaction de cette thèse.
Merci à Paolo, pour m'avoir suggéré un sujet riche et vaste, qui m'aura permis d'apprendre une quantité sans fin de belles mathématiques.  
Merci à Michel Boileau, pour m'avoir toujours encadré avec enthousiasme malgré le grand virage symplectique qu'aura pris cette thèse. 
Merci à tous les deux, pour votre patience, pour avoir toujours été disponibles malgré une situation géographique compliquée, pour nos rendez-vous skypes finissant parfois très tard, ainsi que pour vos relectures minutieuses,  
dont la forme finale de ce manuscrit doit beaucoup.

Merci à Christopher Woodward et Frédéric Bourgeois d'avoir accepté de rapporter cette thèse. Merci en particulier à Christopher Woodward d'avoir accepté de lire ce manuscrit en français, d'avoir partagé ses idées lors de nos conversations à Montréal, et notamment de m'avoir suggéré la définition de la version tordue, ce fut une étape clé pour cette thèse. Thanks a lot!

Merci à Christian Blanchet, pour avoir accepté de faire partie de mon jury, et pour m'avoir suggéré d'appliquer la suite exacte aux entrelacs quasi-alternés.
Un grand merci également à Jean-François Barraud, qui n'aura jamais compté son temps pour partager son expertise sur l'homologie de Floer, cette thèse lui doit également beaucoup et je suis très heureux de le compter parmi les membres de mon jury.

Merci aussi à toutes les personnes avec qui j'ai eu la chance de discuter de maths et qui ont partagé leurs connaissances avec moi. Sans être exhaustif, je pense notamment à Julien, Laurent, Joan, sans oublier Raphael et son hospitalité à Regensburg.

Merci aux thésards toulousains, Jéremy, Matthieu, Ilies, Dima, Valentina, Eleonora, Eric, Anton, Fabrizio, Danny,  Anne, Damien, Laura, Maxime, Julie, Jules, Kévin, Audunn, pour avoir fait régner la Bonne Ambiance dans le couloir du 1R2, sans oublier Arnaud, Thibaut et Bérénice. Merci aussi aux "gens d'en face", pour les bons moments lors des séminaires RANDO : je pense à Mélanie, Claire ($\times$3), Matthieu, Malika, Antoine, Laurent, Ioanna, Guillaume. Merci également à Jocelyne, Marie-Laure, Agnès et Martine, pour m'avoir accompagné avec efficacité dans mes démarches administratives.

Merci aussi aux membres du laboratoire de Nantes, et en particulier à son équipe stimulante de géométrie symplectique, qui aura fait de mes quelques séjours passés là-bas des séjours agréables et enrichissants, je pense notamment à Baptiste, Hossein, Vini, Sam, Vincent, Vincent, Guillaume, François.

Merci aussi à ceux que j'ai connus à l'occasion de conférences (notamment les inconditionnels de la Llagonne) je pense à Julien, Léo, Ma\"{y}lis, Delphine, Xavier, Ramanujan, Fathi, Renaud, Ben, et tant d'autres  ...

Merci aussi aux amis de longue et très longue date, Bruno, Oliv, Lucie, Mathilde, Roméo, Ahmed, Remy, Florence, Matthieu, Morgane, Morgan, Gaby, Pif, Juan Pablo, Ivan, Fred, Sylvie, Xav, Laurie, Nina, Louise, Pierre, Céline, Anne, Guillaume, Nicolas, Cyril, Mélodie, Romain, et j'en oublie, pour les (trop rares) bons moments où nous nous sommes retrouvés.

Merci enfin à tous les professeurs de mathématiques qui m'ont donné le gôut pour cette discipline, je pense notamment à Vincent Bayle et Monsieur "Jimmy", et sans oublier Jacques Jourliac, mon professeur de lycée et très cher ami, qui est probablement la personne qui m'aura convaincu d'en faire mon métier.

Merci à ma famille, pour votre soutien pendant cette thèse et depuis toujours, je vous dois tout.

Enfin, merci à Claire, pour remplir ma vie de bonheur au quotidien.

\newpage

\vspace*{1in}

\begin{flushright}
\textit{à Jacques Jourliac,}
\end{flushright}

\vspace*{1in}

\begin{figure}[!h]
    \centering

 \includegraphics[width=.65\textwidth]{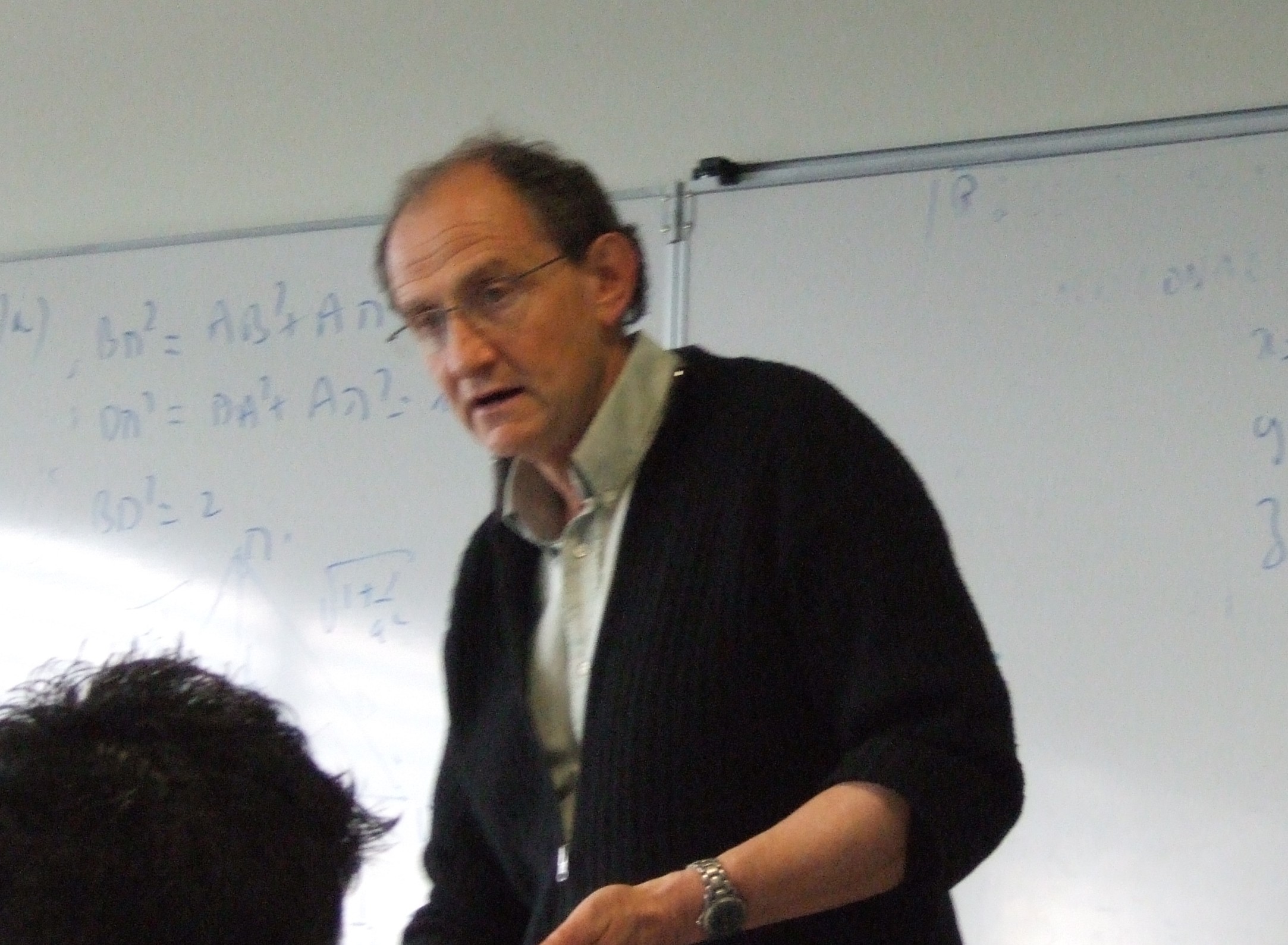}
  
\end{figure}

\tableofcontents

\mainmatter

\chapter{Introduction}\label{chapintro}

\section{Bref historique}

Les instantons ont été introduits en 1975 par 't Hooft en physique des particules \cite{tHooft}, comme solutions particulières des équations de Yang-Mills : ce sont des connexions $A$ sur un fibré principal au-dessus d'une 4-variété Riemanienne, dont la courbure est anti-autoduale. Ils font leur première apparition en topologie de basse dimension en 1983, lorsque Donaldson \cite{Donaldsonannounce}, en étudiant leur espace des modules, prouve son célèbre théorème sur la diagonalisation des formes d'intersection des 4-variétés différentiables. Ce théorème a pour conséquence la non-différentiabilité de nombreuses 4-variétés topologiques. Il construit par la suite une famille d'invariants permettant de distinguer des structures différentielles \cite{Donaldsonpoly}.

Dans la fin des années 80, Floer définit plusieurs groupes d'homologie qui porteront son nom. Les premiers, associés à une 3-variété close orientée $Y$, sont construits comme (des substituts à) une homologie de Morse de la fonctionnelle de Chern-Simons (pour une sphère d'homologie entière, sur le $SU(2)$-fibré trivial), dont le complexe est engendré par des connexions plates (perturbées), et la différentielle compte des instantons sur le tube $Y\times \rr$ \cite{Floerinst}. Ces groupes $I_*(Y)$ sont des réceptacles naturels d'invariants pour 4-variétés à bord, généralisant les invariants de Donaldson, et obéissent aux axiomes d'une "(3+1)-théorie quantique des champs topologique". Par ailleurs, la reformulation par Taubes en théorie de jauge \cite{Taubescasson} de l'invariant de Casson montre que ces groupes catégorifient cet invariant.

Les second groupes sont de nature symplectique : Floer définit une homologie associée à une variété symplectique munie d’une Lagrangienne \cite{Floerlagr}, construite en utilisant la théorie des courbes pseudo-holomorphes de Gromov \cite{Gromov}. Le calcul de ces groupes dans certains cas lui permet d'établir des versions de la conjecture d’Arnold. Il généralise ensuite sa construction à une variété symplectique munie d'une paire de Lagrangiennes $L_0,L_1\subset M$, et parvient, sous certaines hypothèses, à définir des groupes similaires, notés $HF(L_0,L_1)$, dont le complexe est engendré par les points d'intersection $L_0\cap L_1$, et la différentielle compte des disques de Whitney pseudo-holomorphes. 

En "étirant" une 3-variété Riemanienne $Y$ le long d'un scindement de Heegaard, Atiyah met en évidence une correspondance entre limites d'instantons et courbes pseudo-holomorphes dans l'espace des modules de la surface scindement. Il suggère dans \cite{Atiyahfloer} que le groupe $I_*(Y)$ peut être calculé comme une homologie d'intersection Lagrangienne dans l'espace des modules des connexions plates associé à la surface, pour la paire de Lagrangiennes correspondant aux connexions s'étendant de manière plate à chaque corps à anses. C'est cette idée qui sera retenue comme la "conjecture d'Atiyah-Floer", dont une difficulté majeure est de définir l'homologie d'intersection Lagrangienne dans l'espace des modules, qui est singuliers. Pour plus de détails sur cette conjecture nous renvoyons à \cite{Wehrheimsurvey} \cite{SalamonICM} \cite{duncan2013introduction}. Des  analogues de cette conjecture sont  démontrés dans différents cas \cite{DostoglouSalamon}, mais toujours lorsque l'espace des modules n'a pas de singularités. Toutefois, bien que des progrès récents aient été faits, cette conjecture demeure toujours ouverte dans sa version originale. 
\section{Panorama des théories de Floer actuelles}
Les techniques utilisées par Floer pour définir les deux homologies précédentes se sont avérées fécondes, et ont permis de définir de nombreux groupes d'homologie associés à des fonctionnelles définies sur des espaces de dimension infinie. Le raisonnement d'Atiyah a inspiré également la définition de nombreux invariants très riches. Nous en décrivons quelques uns dans les paragraphes qui suivent.

\paragraph{Homologie des monopoles} Quelques années après l'introduction des instantons en topologie, Seiberg et Witten proposent une nouvelle équation aux dérivées partielles linéaire plus simple à manipuler, qui permettra de retrouver de nombreux résultats, et de définir des invariants similaires aux invariants de Donaldson, voir \cite{Wittenmonopoles}. Puis, Kronheimer et Mrowka définissent des groupes d'homologie faisant intervenir cette équation, qu'ils appellent "homologie des monopoles". Cela leur permet notamment de démontrer la propriété P pour les n{\oe}uds \cite{KMpropertyP}.

Le raisonnement d'Atiyah inspire alors des variantes symplectiques de ces groupes, parmi lesquels l'homologie de Heegaard-Floer d' \OSz et les "Lagrangian matching invariants" de Perutz \cite{Perutzmatching}.

\paragraph{Homologies de Heegaard-Floer} L'homologie de Heegaard-Floer est un ensemble de théories. La première, introduite dans \cite{OSzholodisk1} et \cite{OSzholodisk2},  associe des invariants pour des 3-variétés closes connexes orientées. C'est une homologie d'intersection Lagrangienne  associée à un diagramme de Heegaard de genre $g$ : la variété symplectique considérée est le $g$-ième produit symétrique, dans laquelle vivent deux tores Lagrangiens, correspondants aux produits des courbes du diagramme.

Peu après, \OSz  \cite{OSzholodiskknots} et Rasmussen \cite{Rasmussen} définissent indépendamment des invariants analogues associés aux n{\oe}uds, puis Juh{\'a}sz unifie ces deux théories en définissant une homologie associée à certaines variétés suturées, appelées "Balanced".

Du fait de leur richesse structurelle (structures  $\mathrm{Spin}_\cc$, graduations absolues, structure de $\zz[U]$-module, applications induites par cobordismes), de leur calculabilité (suite exacte de chirurgie, formule d'adjonction, versions combinatoires), et de leurs relations avec des invariants antérieurs (invariant de Casson, norme de Thurston, polynome d'Alexander), ces invariants suscitent une grande activité chez les topologues. Parmi les applications frappantes de ces invariants on peut mentionner une caractérisation des nœuds fibrés (Ghiggini \cite{Paolofibknots}, et  Ni \cite{Nifibknots}), ou le calcul de la norme de Thurston d’une 3-variété (\cite{OSzthurstonnorm} et \cite{Nithurstonnorm}).

Dans ce contexte la version correspondante de la conjecture d’Atiyah-Floer a été démontrée par  Kutluhan, Lee et Taubes. Dans \cite{KLTHF=HM1} et une série de papiers ils construisent un isomorphisme de l’homologie de Heegaard-Floer vers l’homologie des monopoles.

\paragraph{Variantes de l'homologie des instantons} L'homologie des instantons n'a été définie par Floer que pour certaines 3-variétés (sphères d'homologie entières, et d'autres en utilisant des $SO(3)$-fibrés non-triviaux), en raison de singularités apparaissant au niveau des connexions réductibles. Kronheimer et Mrowka, en considérant une somme connexe avec un 3-tore, définissent des généralisations pour toutes les 3-variétés, ainsi que des versions nouées et suturées, voir \cite{KMknots} et \cite{KMsut}.

\paragraph{Pillowcase homology} Récemment, dans l'espoir d'obtenir une version symplectique des variantes de Kronheimer et Mrowka de l'homologie des instantons, Hedden, Herald et Kirk ont défini dans \cite{HHKpillowcase2} des invariants pour certains n{\oe}uds, comme une hommologie d'\emph{immersions} Lagrangiennes dans la "variété des caractères $SU(2)$ sans traces" de la sphère privée de quatre points, variété homéomorphe à une "taie d'oreiller". Ils appellent cet invariant "pillowcase homology".

\paragraph{Théorie des quilts}
Dans \cite{WWfft} et \cite{WWffttangles}, Wehrheim et Woodward proposent un nouveau cadre général pour construire les différentes versions symplectiques d'invariants de théorie de jauge : leur construction part non plus d'un scindement de Heegaard, mais d'une décomposition de Cerf de la 3-variété et utilise la théorie des "quilts pseudo-holomorphes" développée dans \cite{WWquilts}. Ils appliquent leur théorie et constr\-uisent des invariants en utilisant des espaces de modules de connexions à courbure centrale sur des $U(N)$-fibrés de degrés premiers à $N$. Nous nous inspirerons de leurs travaux pour adapter leur théorie au cadre utile pour l’homologie HSI dans le chapitre \ref{chapfft}.

\section{Brève présentation de l'homologie instanton-symplectique}
Nous présentons l’homologie Instanton-Symplectique, définie par Manolescu et Woodward dans \cite{MW}, et qui est l’objet principal de cette thèse. Rappelons que la version symplectique de l’homologie des instantons suggérée par Atiyah devrait être définie comme une homologie d’intersection Lagrangienne dans la variété des caractères d’un scindement, qui est une variété symplectique singulière. Jeffrey a remarqué dans \cite{jeffrey} que la variété des caractères peut se réaliser comme le quotient symplectique d’une variété symplectique de dimension finie munie d’une action Hamiltonienne de $SU(2)$, dont le niveau zéro du moment est contenu dans la partie lisse. Cet espace, appelé "espace des modules étendu", correspond à un espace des modules de connexions plates sur le $SU(2)$-fibré principal trivial au-dessus de la surface privée d’un disque, présentant une forme particulière au voisinage du bord.

 L'idée de Manolescu et Woodward est alors de définir l'homologie Lagrangienne dans un ouvert de cet espace. Ils y parviennent, et conjecturent que leur invariant est isomorphe à la version "chapeau" de l'homologie de Heegaard-Floer, ainsi qu'à une version de l'homologie des instantons définie par Donaldson, correspondant au  cône de l'application $u$ (après produit tensoriel avec $\qq$).

\section{Résultats dans cette thèse, perspectives}
Dans l'optique de pouvoir calculer l'homologie HSI, cette thèse est motivée par les questions suivantes :

\begin{itemize}
\item[(A)] Décrire l'homologie HSI d'une somme connexe de deux 3-variétés à partir de l'homologie HSI des variétés initiales,
\item[(B)] Décrire l'influence d'une chirurugie de Dehn sur l'homologie HSI,
\item[(C)] Définir des invariants associés à des 4-cobordismes.
\end{itemize}

Rappelons que d'après le théorème de Lickorish-Wallace, toute 3-variété peut s'obtenir par chirurgie de Dehn sur un entrelacs, ce qui souligne l'importance de $(B)$. Un résultat central pour le calcul de ce type d’invariants est le "triangle de Floer" : une suite exacte longue entre les invariants d’une triade de chirurgie, \cite{Floertri} pour les instantons, puis dans les autres théories   \cite{OSzholodisk2} \cite{KMOSzchir}. De plus, les morphismes intervenant dans de telles suites exactes ont généralement une interprétation topologique : ils sont souvent associés aux trois cobordismes correspondants aux attachements d’anses entre les trois 3-variétés. En théorie des instantons, ces applications sont définies en comptant des instantons sur les cobordismes correspondants. De telles interprétations permettent parfois d’obtenir des critères d’annulation, facilitant les calculs, notamment des formules d’adjonction et des formules d’éclatement.

L'existence d'un tel triangle de chirurgie pour l’homologie des instantons est confrontée au problème suivant : trois variétés formant une triade ne peuvent être simultanément des sphères d’homologie entière. Floer contourne ce problème en utilisant un $SO(3)$-fibré principal non-trivial sur la variété ayant l'homologie de $S^2\times S^1$, afin de pouvoir y définir une homologie des instantons. Bien que l'homologie HSI soit définie pour toutes les 3-variétés, nous verrons que le même phénomène apparait dans cette théorie, ce qui, suivant une suggestion de C. Woodward, nous a conduit à introduire une variante $HSI(Y,c)$ de l'homologie HSI, associée à une 3-variété $Y$ munie d'une classe $c$ dans $H_1(Y,\Z{2})$, ou de manière équivalente, d'une classe d'isomorphisme de fibrés $SO(3)$ au-dessus de $Y$.  Nous expliquons brièvement la construction de cette variante, qui sera  définie précisément dans le chapitre \ref{chapHSI}.

Soit $\Sigma \subset Y$ un scindement de Heegaard de genre $g$, séparant $Y$ en deux corps à anses $H_0$ et $H_1$, et $C_0$, $C_1$ deux n{\oe}uds dans $H_0$ et $H_1$ respectivement, tels que la classe de leur réunion dans $H_1(Y;\Z{2})$ vaut $c$. Soit $\Sigma'$ la surface à bord obtenue en retirant un disque à $\Sigma$, et $*\in \partial\Sigma'$ un point base. On considère $\N(\Sigma')$ un certain espace des modules de connexions au-dessus de $\Sigma'$, admettant la description suivante :

\[\N(\Sigma') = \lbrace  \rho \in \mathrm{Hom}(\pi_1(\Sigma',*) , SU(2) ) :  \rho(\partial \Sigma')   \neq -I \rbrace . \]

Cet espace admet une structure symplectique naturelle, pour laquelle les sous-variétés suivantes sont des Lagrangiennes :
\begin{align*}
 L_0 &= \lbrace  \rho \circ i_{0,*} : \rho  \in \mathrm{Hom}(\pi_1(H_0\setminus C_0,*), SU(2) ) ,  \rho(\mu_0)  = -I \rbrace  \\
 L_1 &= \lbrace  \rho \circ i_{1,*} : \rho  \in \mathrm{Hom}(\pi_1(H_1\setminus C_1,*), SU(2) ) , \rho(\mu_1)  = -I \rbrace ,
\end{align*}
avec $i_{0,*}$ et $i_{1,*}$ induits par les inclusions, et $\mu_0$, $\mu_1$ des méridiens de $C_0$ et $C_1$ respectivement.

Le groupe  $HSI(Y,c)$ peut alors être défini comme l'homologie d'intersection Lagrangienne $HF(L_0,L_1)$. Afin de pouvoir utiliser la théorie des quilts de Wehrheim et Woodward, nous  définirons cet invariant directement dans le cadre de cette théorie, puis nous prouverons dans la proposition \ref{calculscindement} que cette définition correspond bien à celle que l'on vient de donner. 

\paragraph{Énoncé des principaux résultats} Nous décrivons à présent les principaux résultats
de cette thèse.

Concernant la question $(A)$, nous obtenons la formule de Künneth suivante : 
\begin{theo}(Formule de Künneth pour la somme connexe) Soient $Y$ et $Y'$ deux 3-variétés orientées closes, et $c$, $c'$ deux classes dans $H_1(Y;\Z{2})$ et $H_1(Y';\Z{2})$ respectivement. Alors,
\begin{align*}
HSI(Y \# Y' , c+ c') \simeq & HSI(Y, c) \otimes HSI(Y' , c' )  \\
 & \oplus \mathrm{Tor}(HSI(Y, c) , HSI(Y' , c' ))[-1].
\end{align*}
\end{theo} 

Pour présenter une réponse à la question $(B)$, nous rappelons ce qu'est une triade de chirurgie (dont l'archétype correspond aux chirurgies $\infty$, 0 et 1 sur un n{\oe}ud muni d'une longitude). 

\begin{defi} Une \emph{triade de chirurgie} est un triplet de 3-variétés $Y_\alpha$, $Y_\beta$ et $Y_\gamma$ obtenues à partir d'une 3-variété $Y$ compacte, orientée, avec un bord de genre 1, en recollant un tore solide le long du bord, de façon à envoyer son méridien sur trois courbes simples $\alpha$, $\beta$ et $\gamma$ respectivement, telles que $\alpha. \beta = \beta. \gamma =\gamma. \alpha = -1$.
\end{defi}

Notre suite exacte s'énonce alors  de la manière suivante :

\begin{theo}[Suite exacte de chirurgie]\label{suiteintro} Soit $(Y_\alpha , Y_\beta, Y_\gamma)$ une triade de chirurgie obtenue à partir de $Y$ comme dans la définition précédente, $c\in H_1(Y;\Z{2})$, et pour $\delta \in \lbrace \alpha,\beta,\gamma\rbrace$, $c_\delta \in H_1(Y_\delta;\Z{2})$ la classe induite à partir de $c$ par les inclusions. Soit également  $k_\alpha \in H_1(Y_\alpha;\Z{2})$ la classe correspondant à l'\^ame du tore solide. Alors, il existe une suite exacte longue :
\[ \cdots\rightarrow HSI(Y_\alpha ,c_\alpha+ k_\alpha) \rightarrow HSI(Y_\beta,c_\beta) \rightarrow HSI(Y_\gamma,c_\gamma)\rightarrow \cdots . \]
\end{theo}

Concernant la question $(C)$, nous nous limitons à l'homologie à coefficients dans $\Z{2}$. Dans le Chapitre \ref{chapcob}, nous construisons des invariants pour des 4-cobordismes prenant la forme suivante : Soit $W$ un 4-cobordisme compact orienté de $Y$ vers $Y'$, muni d'une classe $c_W \in H^2(W; \Z{2})$. En notant $c$ et $c'$  les classes d'homologie duales aux restrictions de $c_W$ à $Y$ et $Y'$, on construit un morphisme 
\[ F_{W,c_W} \colon HSI(Y,c)\to HSI(Y',c').\]

Nous montrons alors que deux parmi trois des morphismes intervenant dans la suite exacte sont du type précédent. Plus précisément :


\begin{theo} Soient une triade et des classes d'homologie comme dans le théorème \ref{suiteintro}. Les deux morphismes de $HSI(Y_\alpha ,c_\alpha + k_\alpha)$ vers $HSI(Y_\beta,c_\beta)$, puis de $HSI(Y_\beta,c_\beta)$ vers $HSI(Y_\gamma,c_\gamma)$ de la suite exacte du théorème précédent, que l'on a construit dans la partie \ref{flechestri}, correspondent aux applications $F_{W_{\alpha\beta}, c_{\alpha\beta} + d_\alpha}$ et $F_{W_{\beta\gamma}, c_{\beta\gamma} }$, où :

\begin{itemize}

\item $W_{\alpha\beta}$ et $W_{\beta\gamma}$ désignent les cobordismes d'attachement d'anse, allant respectivement de $Y_\alpha$ vers $Y_\beta$, et de $Y_\beta$ vers $Y_\gamma$.

\item $c_{\alpha\beta}$ et $c_{\beta\gamma}$ désignent les classes induites par $c\times [0,1] \in H_2(Y\times [0,1], Y\times \lbrace 0,1\rbrace ;\Z{2} )$ par l'inclusion de $Y\times [0,1]$ dans $W_{\alpha\beta}$ et $W_{\beta\gamma}$ respectivement.

\item $d_\alpha$ désigne la classe fondamentale de l'anse attachée le long de $\alpha$, de sorte que $\partial_* d_\alpha = k_\alpha \in H_1(Y_\alpha;\Z{2})$.
\end{itemize}
\end{theo}

Nous donnons enfin un critère d'annulation pour de tels morphismes :

\begin{theo}
Soit $W$ un 4-cobordisme connexe orienté entre variétés connexes, 
\begin{enumerate}
\item $F_{W\# \cc P^2,c} =  0 $, pour toute classe $c\in H^2(W\# \cc P^2;\Z{2})$.
\item Si $c\in H^2(W\# \overline{\cc P}^2;\Z{2})$ est non-nulle en restriction à $\overline{\cc P}^2$, alors $F_{W\# \overline{\cc P}^2,c} =  0 $, sinon $F_{W\# \overline{\cc P}^2,c} =  F_{\overline{W},c_{|W}} $.
\end{enumerate}
\end{theo}

\paragraph{Applications}

La suite exacte de chirurgie, combinée à la connaissance de la caractéristique d'Euler de l'homologie HSI et d'une observation d'Ozsv{\'a}th et Szab{\'o}, permet de calculer l'homologie HSI de plusieurs variétés. On en présentera quelques unes dans la section \ref{sectionapplic}, notamment des variétés obtenues par plombage de fibrés en disques sur des sphères le long d'un graphe, des chirurgies sur des noeuds, et des revêtements doubles d'entrelacs quasi-alternés.

\paragraph{Idée des preuves}
La stratégie pour prouver la formule de Künneth pour la somme connexe est de réinterpréter la construction de l'homologie $HSI$ dans le cadre de la "théorie des champs de Floer" de Wehrheim et Woodward. Similaire à une TQFT, une telle théorie consiste en un foncteur de la catégorie des cobordismes de dimension $2+1$ vers la catégorie symplectique de Weinstein-Wehrheim-Woodward. Cette approche facilitera par ailleurs la preuve de la suite exacte de chirurgie, en offrant la possibilité de ne plus travailler à partir d'un scindement de Heegaard.

Pour établir la suite exacte de chirurgie, nous étudions l’effet d’un twist de Dehn sur le tore privé d’un disque $T'$ au niveau de l’espace des modules $\N(T')$. Nous remarquons que la transformation induite est similaire à un "twist de Dehn symplectique", ce qui permet d’appliquer une variante de la suite exacte de Seidel \cite{Seidel}, suite exacte originellement inspirée par la conjecture d’Atiyah-Floer et le triangle de Floer.

Enfin, la définition des applications induites par cobordismes suit la démarche d’\OSz : nous définissons ces applications anses par anses. Les applications correspondant
aux 2-anses sont construites en comptant des "triangles matelassés".

\paragraph{Perspectives} Dans le dernier chapitre nous discutons de quelques prolongements naturels de cette thèse :

\begin{itemize}
\item \emph{Naturalité}. Etant-donné un difféomorphisme entre deux 3-variétés, peut-on lui associer canoniquement un isomorphisme entre les groupes HSI correspondants? La question de la naturalité, bien qu'implicitement utilisée  pour nombre d'applications, n'a été résolue que tardivement par Juhasz et Thurston en théorie d'Heegaard-Floer. Nous nous  attendons à ce que les groupes d'homologie HSI ne dépendent pas de la seule 3-variété, mais aussi d'un point base, de même les applications induites par cobordisme devraient dépendre d'une classe d'homotopie  de chemin reliant les points bases. Nous expliquons dans la section \ref{sectionnaturalite} ce qu'il est suffisant de vérifier pour garantir la naturalité, dans le cadre général d'une "théorie des champs de Floer".

\item \emph{Analogue des autres versions d'Heegaard-Floer}. La construction de l'homologie HSI n'est pas sans rappeler celle de la version "chapeau" de l'homologie d'Heegaard-Floer : en effet il faut compactifier $\N(\Sigma')$ en ajoutant une hypersurface symplectique, et la différentielle ne compte que les disques pseudo-holomorphes n'intersectant pas cette hypersurface. Nous définissons des variantes prenant en compte les courbes intersectant cette hypersurface, analogues aux versions +,- et $\infty$ de l'homologie de Heegaard-Floer.

\item \emph{Relations avec la théorie des représentations.} Un lien profond existe entre la théorie de jauge et la théorie des représentations. Il se manifeste notamment dans la description des espaces de modules, et dans la reformulation de Taubes de l'invariant de Casson. Nous définissons un espace topologique, correspondant à une  version "tordue" de la variété des représentations dans $SU(2)$, puis, comme déjà suggéré par \MW et utilisé dans le calcul de l'homologie HSI des lenticulaires, nous proposons une suite spectale reliant l'homologie de cet objet et l'homologie HSI, et nous en déduisons une borne entre le rang de HSI et l'invariant de Casson pour les sphères de Brieskorn.

\item \emph{Invariants pour les entrelacs et les variétés suturées.} Nous proposons une généralisation de la construction de l'homologie HSI faisant intervenir des espaces de modules de surfaces à plusieurs composantes de bord, puis nous indiquons une possible marche à suivre  donnant lieu à des invariants pour des variétés suturées et des entrelacs.

\end{itemize}

\chapter{Théorie des champs de Floer}\label{chapfft}

\section{Grandes lignes, idée générale de la construction}\sectionmark{Grandes lignes}

Commençons par rappeler brièvement ce qu'est une "théorie des champs de Floer", développée par Wehrheim et Woodward dans \cite{WWfft} : une telle théorie est un foncteur de la catégorie des cobordismes connexes entre surfaces connexes vers une catégorie que nous allons préciser dont les objets sont des variétés symplectiques, et les morphismes des suites de correspondances Lagrangiennes.

A une telle suite de correspondances Lagrangiennes ayant même variété symplectique de départ et d'arrivée, Wehrheim et Woodward associent une "quilted Floer homology". Etant donné un tel foncteur, on peut associer à une 3-variété close $Y$ un invariant de la façon suivante : on commence par retirer deux boules ouvertes à $Y$, de sorte à avoir un cobordisme de la sphère $S^2$ vers elle-même, puis on applique le foncteur, et enfin on prend la "quilted Floer homology" de la chaîne de correspondances obtenue.

Si l'on se donne un scindement de Heegaard de la variété $Y$, ce type d'invariant peut être calculé comme une homologie de Floer Lagrangienne "classique", voir la proposition \ref{calculscindement}. La présente formulation à l'aide de la "quilted Floer homology" était suggérée par Manolescu et Woodward pour démontrer que la construction de l'homologie Instanton-Symplectique est invariante par stabilisation. C'est un cadre plus flexible, et pratique pour démontrer la formule de Künneth (Chapitre 4), et la suite exacte de chirurgie (Chapitre 5).

Néanmoins, en ce qui concerne la construction de l'homologie Instanton-Symplectique de Manolescu et Woodward, il sera nécessaire de procéder à quelques adaptations, essentiellement pour trois raisons :

\begin{itemize}
\item Les espaces des modules sont associés à des surfaces ayant une composante de bord : il faudra retirer un petit disque à une surface close, de même qu'il faudra retirer un petit tube reliant ces disques à  un cobordisme entre ces surfaces. La correspondance Lagrangienne ainsi obtenue dépendra du choix du tube, comme nous le verrons dans l'exemple \ref{exemchgtchemin}. Il faudra de plus fixer un paramétrage du bord de ce tube, la dépendance du paramétrage sera illustrée dans l'exemple \ref{exemreparam}.

\item Afin d'établir une suite exacte de chirurgie, il sera nécessaire de munir un cobordisme $W$ de dimension 3  d'une classe d'homologie dans $H_1(W;\Z{2})$. Ceci revient à considérer des fibrés $SO(3)$ non-triviaux au-dessus de $W$, qui apparaissaient déjà dans le triangle de Floer \cite[Theorem 1]{BraamDonaldson} sur le terme correspondant à la $S^2\times S^1$ - variété d'homologie.

\item La catégorie d'arrivée sera elle aussi un peu plus commpliquée : afin de pouvoir y définir une homologie de Floer, il sera nécessaire d'imposer à ses objets et ses morphismes des hypothèses techniques supplémentaires,  qui sont essentiellement celles de  \cite[Assumption 2.5]{MW}.

\end{itemize}

\section{Quilts, Homologie de Floer matelassée, Catégorie symplectique}\sectionmark{Quilts, Homologie matelassée, Catégorie symp}

\begin{defi}
Une \emph{correspondance Lagrangienne} entre deux variétés symplectiques $M$ et $M'$ est une Lagrangienne $L\subset M^- \times M'$, où $M^-$ désigne la variété $M$ munie de l'opposée de sa forme symplectique.
\end{defi}

Ce type de correspondances, parfois appelées relations canoniques dans la littérature, apparaît fréquemment en géométrie symplectique : un difféomorphisme entre deux variétés symplectiques est un symplectomorphisme si et seulement si son graphe est une correspondance Lagrangienne. Par ailleurs, si une variété symplectique est munie d'une action $G$-Hamiltonienne de moment $\Psi\colon M\rightarrow \mathfrak{g}^*$, le niveau zéro du moment $\Psi^{-1}(0)$ induit une correspondance Lagrangienne entre $M$ et le quotient symplectique $M\red G$, lorsque celui-ci est lisse.

\begin{defi}
Conformément à la terminologie de Wehrheim et Woodward, nous appellerons \emph{correspondance Lagrangienne généralisée} entre deux variétés symplectiques $M$ et $M'$ la donnée de variétés symplectiques intermédiaires $M_1$, $M_2$, ..., $M_{k-1}$, ainsi que des correspondances Lagrangiennes pour $0\leq i \leq k-1$ $L_{i(i+1)} \subset M_i ^- \times M_{i+1}$, avec $M_0 = M$ et $M_k = M'$. On notera $\underline{L}$ une telle suite de correspondances :
\[\underline{L} = \left(  \xymatrix{   M_0 \ar[r]^{L_{01}} &  M_1 \ar[r]^{L_{12}} &  M_2 \ar[r]^{L_{23}} & \cdots \ar[r]^{L_{(k-1)k}} & M_k } \right), \]

On appellera \emph{longueur de $\underline{L}$} l'entier $k$. Si $L$ (resp. $\underline{L}$) désigne une correspondance Lagrangienne (resp. généralisée) de $M$  vers $M'$, on notera $L^T$ (resp. $\underline{L}^T$) la correspondance allant de $M'$ vers $M$, obtenue en renversant les flèches.
\end{defi}

On notera $pt$ la variété symplectique réduite à un point. Une correspondance Lagrangienne entre $pt$ et $M$ est simplement une Lagrangienne de $M$.

Si $\underline{L}$ est une correspondance Lagrangienne généralisée allant de $pt$ à $pt$, l'homologie de Floer matelassée de $\underline{L}$ peut être définine (lorsque celà est possible) comme l'homologie de Floer Lagrangienne
\[ HF(\underline{L}) = HF( L_{01}\times L_{23} \times \cdots , L_{12}\times L_{34} \times \cdots ), \]
où la variété ambiante est le produit de toutes les variétés $M_0^- \times M_1 \times M_2^- \times \cdots $  (voir \cite[Rem 5.2.7 (e)]{WWorient} pour la justification de cette définition lorsque les coefficients sont dans $\zz$). Lorsque $L_{01}\times L_{23} \times \cdots $ et $L_{12}\times L_{34} \times \cdots $ s'intersectent transversalement, le complexe est engendré par les \emph{points d'intersection généralisés}
\[ \I(\underline{L}) =\lbrace (x_0, \cdots , x_k) ~|~ \forall i, (x_i,x_{i+1}) \in L_{i(i+1)}   \rbrace, \]
et la différentielle compte des trajectoires de Floer d'indice 1,  qui ici peuvent s'interpréter comme des "bandes matelassées". Rappelons les définitions de  surface matelassée et de quilts pseudo-holomorphes, qui constituent les notions fondamentales de la théorie de Wehrheim et Woodward :

\begin{defi}Une \emph{surface matelassée}  $\underline{S}$ est la donnée :

\begin{enumerate}
\item[$(i)$] d'une collection de "morceaux" $\underline{S} = (S_k)_{k=1\cdots m}$, c'est-à-dire des surfaces de Riemann munies de structures complexes $j_k$. On indexe ses composantes de bord par un ensemble $\mathcal{B}(S_k)$ : $\partial S_k = \bigcup_{b\in\mathcal{B}(S_k) }{I_{k,b}}$.

\item[$(ii)$] d'une collection $\mathcal{S}$ de coutures : ensembles à deux éléments disjoints deux-à-deux :

$ \sigma \subset \bigcup_{k=1}^{m}\bigcup_{b\in\mathcal{B}(S_k)} I_{k,b}$, et pour chaque $\sigma = \lbrace  I_{k,b}, I_{k',b'}\rbrace$, un difféomorphisme analytique réel $\varphi_\sigma \colon I_{k,b} \rightarrow I_{k',b'}$.

\end{enumerate}\end{defi}

\begin{defi} Soit $\underline{S}$ une surface matelassée comme précédemment, $\underline{M} = (M_k)_{k=1\cdots m}$ une collection de variétés symplectiques, une pour chaque morceau de $\underline{S}$, et  $\underline{L} = \left(  L_\sigma \subset M_k ^- \times M_{k'} , L_{k,b} \subset M_k\right) $ une collection de  correspondances Lagrangiennes, une par couture $\sigma = \lbrace  I_{k,b}, I_{k',b'}\rbrace$, et de Lagrangiennes, une par composante de bord $I_{k,b}$ qui n'est pas dans une couture. Un \emph{quilt (pseudo-holomorphe)} $  \underline{u}\colon  \left( \underline{S} \rightarrow \underline{M}, \underline{L} \right) $ est alors une collection d'applications (pseudo-holomorphes si les $M_k$ sont munies de structures presque-complexes)  $u_i\colon S_i\rightarrow M_i$ satisfaisant aux conditions aux bords et aux coutures suivantes :
 
 \[ (u_k(x), u_k'(\varphi_\sigma (x)) ) \in L_\sigma,  x \in I_{k,b}  ,\]
 \[ u_k(x) \in L_{k,b}, x \in I_{k,b} .\]

\end{defi}
Tout cela peut être résumé dans un diagramme comme celui de la figure \ref{cylindre} :

\begin{figure}[!h]
    \centering
    \def\svgwidth{.65\textwidth}
    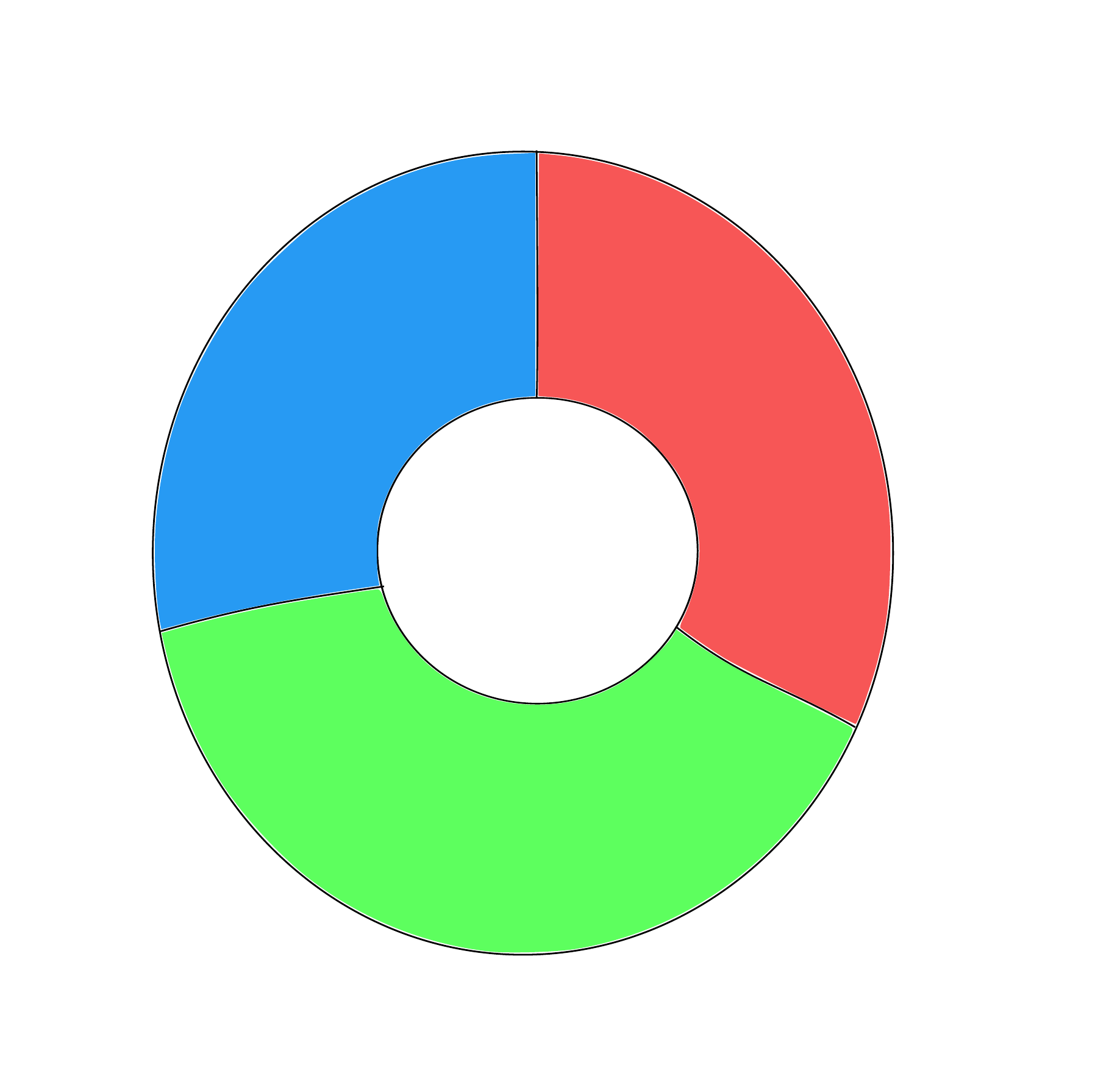
      \caption{Un cylindre matelassé.}
      \label{cylindre}
\end{figure}

\begin{remark}
\begin{enumerate}

\item On dit que $I_{k,b}$ est un bord de $\underline{S}$ s'il n'est pas dans une couture.

\item Parfois on confondra $\underline{S}$ avec la surface obtenue en recollant tous les morceaux entre eux le long des coutures.

\end{enumerate}
\end{remark}

Ainsi, dans ce formalisme, les trajectoires de Floer peuvent être vues comme des bandes matelassées comme dans la figure \ref{bande}.

\begin{figure}[!h]
    \centering
    \def\svgwidth{.65\textwidth}
   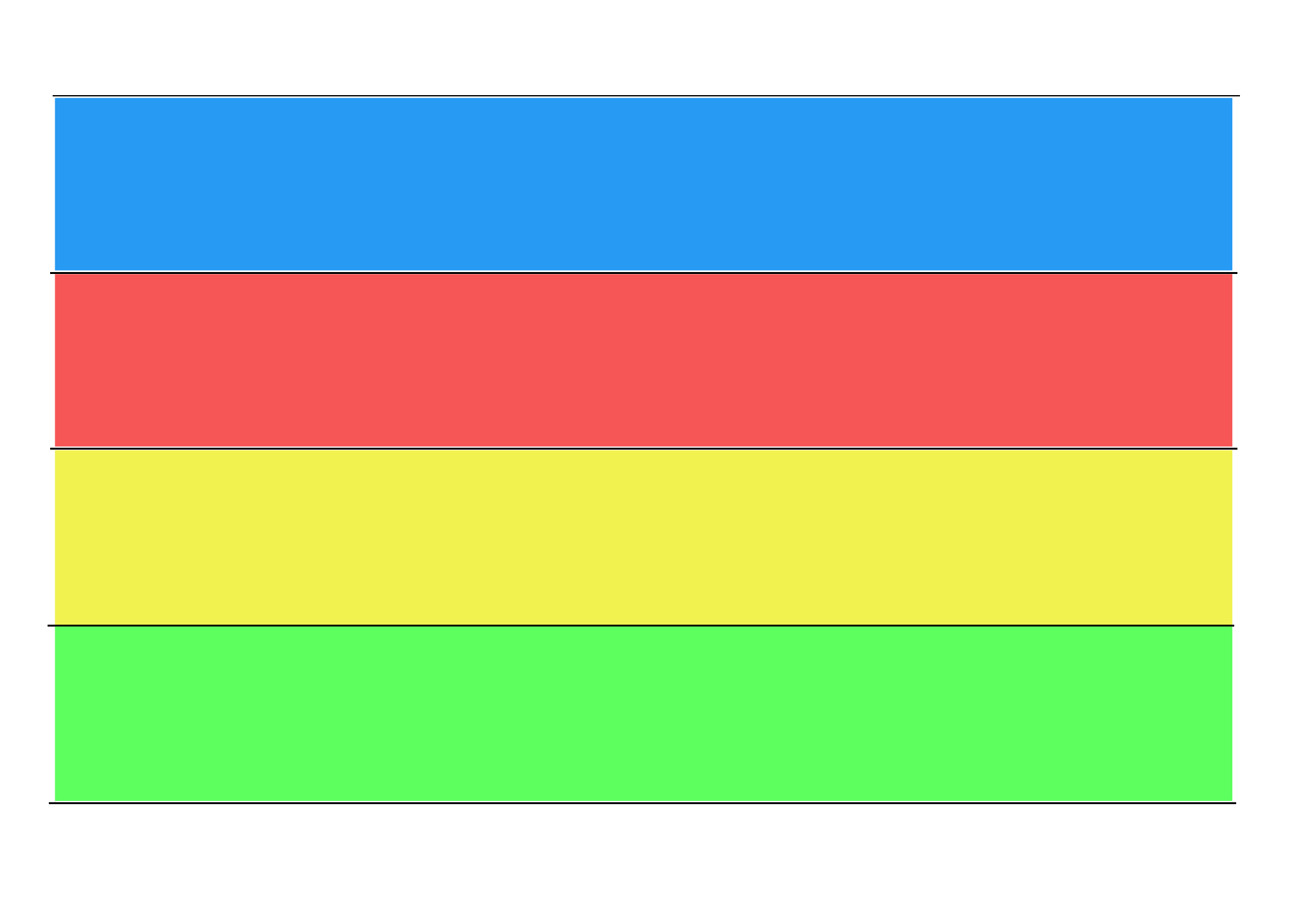
      \caption{Une bande matelassée intervenant dans la différentielle de l'homologie matelassée.}
      \label{bande}
\end{figure}

Weinsten suggère dans \cite{weinsteincat} que les correspondances Lagrangiennes doivent former les morphismes d'une catégorie, dont la composition serait donnée de la manière suivante :

\begin{defi}[Composition géométrique]

Soient $M_0$, $M_1$, $M_2$ trois variétés symplectiques, et $L_{01} \subset M_0\times M_1$, $L_{12} \subset M_1\times M_2$ des correspondances Lagrangiennes. On appelle \emph{composition géométrique} de $L_{01}$ et $L_{12}$  le sous-ensemble :
\[L_{01} \circ L_{12} = \pi_{02}(L_{01}\times M_2 \cap M_0\times L_{12}),\]
où l'on note $\pi_{02}$ la projection
\[\pi_{02}\colon M_0\times M_1\times M_2 \rightarrow M_0\times  M_2 .\]
\end{defi}

\begin{remark}La diagonale $\Delta_{M} = \lbrace (x,x)~|~ x\in M \rbrace$ joue le rôle de l'identité pour cette composition.
\end{remark}

Malheureusement, la correspondance ainsi obtenue n'est pas toujours une correspondance Lagrangienne, elle n'est d'ailleurs pas nécessairement lisse. L'idée de Wehrheim et Woodward est de ne s'autoriser à composer deux correspondances uniquement lorsque le critère suivant est satisfait :

\begin{defi}[Composition géométrique plongée]
Une composition géométrique $L_{01} \circ L_{12}$ est dite \emph{plongée} si :

\begin{itemize}
\item $L_{01} \times M_2$ et $M_0 \times L_{12}$ s'intersectent transversalement,

\item $\pi_{02}$ induit un plongement de $L_{01}\times M_2 \cap M_0\times L_{12}$ dans $M_0\times  M_2$.
\end{itemize}

\end{defi}

En plus de garantir que $L_{01} \circ L_{12}$ est à son tour une correspondance Lagrangienne, l'homologie de Floer matelassée se comporte bien dans ces conditions : nous verrons dans le théorème \ref{compogeom} que si $L_{01} \circ L_{12}$ est plongée et sous certaines hypothèses supplémentaires, 
\[ HF( \cdots,  L_i, L_{i+1}, \cdots ) \simeq HF( \cdots,  L_i \circ L_{i+1}, \cdots ). \]

Ainsi, Wehrheim et Woodward définissent leur catégorie symplectique étendue $ \mathrm{Symp}_\tau^ \#$ (dans \cite[Def. 3.1.7]{WWfft}) comme la catégorie ayant pour objets des variétés symplectiques (soumises à certaines hypothèses de monotonie), et pour morphismes des classes d'équivalences de correspondances Lagrangiennes généralisées (elles aussi soumises à des hypothèses), où la relation d'équivalence est engendrée par :
\[ ( \cdots,  L_i, L_{i+1}, \cdots ) \sim ( \cdots,  L_i \circ L_{i+1}, \cdots ), \text{si la composition est plongée,}\]
ainsi que des hypothèses de monotonie garantissant que l'on puisse définir l'homologie de Floer.

\begin{remark}Il résultera de la théorie de Cerf que les correspondances Lagrangiennes  généralisées que nous allons construire sont équivalentes à des correspondances Lagrangiennes généralisées de longueur 2. Ceci est un fait général pour toutes correspondances Lagrangiennes  généralisées, observé par Weinstein dans \cite{Weinsteinremark}. 
\end{remark}

La catégorie $ \mathrm{Symp}_\tau^ \#$ constituera un modèle que l'on va adapter pour être dans le cadre de \cite[Assumption 2.5]{MW}. Nous donnons quelques définitions préliminaires avant de définir (c.f. définition \ref{defcat}) la catégorie $\Symp$ que nous utiliserons.

Les variétés symplectiques que nous considèrerons seront munies d'hypersurfaces symplectiques. Afin que l'homologie de Floer soit bien définie, les correspondances Lagrangiennes devront vérifier la condition suivante de compatibilité (\cite[Def. 6.2]{MW}):

\begin{defi}[Correspondance Lagrangienne compatible avec une paire d'hypersurfaces] \label{correspcompat}
Soient $M_0$, $M_1$ deux variétés symplectiques, $R_{0}\subset M_0$, $R_{1} \subset M_1 $ deux hypersurfaces symplectiques. Une correspondance Lagrangienne $L_{01} \subset M_0\times M_1$ est dite \emph{compatible avec la paire $(R_0,R_1)$} si, d'une part,
\[ (R_0 \times M_1 ) \cap L_{01} = (M_0 \times R_1 ) \cap L_{01} = (R_0 \times R_1 ) \cap L_{01}, \]
et si $L_{01}$ intersecte $R_0 \times M_1$ et $M_0 \times R_1$ transversalement.\end{defi}

Dans ces conditions, pour  des voisinages tubulaires $\tau_0$ et $\tau_1$ de $R_0$ et $R_1$ suffisamment petits :
\[ \tau_0\colon N_{R_0} \rightarrow M_0,\   \tau_1\colon N_{R_1} \rightarrow M_1, \]
l'image réciproque $ (\tau_0 \times \tau_1)^{-1}(L_{01}) \subset N_{R_0} \times N_{R_1} $ est le graphe d'un isomorphisme de fibrés
\[ \varphi \colon (\widetilde{N}_{R_0})|_{(R_0 \times R_1 ) \cap L_{01}} \rightarrow (\widetilde{N}_{R_1})|_{(R_0 \times R_1 ) \cap L_{01}}, \]
où $\widetilde{N}_{R_0}$ est le fibré normal de $R_0 \times M_1 \subset M_0 \times M_1$, et $\widetilde{N}_{R_1}$ est le fibré normal de $M_0 \times R_1 \subset M_0 \times M_1$.


Notons que si l'une des variétés symplectiques est un point, une Lagrangienne $L$ est compatible avec une hypersurface $R$ si et seulement si $L$ et $R$ sont disjointes.

Soient   $\underline{S}$ une surface matelassée,  $\underline{u}\colon \underline{S}\rightarrow (\underline{M},\underline{L})$ un quilt, et $\underline{R}\subset \underline{M}$ une famille d'hypersurfaces telle que les correspondances Lagrangiennes aux coutures soient compatibles avec les hypersurfaces correspondantes, rappelons la définition du nombre d'intersection $\underline{u}. \underline{R}$ :

Soit $\underline{U}\subset \underline{S}$ un voisinage ouvert de $\underline{u}^{-1}(\underline{R})$ dont l'image de chaque morceau $S_i$ est contenue dans les voisinages tubulaires $\tau_i$ de $R_i$. Chaque application $u_i$ peut alors être vue comme une section du fibré en droites complexes tiré en arrière $u_i^* N_{R_i}$. Tous ces fibrés se recollent en un fibré sur $\underline{U}$ en utilisant les isomorphismes $\varphi$, et les sections $u_i$ se recollent en une section globale de ce fibré, non-nulle au-dessus du bord $\partial \underline{U}$.  De ce fait, le fibré est trivial sur ce bord et s'étend à un fibré au-dessus de $\underline{S}$, et les sections se prolongent en des sections globales, non-nulles en dehors de $\underline{U}$. Le nombre d'intersection $\underline{u}\cdot \underline{R}$ est alors défini comme étant le nombre d'Euler  de ce fibré.

Le lemme suivant est prouvé dans \cite[Lemma 6.4]{MW} lorsque la surface matelassée est plusieurs bandes parallèles, sa preuve s'adapte à n'importe quelle surface matelassée. 

\begin{lemma}(\cite[Lemma 6.4]{MW}) Le nombre d'intersection $\underline{u}\cdot \underline{R}$ ne change pas si l'on perturbe $\underline{u}$ par une homotopie préservant les conditions aux bords et coutures. De plus, si $\underline{u}$ est pseudo-holomorphe, $\underline{R}$ presque complexes et $\underline{u}$ et $\underline{R}$ s'intersectent transversalement, alors  ce nombre est donné par :
\[ \underline{u} \cdot \underline{R} = 
\sum_{j=0}^k 
\# \{ z_j \in \operatorname{int}(S_j) | u_j(z_j)
\in R_j \} + \frac{1}{2} \# \{ z_j \in \partial S_j | u_j(z_j) \in R_j \} .\]
\end{lemma}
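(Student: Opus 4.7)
The plan is to adapt the argument of Manolescu--Woodward \cite[Lemma 6.4]{MW}, which treats parallel strips, to an arbitrary quilted surface $\underline{S}$. The two assertions are of a different nature: homotopy invariance is a topological statement about the Euler number of a complex line bundle, while the formula counting intersections is a local/analytic statement relying on positivity of intersections.

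For the homotopy invariance, I would first make the normal line bundle construction precise. Over each piece $S_i$, the map $u_i$ lifts to a section of $u_i^* N_{R_i}$ on the preimage of the tubular neighborhood; along a seam $\sigma = \{I_{k,b}, I_{k',b'}\}$, the compatibility condition of Definition~\ref{correspcompat} together with the bundle isomorphism $\varphi$ provides precisely the clutching data needed to glue $u_k^* N_{R_k}$ and $u_{k'}^* N_{R_{k'}}$ along the seam. One then checks that this glued bundle $E \to \underline{U}$, extended trivially away from $\underline{U}$, carries a global continuous section $s_{\underline{u}}$ which is nowhere vanishing on $\underline{S}\setminus \underline{U}$. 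Given a homotopy $\{\underline{u}_t\}_{t\in[0,1]}$ preserving boundary and seam conditions, I would perform the construction fiberwise over $[0,1]$, obtaining a line bundle $\tilde E \to \underline{S}\times[0,1]$ together with a section non-vanishing near $\partial \underline{U}_t$ for each $t$. Standard obstruction theory (the Euler number of a complex line bundle with prescribed nonvanishing behaviour at infinity is a deformation invariant) then gives $\underline{u}_0 \cdot \underline{R} = \underline{u}_1 \cdot \underline{R}$.

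For the second assertion, I would assume transversality and compute the Euler number by a local model at each zero of $s_{\underline{u}}$. Interior zeros lie inside some piece $S_j$ and correspond to points where the pseudo-holomorphic map $u_j$ meets the almost-complex submanifold $R_j$ transversally; the standard positivity of intersections (as in McDuff's local theorem) shows that each such point contributes $+1$ to the Euler number, accounting for the first sum. For zeros on a seam $\sigma$, the essential observation is that the compatibility condition $(R_0\times M_1)\cap L = (M_0\times R_1)\cap L$ forces a single geometric seam intersection to register simultaneously as a point of $u_k^{-1}(R_k)\cap I_{k,b}$ and as its image under $\varphi_\sigma$ in $u_{k'}^{-1}(R_{k'})\cap I_{k',b'}$; thus each geometric zero on a seam is counted twice in the sum $\sum_j \#\{z_j \in \partial S_j \mid u_j(z_j)\in R_j\}$. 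To show that each such zero contributes $+1$ to the Euler number (whence the factor $\tfrac12$), I would use a doubling argument: since $L_\sigma$ is a compatible Lagrangian correspondence and the $J_k$ are suitably chosen, one can double the two pieces across the seam to obtain a genuine pseudo-holomorphic map from a smooth complex chart into $M_k^-\times M_{k'}$, intersecting the almost-complex submanifold $(R_k\times M_{k'})\cap L_\sigma$ transversally at the doubled point; positivity of intersections in the closed setting then yields the contribution $+1$, so the seam point contributes $\tfrac12$ per bookkeeping.

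The main obstacle will be the seam analysis: one must verify that the doubling is genuinely pseudo-holomorphic in a neighborhood of the intersection, which requires using the tubular model $(\tau_0\times\tau_1)^{-1}(L_\sigma) = \mathrm{graph}(\varphi)$ to pick compatible almost-complex structures on both sides of the seam, and then invoking the analogue of McDuff's positivity result in the version for pseudo-holomorphic quilts. Once this local contribution is established, the non-seam boundary components contribute nothing (since the compatibility condition with a point symplectic factor forces $L_{k,b}\cap R_k=\emptyset$), and summing the local contributions over interior and seam zeros yields exactly the claimed formula.
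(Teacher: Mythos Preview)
The paper does not actually give its own proof of this lemma: it merely states that the result is proved in \cite[Lemma 6.4]{MW} for parallel strips and that ``sa preuve s'adapte à n'importe quelle surface matelassée''. Your proposal is a reasonable outline of precisely that adaptation and follows the same strategy as the cited reference (Euler number of the glued normal line bundle for homotopy invariance, positivity of intersections for the counting formula), so there is nothing to compare against beyond the MW original.

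One small remark on your seam analysis: rather than doubling into $M_k^-\times M_{k'}$, it is slightly more direct to observe that after gluing via $\varphi$ the seam becomes an interior point of the glued surface $\underline{S}$, and the section $s_{\underline{u}}$ is continuous there; the local degree of this section at a transverse zero on the seam is then computed using that both $u_k$ and $u_{k'}$ are pseudo-holomorphic and the $R_i$ are almost-complex, which forces the local contribution to be $+1$ just as for interior points. Your doubling argument reaches the same conclusion but introduces an unnecessary intermediate step.
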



\begin{defi}\label{defcat}
On appelle $\Symp$ la catégorie suivante :  

Ses objets sont les 5-uplets $(M, \omega , \tilde{\omega}  , R, \tilde{J})$ satisfaisant les hypothèses $(i)$, $(ii)$, $(iii)$, $(iv)$, $(v)$, $(x)$, $(xi)$ et $(xii)$ de  \cite[Assumption 2.5]{MW}. Rappelons ces dernières :
\begin{enumerate}
\item[$(i)$] $(M,\omega)$ est une variété symplectique compacte.

\item[$(ii)$] $\tilde{\omega}$ est une 2-forme fermée sur $M$.

\item[$(iii)$] Le lieu de dégénérescence $R\subset M$ de $\tilde{\omega}$ est une hypersurface symplectique pour $\omega$.

\item[$(iv)$] $\tilde{\omega}$ est $\frac{1}{4}$-monotone, c'est-à-dire $\left[ \tilde{\omega} \right] = \frac{1}{4} c_1(TM) \in H^2(M; \rr)$.

\item[$(v)$] Les restrictions de $\tilde{\omega}$ et $\omega$ à $M\setminus R$ définissent la même classe de cohomologie dans $H^2(M\setminus R; \rr)$.

\item[$(x)$] Le nombre de Chern minimal $N_{M\setminus R}$ (par rapport à $\omega$) est un multiple strictement positif de 4, de sorte que le nombre de Maslov minimal $N = 2N_{M\setminus R}$ soit un multiple strictement positif de $8$.

\item[$(xi)$] $\tilde{J}$ est une structure presque complexe $\omega$-compa\-tible sur $M$, $\tilde{\omega}$-compa\-tible sur $M\setminus R$, et telle que $R$ est une hypersurface presque complexe pour $\tilde{J}$.

\item[$(xii)$] Toute sphère $\tilde{J}$-holomorphe dans $M$ d'indice nul, nécessairement incluse dans $R$ par monotonie, a un nombre d'intersection avec $R$ égal à un multiple strictement négatif de 2.

\end{enumerate}

L'ensemble des morphismes entre deux objets consiste en des chaînes de morphismes élémentaires $\underline{L} = (L_{01}, L_{12}, \cdots ) $, modulo une relation d'équivalence :

Les morphismes élémentaires sont des correspondances $L_{i(i+1)} \subset  M_i^- \times M_{i+1}$,   qui sont Lagrangiennes pour les formes monotones $\tilde{\omega}_i$, simplement con\-nexes, $(R_i, R_{i+1})$-compatibles au sens de  la définition \ref{correspcompat}, telles que $L_{i(i+1)} \setminus \left( R_i \times R_{i+1} \right)$ est spin, et telles que tout disque pseudo-holomorphe de $M_i^- \times M_{i+1}$ à bord dans $L_{i(i+1)}$ et d'aire nulle intersecte $(R_i, R_{i+1})$ en un multiple strictement positif de $-2$.

La relation d'équivalence sur les chaînes de morphismes est engendrée par l'identification suivante : on identifie $(L_{01},\cdots ,  L_{(i-1)i},L_{i(i+1)}, \cdots )$ à la composée $(L_{01},\cdots  L_{(i-1)i}\circ  L_{i(i+1)}, \cdots )$ si la composition de $L_{(i-1)i}$ et $L_{i(i+1)}$ est plongée, simplement connexe, $(R_{i-1} , R_{i+1} )$-compatible, spin en dehors de $R_{i-1}\times R_{i+1}$, vérifie l'hypothèse ci-dessus concernant les disques pseudo-holomorphes, ainsi que l'hypothèse suivante : tout cylindre matelassé pseudo-holomorphe comme dans la figure \ref{cylindre} d'aire nulle et de conditions aux coutures dans $L_{(i-1)i}$,   $L_{i(i+1)}$ et $L_{(i-1)i}\circ  L_{i(i+1)}$ intersecte $(R_{i-1} , R_i, R_{i+1})$ en un nombre plus petit que $-2$.
\end{defi}

\begin{remark}\label{monotonie} Sous ces hypothèses, les correspondances Lagrangiennes généralisées sont automatiquement monotones : si $\underline{x},\underline{y} \in  \I(\underline{L})$, sont des points d'intersections généralisés, et   $\underline{u}$ une bande matelassée à conditions aux coutures données par $\underline{L}$ et ayant pour limites $\underline{x},\underline{y}$, alors l'aire symplectique de $\underline{u}$ vaut $A(\underline{u}) = \frac{1}{8} I(\underline{u}) + c(\underline{x},\underline{y})$, avec $c(\underline{x},\underline{y})$ ne dépendant que des points $\underline{x}$ et $\underline{y}$. Cela provient de la monotonie des variétés symplectiques et de la simple connexité des correspondances, voir \cite[Lemma 2.8]{MW}.

\end{remark}

\subsection{Définition de l'homologie matelassée}\label{defhomolgiematelassée}

Soit $\underline{L} = (L_{i(i+1)})_{i = 0\cdots k} $ un morphisme de $\Symp$ de longueur $k+1$ de $pt$ vers $pt$. Notons $\underline{M} = (M_i, \omega_i , \tilde{\omega}_i, R_i , \tilde{J}_i)_{i = 0\cdots k+1}$ les objets intermédiaires, avec $M_0 = M_{k+1} = pt$. On notera :

\[\underline{L} = \left(  \xymatrix{   M_0 \ar[r]^{L_{01}} &  M_1 \ar[r]^{L_{12}} &  M_2 \ar[r]^{L_{23}} & \cdots \ar[r]^{L_{k(k+1)}} & M_{k+1} } \right). \]

Notons $\mathcal{J}(M_i, \mathrm{int}\left\lbrace \omega_i = \tilde{\omega}_i \right\rbrace ,  \tilde{J}_i)$ l'ensemble des structures presque complexes sur $M_i$ qui sont $\omega_i$-compatibles, et qui coïncident avec $\tilde{J}_i$ en dehors de $\mathrm{int}\left\lbrace \omega_i = \tilde{\omega}_i \right\rbrace$. Notons également \[\mathcal{J}_t(M_i, \mathrm{int}\left\lbrace \omega_i = \tilde{\omega}_i \right\rbrace ,  \tilde{J}_i) = \mathcal{C}^\infty([0,1], \mathcal{J}(M_i, \mathrm{int}\left\lbrace \omega_i = \tilde{\omega}_i \right\rbrace ,  \tilde{J}_i))\] l'espace des structures presque complexes dépendantes du temps.

Introduisons des perturbations Hamiltoniennes, afin de garantir que les intersections soient transverses. Soient $\underline{H} = (H_i)_{i = 1\cdots k}$ des  Hamiltoniens, $H_i : M_i\times \rr \rightarrow \rr$ à support contenus dans $\mathrm{int}\left\lbrace \omega_i = \tilde{\omega}_i \right\rbrace$. Notons $\varphi_i$ le temps 1 du flot de $X_{H_i}$ et
\begin{align*}
& \widetilde{L}_{i(i+1)} = \left\lbrace (\varphi_i(x_i),x_{i+1}) ~|~ (x_i,x_{i+1}) \in L_{i(i+1)} \right\rbrace   \\
 & \widetilde{L}_{(0)} = \widetilde{L}_{0} \times \widetilde{L}_{12} \times \cdots \\
& \widetilde{L}_{(1)} = \widetilde{L}_{01} \times \widetilde{L}_{23} \times \cdots.
\end{align*}

Supposons les points d'intersection généralisés $\I(\underline{L})$ contenus dans le produit des $\mathrm{int}\left\lbrace \omega_i = \tilde{\omega}_i \right\rbrace$,  ce qui sera le cas pour définir l'homologie HSI. Pour un choix approprié des Hamiltoniens, l'intersection $\widetilde{L}_{(0)} \cap \widetilde{L}_{(1)}$ est transverse. L'ensemble fini $\widetilde{L}_{(0)} \cap \widetilde{L}_{(1)}$ est alors en bijection avec l'ensemble des points d'intersection généralisés perturbés $\I_{\underline{H}}(\underline{L})$ consistant en des $k$-uplets $p_i\colon [0,1] \rightarrow M_i$ tels que $\frac{\mathrm{d}p_i}{\mathrm{d}t} = X_{H_i}$, et $(p_i(1), p_{i+1}(0) )\in L_{i(i+1)}$. En effet, étant donné que $p_i(1) = \varphi_i(p_i(0))$, ils correspondent aux points $(x_1, \cdots , x_k)$ de $\widetilde{L}_{(0)} \cap \widetilde{L}_{(1)},$ si l'on pose $x_i = p_i(0)$.

Soit $\widetilde{\mathcal{M}}(\underline{x},\underline{y})$ l'ensemble des applications $u_i\colon \rr \times [0,1]\to M_i$ telles que, avec $s\in\rr$ et $t\in [0,1]$ :

\[ \begin{cases} 0 = \partial_s u_i + J_t ( \partial_t u_i - X_{H_i}) \\
\mathrm{lim}_{s\rightarrow - \infty}{u_i(s,t)} = \varphi_i^t (y_i)\\
\mathrm{lim}_{s\rightarrow + \infty}{u_i(s,t)} = \varphi_i^t (x_i)\\
(u_i(s,1), u_{i+1}(s,0) )\in L_{i(i+1)} \\
\underline{u} \cdot \underline{R} = 0 \\
I(\underline{u}) = 1.
\end{cases} \]
L'espace des trajectoires de Floer matelassées est alors le quotient $\mathcal{M}(\underline{x},\underline{y})  = \widetilde{\mathcal{M}}(\underline{x},\underline{y}) /\rr$  par la reparamétrisation en $s$. Pour des choix génériques d'Hamiltoniens $\underline{H}$ et de structures presque complexes $\underline{J}$, c'est un ensemble fini.

Le complexe de Floer est alors défini comme \[CF(\underline{L},\underline{H}, \underline{J}) = \bigoplus_{\underline{x}\in \I_{\underline{H}}(\underline{L})} \zz \underline{x},\] et muni de la différentielle  définie par \[\partial \underline{x} = \sum_{\underline{y}} \# \mathcal{M}(\underline{x},\underline{y}) \underline{y},\] 
où $\# \mathcal{M}(\underline{x},\underline{y}) = \sum_{u\in \mathcal{M}(\underline{x},\underline{y}) }{o(u)}$, avec $o(\underline{u}) = \pm 1$ l'orientation du point $\underline{u}$ dans l'espace des modules construite dans \cite{WWorient} à partir de l'unique structure spin relative sur $\underline{L}$.

Rappelons le résultat de Manolescu et Woodward qui rend l'homologie de Floer bien définie pour des éléments de $Hom_{Symp}(pt,pt)$ :

\begin{theo}(\cite[Theorem 6.5]{MW}) Soient $\underline{L}$ et $\underline{H}$ comme ci-dessus. Il existe un sous-ensemble $G_\delta$-dense 
\[\mathcal{J}_t^{reg}(M_i, \mathrm{int}\left\lbrace \omega_i = \tilde{\omega}_i \right\rbrace ,  \tilde{J}_i) \subset \mathcal{J}_t(M_i, \mathrm{int}\left\lbrace \omega_i = \tilde{\omega}_i \right\rbrace ,  \tilde{J}_i)\] de structures presque complexes régulières pour lesquelles la différentielle est finie et vérifie $\partial ^2 = 0$. Alors, l'homologie matelassée $HF (\underline{L})$ est bien définie pour des structures presque complexes et des perturbations Hamiltoniennes génériques, et est indépendante de ces choix, hormis éventuellement de la structure presque complexe de référence.
\end{theo}

\begin{remark}Pour les variétés qui interviendont par la suite, le choix de la structure presque complexe de référence n'interviendra pas, en effet celle-ci est choisie dans un espace contractile, voir \cite[Remark 4.13]{MW}.
\end{remark}

\paragraph{Graduation}L'hypothèse sur le nombre de Maslov minimal permet de définir une $\Z{8}$-graduation relative sur le complexe de chaînes : si $x$ et $y$ sont deux points d'intersection généralisés, et $u$, $v$ deux trajectoires de Floer matelassées (non-nécessairement pseudo-holomorphes) les reliant, $I(u) = I(v)$ modulo 8. On note alors $I(x,y) \in \Z{8}$ cette quantité commune. La différentielle est alors de degré $1$. Il s'en suit que $I(x,y)$ définit une graduation relative sur $HF(\underline{L})$.

Rappelons enfin le résultat suivant, qui prouve l'invariance de l'homologie de Floer matelassée par composition géométrique plongée :

\begin{theo}(\cite[Theorem 6.7]{MW})\label{compogeom} Soit $\underline{L}$ une  correspondance Lagrangienne généralisée comme précédemment. Si de plus la composition géométrique $L_{i-1,i} \circ  L_{i, i + 1}$ est plongée, simplement connexe, $(R_{i-1} , R_{i+1})$-compa\-tible, et telle que le nombre d'intersection de tout cylindre matelassé pseudo-holomorphe avec $(R_{i-1} , R_i,  R_{i+1})$ est plus petit que $-2$, alors $HF (\underline{L})$ est canoniquement isomorphe à $HF (\cdots L_{i-1,i} \circ L_{i, i + 1} \cdots )$.
\end{theo}


%

\section{Cobordismes à bords verticaux et théorie de Cerf connexe}\sectionmark{théorie de Cerf}

Commençons par définir la catégorie de cobordisme qui nous intéressera.

\begin{defi}[Catégorie des cobordismes à bords verticaux]\label{chemincob}
On appelle \emph{catégorie des cobordismes à bords verticaux, avec classe d'homologie de degré 1 à coefficients dans $\Z{2}$}, que l'on notera $\Cob$, la catégorie dont :

\begin{itemize}

\item les objets sont les couples $(\Sigma,p)$, où $\Sigma$ est une surface compacte, connexe, orientée, à bord connexe, et $p\colon \rr/\zz \rightarrow \partial\Sigma$ est un difféomorphisme (paramétrage).

\item les morphismes de $(\Sigma_0,p_0)$ vers $(\Sigma_1,p_1)$ sont des classes de difféomorphismes  de 5-uplets $(W, \pi_{\Sigma_0},  \pi_{\Sigma_1}, p,  c)$, où $W$ est une 3-variété compacte orientée à bord, $\pi_{\Sigma_0}$,  $\pi_{\Sigma_1}$ et $p$ sont des plongements de $\Sigma_0$, $\Sigma_1$ et $\rr/\zz \times [0,1]$ dans $\partial W$, le premier renversant l'orientation, les deux autres préservant l'orientation, et tels que 

\[\partial W = \pi_{\Sigma_0}(\Sigma_0)\cup  \pi_{\Sigma_1}(\Sigma_1)\cup  p(\rr/\zz \times [0,1]), \]
 $\pi_{\Sigma_0}(\Sigma_0)$  et $\pi_{\Sigma_1}(\Sigma_1)$ sont disjoints, pour $i=0,1$, \[\pi_{\Sigma_i}(\Sigma_i)\cap p(\rr/\zz \times [0,1])  = \pi_{\Sigma_i}(p_i(\rr/\zz)) = p(\rr/\zz \times \lbrace i\rbrace),\]
  $p(s,i) = \pi_{\Sigma_i}(p_i(s))$, et  $c \in H_1 (W, \Z{2})$.
  
On appellera   $p(\rr/\zz \times [0,1]) $ partie verticale de $\partial W$, et on la notera $\partial^{vert} W$.

Deux tels 5-uplets $(W, \pi_{\Sigma_0},  \pi_{\Sigma_1}, p,  c)$ et $(W', \pi'_{\Sigma_0},  \pi'_{\Sigma_1}, p',  c')$ sont dits équivalents s'il existe un difféomorphisme $\varphi \colon W \rightarrow W'$ compatible avec les plongements et préservant la classe. 

\item la composition des morphismes consiste à recoller le long des plongements, et à ajouter les classes d'homologie, on la notera $\cup$, ou $\cup_\partial$.
\end{itemize}

\end{defi}

On n'associera pas une correspondance Lagrangienne à n'importe quel cobordisme, mais seulement à certains, dits "élémentaires" (au sens de la théorie de Morse). Nous verrons qu'un cobordisme quelconque pourra toujours se décomposer en un nombre fini de cobordismes élémentaires, définissant ainsi une succession de correspondances Lagrangiennes, et donc un morphisme de $\Symp$. Ce paragraphe a pour objectif de préparer la démonstration du fait que cette construction ne dépend pas du découpage utilisé.

Il s'agit essentiellement d'adapter les résultats de \cite{connectedcerf} au cadre qui nous intéresse. Rappelons leur résultat principal, qui est faux en dimension $1+1$ :

\begin{theo}[\cite{connectedcerf}]\label{GWWtheo}
Soit $n\geq 2$,
\begin{enumerate}

\item Tout $(n+1)$-cobordisme connexe entre $n$-variétés connexes admet une décomposition en cobordismes élémentaires dont tous les niveaux intermédiaires sont connexes. Une telle décomposition sera appelée \emph{décomposition de Cerf}.
\item Etant données deux telles décompositions, il est possible de passer de l'une à l'autre par un nombre fini de \emph{mouvements de Cerf}, à savoir une difféo-équivalence, un ajout ou une suppression de cylindres, un ajout ou une suppression de paires de naîssance-mort, ou une interversion de points critiques ("critical point switch") (voir \cite{connectedcerf} pour les définitions, ou la définition \ref{mvtcerf} qui suit).
\end{enumerate}
\end{theo}

Définissons une catégorie de cobordisme intermédiaire :

\begin{defi}[Catégorie des cobordismes à bords verticaux élémentaires]\label{chemincobelem}

On appelle $\Cobelem$ la catégorie dont les objets sont les mêmes que ceux de $\Cob$, et dont les morphismes sont des chaînes de 6-uplets 
\[(W_k, \pi_{\Sigma_k},  \pi_{\Sigma_{k+1}}, p_k, f_k,  c_k),\]
où $(W_k, \pi_{\Sigma_k},  \pi_{\Sigma_{k+1}},p_k, c_k)$ est un cobordisme de $(\Sigma_k,p_k)$ vers $(\Sigma_{k+1},p_{k+1})$ comme précédemment. La fonction  $f_k\colon W_k \rightarrow [0,1]$ est une fonction de Morse telle que, pour $i=0,1$, $f_k^{-1}(i) = \pi_{\Sigma_{k+i}}(\Sigma_{k+i})$, admettant au plus un point critique dans l'intérieur de $W_k$ et pas de point critique sur $\partial W_k$, et $f(p(s,t)) = t \in [0,1]$. Enfin $c_k \in H_1 (W_k, \Z{2}))$. De tels cobordismes sont appelés \emph{élémentaires}. On notera ces morphismes 

\[\underline{W} = (W_1, f_1, p_1,  c_1) \odot (W_2, f_2, p_2,  c_2) \odot \cdots \odot (W_k, f_k, p_k,  c_k),\]
et la composition, que l'on notera $\odot$, consiste à concaténer deux chaînes.
\end{defi}

\begin{remark}\begin{enumerate}
\item Lorsqu'il n'y aura pas d'ambiguïté, on omettra parfois de préciser les plongements, et on notera simplement $(W,c)$ la donnée de $(W, \pi_{\Sigma_0},  \pi_{\Sigma_1}, f, p,  c)$.

\item Il y a un foncteur $\Cobelem\rightarrow\Cob$ qui ne change pas les objets et qui consiste à recoller une cha\^ine $\underline{W}$ en un seul cobordisme, à ajouter les classes, et à oublier les fonctions de Morse.

\end{enumerate}
\end{remark}

Afin d'obtenir dans la proposition \ref{cerfchemin} un résultat analogue au théorème \ref{GWWtheo} pour des cobordismes à bord vertical munis d'une classe d'homologie de degré 1 à coefficients dans $\Z{2}$, nous définissons des mouvements similaires pour de tels cobordismes.

\begin{defi}[Mouvements de Cerf] \label{mvtcerf}
Si $\underline{W} = W_1 \odot \cdots \odot W_k$ et $\underline{W'} = W_1' \odot \cdots\odot W_l'$ sont des morphismes de  $\Cobelem$, on appelle \emph{mouvements de Cerf} le fait de remplacer $\underline{W}$ par $\underline{W'}$ en opérant l'une des modifications suivantes :

\begin{enumerate}
\item[$(i)$] \emph{Difféo-équivalence} : On dit qu'un difféomorphisme $\varphi \colon W\rightarrow W'$ est une difféo-équivalence entre deux cobordismes $(W, \pi_{\Sigma_0},  \pi_{\Sigma_1}, f, p,  c)$ et $(W', \pi'_{\Sigma_0},  \pi'_{\Sigma_1}, f', p',  c')$ si $f'\circ\varphi = f$, $\varphi \circ \pi_{\Sigma_i} = \pi'_{\Sigma_i}$, pour $i=0,1$, $p' = \varphi \circ p$, et $c' = \varphi_* c$.

\item[$(ii)$] \emph{Ajout ou suppression de cobordismes triviaux} : Le fait d'ajouter ou de supprimer un cobordisme de la forme : 

\[(W = \Sigma \times [0,1], \pi_{\Sigma_i} = id_{\Sigma} \times \lbrace i \rbrace,   f(s,t) = t, p(s,t) = (p(s),t),  c = 0)\]

\item[$(iii)$] \emph{Ajout ou suppression d'une paire de naîssance-mort} : Une paire de naîssance-mort est  une chaîne \[(W, \pi_{\Sigma_0},  \pi_{\Sigma_1}, f, z,  c =0) \odot(W', \pi_{\Sigma_1}',  \pi_{\Sigma_0}', f', z',  c' = 0)\] telle que la réunion $(W\cup_{\Sigma_1} W',f\cup f',c + c' =0)$ est difféo-équivalente à un cylindre.

\item[$(iv)$] \emph{Interversion de points critiques} ("critical point switch") : Soient $s_1$ et $s_2 \subset$ deux sphères d'attachement d'anses de $\Sigma$ disjointes, $h_1, h_2$ les anses correspondantes, $W_1$ le cobordisme correspondant à l'attachement de $h_1$, $W_2$ le cobordisme correspondant à l'attachement de $h_2$ après avoir attaché $h_1$, $W_2'$ le cobordisme correspondant à l'attachement de $h_2$, $W_1'$ le cobordisme correspondant à l'attachement de $h_1$ après avoir attaché $h_2$. Le mouvement consiste à remplacer $W_1 \odot W_2$ par $W_2' \odot W_1'$.

\item[$(v)$] \emph{Glissement de classe d'homologie} :
Si $c_i + c_{i+1} = d_i + d_{i+1}$ dans $H_1(W_i \cup W_{i+1};\Z{2})$, et $W_i$ ou $W_{i+1}$ est un cylindre, remplacer $(W_i, c_i)\odot (W_{i+1}, c_{i+1})$ par $(W_i, d_i)\odot (W_{i+1}, d_{i+1})$ dans $(\underline{W}, \underline{c})$ .
\end{enumerate}
\end{defi}

\begin{prop}\label{cerfchemin}

\begin{enumerate}
\item Tout cobordisme à bord vertical $(W,p,c)$ admet une décomposition de Cerf (i.e. $\Cobelem \rightarrow \Cob$ est surjectif).
\item Une fois recollées, deux chaînes de cobordismes élémentaires définissent le même morphisme dans $\Cob$ si et seulement si l'on peut passer de l'une à l'autre par l'un des cinq mouvements de Cerf.
\end{enumerate}
\end{prop}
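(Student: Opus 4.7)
La stratégie consiste à se ramener au théorème \ref{GWWtheo} pour la structure sous-jacente de cobordisme, puis à traiter séparément le paramétrage du bord vertical et la classe d'homologie à coefficients dans $\Z{2}$.

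Pour le point $(1)$, étant donné $(W, \pi_{\Sigma_0}, \pi_{\Sigma_1}, p, c)$, le plongement $p$ fournit une fonction hauteur naturelle le long de $\partial^{vert} W$. Je commencerais par étendre cette fonction en une fonction de Morse $f\colon W\to [0,1]$ valant $i$ sur $\pi_{\Sigma_i}(\Sigma_i)$ et sans point critique sur $\partial W$, ce qui est toujours possible par un argument standard d'extension. On applique alors le théorème \ref{GWWtheo}, formulé pour les cobordismes à bord produit (ou après avoir cappé temporairement $\partial^{vert} W$ pour se ramener au cas clos), afin de garantir que les niveaux intermédiaires soient connexes. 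Ceci fournit une décomposition $W = W_1\cup \cdots \cup W_k$ en cobordismes élémentaires, chaque $W_j$ héritant d'une portion $p(\rr/\zz\times [t_{j-1}, t_j])$ du bord vertical que l'on reparamètre linéairement sur $\rr/\zz\times[0,1]$. Il reste à distribuer la classe $c$ : l'application $\bigoplus_j H_1(W_j;\Z{2})\to H_1(W;\Z{2})$ est surjective (récurrence via Mayer-Vietoris, les surfaces $\Sigma_i$ étant connexes), ce qui permet de choisir des $c_j$ avec $\sum c_j = c$.

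Pour le point $(2)$, le sens direct est routinier : aucun des cinq mouvements ne modifie la classe dans $\Cob$ du cobordisme recollé, et pour $(v)$ c'est précisément la condition $c_i + c_{i+1} = d_i + d_{i+1}$ qui assure la préservation de la classe d'homologie globale. Pour le sens réciproque, je partirais de deux décompositions $\underline{W}$ et $\underline{W}'$ recollant en le même $(W,c)$. En oubliant classes et paramétrages, le théorème \ref{GWWtheo} fournit une suite finie de mouvements $(i)$ à $(iv)$ reliant les décompositions sous-jacentes ; chacun se relève en un mouvement de la définition \ref{mvtcerf} sur les 6-uplets, en transportant les paramétrages verticaux par les difféomorphismes sous-jacents et en répartissant les classes arbitrairement mais en respectant leur somme. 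Cela ramène le problème à la situation où les deux chaînes ont le même support, avec des distributions de classes éventuellement différentes.

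Le point crucial, qui constitue à mon avis la principale difficulté, est alors de montrer que deux telles distributions $(c_j)$ et $(c'_j)$ vérifiant $\sum c_j = \sum c'_j = c$ sont connectées par des mouvements $(ii)$ et $(v)$ seulement. Par Mayer-Vietoris, le noyau de $\bigoplus_j H_1(W_j;\Z{2})\to H_1(W;\Z{2})$ est engendré par les éléments de la forme $(0,\ldots,\iota_j(x),\iota_{j+1}(x),0,\ldots)$ pour $x\in H_1(\Sigma_j;\Z{2})$, où $\iota_j\colon \Sigma_j\hookrightarrow W_j$ et $\iota_{j+1}\colon \Sigma_j\hookrightarrow W_{j+1}$ désignent les inclusions. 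Pour éliminer un tel générateur, on insère via $(ii)$ un cylindre trivial $\Sigma_j\times[0,1]$ entre $W_j$ et $W_{j+1}$, puis on applique deux fois le mouvement $(v)$ : d'abord entre $W_j$ et le cylindre pour transférer la classe $\iota_j(x)$ de $W_j$ vers le cylindre (l'identification $H_1(W_j\cup\Sigma_j\times[0,1];\Z{2})\cong H_1(W_j;\Z{2})$ rendant ce transfert licite), puis entre le cylindre et $W_{j+1}$ pour la transférer vers $W_{j+1}$ sous la forme $\iota_{j+1}(x)$, avant de supprimer le cylindre de nouveau via $(ii)$. L'itération de ce procédé épuise le noyau et conclut la preuve.
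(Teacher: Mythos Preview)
Your proposal is correct and follows essentially the same two-step strategy as the paper: reduce to the underlying Cerf theory of \cite{connectedcerf}, then handle the class $c$ separately. The main difference is one of style rather than substance. For part~1, the paper decomposes $c$ geometrically by choosing a one-dimensional representative $C\subset W$ transverse to the intermediate surfaces and eliminating intersection points in pairs, whereas you argue algebraically via Mayer--Vietoris surjectivity; both work. For part~2, the paper is terse (``on peut isoler l'homologie à l'extérieur d'une paire de naissance/mort''), while you spell out the Mayer--Vietoris kernel and the explicit cylinder-insertion procedure using moves $(ii)$ and $(v)$; your version is more detailed and arguably clearer. One small point you leave implicit: when lifting a move of type $(iii)$ or $(iv)$ from the underlying Cerf theory, the definition requires the classes on the involved elementary pieces to be zero, so you cannot ``répartir les classes arbitrairement'' at that step --- you must first invoke your redistribution mechanism to clear the classes from those pieces, perform the move, then redistribute. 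You have the tool for this in your final paragraph, so the argument goes through.
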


\begin{proof}

1. De même que pour le cas sans bord vertical (\cite[Lemma 2.5]{connectedcerf}) une telle décomposition est obtenue à partir d'une fonction de Morse "excellente" (c'est-à-dire injective sur l'ensemble de ses points critiques), sans points critiques d'indice $0$ et $3$, et telle que si $p$ et $q$ sont deux points critiques tels que $ ind~p <ind~q $, alors $f(p) < f(q)$. Alors, si $b_0 = min~f < b_1 <\cdots b_k = max~f$ est une suite de valeurs régulières telle que $[b_i, b_{i+1} ]$ contient au plus un point critique, $W_i = f^{-1}([b_i, b_{i+1} ])$ est un cobordisme connexe (garanti par la condition qu'il n'y a pas de points critiques d'indice $0$ et $3$) entre surfaces connexes. En effet, la première surface non-connexe correspondrait à l'attachement d'une 2-anse, puis la première surface à nouveau connexe correspondrait à l'attachement d'une 1-anse, or on a supposé que les 1-anses étaient attachées avant les 2-anses.

Il faut ici supposer de plus que sur le bord vertical, $f(p(s,t)) = K t$, pour une constante $K >0$. Nous prétendons qu'il est possible de trouver une telle fonction. En effet, partant d'une fonction de Morse telle que $f(p(s,t)) = K t$ au voisinage du bord vertical, il est possible de réarranger les points critiques et de supprimer les minimums et les maximums sans affecter les valeurs au bord : il suffit de prendre un pseudo-gradient parallèle au bord vertical, ceci assure que les sphères d'attachement d'anses sont confinées à l'intérieur de $W$, puis le même raisonnement que dans le cas sans bord vertical s'applique.

Par ailleurs, la classe $c$ peut se décomposer en classes $c_i \in H_1( W_i , \Z{2})$ : il suffit de choisir un représentant $C\subset W$ de dimension 1  qui n'intersecte pas les surfaces intermédiaires. Un représentant générique intersecte les surfaces en un nombre pair de points, qui peuvent être éliminés deux par deux.

2. Si l'on se donne deux telles fonctions de Morse, il est possible de les relier par un chemin de fonctions ayant un nombre fini de singularités type naîssance mort, ou d'interversions de points critiques, en gardant les mêmes valeurs sur le bord vertical. Le reste de l'argument est en tout point analogue à la preuve de \cite[Theorem 3.4]{connectedcerf}.

Notons que pour les mouvements $(iii), (iv)$, on a supposé que la classe d'homologie était nulle. Ceci est possible quitte à rajouter des cobordismes triviaux, et parce que l'on peut isoler l'homologie à l'extérieur d'une paire de naîssance/mort. 
\end{proof}

Nous obtenons ainsi un critère permettant de factoriser un foncteur $\textbf{F}: \Cobelem \rightarrow  \Symp $ par le foncteur de recollement $\Cobelem \rightarrow \Cob $.

\begin{cor}\label{factorisation} Soit un foncteur $\textbf{F}: \Cobelem \rightarrow  \Symp $ vérifiant

\begin{enumerate}
\item[$(i)$] $\textbf{F}(W,c) = \textbf{F}(W',c)$ dès lors que les cobordismes $(W,c)$ et $(W',c)$ sont difféo-équivalents.

\item[$(ii)$] 

$\textbf{F}(W) = \Delta_{F(S)}$, si $W$ est un cobordisme trivial sans classe d'homologie. 

\item[$(iii)$] Si $W\odot W'$ est une paire de naîssance/mort, la composition géométrique $\textbf{F}(W,0) \circ \textbf{F}(W',0)$ satisfait les hypothèses du théorème \ref{compogeom} et vaut la diagonale $\Delta_{F(S)}$.

\item[$(iv)$] Si $W_2' \odot W_1'$ est obtenue à partir de $W_1 \odot W_2$ par une interversion de points critiques, $\textbf{F}(W_1,0) \circ \textbf{F}(W_2,0) = \textbf{F}(W_2',0) \circ \textbf{F}(W_1',0)$ et ces compositions satisfont les hypothèses du théorème \ref{compogeom}.

\item[$(v)$] Si $c+c' =d+d'$, alors $\textbf{F}(W,c) \circ \textbf{F}(W',c') = \textbf{F}(W,d) \circ \textbf{F}(W',d')$, et la composée à droite ou à gauche de $\textbf{F}(S\times I,c)$ avec tout autre morphisme satisfait les hypothèses du théorème \ref{compogeom}. 

\end{enumerate}

Alors $\textbf{F}$ se factorise en un foncteur $\Cob \rightarrow \Symp $.

\end{cor}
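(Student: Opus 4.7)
The plan is to use Proposition \ref{cerfchemin} as the essential tool. By part (1) of that proposition, the gluing functor $\Cobelem\to\Cob$ is surjective on morphisms, so for every morphism $(W,c)$ of $\Cob$ one may choose a Cerf decomposition $\underline{W}\in\Cobelem$ with $\cup_\partial \underline{W}=(W,c)$, and set $\tilde{\textbf{F}}(W,c):=\textbf{F}(\underline{W})$. Since $\Cob$ and $\Cobelem$ have the same objects, we set $\tilde{\textbf{F}}(\Sigma,p):=\textbf{F}(\Sigma,p)$ on objects. Functoriality will be automatic from the functoriality of $\textbf{F}$ and the fact that a concatenation of Cerf decompositions of $W_1$ and $W_2$ is a Cerf decomposition of their composition in $\Cob$, so the whole content is to prove that $\tilde{\textbf{F}}$ is well-defined on morphisms.

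For well-definedness, Proposition \ref{cerfchemin}(2) asserts that any two Cerf decompositions $\underline{W},\underline{W'}$ of the same morphism of $\Cob$ are connected by a finite sequence of the Cerf moves $(i)$--$(v)$ of Definition \ref{mvtcerf}. It therefore suffices to show that $\textbf{F}(\underline{W})=\textbf{F}(\underline{W'})$ when the two chains differ by one such move; this is exactly what hypotheses $(i)$--$(v)$ of the corollary are designed to provide. Concretely: move $(i)$ is handled directly by hypothesis $(i)$. For move $(ii)$, inserting or deleting a trivial cylinder amounts to inserting or deleting the diagonal correspondence $\Delta_{F(\Sigma)}$ in the generalized Lagrangian correspondence, which leaves the morphism in $\Symp$ unchanged because the diagonal is the identity for geometric composition (Remark following Définition of composition géométrique), and the required embeddedness/monotonicity/compatibility conditions of Définition \ref{defcat} hold trivially for $\Delta$ by hypothesis $(ii)$. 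For move $(iii)$, hypothesis $(iii)$ says that the two Lagrangian correspondences arising from a birth-death pair compose geometrically to the diagonal under the hypotheses of Théorème \ref{compogeom}; in the equivalence relation defining $\Symp$ we may thus replace the pair by $\Delta_{F(\Sigma)}$ and then delete it as in the previous case. For move $(iv)$, hypothesis $(iv)$ gives the required equality of geometric compositions together with the embeddedness property, so both sides collapse to the same element of $\mathrm{Hom}_\Symp$. Finally move $(v)$ is hypothesis $(v)$, using the freedom to compose with a cylinder on either side.

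For the functoriality of $\tilde{\textbf{F}}$, once well-definedness is established the argument is formal: if $W$ and $W'$ are composable morphisms of $\Cob$ and $\underline{W},\underline{W'}$ are Cerf decompositions of them, then $\underline{W}\odot\underline{W'}$ is a Cerf decomposition of $W\cup_\partial W'$, and
\[
\tilde{\textbf{F}}(W\cup_\partial W')=\textbf{F}(\underline{W}\odot\underline{W'})=\textbf{F}(\underline{W'})\circ\textbf{F}(\underline{W})=\tilde{\textbf{F}}(W')\circ\tilde{\textbf{F}}(W),
\]
using that $\textbf{F}$ is a functor on $\Cobelem$ and that composition in $\Cobelem$ is given by $\odot$. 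Identity morphisms in $\Cob$ (trivial cylinders with zero class) are sent to $\Delta_{F(\Sigma)}$ by hypothesis $(ii)$, which is the identity in $\Symp$.

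The main obstacle, and the reason the five hypotheses take the precise form they do, is that the replacement $(L_i,L_{i+1})\mapsto L_i\circ L_{i+1}$ is only a legitimate equality in $\Symp$ when the composition is embedded, simply connected, compatible with the symplectic hypersurfaces, and satisfies the quilted-cylinder monotonicity condition of Définition \ref{defcat}. Thus each of the clauses $(ii)$--$(v)$ of the corollary must not only identify the relevant correspondences but also certify that the compositions produced by the Cerf moves satisfy the hypotheses of Théorème \ref{compogeom}; once this is granted, the proof itself reduces to matching the five Cerf moves against the five hypotheses, as above.
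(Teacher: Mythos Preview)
Your proof is correct and follows exactly the intended approach: the paper states this corollary as an immediate consequence of Proposition~\ref{cerfchemin} and gives no proof beyond a QED symbol. Your write-up spells out precisely the argument the paper leaves implicit, matching each Cerf move of D\'efinition~\ref{mvtcerf} to the corresponding hypothesis $(i)$--$(v)$ and noting that the embeddedness and compatibility conditions required by the equivalence relation in $\Symp$ are part of what each hypothesis guarantees.
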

\arnaque

\chapter{Construction de l'homologie Instanton-Symplectique d'une variété munie d'une classe}\label{chapHSI}
\chaptermark{Définition du foncteur}

Afin de construire le foncteur $\Cob \rightarrow \Symp $ qui nous intéresse, on commencera par construire un foncteur $\Cobelem \rightarrow \Symp $ (trois premiers paragraphes), puis on vérifiera qu'il se factorise par le foncteur de recollement précédent (quatrième paragraphe). Enfin, dans le dernier paragraphe nous définirons les groupes d'homologie "Instanton-Symplectique" $HSI(Y,c)$ associés à une 3-variété Y munie d'une classe $c\in H_1(Y;\Z{2})$.

\section{Espace des modules étendu}
Les espaces de modules qui apparaitront par la suite seront toujours associés au groupe de Lie $SU(2)$. On notera $\mathfrak{su(2)}$ son algèbre de Lie, identifiée aux matrices $2\times 2$ antihermitiennes de trace nulle, que l'on munira du produit scalaire usuel $<a,b> = \mathrm{Tr} (a b^*) = -\mathrm{Tr} (ab)$. Ainsi, on identifiera $\mathfrak{su(2)}$ et $\mathfrak{su(2)}^*$ à l'aide de ce produit scalaire.


Soit $(\Sigma,p)$ une surface à bord paramétré comme dans la définition \ref{chemincob}, on lui associe l'espace des modules étendu $ \Mg (\Sigma,  p)$ défini et étudié par Jeffrey dans \cite{jeffrey}. Rappelons sa définition :

\begin{defi}(Espace des modules associé à une surface, \cite[Def. 2.1]{jeffrey}) 
Soit l'espace des connexions plates :
\[ \mathscr{A}_F^\mathfrak{g}(\Sigma)  = \lbrace A \in \Omega^{1} (\Sigma )\otimes \mathfrak{su(2)}\ |\ F_A = 0,\  A_{|\nu \partial \Sigma} = \theta ds \rbrace, \]
où $\nu \partial \Sigma$ désigne un voisinage tubulaire du bord (non fixé), et $s$ désigne le paramètre de $\rr/\zz$, et le groupe
 
 \[ \Gc (\Sigma ) = \left\lbrace u \colon \Sigma \rightarrow SU(2)\ |\ u_{|\nu \partial \Sigma} = I \right\rbrace \] agit par transformations de jauge.
L'espace des modules étendu est défini comme le quotient \[  \Mg (\Sigma, p) = \mathscr{A}_F^\mathfrak{g}(\Sigma) /\Gc (\Sigma ),\]
\end{defi}

La proposition suivante fournit une description explicite de cet espace :
\begin{prop}\label{riemannhilbert} (\cite[Prop. 2.5]{jeffrey})
Soit $* \in \partial \Sigma$ un point base (on prendra usuellement $* = p(0)$). La valeur $\theta ds$ de la connexion  au voisinage du bord et l'holonomie fournissent une identification de $\Mg (\Sigma, p)$ avec \[ \left\lbrace  (\rho, \theta)\in Hom(\pi_1(\Sigma, *), SU(2)) \times \mathfrak{su(2)}\ |\ e^\theta = \rho(p) \right\rbrace .\]
En particulier, une présentation du groupe fondamental
\[ \pi_1 (\Sigma, *) = \langle \alpha_1 , \beta_1 , \cdots \alpha_h , \beta_h\rangle,\]
telle que  $p = \prod_{i=1}^{h}{[\alpha_i , \beta_i ]} $ induit un homéomorphisme
\begin{align*}
 &\Mg (\Sigma, p) \simeq \\ & \left\lbrace   (\theta , A_1 , B_1 , \cdots, A_h , B_h ) \in \mathfrak{su(2)} \times  SU(2)^{2h} ~|~ e^{2\pi \theta}  =  \prod_{i=1}^{h}{[A_i , B_i ]} \right\rbrace . \end{align*}
L'élément $\theta$ est tel que la connexion vaut $\theta ds$ au voisinage du bord, $A_i$ (resp. $B_i$ ) est l'holonomie de $A$ le long de la courbe $\alpha_i$ (resp. $\beta_i$ ).
\end{prop}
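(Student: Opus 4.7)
Le plan consiste à appliquer une variante à bord de la correspondance de Riemann-Hilbert. Je construirais l'application holonomie $\Phi \colon \mathscr{A}_F^\mathfrak{g}(\Sigma) \to \mathrm{Hom}(\pi_1(\Sigma,*), SU(2)) \times \mathfrak{su}(2)$ envoyant $A$ sur $(\rho_A, \theta_A)$, où $\rho_A$ est l'holonomie à partir du point base $*$ et $\theta_A$ l'unique élément de $\mathfrak{su}(2)$ tel que $A = \theta_A\, ds$ sur un voisinage de $\partial\Sigma$. L'enjeu principal vient de ce que l'on ne quotiente que par les transformations de jauge triviales au voisinage du bord : cela permet de conserver $\rho_A$ elle-même (et non sa classe de conjugaison) et la valeur exacte $\theta_A$ comme invariants de la classe de jauge.

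La contrainte $e^\theta = \rho_A(p)$ résulte d'un calcul direct de transport parallèle : comme $A$ vaut $\theta\, ds$ au voisinage du bord, l'holonomie le long du lacet $p$ est la solution d'une équation différentielle à coefficients constants, qui s'intègre en $e^\theta$ (modulo la normalisation du paramètre de $\rr/\zz$). L'invariance sous $\Gc$ est alors immédiate : un élément $u \in \Gc(\Sigma)$ fixe le point base $*$, préserve donc $\rho_A$, et vaut l'identité là où $A$ prend la forme $\theta_A\, ds$, préservant ainsi $\theta_A$. L'application $\Phi$ descend donc en une application bien définie sur $\Mg(\Sigma,p)$, à valeurs dans le sous-ensemble $\lbrace (\rho,\theta) : e^\theta = \rho(p) \rbrace$.

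L'injectivité repose sur le procédé classique : si $\Phi(A) = \Phi(A')$, on relève $A$ et $A'$ au revêtement universel $\widetilde{\Sigma}$ où elles deviennent exactes, donnant des primitives $g, g' \colon \widetilde{\Sigma} \to SU(2)$ avec $A = g^{-1}dg$ et $A' = (g')^{-1}dg'$. La fonction $h = g(g')^{-1}$ est alors $\pi_1$-équivariante (grâce à $\rho_A = \rho_{A'}$) donc descend en une transformation de jauge $u$ sur $\Sigma$ ; l'égalité $\theta_A = \theta_{A'}$ et un choix correct du relevé du point base forcent $u$ à être trivial près du bord. Pour la surjectivité, partant de $(\rho,\theta)$ satisfaisant $e^\theta = \rho(p)$, je construirais une connexion plate modèle sur $\Sigma$ à l'aide de $\rho$ (en poussant la forme de Maurer-Cartan du revêtement universel), tout en imposant la forme normale $\theta\, ds$ sur un voisinage collier du bord ; la condition de compatibilité est précisément ce qui permet de recoller les deux morceaux par une transformation de jauge intermédiaire, celle-ci pouvant être déformée à l'identité grâce à la connexité simple de $SU(2)$.

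L'identification explicite par une présentation standard en résulte : un morphisme $\rho$ est entièrement déterminé par les images $A_i = \rho(\alpha_i)$, $B_i = \rho(\beta_i)$ des générateurs, et la relation $p = \prod_{i=1}^h [\alpha_i,\beta_i]$ impose $\rho(p) = \prod_i [A_i,B_i]$, ce qui, combiné à $e^\theta = \rho(p)$, donne l'équation annoncée (le facteur $2\pi$ provenant d'un choix de paramétrisation du bord). L'étape que je m'attends à être la plus délicate est la surjectivité : il faut construire une connexion plate à holonomie prescrite qui coïncide \emph{exactement} avec $\theta\, ds$ près du bord, et non seulement à conjugaison près. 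C'est là que la condition de compatibilité $e^\theta = \rho(p)$ joue un rôle essentiel, et où un contrôle précis sur la transformation de jauge de recollement est nécessaire pour rester dans $\Gc$.
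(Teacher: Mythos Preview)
Le papier ne fournit pas de preuve propre pour cette proposition : il la cite directement comme \cite[Prop.~2.5]{jeffrey} et l'utilise comme résultat connu. Ton approche est la démonstration standard de la correspondance de Riemann--Hilbert pour les connexions plates sur une surface à bord, adaptée au groupe de jauge restreint $\Gc(\Sigma)$, et elle est correcte dans ses grandes lignes ; c'est essentiellement ce que l'on trouve dans la référence de Jeffrey.

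Un point mineur : pour l'injectivité, tu affirmes que la transformation de jauge $u = g(g')^{-1}$ est automatiquement triviale près du bord grâce à $\theta_A = \theta_{A'}$. Cela demande un peu plus de soin : l'égalité des formes $\theta\,ds$ ne garantit a priori que la \emph{constance} de $u$ près du bord, et il faut invoquer la normalisation au point base (ton \og choix correct du relevé\fg) pour conclure que cette constante est $I$, puis utiliser que le voisinage du bord est connexe. Pour la surjectivité, ta remarque sur la simple connexité de $SU(2)$ est pertinente mais pas strictement nécessaire ici : ce qui compte est que la transformation de jauge de recollement, définie sur un anneau, puisse être prolongée à l'intérieur en une transformation triviale près de $\partial\Sigma$ ; c'est bien la contrainte $e^\theta = \rho(p)$ qui rend ce recollement cohérent le long du cercle de bord.
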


On note \[ \N(\Sigma,p) = \left\lbrace (\theta , A_1 , B_1 , \cdots A_h , B_h ) \in \Mg (\Sigma, p)  ~|~ |\theta | < \pi \sqrt{2} \right\rbrace  \]

Cette partie s'identifie à l'ouvert de $SU(2)^{2h}$ correspondant aux éléments $( A_1 , B_1 , \cdots , A_h , B_h ) $ tels que $\prod_{i=1}^{h}{[A_i , B_i ]} \neq -I $, et est donc lisse.

Rappelons que cet espace est muni de la 2-forme de Huebschmann-Jeffrey, définie de manière similaire à la forme d'Atiyah-Bott pour une surface sans bord : si $A$ est une connexion représentant un point lisse de $\Mg (\Sigma, p)$, l'espace tangent s'identifie au quotient :

\[ T_{[A]} \Mg (\Sigma, p) = \frac{ \left\lbrace  \alpha\in \Omega^1(\Sigma) \otimes \mathfrak{su(2)}~|~ \alpha_{\nu \partial \Sigma} = \eta ds,~d_A\alpha = 0 \right\rbrace }{ \left\lbrace d_A f ~|~ f\in \Omega^0(\Sigma) \otimes \mathfrak{su(2)},~  f_{\nu \partial \Sigma} = 0 \right\rbrace }. \]
Si $\alpha = \eta\otimes a$ et $\beta = \mu\otimes b$, avec $\eta,\mu \in \mathfrak{su(2)}$ et $a,b$ des 1-formes à valeurs réelles, on note  $<\alpha \wedge \beta>$ la 2-forme à valeurs réelles définie par \[<\alpha \wedge \beta> = <\eta, \mu>  a\wedge b.\]
La forme de Huebschmann-Jeffrey $\omega$ est alors définie par :
\[ \omega_{[A]}([\alpha],[\beta]) = \int_{\Sigma'} \langle \alpha\wedge\beta \rangle   .\]
Cette forme est symplectique sur l'ouvert $\N(\Sigma,p)$, voir \cite[Prop. 3.1]{jeffrey}.

\section{Compactification par découpage symplectique}

Dans ce paragraphe nous rappelons brièvement comment Manolescu et Woodward obtiennent un objet de $\Symp$ à partir de l'espace $\N(\Sigma,p)$ : il s'agit d'une compactification $\Nc(\Sigma,p)$ obtenue  par découpage symplectique. Nous renvoyons à \cite[Parag. 4.5]{MW} pour plus de détails.

L'application $[A] \mapsto \theta \in \mathfrak{su(2)}$ est le moment d'une action $SU(2)$-Hamil\-tonienne, ainsi $[A] \mapsto |\theta| \in \rr$ est le moment d'une action du cercle (sur le complémentaire de $ \lbrace \theta = 0 \rbrace $). Il est alors possible de considérer le découpage symplectique de Lerman  en une valeur $\lambda \in \rr$ : c'est la réduction symplectique 
\[ \Mg(\Sigma,p)_{\leq \lambda} = \left( \Mg(\Sigma,p) \times \cc \right)  \red U(1)  \]
de l'action du cercle ayant pour moment $\Phi([A],z) = | \theta| +\frac{1}{2} |z|^2 -\lambda$.

\begin{remark}L'action du cercle n'étant pas définie sur $\lbrace \theta = 0 \rbrace$, on considère en réalité la réduction $\left( ( \Mg(\Sigma,p) \setminus  \lbrace \theta = 0 \rbrace) \times \cc \right)  \red U(1)$, qui contient $\N(\Sigma,p) \setminus  \lbrace \theta = 0 \rbrace$ plongée symplectiquement, à laquelle on recolle ensuite $\lbrace \theta = 0 \rbrace$.
\end{remark}

Pour $\lambda = \pi \sqrt{2}$, 

\[ \Mg(\Sigma,p)_{\leq \pi \sqrt{2}} = \N(\Sigma,p) \cup R, \]

avec $R =  \lbrace |\theta| = \pi \sqrt{2} \rbrace / U(1) $.

C'est cet espace que l'on note $\Nc(\Sigma,p)$, et $\tilde{\omega}$ est la 2-forme induite de la réduction. Elle est monotone (\cite[Proposition 4.10]{MW}), mais dégénérée sur $R$ (\cite[Lemma 4.11]{MW}).

En revanche, si $ \lambda = \pi \sqrt{2} - \epsilon$ pour $\epsilon$ petit,  $\Mg(\Sigma,p)_{\leq \lambda }$ est encore difféomorphe à $\Nc(\Sigma,p)$. Soit  $\varphi_\epsilon\colon \Nc(\Sigma,p) \rightarrow \Mg(\Sigma,p)_{\leq \pi \sqrt{2} - \epsilon }$ un difféomorphisme à support contenu au voisinage de $R$, et $\omega_\epsilon$  la forme symplectique de $\Mg(\Sigma,p)_{\leq \pi \sqrt{2} - \epsilon }$, alors $\omega = \varphi_\epsilon ^* \omega_\epsilon$ est une forme symplectique mais non-monotone sur $\Nc(\Sigma,p)$.

Enfin, $\tilde{J}$ est une structure presque complexe "de référence" sur $\Nc(\Sigma,p)$, compatible avec $\omega$ et telle que $R$ soit une hypersurface complexe.

Rappelons également le résultat suivant concernant la structure du lieu de dégénérescence $R$ de $\tilde{\omega}$, qui sera utile pour contrôler les phénomènes de bubbling:

\begin{prop} (\cite[Prop. 3.7]{MW})\label{degener} L'hypersurface $R$ admet une fibration en sphères telle que le noyau de $\tilde{\omega}$ correspond à l'espace tangent des fibres. De plus, le nombre d'intersection d'une fibre avec $R$ dans $\Nc(\Sigma,p)$ est -2.

\end{prop}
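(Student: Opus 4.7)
La stratégie consiste à exploiter la description de $R$ comme découpage symplectique associé au flot Hamiltonien de la fonction $|\theta|$. Cette fonction étant $SU(2)$-invariante, l'action $SU(2)$ de moment $\theta$ sur $\Mg(\Sigma,p)$ commute avec le $U(1)$ utilisé dans le découpage et descend donc à $R$. Au niveau $|\theta| = \pi\sqrt{2}$, la valeur $\theta$ parcourt une orbite coadjointe $\mathcal{O} \simeq S^2$ dans $\mathfrak{su}(2)^*$, et le $U(1)$ du découpage coïncide avec le stabilisateur d'un point de $\mathcal{O}$ sous l'action $SU(2)$. Le quotient résiduel $R/SU(2)$ s'identifie alors à la réduction de Marsden-Weinstein $\Mg(\Sigma,p) \red SU(2)$ à l'orbite $\mathcal{O}$ (l'espace des représentations de $\pi_1(\Sigma)$ à holonomie $-I$ au bord), et la projection $\pi \colon R \to R/SU(2)$ est une fibration lisse de fibre $SU(2)/U(1) \simeq S^2$.

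Effectuer la réduction en deux étapes (d'abord par $U(1)$ pour obtenir $R$, puis par le résiduel $SU(2)/U(1) \simeq S^2$ pour obtenir $R/SU(2)$) montrera alors que $\tilde{\omega}|_R = \pi^* \omega_{\mathrm{red}}$, où $\omega_{\mathrm{red}}$ est la forme symplectique non-dégénérée de $R/SU(2)$. Le noyau de $\tilde{\omega}|_R$ coïncide donc exactement avec $\ker d\pi$, c'est-à-dire avec l'espace tangent des fibres $S^2$. Pour l'auto-intersection d'une fibre $F$, on décompose $N_{F/\Nc} = N_{F/R} \oplus (N_{R/\Nc})|_F$ ; la trivialité de $N_{F/R}$ (pour toute fibre d'une fibration lisse) ramène le calcul à $\deg \left( (N_{R/\Nc})|_F \right)$. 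Par la théorie du découpage symplectique, $N_{R/\Nc}$ est le fibré en droites associé au $U(1)$-fibré principal $\theta^{-1}(\mathcal{O}) \to R$ avec le poids $-1$ de la variable de découpage. Restreint à $F$, ce fibré principal est, modulo le centre $\{\pm I\}$ de $SU(2)$ qui agit trivialement sur $\Mg$, identifié à $SO(3) \to S^2$, dont le fibré associé de poids $-1$ est l'opposé du fibré tangent à la sphère, soit $\mathcal{O}(-2)$ de première classe de Chern $-2$.

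L'obstacle principal sera de contrôler soigneusement les subtilités du découpage à une valeur critique : le flot de $|\theta|$ n'est Hamiltonien que hors de $\theta = 0$, et le niveau $\pi\sqrt{2}$ est un lieu où l'action $SU(2)$ présente des stabilisateurs non-triviaux (au moins le centre $\{\pm I\}$, voire davantage aux connexions réductibles). Il faudra vérifier que cela n'affecte ni la lissité de $R$ au voisinage des fibres génériques, ni la validité de la formule standard pour le fibré normal dans le découpage symplectique. Un modèle local via la présentation de $\N(\Sigma,p)$ par les holonomies (proposition \ref{riemannhilbert}) permettra ce contrôle, en ramenant le problème à un calcul équivariant dans un voisinage standard de $\mathcal{O}$ dans $\mathfrak{su}(2)^*$, où le calcul de signe pour le fibré normal devient routinier.
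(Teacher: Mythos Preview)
The paper does not give its own proof of this proposition: it is stated as a quotation of \cite[Prop.~3.7]{MW} and is immediately followed by its corollary about zero-area pseudo-holomorphic spheres, with no argument. So there is nothing in the present paper to compare your proposal against.

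Your outline is nonetheless the right one and is essentially what one finds in Manolescu--Woodward. The sphere fibration on $R$ does come from the residual $SU(2)$-action (the $U(1)$ used in the cut being the stabiliser of a point of the coadjoint sphere $\mathcal{O}$ of radius $\pi\sqrt{2}$), and reduction in stages identifies $\tilde{\omega}|_R$ with the pullback of the reduced form on $R/SU(2)$, giving the kernel statement. Two small points: first, what is asked is the intersection number $F\cdot R$, not the self-intersection of $F$ in $\Nc$; this is directly $\deg\bigl((N_{R/\Nc})|_F\bigr)$, so the decomposition of $N_{F/\Nc}$ you write is unnecessary (though harmless). Second, in your weight computation it is worth being explicit that on $\{|\theta|=\pi\sqrt{2}\}$ the centre $\{\pm I\}\subset SU(2)$ acts trivially on $\Mg(\Sigma,p)$, so over a fibre the principal $U(1)$-bundle really is $SO(3)\to S^2$ (Euler number $2$) rather than $SU(2)\to S^2$; combined with the weight $-1$ of the cut coordinate this gives the claimed $-2$.
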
 

Il s'en suit en particulier que les courbes pseudo-holomorphe d'aire nulle pour $\tilde{\omega}$ seront des revêtements ramifiés de fibres de cette fibration.

Ainsi, pour résumer les propriétés :

\begin{prop}[\cite{MW}] Le 5-uplet ($\Nc(\Sigma,p)$, $\omega$, $\tilde{\omega}$,$R$,$\tilde{J}$) satisfait les hypothèses de la définition \ref{defcat} : c'est un objet de $\Symp$.
\end{prop}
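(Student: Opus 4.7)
La stratégie est de vérifier chacune des huit hypothèses de la définition \ref{defcat} en s'appuyant directement sur les résultats établis par Manolescu et Woodward dans \cite{MW}, éventuellement légèrement adaptés au cas d'une surface à bord paramétré. L'idée est que le découpage symplectique décrit au paragraphe précédent a été conçu précisément pour fournir un objet vérifiant l'Assumption 2.5 de \cite{MW} ; il s'agit donc essentiellement de traduire les énoncés correspondants.

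Plus précisément, je procéderai dans l'ordre suivant. Pour $(i)$, la compacité de $\Nc(\Sigma,p) = \Mg(\Sigma,p)_{\leq \pi\sqrt{2}}$ découle de celle de $SU(2)^{2h}$ et de la compactification par découpage, et la non-dégénérescence de $\omega$ découle de sa construction via le difféomorphisme $\varphi_\epsilon$ à partir de la forme symplectique réduite $\omega_\epsilon$. Pour $(ii)$, $\tilde{\omega}$ est induite par une réduction symplectique d'une forme fermée, donc fermée. Pour $(iii)$, on invoque la description explicite de $R$ de la proposition \ref{degener} (fibration en sphères), et le fait que $\omega$ coïncide avec la forme réduite en dehors d'un voisinage de $R$, pour constater que $R$ est une hypersurface symplectique pour $\omega$. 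Les points $(iv)$ et $(v)$ relèvent de calculs de classes de cohomologie effectués dans \cite[Proposition 4.10]{MW} (monotonie de $\tilde{\omega}$) et du fait que $\omega$ et $\tilde{\omega}$ diffèrent seulement par un terme à support dans un voisinage de $R$, d'où l'égalité des classes dans $H^2(\Nc(\Sigma,p)\setminus R; \rr)$.

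Pour $(x)$, on utilise le calcul du premier nombre de Chern de $\N(\Sigma,p)$ effectué dans \cite{MW}, combiné au fait que $\N(\Sigma,p)$ est simplement connexe : le nombre de Chern minimal sur $\N(\Sigma,p) = \Nc(\Sigma,p)\setminus R$ est égal à $4$, d'où un nombre de Maslov minimal de $8$. Pour $(xi)$, la structure presque complexe de référence $\tilde{J}$ est celle construite dans \cite{MW}, pour laquelle $R$ est automatiquement complexe puisque la fibration en sphères de $R$ est holomorphe. Enfin, pour $(xii)$, on combine la proposition \ref{degener} avec le fait que toute sphère $\tilde{J}$-holomorphe d'indice nul est incluse dans $R$ (par monotonie) : une telle sphère est alors un revêtement ramifié de degré $k\geq 1$ d'une fibre, et son nombre d'intersection avec $R$ vaut $-2k$, qui est un multiple strictement négatif de $2$.

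Le principal obstacle technique réside dans $(xii)$, où il faut contrôler précisément les sphères d'aire nulle. Le fait que la fibration en sphères de $R$ ait précisément un nombre d'intersection $-2$ avec $R$ (proposition \ref{degener}) est le point clé : sans cette propriété, la non-trivialité de $\tilde{J}$-sphères d'indice nul pourrait faire échouer la définition de l'homologie matelassée. Les autres points sont essentiellement formels une fois la construction mise en place, et ne nécessitent que de recopier ou d'adapter les arguments de \cite{MW}.
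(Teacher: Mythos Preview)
Your sketch is correct and in fact more detailed than what the paper does: the paper gives no proof at all for this proposition, simply stating it as a summary of results from \cite{MW} and moving directly to the next section. Your hypothesis-by-hypothesis verification, drawing on the specific results of \cite{MW} (Proposition 4.10 for monotonie, the description of $R$ in Proposition~\ref{degener} for $(iii)$ and $(xii)$, etc.), is exactly the right approach and is essentially what the citation to \cite{MW} is meant to encapsulate.
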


\section{Correspondances Lagrangiennes} 

Soit $(W,\pi_{\Sigma_0},\pi_{\Sigma_1},p,c)$ comme dans la définition \ref{chemincob}, et $C\subset int(W)$ une sous-variété sans bord de dimension 1, dont la classe d'homologie dans $H_1(W,\Z{2})$ vaut $c$. Il découlera de la proposition \ref{reformul} que seule la classe $c$ interviendra (indépendamment du choix de $C$).

On commence par définir une correspondance $L(\pi_{\Sigma_0},\pi_{\Sigma_1},p, C)$ entre $\Mg (\Sigma_0, p_0)$ et  $\Mg (\Sigma_1, p_1)$, puis lorsque le cobordisme est élémentaire, on en déduit une correspondance Lagrangienne  entre les découpages symplectiques, qui sera notée $L^c(\pi_{\Sigma_0},\pi_{\Sigma_1},p, C)$, et qui vérifiera les hypothèses de la catégorie $\Symp$.

\begin{defi}
\begin{itemize}

\item[$(i)$] (Espace des modules associé à un cobordisme à bord vertical $(W, \pi_{\Sigma_0},\pi_{\Sigma_1},p, C)$). Soit l'espace des connexions suivant, où $s$ désigne la coordonnée en $\rr/\zz$ de $p(\rr/\zz \times [0,1])$, et  $\mu$ désigne un méridien arbitraire de $C$ :

\begin{align*}&\mathscr{A}_F^\mathfrak{g}(W,C) =  \\ 
& \left\lbrace A \in \Omega^{1} (W \setminus C) \otimes \mathfrak{su(2)}\ |\ F_A = 0, \mathrm{Hol}_{\mu} A = -I, A_{|\nu p(\rr/\zz \times [0,1])} = \theta ds\right\rbrace.
\end{align*}

Sur cet espace agit le groupe de jauge suivant :

\[\Gc (W\setminus C)  = \left\lbrace u \colon W \setminus C \rightarrow SU(2)\ |\ u_{|\nu p(\rr/\zz \times [0,1])} = I\right\rbrace,\]

On définit alors le quotient \[ \Mg (W, \pi_{\Sigma_0},\pi_{\Sigma_1},p, C) =  \mathscr{A}_F^\mathfrak{g}(W,C) /\Gc (W\setminus C ).\]

\item[$(ii)$] (Correspondance associée à un cobordisme à bord vertical.) On définit une correspondance 

\[ L(\pi_{\Sigma_0},\pi_{\Sigma_1},p, C) \subset \Mg (\Sigma_0, p_0)^- \times \Mg (\Sigma_1, p_1) \]
comme les couples de connexions se prolongeant de manière plate à $W\setminus C$, avec holonomie $-I$ autour de $C$ :

\[L(\pi_{\Sigma_0},\pi_{\Sigma_1},p, C) = \{([A_{|\Sigma_0} ], [A_{|\Sigma_1} ])\ |\ A \in \Mg(W, C, *)\}\]

\end{itemize}
\end{defi}

\begin{remark}Cette définition dépend du paramétrage $p$ de la partie verticale de $\partial W$, cette dépendance sera décrite dans l'exemple \ref{exemreparam}. 

\end{remark}

\subsection{Reformulation en termes de fibrés SO(3) non-triviaux}

Dans cette sous-section nous présentons un point de vue différent concernant la construction des correspondances Lagrangiennes. Il en résultera que ces Lagrangiennes ne dépendent de la sous-variété $C$ que via sa classe $c$ dans $H_1(W; \Z{2})$. Ce résultat pourrait être démontré directement en observant que l'espace des modules ne change pas lorsque l'on supprime un croisement et lorsque l'on ajoute deux entrelacs parallèles, mais le point de vue que nous allons donner a une importance conceptuelle, notamment en lien avec le triangle initial de Floer pour l'homologie des instantons.

Soit $P$ le $SO(3)$-fibré principal au-dessus de $W$ défini en recollant les fibrés triviaux au-dessus de $ W\setminus \nu C$ et $ \nu C$ le long du bord $\partial \nu C$ par une fonction de transition  $f\colon \partial \nu C \rightarrow SO(3)$ telle que l'image de tout méridien de $C$ définit l'élément non-trivial de $\pi_1(SO(3))$ : 

\[ P =  SO(3) \times (W\setminus \nu C) \cup_f SO(3) \times  \nu C .\]

On notera $\tau\colon P_{| W\setminus \nu C}\rightarrow ( W\setminus \nu C ) \times SO(3)$ la trivialisation de $P$ sur $W\setminus \nu C$.

\begin{lemma} La seconde classe de Stiefel-Whitney $w_2(P)\in H^2(W; \Z{2})$ est Poincaré-duale à l'image de $c$ dans $H_1(W, \partial W; \Z{2})$.
\end{lemma}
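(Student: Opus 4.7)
Le plan est de démontrer que $w_2(P)$ et l'image de $c$ dans $H_1(W,\partial W;\Z{2})$ se correspondent via la dualité de Poincaré-Lefschetz, en les évaluant toutes deux par l'accouplement d'intersection
\[ H_1(W,\partial W;\Z{2}) \otimes H_2(W;\Z{2}) \longrightarrow \Z{2}, \]
qui est non-dégénéré par Poincaré-Lefschetz à coefficients dans $\Z{2}$. La formule standard reliant dualité de Poincaré et accouplement de Kronecker donne, pour toute surface fermée $S \subset \mathrm{int}(W)$ transverse à $C$, l'identité $PD(w_2(P)) \cdot [S] = \langle w_2(P), [S]\rangle$, alors que $[C] \cdot [S] = \#(C\cap S) \bmod 2$. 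Il suffit donc d'établir $\langle w_2(P), [S]\rangle = \#(C\cap S) \bmod 2$ pour toute telle surface.

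Par naturalité de $w_2$, cette quantité vaut $w_2(P|_S) \in H^2(S;\Z{2}) \simeq \Z{2}$. Notons $p_1, \dots, p_n$ les points de $C \cap S$ et $D_1, \dots, D_n$ des disques disjoints de $S$ centrés en ces points et contenus dans $\nu C$. Par construction du fibré $P$, la restriction $P|_S$ est obtenue en recollant le fibré trivial sur $S\setminus \bigcup_i D_i$ avec les fibrés triviaux sur les $D_i$, la fonction de transition $f|_{\partial D_i}$ représentant, puisque $\partial D_i$ est un méridien de $C$ dans $W$, la classe non-triviale de $\pi_1(SO(3))$.

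Le calcul de $w_2(P|_S)$ procède alors par théorie de l'obstruction : on fixe une triangulation de $S$ dont les $D_i$ sont des sous-complexes et une trivialisation de $P|_S$ sur le 1-squelette (possible puisque tout fibré $SO(3)$ sur un graphe est trivial). L'obstruction à étendre cette trivialisation aux 2-cellules donne un cocycle cellulaire à valeurs dans $\pi_1(SO(3))\simeq\Z{2}$ représentant $w_2(P|_S)$ ; ce cocycle s'annule sur les 2-cellules disjointes des $D_i$ (fonction de transition triviale) et vaut $1$ sur celles contenues dans un $D_i$ (transition non-triviale au bord). D'où $w_2(P|_S) = n \bmod 2 = \#(C\cap S) \bmod 2$, ce qui conclut. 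Le point technique central est ce dernier calcul d'obstruction, qui se réduit essentiellement au cas modèle : le fibré $SO(3)$ sur $S^2$ obtenu en recollant deux fibrés triviaux par l'élément non-trivial de $\pi_1(SO(3))$ est caractérisé par $w_2\neq 0$.
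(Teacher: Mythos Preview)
Your proof is correct and follows essentially the same overall strategy as the paper --- both establish the identity by showing that the Kronecker pairing $\langle w_2(P),\sigma\rangle$ equals the intersection number $[C]\cdot\sigma$ for every $\sigma\in H_2(W;\Z{2})$, and then invoke the non-degeneracy of this pairing. The technical implementation, however, is genuinely different.

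The paper works directly in \v{C}ech cohomology: it builds an explicit acyclic cover of $W$ adapted to $C$ (with four model open sets $V_0,V_1,V_2,V_3$ near $C$), lifts the $SO(3)$ transition functions to $SU(2)$, and reads off the $\Z{2}$-valued $2$-cocycle $(c_{ijk})$ representing $w_2(P)$ from the failure of these lifts to satisfy the cocycle condition. The pairing with a \v{C}ech $2$-cycle is then computed combinatorially. Your approach instead restricts $P$ to an embedded surface $S$ representing $\sigma$, reduces the computation to $w_2(P|_S)$ by naturality, and evaluates this via obstruction theory and the model case of the non-trivial $SO(3)$-bundle over $S^2$.

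Your route is somewhat more conceptual and avoids the bookkeeping of the explicit cover; on the other hand it implicitly uses that every class in $H_2(W;\Z{2})$ of a compact $3$-manifold is represented by an embedded closed surface, which is standard but worth stating. One small imprecision: in your obstruction-theory paragraph you want the $D_i$ to be single $2$-cells of the CW structure (not merely sub-complexes), and the trivialisation on the $1$-skeleton should be the one coming from $\tau$ on $S\setminus\bigcup D_i$; with those choices the cocycle is visibly $0$ on the outer $2$-cells and $1$ on each $D_i$, exactly as you claim.
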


\begin{proof}

Commençons par rappeler la construction de $w_2(P)$ en homologie de \v{C}ech : fixons $\lbrace U_i\rbrace_i$ un recouvrement acyclique de $W$ compatible avec $C$ au sens suivant : au voisinage de $C$, le recouvrement est modelé par 4 ouverts $V_0, V_1, V_2, V_3$ comme représentés dans la figure \ref{recouvrement}. Chaque composante connexe de $\nu C$ est recouverte par 3 (ou plus) ouverts du type $V_0$, et $\partial \nu C$ est recouvert par les $V_1, V_2, V_3$.

\begin{figure}[!h]
    \centering
    \def\svgwidth{.65\textwidth}
    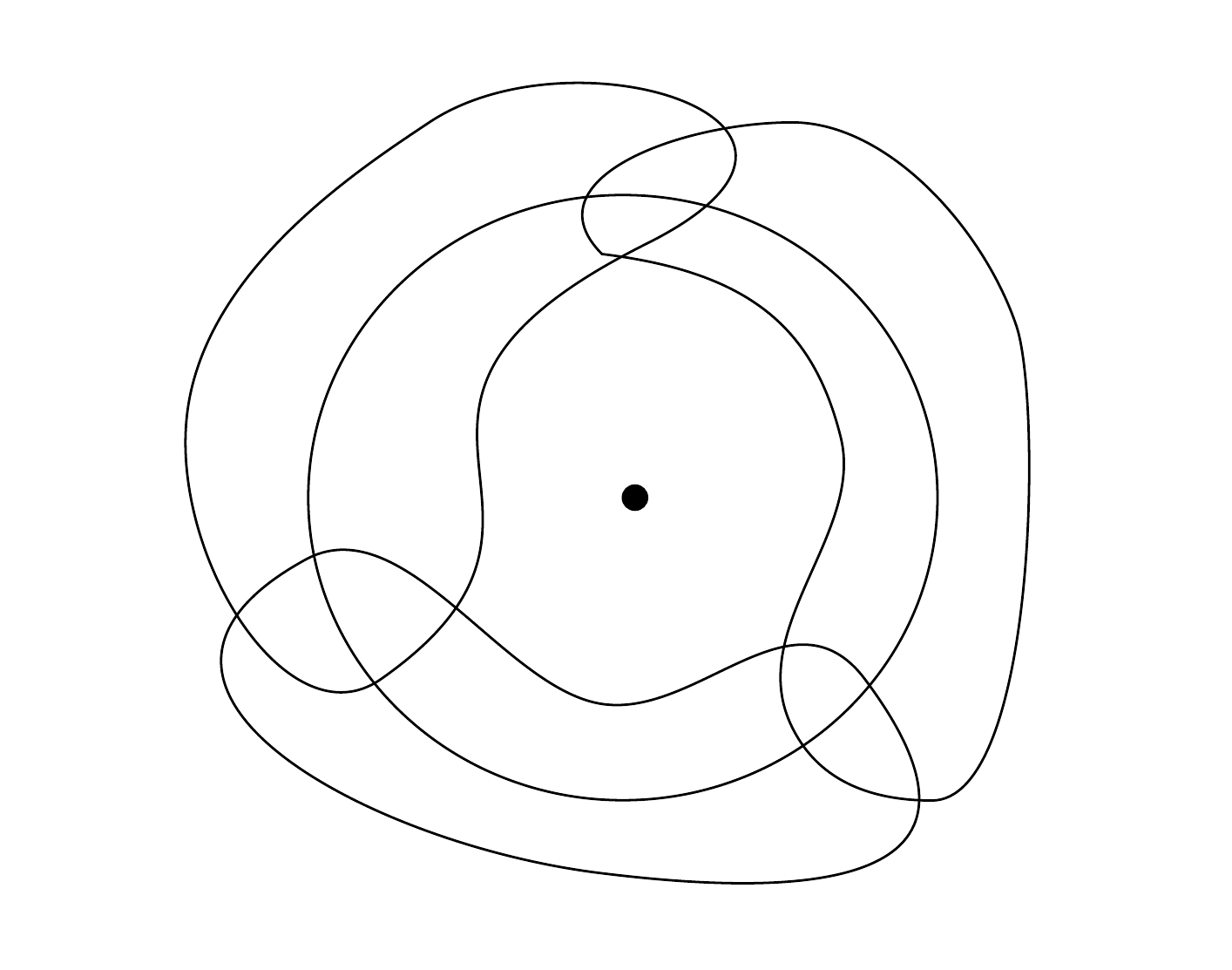
      \caption{Vue en coupe du recouvrement au voisinage de $C$.}
      \label{recouvrement}
\end{figure}

Le fibré $P$  est donné par des fonctions de transition $\alpha_{ij}\colon U_i \cap U_j \rightarrow SO(3)$. Ces fonctions vérifient $\alpha_{ij} \alpha_{ji} = I$ et $\alpha_{ij} \alpha_{jk} \alpha_{ki} = I$. Soient \[\widetilde{\alpha}_{ij}\colon U_i \cap U_j \rightarrow SU(2)\] des relèvements de $\alpha_{ij}$ à $SU(2)$, la seconde relation précédente devient \[\widetilde{\alpha}_{ij} \widetilde{\alpha}_{jk} \widetilde{\alpha}_{ki} \in \Z{2}\] et permet de définir un 2-cocycle en homologie de \v{C}ech :
\[ (c_{ijk} \colon U_i \cap U_j \cap U_k \rightarrow \Z{2})_{ijk} \in \check{\mathrm{C}}^2(W,\Z{2}), \]
et sa classe dans $\check{\mathrm{H}}^2(W,\Z{2})$ est alors $w_2(P)$, par définition.

D'après la construction du fibré $P$, les fonctions de transition peuvent être choisies comme étant, en notant encore $f$ un prolongement de $f$ à un voisinage de $\partial \nu C$,  $\alpha_{ij}(x) = f(x) $ si $U_i = V_0$ et $U_j \in \lbrace V_1, V_2, V_3 \rbrace$, et $\alpha_{ij} = I$ sinon. 

Par hypothèse, la fonction de transition $f$ ne se relève pas en une fonction de $\partial \nu  C$ vers $SU(2)$. En revanche il est possible de choisir des relèvements $\widetilde{f}_j \colon V_j\rightarrow SU(2)$ pour chaque $j = 1,2, 3$. On peut supposer que 
$\widetilde{f}_1 = \widetilde{f}_2 $ sur $V_1 \cap V_2 $, $\widetilde{f}_2 = \widetilde{f}_3 $ sur $V_2 \cap V_3 $, et $\widetilde{f}_3 = - \widetilde{f}_1 $ sur $V_3 \cap V_1 $. On pose alors  $\widetilde{\alpha}_{ij}(x) = \widetilde{f}_j(x) $ si $U_i = V_0$ et $U_j \in \lbrace V_1, V_2, V_3 \rbrace$, et $\alpha_{ij} = I$ sinon. Le cocycle $(c_{ijk}) _{ijk}$ prend alors les valeurs suivantes :

\[ c_{ijk} = \begin{cases} -I\text{ si } \lbrace U_i, U_j, U_k\rbrace = \lbrace V_0, V_1, V_3 \rbrace\\ I\text{ sinon,} \end{cases}  \]
où $V_0$, $V_1$ et $V_3$ désignent des ouverts du type précédent.

Soit à présent un cycle \[F = \sum_{ \lbrace i,j,k\rbrace\in I_F}{U_{i}\cap U_j \cap U_k } \in \check{\mathrm{C}}_2(W,\Z{2}), \] alors \[\langle w_2(P), [F]\rangle = \sum_{\lbrace i,j,k\rbrace\in I_F}{c_{ijk}} = [C] . [F].\]

\end{proof}

Soit $\A(W,P)$ l'espace des connexions plates sur $P$ de la forme $\theta ds$ au voisinage de $\partial ^{vert} W$, où l'on a  identifié les connexions à des 1-formes $\mathfrak{su(2)}$-valuées via la trivialisation $\tau$, et $s$ désigne le paramètre circulaire de $\partial ^{vert} W$. Cet espace admet une action du groupe $\G^0(W,P)$ des transformations de jauge triviales au voisinage de $\partial ^{vert} W$ et homotopes à l'identité (c'est-à-dire la composante connexe de l'identité du groupe des transformations de jauge triviales au voisinage de $\partial ^{vert} W$). Posons \[\Mg(W,P) =  \A(W,P)/\G^0(W,P)\] l'espace des orbites pour cette action. La trivialisation $\tau$ permet de définir une application \[  \Mg(W,P) \rightarrow  \N(\Sigma_0) \times \N(\Sigma_1)\] par restriction aux bords et tiré en arrière à $SU(2)\times ( \Sigma_0 \sqcup \Sigma_1 )$. On note $L(W,P) \subset \N(\Sigma_0) \times \N(\Sigma_1)$ son image.

\begin{remark} L'espace des modules $\Mg(W,P)$ ne dépend que du type d'isomorphisme de $P$, c'est-à-dire de la classe $c$, et la correspondance $L(W,P)$ ne dépend que de la restriction de $\tau$ à $\partial W$. Il s'en suit donc que $L(W,P)$ ne dépend de $C$ que via $c$.
\end{remark}

\begin{prop} \label{reformul}

L'espace des modules $\Mg(W,C)$ s'identifie canoniquement à $\Mg(W,P)$. Il s'en suit que $L(W,P) = L(W,C)$, ainsi d'après la remarque précédente $L(W,C)$ ne dépend que de la classe $c$.

\end{prop}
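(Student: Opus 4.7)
Le plan consiste à construire des applications inverses l'une de l'autre entre $\Mg(W,C)$ et $\Mg(W,P)$, en exploitant le revêtement à deux feuillets $SU(2) \to SO(3)$ de noyau $\{\pm I\}$.

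Je commencerais par construire $\Mg(W,P) \to \Mg(W,C)$ comme suit. Étant donnée une connexion plate $\mathcal{A}$ sur $P$, je l'exprimerais sous la forme de deux connexions plates sur des fibrés $SO(3)$ triviaux, à savoir $A_0$ sur $W \setminus \nu C$ via la trivialisation $\tau$ et $A_1$ sur $\nu C$ via la trivialisation associée, liées sur $\partial \nu C$ par $A_0 = f^{-1} A_1 f + f^{-1} df$. L'identification canonique $\mathfrak{so}(3) \simeq \mathfrak{su(2)}$ permet de voir $A_0$ comme une connexion $\tilde{A}_0$ sur le fibré $SU(2)$ trivial sur $W \setminus \nu C$, et de même pour $\tilde{A}_1$ sur $\nu C$. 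L'enjeu clé est de calculer l'holonomie $\mathrm{Hol}_\mu \tilde{A}_0$ autour d'un méridien $\mu$ de $C$ : puisque $\mu$ borde un disque dans $\nu C$, on a $\mathrm{Hol}_\mu \tilde{A}_1 = I$, et le choix d'un relèvement local $\tilde{f}$ de $f$ à $SU(2)$ permet de comparer les transports parallèles. Comme $f_{|\mu}$ représente le générateur de $\pi_1(SO(3))$, $\tilde{f}$ change de signe en faisant le tour de $\mu$, ce qui impose $\mathrm{Hol}_\mu \tilde{A}_0 = -I$. La connexion $\tilde{A}_0$, prolongée à $W\setminus C$ près de $C$ via $\tilde{A}_1$, définit ainsi un élément de $\mathscr{A}_F^\mathfrak{g}(W,C)$.

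Dans l'autre sens, pour construire $\Mg(W,C) \to \Mg(W,P)$, une connexion plate $A \in \mathscr{A}_F^\mathfrak{g}(W,C)$ induit par projection une connexion plate $SO(3)$ sur le fibré trivial sur $W \setminus C$, dont l'holonomie autour des méridiens vaut $+I$ dans $SO(3)$. Ceci permet de la prolonger platement à travers $C$ en une connexion plate sur un certain fibré $SO(3)$ sur $W$. L'obstruction au relèvement $SU(2)$ (le fait que la connexion $SU(2)$ initiale existe globalement sur $W\setminus C$ mais a holonomie $-I$ autour des méridiens) impose que la fonction de recollement envoie un méridien sur le générateur de $\pi_1(SO(3))$ : le fibré ainsi obtenu est donc isomorphe à $P$ d'après le lemme précédent, puisque sa classe $w_2$ est PD-duale à $c$.

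La partie essentiellement technique, et qui sera le principal obstacle à surmonter, consistera à passer proprement au quotient par les groupes de jauge : il faudra établir que $\G^0(W,P)$ correspond bijectivement à $\Gc(W\setminus C)/\{\pm I\}$, le signe global étant absorbé par le choix de relèvement $SU(2) \to SO(3)$, et que les conditions de bord vertical ($A_{|\nu p} = \theta ds$ et $u_{|\nu p} = I$) se traduisent fidèlement entre les deux formulations. Il conviendra notamment de justifier que l'hypothèse d'être dans la composante connexe de l'identité du côté $SO(3)$ est exactement ce qui garantit l'existence d'un relèvement continu global en une transformation de jauge $SU(2)$. Une fois la bijection $\Mg(W,C) \simeq \Mg(W,P)$ obtenue, sa compatibilité avec la restriction aux bords $\Sigma_0$ et $\Sigma_1$ est immédiate, d'où $L(W,C) = L(W,P)$, et l'indépendance par rapport au choix de $C$ (pour $c$ fixé) découle alors de la remarque précédant l'énoncé.
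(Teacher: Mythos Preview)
Your approach is essentially the same as the paper's: both construct mutually inverse maps $\Mg(W,C) \leftrightarrow \Mg(W,P)$ via the double cover $SU(2) \to SO(3)$, and both compute the meridian holonomy by comparing the contractible loop in the $\nu C$ trivialisation with its image under the transition function $f$. One small correction: in your discussion of the gauge groups, the quotient by $\{\pm I\}$ is not needed, since the boundary condition $u_{|\nu \partial^{vert} W} = I$ already fixes the sign of any $SU(2)$-lift; this is in fact what makes the correspondence between $\G^0(W,P)$ and $\Gc(W\setminus C)$ a genuine bijection rather than a two-to-one map.
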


\begin{proof}

 Pour prouver cela nous allons construire deux applications qui sont inverses l'une de l'autre  :
\[ \begin{cases}
\Phi_1\colon \Mg(W,C) \rightarrow \Mg(W,P) \\ 
\Phi_2\colon \Mg(W,P) \rightarrow \Mg(W,C). 
\end{cases} \]

\textbf{1. L'application $\Phi_1$.} Soit $[A] \in \Mg(W,C)$ et $A \in [A]$ un représentant. La connexion $A$ induit une connexion plate $\widehat{A}$ sur $SO(3) \times (W\setminus\nu C)$ par passage au quotient, et si $\mu$ est un méridien de $C$, $\mathrm{Hol}_\mu \widehat{A} = I$. Ce fait, et la donnée d'un paramétrage $p\colon C\times \rr/\zz \rightarrow \partial \nu C$ permettent de définir une fonction de transition $f\colon \partial \nu C\rightarrow SO(3)$ par \[f(c,s) = \mathrm{Hol}_{\lbrace c\rbrace\times [0,s]}\widehat{A},\] avec $[0,s]\subset \rr/\zz$ un arc orienté quelconque allant de 0 à $s$.

Cette fonction de transition permet de recoller le fibré plat $(SO(3) \times (W\setminus\nu C), \widehat{A})$ avec le fibré horizontal $(SO(3) \times \nu C, A_{horiz})$. Notons $(Q, A_Q)$ le fibré plat ainsi obtenu. La fonction $f$ vérifie les mêmes hypothèses que celle choisie pour définir le fibré $P$,  les fibrés $Q$ et $P$ sont donc isomorphes. Soit $\varphi\colon Q\rightarrow P$ un isomorphisme, tel que $\tau \circ\varphi$ est l'identité sur $SO(3) \times (W\setminus\nu C)$.

Posons finalement $\Phi_1([A]) = [\varphi_* A_Q]\in \Mg(W,P)$. Cette classe est indépendante des choix que l'on a fait, modulo un élément de $\G^0(W,P)$.

\textbf{2. L'application $\Phi_2$.} Soit $[A] \in \Mg(W,P)$, et $A\in [A]$ un représentant. Le poussé en avant $\tau_*  A_{|W\setminus \nu C}$ définit une connexion sur $SO(3)\times W\setminus \nu C$, notons $\widetilde{A}$ la connexion sur $SU(2)\times W\setminus \nu C$ tirée en arrière par l'application quotient. Cette connexion vérifie $\mathrm{Hol}_\mu \widetilde{A} = -I$ pour tout méridien $\mu$ de $C$, en effet dans la trivialisation au-dessus de $\nu C$, le lacet $\gamma\colon s\mapsto \mathrm{Hol}_{[0,s]} A$ est contractile dans $SO(3)$, car $\mu$ borde un disque. Il s'en suit que le lacet $\widetilde{\gamma} \colon s\mapsto \mathrm{Hol}_{[0,s]} A$ défini dans la trivialisation au-dessus de $W\setminus \nu C$ ne l'est pas, car $\gamma$ et $\widetilde{\gamma}$ diffèrent par la fonction de transition $f$. La connexion $\widetilde{A}$ définit donc un élément $\Phi_2([A])$ de  $ \Mg(W,C)$, indépendant des choix modulo l'action de $\Gc(W,C)$.

Ces deux applications sont inverses l'une de l'autre par construction, et identifient ainsi $\Mg(W,C)$ et $\Mg(W,P)$.

\end{proof}

\subsection{Correspondance associée à un cobordisme élémentaire}
L'espace des modules $\Mg (W, \pi_{\Sigma_0},\pi_{\Sigma_1},p, c)$ associé à un cobordisme vertical quelconque peut ne pas être lisse, et l'application induite par l'inclusion $\Mg (W, \pi_{\Sigma_0},\pi_{\Sigma_1},p, c) \rightarrow \Mg (\Sigma_0)  \times \Mg (\Sigma_1) $ peut ne pas être un plongement, ainsi la correspondance $L(W, \pi_{\Sigma_0} , \pi_{\Sigma_1} ,p ,  c)$ peut ne pas être une sous variété Lagrangienne. Nous allons voir que ces problèmes n'apparaissent pas pour des cobordismes élémentaires. Nous décrivons à présent les correspondances associés à de tels cobordismes, puis nous prouvons qu'elles sont Lagrangiennes dans la proposition \ref{correspcobelem}.

\begin{exam}[Cobordisme trivial]\label{exemcobtriv}
Soit $(\Sigma,p)$ une surface à bord paramétré, $W = \Sigma \times [0,1]$, muni des plongements $\pi_i(x) = (x,i)$ et $p(s,t) = (p(s) , t)$.

Si $C = \emptyset$ : $L(W,C)$ est la diagonale $\Delta_{\Mg(\Sigma,x,*)}$.

Si $C \neq \emptyset$, $L(\Sigma \times [0,1] , C)$ est le graphe du difféomorphisme ayant pour expression au niveau des  holonomies : 
\[\begin{cases}
A_i \mapsto (-1)^{\alpha_i . C'} A_i\\
B_i \mapsto (-1)^{\beta_i . C'} B_i,
\end{cases}\]
 où $\pi_1 (\Sigma , p(0)) = \langle\alpha_1 , \cdots , \beta_h\rangle$, $C'$ est la projection de $C$ sur $\Sigma$ et $\alpha_i . C'$, $\beta_i . C'$ désignent les nombres d'intersection dans $\Sigma$ modulo 2.

En particulier, si $a_i = [\alpha_i]\in H_1(\Sigma \times [0,1], \Z{2})$ et $b_i = [\beta_i]$,  $L(\Sigma \times [0,1] , a_i, *)$ correspond au difféomorphisme qui envoie $B_i$ sur $-B_i$ et ne change pas les autres holonomies, et $L(\Sigma \times [0,1] , b_i, *)$ correspond au difféomorphisme qui envoie $A_i$ sur $-A_i$ et ne change pas les autres holonomies.

\end{exam}

\begin{proof} Choisissons l'entrelacs $C$ comme une courbe simple contenue dans la surface $\Sigma\times \lbrace \frac{1}{2}\rbrace$, de sorte que le complémentaire $W\setminus C$ se rétracte sur la réunion de $\Sigma\times \lbrace 0\rbrace$ et d'un tore entourant $C$ et touchant $\Sigma\times \lbrace 0\rbrace$ en $C\times \lbrace 0\rbrace$. D'après le théorème de Seifert-Van Kampen, $\pi_1(W\setminus C, *) \simeq (\zz \lambda \oplus \zz \mu ) * F_{2g-1}$, où $\lambda$  et $\mu$ désignent une longitude et un méridien de $C$. Ainsi, les représentations de $\pi_1(W\setminus C, *) $ envoyant $\mu$ sur $-I$ sont en bijection avec les représentations de $(\zz \lambda  ) * F_{2g-1} \simeq \pi_1(\Sigma, *)$, car $-I$ est dans le centre de $SU(2)$. Il s'en suit que $\Mg(W,C) \simeq \Mg(\Sigma \times \lbrace 0\rbrace,p)$. 

Par ailleurs, étudions la restriction $\Mg(W,C) \rightarrow \Mg(\Sigma \times \lbrace 1\rbrace,p)$. Si $\gamma$ est un lacet basé dans $\Sigma $, le carré $\gamma\times [0,1]$ rencontre $C$ $\gamma \cdot C'$ fois,  l'holonomie d'une connexion $A$ autour du bord vaut donc $(-1)^{\gamma \cdot C'}$. D'autre part elle vaut $\mathrm{Hol}_{\gamma \times \lbrace 1\rbrace} A (\mathrm{Hol}_{\gamma \times \lbrace 0\rbrace}A)^{-1}$.

\end{proof}

\begin{exam}[Reparamétrage du cylindre vertical]\label{exemreparam}

Supposons $W$ et les plongements $\pi_0$, $\pi_1$ comme dans l'exemple précédent, mais $p(s,t) = (p(s) +\psi (t) , t)$, pour une fonction $\psi\colon [0,1] \rightarrow \rr$. Alors $L(W, \pi_{\Sigma_0} , \pi_{\Sigma_1} , C, p)$ est le graphe du difféomorphisme suivant :

\[ (\theta, A_1, B_1, \cdots) \mapsto (\theta, Ad_{e^{\alpha \theta}}A_1, Ad_{e^{\alpha \theta}}B_1, \cdots),\]
avec $\alpha = \psi(1) - \psi(0)$. (Cela correspond à faire une rotation d'angle $\alpha$ le long du bord de $\Sigma$. ) 

\end{exam}

\begin{proof} On applique le même raisonnement que pour l'exemple précédent, mais cette fois l'holonomie le long des deux autres bords du carré vaut $e^{\alpha \theta}$ et $e^{-\alpha \theta}$.

\end{proof}

\begin{exam}[Difféomorphisme d'une surface]\label{exemdiffeo}

Soit $\varphi$ un difféomorphisme de $(\Sigma,p)$ valant l'identité sur le bord, $W = \Sigma \times  [0, 1]$, $\pi_0 =  id_\Sigma \times  \lbrace 0\rbrace$, $\pi_1 = \varphi  \times  \lbrace 1\rbrace$, et $p'(s,t) = (p(s),t)$. Si $\pi_1 (\Sigma' , *) = \langle\alpha_1 , \cdots , \beta_h\rangle$ est le groupe libre à $2h$ générateurs, soit  $ u_i (\alpha_1 , \cdots , \beta_h )$ le mot en $\alpha_1 , \cdots , \beta_h$ correspondant à $\varphi_* \alpha_i$, et $v_i (\alpha_1 , \cdots , \beta_h )$ le mot correspondant à $\varphi_* \beta_i$. Alors  $L(W, \pi_{\Sigma_0} , \pi_{\Sigma_1} ,  p', 0)$ est le graphe du difféomorphisme :

\[ (\theta, A_1, B_1, \cdots) \mapsto (\theta, u_1(A_1, B_1, \cdots), v_1(A_1, B_1, \cdots), \cdots).\]

En particulier, le twist de Dehn autour d'une courbe librement homotope à $\beta_1$ est le graphe du difféomorphisme :

\[ (\theta, A_1, B_1, \cdots) \mapsto (\theta, A_1 B_1,  B_1,  \cdots).\]

\end{exam}

\begin{proof}Découle de l'exemple \ref{exemcobtriv} et de la formule donnant l'holonomie le long d'un produit de lacets.

\end{proof}

L'exemple suivant illustre la nécessité de considérer la catégorie $\Cob$, et non la catégorie des cobordismes (sans bord verticaux). En effet si l'on referme le cobordisme suivant en collant un tube le long du bord vertical, on obtient un cobordisme trivial du tore vers le tore, identique à celui que l'on aurait obtenu à partir du cobordisme de l'exemple \ref{exemcobtriv}, mais les correspondances Lagrangiennes obtenues ne sont pas les mêmes.

\begin{exam}[Un changement de "chemin base"]\label{exemchgtchemin}
Soit $\Sigma$ le 2-tore privé d'un petit disque $D$, $*$ un point base sur le bord,  $\alpha_1$ et $\beta_1$ des courbes simples formant une base de son groupe fondamental, et $W = (T^2\times[0,1]) \setminus S$, où $S$ est un voisinage tubulaire du chemin $(\alpha_1(t),t)$, (on paramètre le bord vertical sans tourner) voir figure \ref{exemchgtcheminfig}. $L(W,p, 0)$ est le graphe de :

\[ (\theta, A_1, B_1) \mapsto (\theta, A_1 , A_1^{-1} B_1 A_1).\]

\end{exam}

\begin{figure}[!h]
    \centering
    \def\svgwidth{.65\textwidth}
    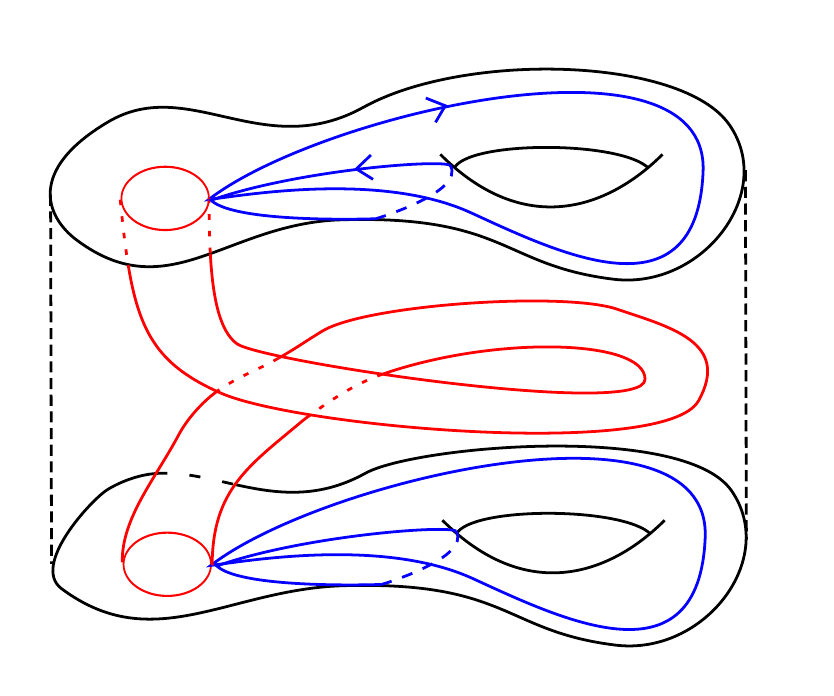
      \caption{Un changement de "chemin base".}
      \label{exemchgtcheminfig}
\end{figure}

\begin{proof} Identifions $\alpha_1$ et $\beta_1$  aux courbes correspondantes dans $\Sigma \times\lbrace 0\rbrace$, et notons $\tilde{\alpha}_1$ et $\tilde{\beta}_1$ les courbes correspondantes dans $\Sigma \times\lbrace 1\rbrace$. On note $*$ et $\tilde{*}$ les points bases correspondants et $\gamma$ l'arc vertical allant de $*$ vers $\tilde{*}$. L'assertion vient du fait que  $\tilde{\alpha}_1$ (resp.$\tilde{\beta}_1$ ) est homotope à $\gamma \alpha_1 \gamma^{-1}$ (resp. $\gamma \alpha_1^{-1} \beta_1 \alpha_1 \gamma^{-1}$).

\end{proof}
%


\begin{exam}[Ajout d'une 2-anse]\label{exemanse}

Soit $s\subset int(\Sigma)$ une courbe simple librement homotope à $\beta_{1}$, et $W\colon \Sigma \rightarrow S$ le cobordisme correspondant à l'attachement d'une 2-anse le long de $s$,  alors $\pi_1(S) = \langle \alpha_2, \beta_2, \cdots \rangle$, et

\[ L(W, p, 0) = \lbrace (\theta , A_1 , I , A_2, B_2,  \cdots A_h , B_h ), (\theta , A_2 , B_2 , \cdots A_h , B_h )  \rbrace, \]
où $A_1 \in SU(2)$ et $(\theta , A_2 , B_2 , \cdots A_h , B_h ) \in \Mg(S)$.

\end{exam}

\begin{proof} Le cobordisme $W$ se rétracte sur le bouquet de $S$ et du cercle $\alpha_1$ correspondant à la co-âme de l'anse. Il s'en suit que $\Mg(W) \simeq \Mg(S) \times SU(2)$.

Par ailleurs, sous cette identification, l'application $\Mg(W) \rightarrow \Mg(S)$ est la projection sur le premier facteur, et $\Mg(W) \rightarrow \Mg(\Sigma)$ envoie un couple $(A_1, [A] )$ sur la connexion telle que $\mathrm{Hol}_{\beta_{1}} = I$, $\mathrm{Hol}_{\alpha_{1}} = A_1$, et  dont les autres holonomies sont identiques à celles de $A$.

\end{proof}

\begin{prop}\label{correspcobelem}S'il existe, comme dans la définition \ref{chemincob}, une fonction de Morse $f$ sur $W$, constante sur les bords $\Sigma_0$ et $\Sigma_1$, et avec au plus un point critique, $L(W, \pi_{\Sigma_0} , \pi_{\Sigma_1} , p, c)$ est une correspondance Lagrangienne en restriction à la partie symplectique des espaces de modules.
\end{prop}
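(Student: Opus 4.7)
La stratégie consiste à combiner un argument général de Stokes pour montrer que l'image est isotrope, puis un calcul de dimension cas par cas grâce à la description explicite des cobordismes élémentaires fournie par les exemples précédents.

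\textbf{Étape 1 (isotropie par la formule de Stokes).} À l'aide de la reformulation de la proposition \ref{reformul}, on peut considérer $\Mg(W, c)$ comme l'espace des modules de connexions plates sur le $SO(3)$-fibré $P$, défini sur $W$ entier (sans singularité à $C$). En un point lisse $[A]$, l'espace tangent est représenté par des 1-formes $\alpha \in \Omega^1(W) \otimes \mathfrak{su(2)}$ vérifiant $d_A \alpha = 0$ avec $\alpha_{|\nu \partial^{vert}W} = \eta \,ds$. Pour deux tels représentants $\alpha, \beta$, la platitude de $A$ et l'invariance du produit scalaire sur $\mathfrak{su(2)}$ entraînent $d\langle \alpha \wedge \beta \rangle = 0$. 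La formule de Stokes appliquée sur $W$ donne alors
\[ \int_{\Sigma_1} \langle \alpha \wedge \beta \rangle - \int_{\Sigma_0} \langle \alpha \wedge \beta \rangle + \int_{\partial^{vert} W} \langle \alpha \wedge \beta \rangle = 0. \]
L'intégrale sur $\partial^{vert} W$ s'annule car $\alpha$ et $\beta$ y sont toutes deux proportionnelles à $ds$, donc leur produit extérieur est nul. On en déduit que l'application de restriction envoie $T_{[A]}\Mg(W, c)$ dans un sous-espace isotrope de $T_{[A_0]}\Mg(\Sigma_0, p_0)^- \oplus T_{[A_1]}\Mg(\Sigma_1, p_1)$. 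Cet argument étant valable pour tout cobordisme (pas seulement élémentaire), il entraîne que $L(W, \pi_{\Sigma_0}, \pi_{\Sigma_1}, p, c)$ est isotrope en tout point lisse.

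\textbf{Étape 2 (demi-dimension).} L'hypothèse d'élémentarité de Morse permet de séparer les cas : pas de point critique (cylindre), un point critique d'indice $1$ (attachement d'une 1-anse), ou d'indice $2$ (attachement d'une 2-anse). Dans le premier cas, les exemples \ref{exemcobtriv}, \ref{exemreparam}, \ref{exemdiffeo} et \ref{exemchgtchemin} identifient explicitement $L$ au graphe d'un difféomorphisme entre les espaces des modules, clairement de demi-dimension. Pour l'ajout d'une 2-anse réduisant le genre de $h$ à $h-1$, l'exemple \ref{exemanse} donne l'identification $L(W) \simeq SU(2) \times \Mg(S)$, et donc
\[ \dim L(W) = 3 + 6(h-1) = 6h - 3 = \tfrac{1}{2}\bigl(\dim \Mg(\Sigma) + \dim \Mg(S)\bigr). \]
Le cas d'une 1-anse s'en déduit par dualité, en renversant l'orientation du cobordisme et en échangeant les rôles de $\Sigma_0$ et $\Sigma_1$.

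\textbf{Principal obstacle.} La difficulté essentielle est de vérifier que la restriction au bord $\Mg(W, c) \to \Mg(\Sigma_0, p_0) \times \Mg(\Sigma_1, p_1)$ est un plongement lisse sur la partie symplectique, afin que $L$ soit effectivement une sous-variété lisse de demi-dimension. Pour les cobordismes élémentaires, ceci résulte du théorème de Seifert-Van Kampen, qui montre que $\pi_1(W \setminus C)$ se scinde selon la décomposition en anses : pour un attachement de 2-anse, $W$ se rétracte sur le bouquet de $S$ avec la co-âme de l'anse, et les identifications explicites des exemples montrent que la restriction est injective. Combinant isotropie et demi-dimension, $L$ est bien une sous-variété Lagrangienne en restriction à la partie lisse.
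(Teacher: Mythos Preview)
Your proof is correct and follows essentially the same approach as the paper: isotropy via Stokes' formula combined with an explicit dimension count from the examples. The only minor difference is that for the Stokes argument the paper works directly with $\Mg(W,C)$ on $W\setminus C$ and justifies that the tangent vectors $\alpha,\beta$ can be chosen to vanish near $C$ (since every nearby flat connection can be gauged to the form $\eta_0\,ds$ there), whereas you bypass this step by invoking the $SO(3)$-bundle reformulation of Proposition~\ref{reformul} to work on all of $W$ at once.
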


\begin{proof}

Un cobordisme élémentaire correspond soit à un cobordisme trivial, soit à l'ajout d'une 1-anse ou d'une 2-anse, les deux derniers cas étant symétriques. Il est clair, d'après les exemples \ref{exemcobtriv} et \ref{exemanse}, que dans chaque cas, $L(W, \pi_\Sigma , \pi_S , C, p)$ est lisse et de dimension maximale.

Montrons l'isotropie pour la forme symplectique. Soit $[A]\in \Mg (W, C, p)$, choisissons un représentant $A\in [A]$ de la forme $\eta_0 ds$ au voisinage de $C$, où $s\in \rr/\zz$ est le paramètre d'un méridien, et $\eta_0\in \mathfrak{su(2)}$ un élément fixé tel que $\exp(\eta_0) = -I$.

Soient $\alpha$, $\beta$ des 1-formes $\mathfrak{su(2)}$-valuées représentant des vecteurs tangents de $T_{[A]}\Mg (W, C, p)$ : c'est-à-dire vérifiant $d_A\alpha = d_A\beta = 0$, et de la forme $\theta ds$ au voisinage de $\partial^{vert} W$. Toute connexion plate proche de $A$ pouvant, à transformation de jauge près, s'écrire sous la forme $\eta_0 ds$ au voisinage de $C$, on peut de plus supposer $\alpha$ et $\beta$ nulles au voisinage $C$ et en particulier prolongeables à $W$ en entier.

Si l'on note $\tilde{A} \in L(W, \pi_\Sigma , \pi_S , C, p)$ l'image de $[A]$ par le plongement

\[ \Mg (W, C, p) \rightarrow \Mg (\Sigma_0, p_0) \times \Mg (\Sigma_1, p_1), \]

 et $\tilde{\alpha},\tilde{\beta} \in T_{\tilde{A}}L(W, \pi_\Sigma , \pi_S , C, p)$ les vecteurs tangents correspondants,

\[ \omega_{\tilde{A}}(\tilde{\alpha},\tilde{\beta}  ) = \int_{\Sigma_1}{\langle \alpha \wedge \beta \rangle } -  \int_{\Sigma_0}{\langle \alpha \wedge \beta \rangle } \]
 La formule de Stokes donne alors : 

\begin{align*}
 0 &= \int_{W}{d \langle \alpha \wedge \beta \rangle }  \\ 
 &= \int_{\Sigma_1}{\langle \alpha \wedge \beta \rangle } -  \int_{\Sigma_0}{\langle \alpha \wedge \beta \rangle } +  \int_{\partial^{vert} W}{\langle \alpha \wedge \beta \rangle } \\
\end{align*}
Et le dernier terme est nul car sur la partie verticale, $\alpha$ et $\beta$ sont proportionelles à $ds$. 
\end{proof}

Il découle de la proposition \ref{correspcobelem} que les difféomorphismes des exemples précédents dont les correspondances sont les graphes sont des symplectomorphismes. Seul le dernier type de  correspondances (exemple \ref{exemanse}) ne provient pas d'un symplectomorphisme, mais d'une sous-variété coisotrope fibrée. L'énoncé suivant, que l'on peut trouver dans \cite[Exemple 6.3]{MW} pour le cas d'une "sous-variété coisotrope sphériquement fibrée", fournit un critère utile pour découper des correspondances Lagrangiennes. Il contient tous les exemples précédents, en effet pour un symplectomorphisme il suffit de considérer $C = M_0$ et  $\varphi = \pi$.

\begin{remark}En toute rigueur les 2-formes peuvent être dégénérées, mais "action Hamiltonienne" continue d'avoir un sens dès que l'équation " $\iota _{X_\xi} \omega =  d\langle H,\xi\rangle$" est encore satisfaite. L'énoncé que l'on va donner est toujours valide dans ce cas.
\end{remark}

\begin{prop}
\label{decoupcoiso}Soit $M_0$ une variété symplectique munie d'une action Hamiltonienne de $U(1)$ de moment $\varphi_0\colon M_0\rightarrow \rr$ ainsi que d'une sous-variété coisotrope $C\subset M_0$ qui admet une fibration $\pi\colon C\rightarrow M_1$ au-dessus d'une variété symplectique $M_1$ telle que l'image $L =(\iota \times \pi) (C) \subset M_0^- \times M_1$ soit une correspondance Lagrangienne.

Soit $\lambda\in \rr$ une valeur régulière de $\varphi_0$  telle que l'action de $U(1)$ sur $\varphi_0^{-1}(\lambda)$ soit libre. On peut alors former le découpage de Lerman  $M_{0,\leq \lambda} = M_{0,< \lambda}\cup R_0$.

On suppose de plus que $C$ est $U(1)$-équivariante, et intersecte $\varphi_0^{-1}(\lambda)$ transversalement. L'action de $U(1)$ passe alors au quotient en une action Hamiltonienne de moment $\varphi_1 \colon M_1 \rightarrow \rr$,  pour laquelle $\lambda$ est une valeur régulière. On note $M_{1,\leq \lambda} = M_{1,< \lambda}\cup R_1$ le découpage de Lerman.

Alors, l'adhérence $L^c$ de $L\cap  (M_{0,< \lambda}^- \times M_{1,< \lambda} )$ dans $M_{0,\leq \lambda}^- \times M_{1,\leq \lambda}$ définit une correspondance Lagrangienne $(R_0, R_1)$-compatible.

Si de plus $M_{0,\leq \lambda}$ et $M_{1,\leq \lambda}$ sont des objets de $\Symp$, et si $L\cap  (M_{0,< \lambda}^- \times M_{1,< \lambda} )$ est simplement connexe et spin, alors $L^c$ est un morphisme de  $\Symp$ : tout disque pseudo-holomorphe $(u_0,u_1)\colon (D^2,\partial D^2) \rightarrow (M_{0,\leq \lambda}^- \times M_{1,\leq \lambda}, L^c)$ d'aire nulle a un nombre d'intersection avec $(R_0, R_1)$ strictement plus petit que -2.

\end{prop}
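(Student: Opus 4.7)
The plan is to describe $L^c$ explicitly via symplectic cutting and then verify each property in turn. A key equivariance observation is that the two moment maps are compatible through $\pi$: since $\pi\colon C\to M_1$ is $U(1)$-equivariant and intertwines the actions, we have $\varphi_0|_C = \varphi_1\circ\pi$ (up to an additive constant that we absorb). Consequently any point $(m_0,m_1)\in L$ satisfies $\varphi_0(m_0) = \varphi_1(m_1)$, so $L$ meets $\varphi_0^{-1}(\lambda)\times M_1$ and $M_0\times\varphi_1^{-1}(\lambda)$ in the same set, namely $\{(m,\pi(m))\mid m\in C_\lambda\}$, with $C_\lambda := C\cap\varphi_0^{-1}(\lambda)$.

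First I would describe $L^c$ as a subset of $M_{0,\leq\lambda}^-\times M_{1,\leq\lambda}$. On the open stratum $M_{0,<\lambda}^-\times M_{1,<\lambda}$ the closure coincides with $L$ intersected with this open set. On the cut stratum $R_0\times R_1$, the closure consists of limits of sequences $(m_0^{(n)},\pi(m_0^{(n)}))$ with $\varphi_0(m_0^{(n)})\to\lambda$; equivalently, $L^c\cap(R_0\times R_1)$ is the image of $C_\lambda$ under $(q_0,q_1\circ\pi)$, where $q_i$ denote the relevant quotient maps. The transversality of $C$ with $\varphi_0^{-1}(\lambda)$ together with freeness of the $U(1)$-action on $\varphi_0^{-1}(\lambda)$ ensure that the quotient $C_\lambda/U(1)$ embeds smoothly into $R_0\times R_1$, so $L^c$ is a smooth submanifold of the expected dimension.

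Next I would verify the Lagrangian condition and the $(R_0,R_1)$-compatibility. On the open stratum both are immediate, since $L$ is Lagrangian and $\omega$ coincides with the cut form there. On the stratum $R_0\times R_1$ a standard Marsden--Weinstein reduction argument applies: the reduction of the coisotropic $C$ at the level $\lambda$ gives a smooth coisotropic in $R_0$ whose characteristic foliation descends compatibly to $R_1$ via $\pi$, and the induced correspondence is Lagrangian for the reduced symplectic forms. For the $(R_0,R_1)$-compatibility, the set-theoretic identity $(R_0\times M_{1,\leq\lambda})\cap L^c = (M_{0,\leq\lambda}\times R_1)\cap L^c = (R_0\times R_1)\cap L^c$ follows directly from $\varphi_0|_L = \varphi_1|_L$, and transversality of $L^c$ with $R_0\times M_{1,\leq\lambda}$ reduces to transversality of $L$ with $\varphi_0^{-1}(\lambda)\times M_1$, which holds by the regularity hypotheses.

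For the intersection count in the second part, I would exploit the sphere-fibered structure from Proposition~\ref{degener}. The degeneracy locus of the monotone form on $M_{0,\leq\lambda}^-\times M_{1,\leq\lambda}$ is $(R_0\times M_{1,\leq\lambda})\cup(M_{0,\leq\lambda}\times R_1)$, and each $R_i$ carries an $S^2$-fibration whose fibers have self-intersection $-2$. A pseudo-holomorphic disk $(u_0,u_1)$ of zero area must be contained in this degeneracy locus, and on each factor where $u_i$ is non-constant it is a ramified cover of a fiber of the $S^2$-fibration on $R_i$. The hard part will be to rule out configurations of intersection number exactly $-2$: the coupling imposed by the boundary condition on $L^c\cap(R_0\times R_1)$, inherited from the equivariant fibration $\pi\colon C\to M_1$, forces $u_0$ and $u_1$ to cover matching fibers of the respective $S^2$-fibrations, so both factors contribute non-trivially and the total intersection number is at most $-4$. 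The simple connectedness and spin assumptions enter to rule out unwanted bubble components and to make the intersection computation well-defined, following the same strategy used in the proof of monotonicity for $\Nc(\Sigma,p)$ itself.
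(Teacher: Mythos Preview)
Your treatment of the first conclusion (smoothness, Lagrangian, $(R_0,R_1)$-compatibility) is broadly correct but more laborious than the paper's. Rather than a stratum-by-stratum description and a separate gluing argument, the paper works directly in the Lerman model: it sets $\Phi_i(m,z)=\varphi_i(m)+\tfrac{1}{2}|z|^2-\lambda$ on $M_i\times\cc$, defines
\[
\widetilde{L}=(L\times\cc^2)\cap\bigl(\Phi_0^{-1}(0)\times\Phi_1^{-1}(0)\bigr)\subset M_0\times\cc\times M_1\times\cc,
\]
and shows that $L^c=\widetilde{L}/U(1)^2$. Smoothness then follows from a single transversality check (the intersection $(L\times\cc^2)\cap(\Phi_0^{-1}(0)\times\Phi_1^{-1}(0))$ is transverse because $\{0\}\times\cc^2$ already meets the level sets transversally) together with freeness of the $U(1)^2$-action. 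This bypasses the delicate issue of patching smooth structures across strata that your description leaves implicit. Your compatibility argument via $\varphi_0|_L=\varphi_1|_L$ matches the paper's exactly.

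For the disk property, however, your argument has a genuine gap. You claim that the boundary coupling forces \emph{both} $u_0$ and $u_1$ to cover matching fibres and hence both contribute, giving total intersection $\leq -4$. This is not justified: nothing prevents one component, say $u_0$, from being constant, in which case only $u_1$ contributes. The paper's argument is entirely different and does not try to control the two factors separately. Instead it exploits the coisotropic fibration: since the boundary of $(u_0,u_1)$ lies in $L^c$, one has $\pi\circ u_0|_{\partial D^2}=u_1|_{\partial D^2}$, so the two discs $\pi\circ u_0$ and $u_1$ glue along their common boundary to a single pseudo-holomorphic sphere
\[
\pi\circ u_0\cup u_1\colon D^2\cup_{\partial D^2}D^2\longrightarrow M_{1,\leq\lambda}
\]
of zero area. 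Hypothesis $(xii)$ on the object $M_{1,\leq\lambda}$ then gives the intersection bound immediately. This gluing trick is the key idea you are missing. Finally, your invocation of simple connectedness and spin ``to rule out unwanted bubble components'' is misplaced: those hypotheses are part of what makes $L^c$ a morphism of $\Symp$ by definition, and play no role in the disk-intersection argument itself.
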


\begin{proof}

Notons $\Phi_i \colon M_i \times \cc \rightarrow \rr$ les moments de l'action de $U(1)$, définis par \[\Phi_i(m,z) = \varphi_i (m) + \frac{1}{2}|z|^2 - \lambda,\] qui donneront  les découpages $M_{i,\leq \lambda} = \Phi_i^{-1}(0) / U(1)$. Posons également 
$Q_i = \varphi_i^{-1}(\lambda) $, de manière à avoir $R_i = Q_i /U(1)$. Enfin, posons \[\widetilde{L} = (L\times \cc^2 ) \cap ( \Phi_0^{-1}(0) \times \Phi_1^{-1}(0)) \subset M_0 \times \cc \times M_1 \times \cc .\]

On va montrer que $ \widetilde{L} / U(1)^2 \subset M_{0, \leq \lambda} \times M_{1, \leq \lambda}$ est une correspondance Lagrangienne lisse, et compatible avec les hypersurfaces. Cette correspondance contenant $L\cap  (M_{0,< \lambda}^- \times M_{1,< \lambda} )$ comme un ouvert dense, il s'en suivra que $L^c = \widetilde{L} / U(1)^2$.

D'une part, $\Phi_0^{-1}(0)$ et $ \Phi_1^{-1}(0)$ sont lisses car $\lambda$ est valeur régulière de $\varphi_0$ et $\varphi_1$. L'intersection $(L\times \cc^2 ) \cap ( \Phi_0^{-1}(0) \times \Phi_1^{-1}(0))$ est transverse dans $M_0 \times \cc \times M_1 \times \cc$, en effet $(\lbrace 0\rbrace \times \cc^2 ) \cap ( \Phi_0^{-1}(0) \times \Phi_1^{-1}(0)) = \lbrace 0\rbrace$.  Enfin, l'action de $U(1)^2$ sur $\tilde{L}$ est libre, car l'action de $U(1)$ est libre sur $\cc\setminus 0$ et sur $\varphi_0^{-1}(\lambda)$, par hypothèse. Il s'en suit que $\tilde{L}/U(1)^2$ est lisse.

Montrons maintenant la compatibilité avec les diviseurs. D'une part, si $(m_0,m_1) \in L$, $\varphi_0 (m_0) = \varphi_1 (m_1)$. Il s'en suit que 
\[L\cap (M_0 \times Q_1 ) = L\cap (Q_0 \times M_1) = L\cap (Q_0 \times Q_1),\] 
puis 
\[\widetilde{L}\cap (M_0 \times Q_1 \times \cc^2) = L\cap (Q_0 \times M_1\times \cc^2) = L\cap (Q_0 \times Q_1\times \cc^2),\] 
et enfin : 
\[L^c\cap (M_{0, \leq \lambda} \times R_1  ) = L\cap (R_0 \times M_{1, \leq \lambda}) = L\cap (R_0 \times R_1).\]

D'après le théorème des fonctions implicites, la propriété de graphe local est équivalente à  $\forall x \in L^c\cap (R_0 \times R_1 ),$ \[T_x L^c\cap T_x (M_0 \times R_1 ) =T_x  L^c\cap T_x (R_0 \times M_1) = T_x ( L\cap (R_0 \times R_1) ).\]
Si $x \in L\cap (Q_0 \times Q_1 )$ et $(v_0,v_1) \in T_x  L$, $v_1 = d\pi_{x_0}.v_0$. Il s'en suit :
\[ T_x L\cap T_x (M_0 \times Q_1 ) =T_x  L\cap T_x (Q_0 \times M_1) = T_x ( L\cap (Q_0 \times Q_1) ),\]
ce qui entraîne la propriété annoncée.

Enfin, concernant la propriété sur les disques, si $(u_0,u_1)$ est un tel disque, alors $\pi\circ u_0 \cup u_1\colon D^2\cup_{\partial D^2} D^2 \rightarrow M_{1, \leq \lambda}$ définit une sphère pseudo-holomorphe d'aire nulle, qui intersecte $R_0$ en un multiple strictement positif de -2.

\end{proof}

Notons que si $(W,p,c)$ est un cobordisme élémentaire,  \[ L(W,p,c) \cap \left( \N (\Sigma_0,p_0)^- \times \N (\Sigma_1,p_1) \right)\] s'identifie soit à $\N (\Sigma_i,p_i) \times SU(2)$, avec $i=0$ ou 1, soit à $\N (\Sigma_0,p_0)$. Dans les deux cas c'est un ouvert d'un produit de copies de $SU(2)$, ainsi sa seconde classe de Stiefel-Whitney s'annule.

Définissons à présent les correspondances Lagrangiennes entre les espaces découpés:

\begin{defi}
Si $(W,p,c)$ est un cobordisme à bord vertical élémentaire de $(\Sigma_0,p_0)$ vers $(\Sigma_1,p_1)$,
la correspondance $L(W,p,c)$ vérifie les hypothèses de la proposition \ref{decoupcoiso}. On définit ainsi $L^c(W,p,c)\subset \Nc(\Sigma_0,p_0)^- \times \Nc(\Sigma_1,p_1)$ comme l'adhérence de 
\[ L(W,p,c) \cap \left( \N (\Sigma_0,p_0)^- \times \N (\Sigma_1,p_1) \right), \]
qui est donc un morphisme de $\Symp$.
\end{defi}



\section{Invariance par mouvements de Cerf}

Le fait suivant est vrai pour tous cobordismes à bord verticaux, élémentaires ou non :

\begin{prop}[Formule de composition]\label{compocob} Soient $\Sigma,S,T$ trois surfaces à bord paramétré, et  $(W_1, c_1)$, $(W_2, c_2 )$ deux cobordismes à bord verticaux, allant respectivement de $\Sigma$ vers $S$, et de $S$ vers $T$. Alors,
\[L(W_1 \cup_S W_2, c_1 + c_2) = L(W_1, c_1) \circ  L(W_2, c_2 ).\]
\end{prop}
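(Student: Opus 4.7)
The plan is to verify both inclusions between subsets of $\Mg(\Sigma, p_\Sigma)^- \times \Mg(T, p_T)$. The inclusion $L(W_1 \cup_S W_2, c_1 + c_2) \subseteq L(W_1, c_1) \circ L(W_2, c_2)$ is immediate by restriction: given a flat connection $A$ on $(W_1 \cup_S W_2) \setminus (C_1 \cup C_2)$ representing a point of $L(W_1 \cup_S W_2, c_1 + c_2)$, its restrictions $A|_{W_1}$ and $A|_{W_2}$ are flat connections satisfying the meridian and vertical-boundary conditions respectively, and their common restriction $A|_S$ provides the intermediate element witnessing membership in the geometric composition.

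For the reverse inclusion, I would use Proposition \ref{riemannhilbert} to translate the question into representation theory. Picking compatible basepoints on the vertical boundary and applying van Kampen's theorem to the open cover of $(W_1 \cup_S W_2) \setminus (C_1 \cup C_2)$ obtained by slightly fattening each half (the intersection deformation retracts onto a collar of $S$, which meets no component of $C_1 \cup C_2$), one obtains
\[
\pi_1\bigl((W_1 \cup_S W_2) \setminus (C_1 \cup C_2)\bigr) \;\cong\; \pi_1(W_1 \setminus C_1) \ast_{\pi_1(S)} \pi_1(W_2 \setminus C_2).
\]
A representation $\rho$ of this pushout sending every meridian of $C_1 \cup C_2$ to $-I$ is the same datum as a pair $(\rho_1, \rho_2)$ of representations of the two factors, each satisfying its own meridian conditions, whose restrictions to $\pi_1(S)$ coincide. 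The boundary parameter $\theta$ is automatically shared, since the vertical boundaries of $W_1, S, W_2$ are glued and the holonomy along $\partial S$ equals $e^\theta$ for all three pieces. Under the identifications of Proposition \ref{riemannhilbert}, this pushout description is exactly the statement that an element of $L(W_1 \cup_S W_2, c_1 + c_2)$ corresponds to a pair $([A_1], [A_2])$ with $[A_1|_S] = [A_2|_S]$, i.e.\ to an element of $L(W_1, c_1) \circ L(W_2, c_2)$.

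The main (minor) technical point, if one prefers a direct gauge-theoretic argument, is to promote the abstract equality $[A_1|_S] = [A_2|_S]$ into an honest gluing of flat connections. Concretely, choosing $u \in \Gc(S)$ with $u \cdot A_1|_S = A_2|_S$, one extends $u$ to a gauge transformation $\tilde u$ on $W_1 \setminus C_1$ via a collar $S \times [0, \varepsilon) \hookrightarrow W_1$ and a cutoff function in the collar coordinate. One checks that $\tilde u \in \Gc(W_1 \setminus C_1)$: it is trivial near $\partial^{vert} W_1$ because $u$ is trivial near $\partial S = \partial^{vert} W_1 \cap S$, and it is trivial outside the collar by construction, in particular away from $C_1 \subset \operatorname{int}(W_1)$. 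Replacing $A_1$ by $\tilde u \cdot A_1$ yields an honest match on $S$, after which flatness (being local) and the meridian conditions are preserved under the gluing, completing the reverse inclusion.
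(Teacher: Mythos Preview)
Your proof is correct and follows essentially the same approach as the paper: the paper's argument is a one-line version of your final gauge-theoretic paragraph, asserting that two flat connections on $W_1\setminus C_1$ and $W_2\setminus C_2$ coinciding on $S$ glue to a flat connection on the union. Your van Kampen detour is a correct alternative phrasing, though note that Proposition~\ref{riemannhilbert} is stated only for surfaces, so you are implicitly using the analogous (unstated but standard) holonomy description for the three-dimensional moduli spaces $\Mg(W,C)$.
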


\begin{proof}L'inclusion de $L(W_1 \cup_S W_2, c_1 + c_2)$ dans la composée est évidente. L'inclusion réciproque vient du fait que, si $C_1$ et $C_2$ sont des sous-variétés représentant les classes $c_1$ et $c_2$,  deux connections plates sur $W_1\setminus C_1$ et $W_2\setminus C_2$ qui coïncident sur $S$ se recollent en une connexion plate sur $W_1\setminus C_1 \cup W_2\setminus C_2 $.

\end{proof} 

\begin{remark}Cette composition géométrique n'est pas plongée en général.
\end{remark}

\begin{theo}\label{hyptechniques} Le foncteur de $\Cobelem$ vers $\Symp$ suivant se factorise ainsi en un foncteur de $\Cob$ vers $\Symp$.
\[ \left\lbrace \begin{aligned}
(\Sigma,p) & \mapsto \Nc(\Sigma,p) \\
(W,f,p,c) & \mapsto L^c(W,f,p,c).
\end{aligned}
\right. \]
On notera $\underline{L}(W,p,c)$ l'image d'un cobordisme par ce foncteur.
\end{theo}

\begin{proof}Il suffit de vérifier que le foncteur satisfait aux hypothèses de la proposition \ref{factorisation}. Les hypothèses $(i)$ et $(ii)$ sont clairement vérifiées, et le point $(iii)$ découle de \cite[Lemma 6.11]{MW}. Il reste à vérifier les hypothèses $(iv)$ et $(v)$.

Vérifions l'hypothèse $(iv)$ : soit $(\Sigma_0,p_0)$  une surface à bord paramétré de genre $g\geq 2$, et $s_1,\ s_2$ deux cercles d'attachement disjoints, non-séparants de $\Sigma_0$. Soit \[\alpha_1, \cdots , \alpha_g, \beta_1, \cdots ,\beta_g\] un système de générateurs de $\pi_1(\Sigma_0, p_0(0))$ tel que $\partial \Sigma_0  $ est le produit des commutateurs des $\alpha_i$ et $\beta_i$, et tel que $s_i$ est librement homotope à $\alpha_i$ ($i=1,\ 2$).

Soit $W_1$ le cobordisme entre $\Sigma_0$ et $\Sigma_1$ correspondant à l'attachement d'une 2-anse le long de $s_1$ ($\Sigma_1$ est de genre $g-1$), $W_2$ le cobordisme entre $\Sigma_1$ et $\Sigma_2$ correspondant à l'attachement d'une 2-anse le long de $s_2$ ($\Sigma_2$ est de genre $g-2$). 

Soit $W_1'$ le cobordisme entre $\Sigma_0$ et $\widetilde{\Sigma_1}$ correspondant à l'attachement d'une 2-anse le long de $s_2$ ($\widetilde{\Sigma_1}$ est de genre $g-1$), $W_2'$ le cobordisme entre $\widetilde{\Sigma_1}$ et $\Sigma_2$ correspondant à l'attachement d'une 2-anse le long de $s_1$.

\[\xymatrix{ & \Sigma_1 \ar[rd]^{W_2} &   \\  \Sigma_0 \ar[ru]^{W_1} \ar[rd]_{W_1 '}    &  & \Sigma_2  \\  & \widetilde{\Sigma_1} \ar[ru]_{W_2'} &  }\]

Notons $C_1 = \lbrace A_1 = I \rbrace\subset \N(\Sigma_0 )$ la sous-variété coisotrope sphériquement fibrée correspondant à $W_1$, et $C_2 = \lbrace A_2 = I \rbrace\subset \N(\Sigma_0 )$ la sous-variété coisotrope sphériquement fibrée correspondant à $W_1'$.

Les sous-variétés $C_1$ et $C_2$ s'intersectent transversalement dans $\N(\Sigma_0 )$, les compositions $L(W_1) \circ L(W_2)$ et $L(W_1') \circ L(W_2')$ sont donc plongées, et elles coïncident car correspondent à la sous-variété coisotrope $C_1 \cap C_2 \subset \N(\Sigma_0 )$, fibrée au-dessus de $\N(\Sigma_2 )$, et simplement connexe car difféomorphe à $SU(2)^2\times \N(\Sigma_2 )$. Puis, d'après la proposition \ref{decoupcoiso}, son adhérence dans $\Nc(\Sigma_0 )$ définit un morphisme de $\Symp$.

Vérifions l'assertion sur les cylindres "quilted" en nous inspirant du raisonnement de la preuve de \cite[Lemma 6.11]{MW} pour la composée $L(W_1) \circ L(W_2)$  (l'assertion concernant $L(W_1') \circ L(W_2')$ se traite de manière analogue).

Montrons que tout cylindre matelassé intersecte le triplet $(R_0 , R_1 , R_2)$ en un multiple strictement négatif de $2$. Soit $ u = (u_0,u_1,u_2)$ un quilt pseudo-holomorphe d'indice de Maslov nul comme dans la figure \ref{cylindre}, tel que $u_i$ prend ses valeurs dans $\Nc(\Sigma_i)$ et les condition aux coutures sont données par $L^c(W_1)$ , $L^c(W_2)$ et  $L^c(W_1) \circ L^c(W_2)$.

 Par monotonie, l'aire des disques $u_i$ pour les formes monotones $\tilde{\omega_i}$ est nulle, ce qui force $u_i$ a être contenu dans une fibre du lieu de dégénérescence $R_i$. En effet, rappelons que $R_i$ admet une fibration en sphères $S^2$ dont le fibré vertical correspond exactement au lieu d'annulation de $\tilde{\omega_i}$, voir la proposition \ref{degener}. Par ailleurs la fibre contenant $u_0$ touche la sous-variété coisotrope $ C_1 \cap C_2$ et est donc incluse dans cette dernière, elle se projette donc sur une fibre de $R_2$. Il en est de même pour $u_1$ : elle est contenue dans $\lbrace A_2 = I \rbrace \subset \Nc(\Sigma_1 )$ et se projette sur une fibre de $R_2$.
 
Ainsi, $u_2$ et les images de $u_0$ et $u_1$ par les projections sur $\Nc(\Sigma_2 )$ se recollent en une sphère pseudo-holomorphe de $R_2$, et cette sphère intersecte $R_2$ en un multiple de -2, mais ce nombre d'intersection est exactement $u.(R_0 , R_1, R_2 )$.

Vérifions à présent l'hypothèse $(v)$. Notons que si $c_i + c_{i+1} = d_i + d_{i+1}$, alors d'après la proposition \ref{compocob}, $L(W_i,c_i) \circ L(W_{i+1},c_{i+1})$ et $L(W_i,d_i) \circ L(W_{i+1}, d_{i+1})$ coïncident avec  $L(W_i \cup W_{i+1}, c_i + c_{i+1}) $. Enfin, la correspondance associée à un cobordisme trivial $(\Sigma\times [0,1],c)$ est le graphe d'un symplectomorphisme, sa  composition à droite ou à gauche  avec toute autre
correspondance satisfait les hypothèses du théorème \ref{compogeom}. 

\end{proof}

\section{Homologie Instanton-Symplectique d'une vari\-été munie d'une classe d'homologie}\sectionmark{Homologie Instanton-Symplectique}\label{defHSI}

Soit $Y$ une 3-variété orientée, compacte, sans bord, $c\in H_1(Y; \Z{2})$ et $z\in Y$. Soit $W$ la variété à bord obtenue par éclatement réel orienté de $Y$ en $z$, $W = (Y\setminus z)\cup S^2$, et $p\colon \rr/\zz \times [0,1] \rightarrow S^2$ un plongement orienté. $(W,p,c)$ est donc un morphisme dans la catégorie $\Cob$ du disque vers le disque. L'ensemble des points d'intersections généralisés $\I(\underline{L}(W,p,c))$ est contenu dans  le produit des niveaux zéros des moments "$\theta_i = 0$", et donc dans $\mathrm{int}\left\lbrace \omega_i = \tilde{\omega}_i \right\rbrace$. On peut alors considérer leur homologie matelassée. Il résulte alors du théorème \ref{hyptechniques} :
\begin{cor}Le groupe abélien $HF(\underline{L}(W,p,c))$, à isomorphisme près, ne dépend que du type topologique de $Y$, du point $z$, et de la classe $c$. On le note $HSI(Y,c,z)$.
\end{cor}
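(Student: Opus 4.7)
Le but est d'établir que $HF(\underline{L}(W,p,c))$ ne dépend, à isomorphisme près, que du triplet $(Y,z,c)$. La stratégie consiste à combiner trois faits. D'abord, l'homologie matelassée associée à un morphisme de $\mathrm{Hom}_\Symp(pt, pt)$ est bien définie (théorème de Manolescu et Woodward rappelé au paragraphe \ref{defhomolgiematelassée}) et invariante, à isomorphisme canonique près, sous la relation d'équivalence de $\Symp$. Ensuite, par le théorème \ref{hyptechniques}, $\underline{L}(W,p,c)$ est obtenu en appliquant un foncteur $\Cob \rightarrow \Symp$ à un morphisme bien défini dans $\Cob$. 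Il suffit donc de montrer que le morphisme de $\Cob$ associé aux données $(W,\pi_0,\pi_1, p, c)$ ne dépend, à équivalence dans $\Cob$ près, que de $(Y,z,c)$.

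On remarque d'abord que l'éclatement réel orienté $W = (Y\setminus z) \cup S^2$ est canoniquement déterminé par $(Y,z)$, et que la classe $c \in H_1(Y;\Z{2})$ se transporte canoniquement en une classe de $H_1(W;\Z{2})$ via l'isomorphisme induit par l'application d'effondrement $W \rightarrow Y$. Le seul choix restant à analyser est celui du paramétrage $p\colon \rr/\zz \times [0,1] \rightarrow S^2$ et des plongements associés $\pi_0, \pi_1$ des disques dans $S^2 = \partial W$.

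Le cœur de la preuve consiste à montrer que deux choix $(p, \pi_0, \pi_1)$ et $(p', \pi_0', \pi_1')$ donnent lieu à des cobordismes équivalents dans $\Cob$. L'espace des plongements orientés $p\colon \rr/\zz \times [0,1] \hookrightarrow S^2$ dont le complémentaire est constitué de deux disques ouverts est connexe par arcs. Une isotopie entre deux tels paramétrages fournit un difféomorphisme $\psi\colon S^2 \rightarrow S^2$ isotope à l'identité tel que $\psi \circ p = p'$ et $\psi \circ \pi_i = \pi_i'$ pour $i=0,1$. À l'aide d'un voisinage collier de $\partial W$ dans $W$, on prolonge $\psi$ en un difféomorphisme $\Psi\colon W \rightarrow W$ supporté dans ce collier, lui-même isotope à l'identité. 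Ce difféomorphisme fournit l'équivalence cherchée dans $\Cob$ : il intertwine les plongements de bord et préserve la classe $c$ puisqu'il est isotope à l'identité.

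Il s'en suit que $\underline{L}(W,p,c)$ et $\underline{L}(W,p',c)$ sont équivalents dans $\Symp$, d'où un isomorphisme entre leurs homologies de Floer matelassées. La bonne définition de ces groupes est par ailleurs garantie par l'observation, rappelée juste avant l'énoncé du corollaire, que $\I(\underline{L}(W,p,c))$ est contenu dans le produit des $\mathrm{int}\{\omega_i = \tilde{\omega}_i\}$. Le principal point technique, sans être profond, est la construction du prolongement en collier préservant simultanément les plongements de bord et la classe $c$ ; il se ramène à des arguments classiques d'extension de difféomorphismes au voisinage du bord, et l'essentiel du travail a déjà été effectué dans les chapitres précédents en mettant en place le foncteur $\Cob \rightarrow \Symp$ et l'invariance de l'homologie matelassée.
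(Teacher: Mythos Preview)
Your proof is correct and follows essentially the same approach as the paper. The paper's own proof of this corollary is extremely terse --- it simply records that the statement follows from Th\'eor\`eme~\ref{hyptechniques} (the factorization of the functor through $\Cob$) together with the observation, made just before the corollary, that the generalized intersection points lie in $\mathrm{int}\{\omega_i=\tilde\omega_i\}$ so that quilted Floer homology is defined --- and leaves entirely implicit the verification that the morphism class in $\Cob$ depends only on $(Y,z,c)$. You have supplied precisely that verification: different choices of the parametrization $p$ (and the accompanying $\pi_0,\pi_1$) are related by an orientation-preserving diffeomorphism of $S^2$, which is isotopic to the identity since $\mathrm{Diff}^+(S^2)$ is connected, and the collar extension then produces the required diffeo-equivalence in $\Cob$. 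This is exactly the kind of routine check the paper is taking for granted.
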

\arnaque

Ainsi, $\bigcup_z HSI(Y,c,z)$ peut \^etre vu comme un fibré au-dessus de $Y$, (et en particulier au-dessus d'un scindement comme dans \cite[Parag. 5.3]{MW}). On notera parfois $HSI(Y,c)$ au lieu de $HSI(Y,c,z)$.

\begin{remark}\label{foncteurainfini}Le foncteur de $\Cob$ vers $\Symp$ que l'on a construit dans ce chapitre nous a permis de définir l'homologie HSI en appliquant l'homologie de Floer matelassée. Néanmoins, un tel foncteur contient potentiellement beaucoup plus d'information, et il est en principe possible d'extraire d'autre types d'invariants, prenant des formes algébriques plus sophistiquées. Par exemple, dans \cite{WWfunctoriality} \WW  associent à une correspondance Lagrangienne $L\subset M_0^- \times M_1$ un foncteur entre deux catégories  $Don^\#(M_0)$ et $Don^\#(M_1)$ appelées "catégories de Donaldson étendues". On peut espérer que leur construction fournisse des invariants pour des 3-variétés à bords munies de classes d'isotopies de chemins reliant les bords, similaires aux invariants apparaissant dans les travaux récents de Fukaya \cite{Fukayaboundary}. De tels invariants motiveraient la construction et l'étude des catégories correspondantes pour les espaces des modules étendus $\Nc(\Sigma)$. Des versions $A_\infty$ entre des catégories de Fukaya dérivées devraient également exister.
\end{remark}

\chapter{Premières propriétés}\label{chappremieresprops}
\section{Calcul à partir d'un scindement de Heegaard}

Soit $Y = H_0 \cup_{\Sigma} H_1$ un scindement de Heegaard donné de $Y$, de genre $g$,  $z\in \Sigma$ un point, et $c \in H_1(Y;\Z{2})$ une classe d'homologie, que l'on peut décomposer en la somme de deux classes $c = c_0 + c_1$, avec $c_0 \in H_1(H_0;\Z{2})$ et $c_1 \in H_1(H_1;\Z{2})$.

\begin{remark}les applications $H_1(H_i;\Z{2}) \rightarrow H_1(Y;\Z{2})$ induites par les inclusions étant surjectives, on peut toujours supposer que $c_0 = 0$ ou $c_1 = 0$.
\end{remark}

Notons $W$, $\Sigma'$, $ H_0'$ et $H_1'$ les éclatements respecifs de $Y$, $\Sigma$, $ H_0$ et $H_1$ au point $z$, de manière à avoir un scindement éclaté $ W = H_0' \cup_{\Sigma'} H_1'$.

On se donne un paramétrage $p\colon \rr/\zz \times [0,1] \rightarrow \partial W$ tel que $p(\rr/\zz \times \frac{1}{2}) = \partial \Sigma'$, on note $p_0$ (resp. $p_1$) la restriction de $p$ à $\rr/\zz \times [0,\frac{1}{2}]$ (resp. $\rr/\zz \times [\frac{1}{2},1]$). Ainsi, dans la catégorie $\Cob$, $(H_0, p_0, c_0)\in Hom(D^2, \Sigma')$, et $(H_1, p_1, c_1)\in Hom(\Sigma', D^2)$. Soient $f_0, f_1$ des fonctions de Morse sur $H_0$ et $H_1$ respectivement, adaptées aux paramétrages $p_0$ et $p_1$ (de sorte qu'ils soient verticaux), et ayant exactement $g$ points critiques chacune (d'indices 1 pour $f_0$ et d'indices 2 pour $f_1$). Elles décomposent ainsi $H_0$ et $H_1$ en $g$ cobordismes élémentaires : $H_0 = H_0^1 \odot H_0^2 \odot \cdots H_0^g$, $H_1 = H_1^1 \odot H_1^2 \odot \cdots H_1^g$.

\begin{lemma}
Pour tout $i$ entre  $2$ et $g$, la composition $L(H_0^1 \cup  \cdots \cup H_0^{i-1}  ) \circ L(H_0^i)$ est plongée, vérifie les hypothèses du théorème \ref{compogeom}, et vaut $L(H_0^1 \cup  \cdots \cup H_0^{i}  )$. 
\end{lemma}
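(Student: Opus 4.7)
The plan is to proceed by induction on $i$, using Proposition \ref{compocob} to obtain the equality $L(H_0^1 \cup \cdots \cup H_0^{i-1}) \circ L(H_0^i) = L(H_0^1 \cup \cdots \cup H_0^i)$ as sets, then to verify embeddedness and the hypotheses of Theorem \ref{compogeom} by exploiting the explicit description of the Lagrangians in terms of holonomies given by Examples \ref{exemcobtriv} and \ref{exemanse}.

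First, I would choose at each stage $k$ a symplectic basis $\alpha_1, \beta_1, \ldots, \alpha_{g_k}, \beta_{g_k}$ of $\pi_1(\Sigma_k', *)$ adapted to the Morse function $f_0$, so that the attaching curve of $H_0^k$ is freely homotopic to $\beta_k$. By Example \ref{exemanse} read in the direction of increasing genus, $L(H_0^k)$ is, on the symplectic part, a fibered coisotropic in $\N(\Sigma_{k-1}')^- \times \N(\Sigma_k')$ cut out by $B_k = I$, with the extra $SU(2)$-factor parameterized by the free holonomy $A_k$. By induction, $L(H_0^1 \cup \cdots \cup H_0^i)$ is then identified with the Lagrangian $\{\theta = 0,\, B_1 = \cdots = B_i = I\} \subset \N(\Sigma_i')$, diffeomorphic to $SU(2)^i$. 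The constraints $B_1 = \cdots = B_{i-1} = I$ from $L(H_0^1 \cup \cdots \cup H_0^{i-1})$ and $B_i = I$ from $L(H_0^i)$ involve disjoint $SU(2)$-factors, so the intersection in $\N(\Sigma_{i-1}') \times \N(\Sigma_i')$ defining the composition is clean of the expected dimension and the projection to $\N(\Sigma_i')$ is injective; embeddedness follows.

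Next, I would verify the remaining conditions required by Theorem \ref{compogeom} for the cut correspondence $L^c(H_0^1 \cup \cdots \cup H_0^i)$. Simple connectedness follows from $SU(2)^i \simeq (S^3)^i$, and spinness from the fact that $SU(2)^i$ is parallelizable as a Lie group. Moreover the Lagrangian is contained in $\{\theta = 0\}$, which is disjoint from the cut locus $\{|\theta| = \pi\sqrt{2}\}$ and a fortiori from the divisor $R$; consequently the $(R,R)$-compatibility is trivial and the Lagrangian does not meet $R$ at all, so the condition on zero-area pseudo-holomorphic disks is vacuous.

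The main technical point, which I expect to be the principal obstacle, is the bound on pseudo-holomorphic quilted cylinders of zero area meeting the triple of divisors. I would adapt the argument used for property $(iv)$ in the proof of Theorem \ref{hyptechniques}: by monotonicity any such cylinder has its pieces contained in $R$, and by Proposition \ref{degener} each piece lies in a fiber of the spherical fibration of $R$. Since the coisotropic $\{B_k = I\}$ underlying $L(H_0^k)$ is preserved by the restriction of this fibration to the divisors, the pieces glue to a holomorphic sphere in the divisor $R \subset \Nc(\Sigma_i')$ whose self-intersection coincides with the quilted intersection number; this self-intersection is a negative multiple of $2$, delivering the required bound.
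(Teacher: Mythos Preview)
Your overall strategy matches the paper's: choose adapted generators so that the handle $H_0^k$ imposes $B_k=\pm I$, identify $L(H_0^1\cup\cdots\cup H_0^{i})$ with $SU(2)^i$ sitting in $\{\theta=0\}$, and read off embeddedness, simple connectedness, spin, and disjointness from $R$ from this explicit model. Two small remarks: the intersection you describe is not merely clean but transverse (this is what ``disjoint $SU(2)$-factors'' actually gives, and is what is needed for the composition to be \emph{plongée}); and in general the constraints are $B_k=\epsilon_k I$ with $\epsilon_k=\pm 1$ depending on the class $c_0$, though this changes nothing in the argument.

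The one genuine gap is in your treatment of the quilted-cylinder hypothesis. You try to transplant the argument from property $(iv)$ of Theorem~\ref{hyptechniques}, gluing the pieces into a sphere in $R\subset\Nc(\Sigma_i')$ and invoking its self-intersection. But the situation here is different: one of the three patches of the cylinder lands in $M_{i-1}=pt$, and the two remaining seam conditions $L(H_0^1\cup\cdots\cup H_0^{i-1})$ and $L(H_0^1\cup\cdots\cup H_0^{i})$ are Lagrangians \emph{disjoint} from the divisors, not correspondences compatible with them. There is therefore nothing to glue across those seams, and no sphere emerges. What the paper does instead is exactly to exploit this disjointness: remove the trivial $pt$-patch to obtain a quilted disk as in figure~\ref{disque}, then project via the coisotropic fibration underlying $L(H_0^i)$ to get an honest pseudo-holomorphic disk in $\Nc(\Sigma_0^{i-1})$ of zero $\tilde{\omega}$-area with boundary in $L(H_0^1\cup\cdots\cup H_0^{i-1})$. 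Since a nonconstant zero-area disk would have to lie in $R_{i-1}$ while its boundary lies in a Lagrangian disjoint from $R_{i-1}$, no such disk exists; the hypothesis of Theorem~\ref{compogeom} is thus vacuous here, rather than satisfied via a $\leq -2$ bound. Replace your sphere argument by this observation and the proof is complete.
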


\begin{proof}Soit $\alpha_1,\cdots \alpha_i, \beta_1,\cdots \beta_i$ un système de générateur du groupe fondamental de la composante de bord de genre $i$ de $H_0^i$ tel que $H_0^i$ correspond à l'attachement d'une 2-anse le long de $\beta_i$, et tel que les courbes $\alpha_1,\cdots \alpha_{i-1}, \beta_1,\cdots \beta_{i-1}$ induisent un système de générateurs du bord de genre $i-1$. Sous les identifications holonomiques suivantes des espaces de modules

\begin{align*}\N(\Sigma_0^{i-1}) &= \lbrace  (A_1,B_1, \cdots,  A_{i-1}, B_{i-1})~|~ [A_1,B_1] \cdots  [A_{i-1}, B_{i-1}] \neq -I \rbrace  \\
 \N(\Sigma_0^{i}) &= \lbrace  (A_1,B_1, \cdots,  A_{i}, B_{i})~|~ [A_1,B_1] \cdots  [A_{i}, B_{i}] \neq -I \rbrace ,
\end{align*}
les correspondances sont données par :
\begin{align*}
 & L(H_0^1 \cup  \cdots \cup H_0^{i-1}  ) =\lbrace (A_1, \epsilon_1 I, A_2, \epsilon_2 I, \cdots ) \rbrace, \text{où } \epsilon_i = \pm 1   \\
 & L(H_0^{i}  ) = \lbrace (A_1,B_1, \cdots,  A_{i-1}, B_{i-1}),(A_1,B_1, \cdots,  A_{i-1}, B_{i-1}, A_i, \epsilon_i I)\rbrace .
\end{align*}
L'intersection $ ( L(H_0^1 \cup  \cdots \cup H_0^{i-1}  ) \times \N(\Sigma_0^{i}) )\cap L(H_0^{i}  )$ est donc transverse dans $ \N(\Sigma_0^{i-1}) \times \N(\Sigma_0^{i}) $, et correspond à 
\[ \left\lbrace (A_1, \epsilon_1 I, \cdots,  A_{i-1},  \epsilon_{i-1} I),(A_1,\epsilon_1 I, \cdots,  A_{i-1}, \epsilon_{i-1} I, A_i, \epsilon_i I)\right\rbrace \simeq SU(2)^i .\] 
La projection sur $\N(\Sigma_0^{i})$ induit ainsi un plongement sur $ L(H_0^1 \cup  \cdots \cup H_0^{i}  )$, qui est bien simplement connexe, et compatible (car disjointe) avec l'hypersurface $R_i$.

Par ailleurs, l'hypothèse sur les disques pseudo-holomorphe d'aire nulle est automatique car $ L(H_0^1 \cup  \cdots \cup H_0^{i}  )$ est disjointe de $R_i$, et celle sur les cylindres se vérifie de manière analogue à l'énoncé correspondant dans la démonstration du lemme \ref{hyptechniques} : étant donné que l'un des trois morceaux est envoyé sur un point, il peut être retiré du cylindre, et le cylindre matelassé correspond alors à un disque matelassé comme dans la figure~\ref{disque}, à conditions au bord dans $  L(H_0^1 \cup  \cdots \cup H_0^{i-1}  )$ et $  L(H_0^1 \cup  \cdots \cup H_0^{i}  )$,  et à conditions aux coutures dans $L(H_0^{i}  )$. Un tel disque matelassé se projette sur un disque de $\N(\Sigma_0^{i-1})$ d'aire nulle, et à bord dans $  L(H_0^1 \cup  \cdots \cup H_0^{i-1}  )$, qui ne peut exister car cette dernière Lagrangienne est disjointe du lieu de dégénérescence $R_{i-1}$ de la forme $\tilde{\omega}_{i-1}$.

\begin{figure}[!h]
    \centering
    \def\svgwidth{.65\textwidth}
    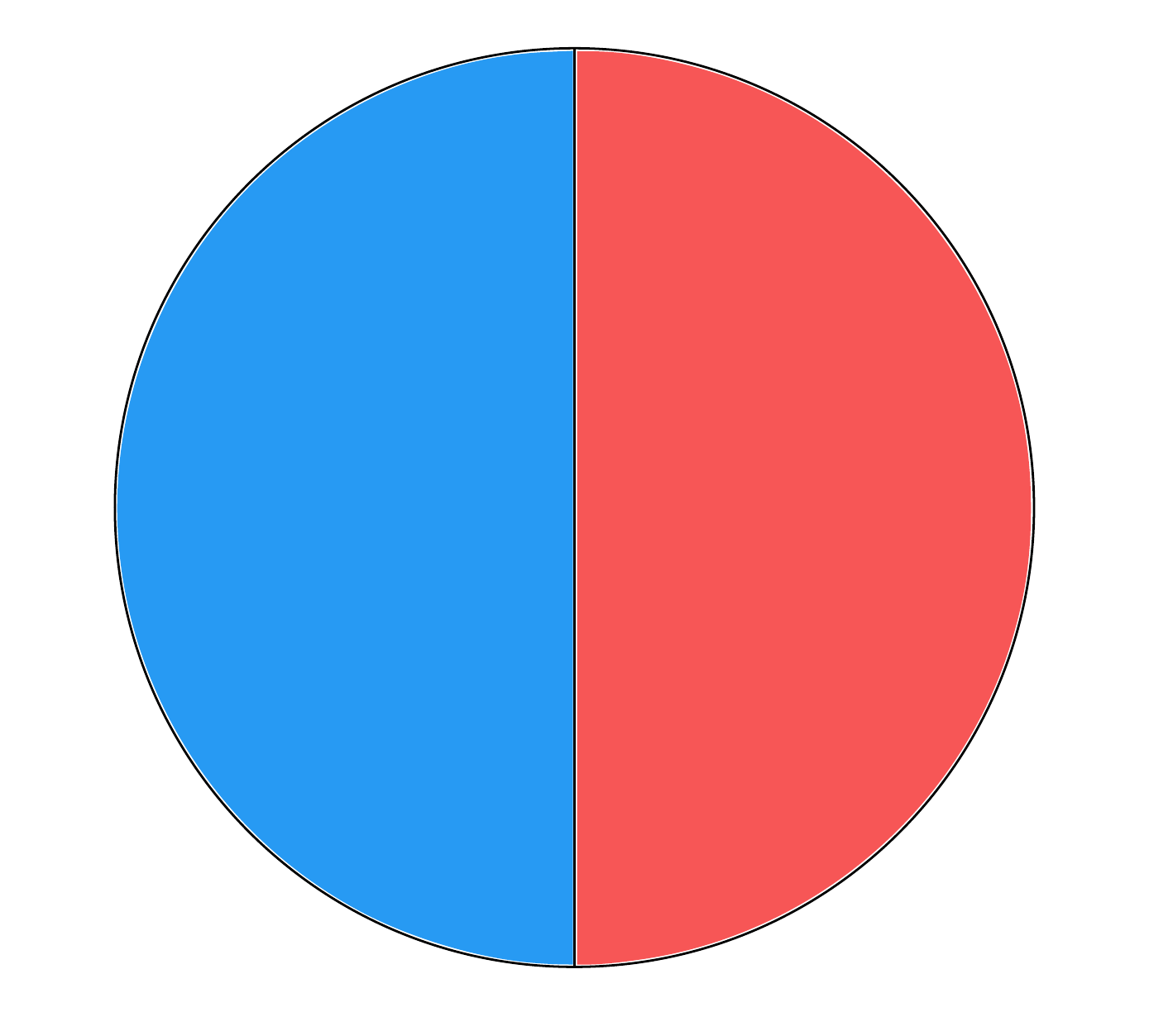
      \caption{Un disque matelassé.}
      \label{disque}
\end{figure}

\end{proof}

Ainsi, la correspondance Lagrangienne généralisée $\underline{L}(H_0, p_0,c_0)$ est équivalente dans $\Symp$ à la Lagrangienne $L(H_0, p_0,c_0)$, de même pour $H_1$. En vertu du théorème \ref{compogeom}, on obtient alors :
\begin{prop}\label{calculscindement} Dans ces conditions, $HSI(Y,c,z) \simeq HF(L_0,L_1; R)$, où $L_i = L(H_i,c_i,p_i) \subset \Nc(\Sigma,p)$. En particulier, si $c=0$, on retrouve les groupes $HSI(Y,z)$ définis par Manolescu et Woodward.
\end{prop}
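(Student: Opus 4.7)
L'idée est d'appliquer de manière itérative le lemme précédent pour réduire chacune des correspondances Lagrangiennes généralisées $\underline{L}(H_0, p_0,c_0)$ et $\underline{L}(H_1, p_1,c_1)$ à une unique correspondance Lagrangienne dans $\Nc(\Sigma,p)$, puis de conclure par le théorème \ref{compogeom}. La décomposition de Cerf $H_0 = H_0^1 \odot \cdots \odot H_0^g$ fournit une correspondance Lagrangienne généralisée de $pt = \Nc(D^2)$ vers $\Nc(\Sigma',p)$ donnée par la chaîne $(L(H_0^1),\cdots, L(H_0^g))$. Par récurrence sur $i$, en utilisant à chaque étape l'équivalence $L(H_0^1 \cup  \cdots \cup H_0^{i-1}) \circ L(H_0^i) = L(H_0^1 \cup  \cdots \cup H_0^{i})$ fournie par le lemme précédent, cette chaîne est équivalente dans $\Symp$ à la Lagrangienne unique $L_0 = L(H_0, p_0, c_0) \subset \Nc(\Sigma,p)$.

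Le même raisonnement s'applique à $H_1$ en renversant l'orientation du cobordisme (ce qui échange le rôle des 1-anses et des 2-anses) : la chaîne $\underline{L}(H_1, p_1, c_1)$ se réduit à la Lagrangienne $L_1 = L(H_1, p_1, c_1)$, vue comme correspondance de $\Nc(\Sigma,p)$ vers $pt$. Il s'ensuit que $\underline{L}(W,p,c) = \underline{L}(H_0, p_0, c_0) \odot \underline{L}(H_1, p_1, c_1)$ est équivalente dans $\Symp$ à la correspondance Lagrangienne généralisée de longueur $2$ donnée par $(L_0, L_1)$. Par le théorème \ref{compogeom}, appliqué successivement à chaque composition, on obtient alors l'isomorphisme canonique
\[ HSI(Y,c,z) = HF(\underline{L}(W,p,c)) \simeq HF(L_0, L_1), \]
ce dernier groupe étant, par définition de l'homologie matelassée rappelée au paragraphe \ref{defhomolgiematelassée}, l'homologie de Floer Lagrangienne classique de la paire $(L_0, L_1)$ dans $\Nc(\Sigma,p)$.

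La principale difficulté, déjà prise en charge dans le lemme précédent, consiste à vérifier à chaque étape les hypothèses du théorème \ref{compogeom}, à savoir la transversalité des intersections, le caractère plongé et simplement connexe de la composition, la compatibilité avec les hypersurfaces symplectiques $R_i$, et les contrôles sur les disques et cylindres matelassés pseudo-holomorphes d'aire nulle. Comme dans le lemme, ces conditions sont vérifiables en remarquant que les Lagrangiennes partielles sont disjointes des lieux de dégénérescence, forçant les éventuelles courbes d'aire nulle à se projeter sur des configurations interdites dans les étages inférieurs.

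Enfin, pour le cas $c=0$, il suffit de prendre $c_0 = c_1 = 0$ : les Lagrangiennes $L_0, L_1$ coïncident alors avec celles utilisées par \MW pour construire leur invariant, ce qui identifie notre construction à la leur et justifie la terminologie $HSI(Y,z) = HSI(Y,0,z)$.
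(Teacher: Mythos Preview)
Your proof is correct and follows essentially the same approach as the paper: iterate the preceding lemma to collapse each generalized correspondence $\underline{L}(H_i,p_i,c_i)$ to the single Lagrangian $L(H_i,p_i,c_i)$, then invoke Theorem~\ref{compogeom}. The paper's own argument is just a two-line summary of exactly this, so your version is simply a more detailed rendering of the same idea.
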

\arnaque

\section{Renversement d'orientation}

Soit $\underline{L}\in Hom_{\Symp}(pt,pt)$, on peut définir la cohomologie $HF^*(\underline{L})$, c'est-à-dire l'homologie du complexe dual de $CF_*(\underline{L})$.

On rappelle que l'on note $\underline{L}^T$ la correspondance Lagrangienne généralisée obtenue en renversant les flèches. Si $x,y\in \I(\underline{L})$ sont deux points d'intersection généralisés,  une trajectoire matelassé $\underline{u}$ à conditions aux coutures dans $\underline{L}$ et allant de $x$ à $y$ peut être vue comme une trajectoire matelassée à conditions aux coutures dans $\underline{L}^T$ et allant de $y$ à $x$. Il vient donc : 
\begin{prop}
$HF_*(\underline{L}^T) \simeq HF^*(\underline{L})$
\end{prop}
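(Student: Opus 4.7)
The plan is to construct an explicit chain-level isomorphism $CF_*(\underline{L}^T) \simeq CF^*(\underline{L})$ which descends to the desired isomorphism in (co)homology. The geometric input is already given in the paragraph preceding the statement: both generalized intersection points and quilted Floer trajectories for $\underline{L}^T$ arise from those of $\underline{L}$ by reversing direction. I only need to package this into an isomorphism of complexes.

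First, I would check that the tautology $(x_i, x_{i+1}) \in L_{i(i+1)} \Leftrightarrow (x_{i+1}, x_i) \in L_{i(i+1)}^T$ yields a canonical bijection $\I(\underline{L}) \simeq \I(\underline{L}^T)$ (with tuples read in reverse order). Once Hamiltonians $\underline{H}$ and almost complex structures $\underline{J}$ are chosen on the $\underline{L}$ side, I would use the time-reversed and orientation-reversed data $\tilde{H}_i^t = -H_{k+1-i}^{1-t}$, $\tilde{J}_i^t = J_{1-t}^{k+1-i}$ on the $\underline{L}^T$ side so that the perturbed generators match. Then, given a Floer quilt $\underline{u} = (u_i)$ for $\underline{L}$ going from $\underline{x}$ to $\underline{y}$, I define $\underline{w} = (w_i)$ by $w_i(s,t) := u_{k+1-i}(-s, 1-t)$. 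A direct calculation shows $\underline{w}$ satisfies the Floer equation for $(\tilde{J}, \tilde{H})$, respects the seam conditions of $\underline{L}^T$ (the factor swap provided by the transpose matches the swap induced by the reflection $t \mapsto 1-t$ at each seam), and has reversed asymptotics. This gives a canonical Maslov-index-preserving bijection
\[ \mathcal{M}_{\underline{L}}(\underline{x},\underline{y}) \longleftrightarrow \mathcal{M}_{\underline{L}^T}(\underline{y}, \underline{x}). \]
Consequently the differential $\partial^{\underline{L}^T}\underline{x} = \sum_{\underline{y}} \#\mathcal{M}_{\underline{L}^T}(\underline{x},\underline{y})\,\underline{y}$ coincides with the coboundary $\partial^* \underline{x} = \sum_{\underline{y}} \#\mathcal{M}_{\underline{L}}(\underline{y},\underline{x})\,\underline{y}$ computing $CF^*(\underline{L})$, so the identity on generators is a chain isomorphism $CF_*(\underline{L}^T) \to CF^*(\underline{L})$, and taking homology finishes the proof.

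The only non-formal point is the sign check: one needs the reflection $(s,t) \mapsto (-s, 1-t)$ and the arrow reversal on correspondences to be compatible with the orientations of moduli spaces built in \cite{WWorient} from the relative spin structures. Over $\Z{2}$ there is nothing to do. Over $\zz$ the argument is standard: the spin structure on $L^T$ is the one induced from that on $L$ by swapping factors, and the orientation sign coming from reversing the source of a strip cancels with the one coming from reversing the direction of the trajectory. This compatibility is the main (and only) technical step; everything else is essentially a bookkeeping exercise.
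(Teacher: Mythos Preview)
Your proof is correct and follows exactly the approach indicated in the paper: the paper's argument is nothing more than the observation (stated just before the proposition) that a quilted trajectory for $\underline{L}$ from $\underline{x}$ to $\underline{y}$ is the same thing as a quilted trajectory for $\underline{L}^T$ from $\underline{y}$ to $\underline{x}$, after which the proposition is declared with no further proof. You have simply unpacked this in detail, including the explicit reflection of the domain and the choice of perturbation data, and correctly isolated the sign compatibility over $\zz$ as the only point requiring care.
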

\arnaque

Si $Y,c$ est une 3-variété munie d'une classe, et $z$ un point base,  $W$ l'éclatement, et $\overline{W}$ muni de l'orientation opposée, alors $\underline{L}(\overline{W},c) = \underline{L}(W,p,c)^T$. Ainsi, si l'on note $HSI_*$ ce que l'on a noté $HSI$ jusqu'ici, et $HSI^*$ la cohomologie :

\begin{prop}$HSI_*(\overline{Y},c,z) \simeq HSI^*(Y,c,z)$
\end{prop}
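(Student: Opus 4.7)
The plan is to deduce this statement directly from the two results stated immediately above, namely the identification $HF_*(\underline{L}^T)\simeq HF^*(\underline{L})$ for a generalized Lagrangian correspondence from $pt$ to $pt$, together with the observation preceding it that reversing the orientation of $Y$ amounts to reversing the arrows in the associated chain of correspondences.

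First I would set up the picture. Given $(Y,c,z)$, the definition in section~\ref{defHSI} produces the oriented real blow-up $W$ of $Y$ at $z$, a parametrisation $p$ of $\partial W = S^2$, and the associated morphism $(W,p,c)$ in $\Cob$ from the disk to the disk; then $HSI_*(Y,c,z) = HF_*(\underline{L}(W,p,c))$. For $\overline{Y}$, I would use the blow-up $\overline{W}$ equipped with the opposite orientation (and the same $c$, since $H_1(\overline{Y};\Z{2}) = H_1(Y;\Z{2})$), together with the parametrisation $p$ reversed accordingly, which still defines a morphism in $\Cob$ from the disk to the disk, now read in the opposite direction.

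The key step is the identification
\[ \underline{L}(\overline{W},c) = \underline{L}(W,p,c)^T \]
already stated in the excerpt. To justify it I would examine the construction chapter by chapter: a Cerf decomposition of $W$ into elementary cobordisms induces, by reversing the Morse function $f\mapsto 1-f$, a Cerf decomposition of $\overline{W}$ whose elementary pieces are exactly the pieces of $W$ with source and target exchanged. At the level of moduli spaces this only swaps the two factors in $\Mg(\Sigma_0)^-\times \Mg(\Sigma_1)$, so $L^c(\overline{W_i},p,c_i) = L^c(W_i,p,c_i)^T$; concatenating and applying the factorisation through $\Cob$ (Theorem~\ref{hyptechniques}) gives the desired equality of generalized correspondences.

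Once this is established, the conclusion is immediate: applying the preceding proposition yields
\[ HSI_*(\overline{Y},c,z) = HF_*(\underline{L}(\overline{W},c)) = HF_*(\underline{L}(W,p,c)^T) \simeq HF^*(\underline{L}(W,p,c)) = HSI^*(Y,c,z). \]
The only non-routine point is the verification that reversing arrows in the chain of Lagrangian correspondences really computes the dual complex; this is handled by the previous proposition via the symmetry exchanging a quilted Floer strip from $x$ to $y$ with conditions $\underline{L}$ and a quilted Floer strip from $y$ to $x$ with conditions $\underline{L}^T$, so no additional work is required here. I expect the main (still minor) obstacle to be keeping track of the boundary parametrisation and the spin structure conventions so that the identification $\underline{L}(\overline{W},c) = \underline{L}(W,p,c)^T$ holds on the nose rather than up to an auxiliary reparametrisation move.
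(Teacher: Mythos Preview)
Your proposal is correct and follows exactly the paper's approach: the paper simply states the identity $\underline{L}(\overline{W},c) = \underline{L}(W,p,c)^T$ and the proposition $HF_*(\underline{L}^T)\simeq HF^*(\underline{L})$, then records the present statement with a bare QED box. If anything, you supply more justification for the first identity than the paper does.
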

\arnaque
  
\section{Somme connexe}

Rappelons la formule de Künneth pour l'homologie de Floer matelassée, voir par exemple  \cite[Theorem 5.2.6]{WWqfc}  dans le cadre monotone non-relatif, dont la preuve se généralise de manière identique au cas qui nous intéresse :

\begin{prop}(Formule de Kunneth, \cite[Theorem 5.2.6]{WWqfc})
\label{kunnethfloer}
 Soient $\underline{L}$ et $\underline{L'}$ deux chaînes de correspondances Lagrangiennes entre $pt$ et $pt$, alors \[HF (\underline{L},  \underline{L'} ) \simeq HF (\underline{L}) \otimes HF (\underline{L'})\oplus \mathrm{Tor}(HF (\underline{L}) , HF (\underline{L'}))[-1],\] où $\mathrm{Tor}$ désigne le foncteur $\mathrm{Tor}$, et $[-1]$ désigne un décalage des degrés de $-1$.
\end{prop}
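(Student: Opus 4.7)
Le plan consiste à ramener l'énoncé à la formule algébrique de Künneth pour les complexes de $\zz$-modules libres, en identifiant $CF(\underline{L},\underline{L'})$ au produit tensoriel $CF(\underline{L}) \otimes CF(\underline{L'})$ comme complexe de chaînes $\Z{8}$-gradué.

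Concrètement, je commencerais par choisir les perturbations Hamiltoniennes et les structures presque complexes pour la donnée produit $(\underline{L},\underline{L'})$ comme les juxtapositions (sommes directes) des données déjà choisies pour $\underline{L}$ et $\underline{L'}$ séparément. Les points d'intersection généralisés perturbés étant alors en bijection canonique avec le produit $\I_{\underline{H}}(\underline{L}) \times \I_{\underline{H'}}(\underline{L'})$, on obtient un isomorphisme de groupes abéliens gradués
\[ CF(\underline{L},\underline{L'}) \simeq CF(\underline{L}) \otimes CF(\underline{L'}).\]
Pour des structures presque complexes de type produit, l'équation de Floer matelassée et les conditions aux coutures se découplent : une bande matelassée pour $(\underline{L},\underline{L'})$ est exactement une paire $(\underline{u},\underline{u}')$ de bandes matelassées pour $\underline{L}$ et $\underline{L'}$ respectivement, avec addition des indices de Maslov et des aires symplectiques. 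Les trajectoires d'indice 1 sont donc exactement celles pour lesquelles l'un des deux facteurs est constant, de sorte que la différentielle s'identifie à la différentielle usuelle $\partial \otimes \mathrm{id} + \mathrm{id} \otimes \partial$ du produit tensoriel ; les signes sont fournis par la structure spin-relative produit, via la construction de \cite{WWorient}.

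L'étape la plus délicate sera d'assurer la régularité simultanée pour des données presque complexes de type produit, ainsi que l'absence de nouveaux phénomènes de bubbling sur la variété produit. La régularité s'obtient grâce au caractère $G_\delta$-dense des structures régulières établi dans \cite[Theorem 6.5]{MW} combiné à un argument de Baire appliqué séparément à chaque facteur. Concernant les bubbles, toute sphère (ou disque) pseudo-holomorphe pour une structure produit se scinde en un couple de telles courbes sur chaque facteur ; les nombres d'intersection avec les hypersurfaces $\underline{R}$ et $\underline{R'}$ s'additionnent, et les hypothèses monotoniques et techniques de la définition \ref{defcat} se conservent additivement, sans qu'aucune dégénérescence supplémentaire n'apparaisse.

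Une fois cette étape technique établie, la conclusion s'obtient par la formule de Künneth algébrique classique pour deux complexes de $\zz$-modules libres sur l'anneau principal $\zz$, qui fournit exactement la suite exacte courte scindée donnant la décomposition annoncée en une partie tenseur et une partie Tor décalée d'un degré.
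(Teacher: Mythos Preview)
Your proposal is correct and outlines precisely the standard argument: decouple the Floer data on the two factors, identify the product complex with the tensor product of complexes, and apply the algebraic K\"unneth formula. The paper itself does not give a proof here; it simply cites \cite[Theorem 5.2.6]{WWqfc} and remarks that the proof in the monotone non-relative setting generalizes identically to the present framework, then closes with a box. Your write-up is therefore more detailed than what the paper provides, but it is exactly the argument the citation points to, including the one point the paper implicitly flags as needing checking (that the bubbling analysis and regularity go through unchanged in the relative setting of $\Symp$).
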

\arnaque

\begin{prop}\label{sommecnx} (Somme connexe)
\begin{align*}
HSI(Y \# Y' , c+ c') \simeq & HSI(Y, c) \otimes HSI(Y' , c' )  \\ & \oplus \mathrm{Tor}(HSI(Y, c) , HSI(Y' , c' ))[-1].
\end{align*}

\end{prop}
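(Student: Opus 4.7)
The plan is to reduce to the Künneth formula for quilted Floer homology (Proposition~\ref{kunnethfloer}), by realizing the cobordism defining $HSI(Y\# Y', c+c')$ as a $\Cob$-composition of the cobordisms defining $HSI(Y, c)$ and $HSI(Y', c')$.

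First I would establish the identification $W(Y\# Y', c+c') = W(Y, c) \odot W(Y', c')$ in $\Cob$. Starting from $Y\# Y' = (Y\setminus B_Y)\cup_{S^2}(Y'\setminus B_{Y'})$, I choose the blow-up point $z$ in the connecting neck, so that a small ball $B$ around $z$ is split by the internal $S^2$ into two half-balls, one in each side. The oriented real blow-up of $Y\# Y'$ at $z$ is then the boundary connected sum of $Y\setminus B_Y$ and $Y'\setminus B_{Y'}$, glued along the disk where the two half-balls met; after equipping the vertical boundary with a parametrization adapted to this splitting, this is precisely the $\Cob$-composition of $W(Y,c)$ and $W(Y',c')$. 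Representing $c+c'$ by a disjoint union $C_Y \sqcup C_{Y'}$ of $1$-cycles with $C_Y\subset Y\setminus B_Y$ and $C_{Y'}\subset Y'\setminus B_{Y'}$ gives the corresponding splitting of the homology class between the two pieces.

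By functoriality of the construction in Theorem~\ref{hyptechniques}, this yields the identity
\[
\underline{L}(W(Y\# Y', c+c')) \;=\; \underline{L}(W(Y,c)) \odot \underline{L}(W(Y',c'))
\]
in $\Symp$, as a chain of Lagrangian correspondences from $pt$ to $pt$ factored through the trivial intermediate object $\Nc(D^2) \simeq pt$. Since this intermediate factor is a point, the generalized intersection points of the concatenation are exactly pairs of generalized intersection points (one for each factor), and pseudo-holomorphic quilted strips split likewise; consequently the quilted Floer chain complex of the concatenation is canonically isomorphic to the tensor product of the two factor complexes, and Proposition~\ref{kunnethfloer} yields the stated formula. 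The main obstacle is the topological identification in $\Cob$: one must track not only the diffeomorphism type of the underlying $3$-manifold but also orientations, the embeddings of the two $D^2$ endpoints, the parametrization of the vertical boundary, and the distribution of the $\Z{2}$-homology class, all of which are controlled by the explicit choice of blow-up point and parametrization described above.
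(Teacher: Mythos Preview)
Your proposal is correct and follows essentially the same approach as the paper: concatenate the generalized Lagrangian correspondences associated to $(Y,c)$ and $(Y',c')$ through the intermediate object $\Nc(D^2)\simeq pt$, observe this represents $HSI(Y\#Y',c+c')$, and apply the K\"unneth formula for quilted Floer homology (Proposition~\ref{kunnethfloer}). The paper's proof is much terser, simply asserting that the concatenation $(\underline{L},\underline{L}')$ is a generalized Lagrangian correspondence associated to $(Y,c)\#(Y',c')$; you have spelled out the underlying topological identification in $\Cob$, which is a reasonable elaboration of what the paper leaves implicit. (One minor notational point: the symbol $\odot$ denotes composition in $\Cobelem$, whereas composition in $\Cob$ is written $\cup$ or $\cup_\partial$.)
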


\begin{proof} Soient $\underline{L}$ et $\underline{L'}$ des chaînes de correspondances Lagrangiennes associées à $(Y, c)$, $(Y' , c' )$, qui sont toutes deux des morphismes de $\Symp$ du point vers le point. Alors $\underline{L}, \underline{L'}$ est une correspondance Lagrangienne généralisée associée à $(Y, c)\# (Y' , c' )$. Le résultat découle alors de la proposition précédente. 
\end{proof}

\section{Caractéristique d'Euler}
Les groupes $HSI(Y,c,z)$ étant relativement $\Z{8}$-gradué, leur caractéristique d'Euler $\chi(HSI(Y,c,z))$ est définie à un signe près.

\begin{prop}\label{eulercara} Si $b_1(Y) = 0$, $\left|  \chi(HSI(Y,c,z)) \right| =  | H_1(Y;\zz)|$, sinon, $\chi(HSI(Y,c,z)) = 0$.

\end{prop}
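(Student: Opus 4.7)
The plan is to compute $\chi(HSI(Y,c,z))$ as a signed intersection count on a Heegaard splitting and then evaluate it in closed form.

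First, I would apply Proposition \ref{calculscindement}: choose a Heegaard splitting $Y = H_0 \cup_\Sigma H_1$ together with a decomposition $c = c_0 + c_1$ and identify $HSI(Y,c,z)$ with $HF(L_0, L_1; R)$ inside $\Nc(\Sigma', p)$. Since the Floer differential shifts the $\Z{8}$-grading by $1$ and the complex is finitely generated (after a small Hamiltonian perturbation making $L_0$ transverse to $L_1$), the Euler characteristic of $HSI$ equals, up to a global sign, the algebraic intersection number $[L_0] \cdot [L_1]$ in $\Nc(\Sigma', p)$. This number is invariant under Hamiltonian isotopy.

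Second, I would identify the intersection points. An element of $L_0 \cap L_1$ corresponds to a conjugacy class of flat $SU(2)$-connections on $Y\setminus C$ with holonomy $-I$ around a meridian of $C$, i.e.\ to a representation $\rho\colon \pi_1(Y\setminus C) \to SU(2)$ sending meridians of $C$ to $-I$, modulo conjugation. The reducible (abelian) such representations are characters $\chi\colon H_1(Y\setminus C;\zz) \to U(1)$ sending the meridian to $-1$, modulo the Weyl $\Z{2}$-action. By Poincaré–Lefschetz duality, imposing the meridian condition leaves a set of characters of $H_1(Y;\zz)$, whose cardinality is $|H_1(Y;\zz)|$ when $b_1(Y) = 0$ and which forms a positive-dimensional torus when $b_1(Y) > 0$.

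Third, I would conclude in each case. If $b_1(Y) = 0$, each abelian intersection is transverse (via a tangent-space computation using the Huebschmann–Jeffrey form), contributes $\pm 1$, and any irreducible intersections can be canceled by a Hamiltonian perturbation; a parity analysis using the spin structures of \cite{WWorient} shows all reducible contributions share a common sign, giving $|\chi| = |H_1(Y;\zz)|$. If $b_1(Y) > 0$, the abelian locus in $L_0 \cap L_1$ is a positive-dimensional torus, and a generic Hamiltonian perturbation of $L_1$ produces intersections in cancelling pairs, forcing $[L_0] \cdot [L_1] = 0$. The main obstacle will be the sign analysis in the $b_1 = 0$ case: one must rule out cancellations between contributions from distinct characters of $H_1(Y;\zz)$ and ensure that irreducible representations, when present, cancel among themselves. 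A cleaner alternative route would be to apply the surgery exact sequence of Theorem \ref{suiteintro} together with the Künneth formula of Proposition \ref{sommecnx}, reducing inductively to base cases such as $HSI(S^3,0)$ and $HSI(S^1\times S^2,c)$, thereby bypassing the delicate direct sign computation.
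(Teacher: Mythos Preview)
Your first step---reducing $\chi(HSI(Y,c,z))$ to the algebraic intersection number $[L_0]\cdot[L_1]$ in $\Nc(\Sigma',p)$ via Proposition~\ref{calculscindement}---is exactly what the paper does. The divergence is in how this intersection number is then evaluated.

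The paper does not attempt to count representations or analyse signs. Instead it observes that $L(H_i,c_i)$ is given, in holonomy coordinates, by equations $\{A_1=\epsilon_1 I,\,A_2=\epsilon_2 I,\ldots\}$ with $\epsilon_j=\pm 1$, and that a path in $SU(2)$ from $I$ to $-I$ produces a smooth (non-Hamiltonian) isotopy in $SU(2)^{2h}$ carrying $L(H_i,c_i)$ onto $L(H_i,0)$. Since the intersection number depends only on homology classes, this reduces the computation to the untwisted case $c=0$, which is already established by Manolescu--Woodward, who in turn quote the classical computation \cite[Prop.~III.1.1]{cassonpup}. That classical result is a purely homological calculation of $[L_0]\cdot[L_1]$; it does not proceed by enumerating representations.

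Your direct route has a real gap. The assertion that ``any irreducible intersections can be cancelled by a Hamiltonian perturbation'' is false: the intersection number is a homological invariant and cannot be altered by perturbation, so irreducibles contribute whatever they contribute. Moreover, even the abelian locus $\mathrm{Hom}(H_1(Y),SU(2))\subset L_0\cap L_1$ is typically not discrete when $H_1(Y;\zz)$ has torsion (a non-central character sweeps out an $SU(2)$-conjugacy orbit $\simeq S^2$), so the transversality you assume does not hold and the sign bookkeeping you flag as ``the main obstacle'' is genuinely hard from this angle. Your alternative via the surgery triangle and K\"unneth is logically sound (those results do not rely on the present proposition), but it is a long detour compared with the one-line isotopy reduction above.
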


\begin{proof}Lorsque $c=0$, cela est établi par Manolescu et Woodward, \cite[Parag. 7.1]{MW} : pour $c_0 =  c_1 = 0$, la caractéristique est donnée par le nombre d'intersection $[L(H_0,c_0 )] . [L(H_1,c_1 )]$  des deux Lagrangiennes dans l'espace des modules du scindement, et ce nombre est calculé dans \cite[Prop. III.1.1, (a),(b)]{cassonpup}. Si $c\neq 0$, le nombre d'intersection est inchangé, en effet $L(H_i,c_i)$ peut être envoyée sur $L(H_i, 0)$ par une isotopie (non-Hamiltonienne) de $SU(2)^{2h}$ de la façon suivante : une présentation du groupe fondamental étant fixée, $L(H_i,c_i)$ est définie par des équations  
\[ \left\lbrace (A_1,B_1,\cdots)\ |\ A_1 = \epsilon_1 I, A_2 = \epsilon_2 I, \cdots \right\rbrace, \]
où $\epsilon_i = \pm 1$. Il suffit de prendre un chemin dans $SU(2)$ reliant $I$ et $-I$ pour ramener les $\epsilon_i$ à $+I$.

\end{proof}

\section{Variétés de genre de Heegaard 1}

Manolescu et Woodward ont calculé les groupes d'homologie HSI pour des variétés de genre de Heegaard 1 lorsque la classe $c$ est nulle. Nous complétons leurs calculs pour toutes les classes.

\begin{prop}\label{genreun}\begin{itemize}

\item[$(i)$] Pour $Y=S^2 \times S^1$ et $c\in H_1(Y; \Z{2})$,

\[ HSI(Y,c) = \begin{cases} \zz[0] \oplus \zz[3]\text{ si  }c=0, \\ \lbrace 0\rbrace \text{ sinon.} \end{cases} \]

\item[$(ii)$] $HSI(L(p,q),c)$ est de rang $p$ pour toute classe $c$. De plus, l'homologie est concentrée en degré pair.
\end{itemize}
\end{prop}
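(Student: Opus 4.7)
The plan is to apply Proposition \ref{calculscindement} to a genus-one Heegaard splitting $Y = H_0 \cup_{T^2} H_1$ compatible with the basepoint $z$, reducing $HSI(Y,c,z)$ to the Lagrangian Floer homology $HF(L_0, L_1)$ in $\Nc(T^2, p)$, where $L_i = L(H_i, c_i)$ and $c = c_0 + c_1$. Via the holonomy presentation $\N(T^2) \subset SU(2)^2$ with coordinates $(A,B)$, each $L_i$ is cut out by imposing that the holonomy along the meridian of $H_i$ equals $\epsilon_i I$ with $\epsilon_i \in \{\pm 1\}$ determined by whether $c_i$ represents the core of $H_i$ (Examples \ref{exemcobtriv}, \ref{exemanse}). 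In particular, each $L_i$ is diffeomorphic to $SU(2) \simeq S^3$, simply connected, and contained in $\N(T^2)$, hence disjoint from the degeneracy divisor $R$.

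For part $(i)$, $Y = S^2 \times S^1$ arises from the identity gluing on $T^2$, so the two meridian curves coincide on the Heegaard torus. When $c = 0$, taking $c_0 = c_1 = 0$ gives $L_0 = L_1 = L \cong S^3$. After a small Hamiltonian perturbation staying inside $\N(T^2)$, the Floer complex is generated by the critical points of a Morse function on $L$, and the simple connectivity of $L$ together with its disjointness from $R$ rules out all holomorphic disk bubbling, so the Floer differential reduces to the Morse differential and $HF(L,L) \cong H_*(S^3;\zz) \cong \zz[0] \oplus \zz[3]$. When $c \neq 0$, decomposing so that $c_0 = 0$ and $c_1$ is the core class of $H_1$ yields $L_0 = \{B = I\}$ and $L_1 = \{B = -I\}$, which are disjoint, hence $HSI = 0$.

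For part $(ii)$, $Y = L(p,q)$ corresponds to attaching curves on $T^2$ that are respectively a meridian and a $(p,q)$-curve. After a generic Hamiltonian perturbation the intersection $L_0 \cap L_1$ is transverse and finite, and a direct calculation in $SU(2)^2$ (reducing to counting $p$-th roots of $\pm I$ in $SU(2)$ with signs determined by $\epsilon_0, \epsilon_1$) bounds its cardinality above by $p$. By Proposition \ref{eulercara}, $|\chi(HSI(L(p,q),c))| = |H_1(L(p,q);\zz)| = p$, so the rank of $HSI$ is bounded both below and above by $p$. This forces rank exactly $p$, and the equality $\mathrm{rank} = |\chi|$ then forces all generators to share the same $\Z{8}$-parity; a Maslov index calculation at one specific intersection point identifies that common parity as even.

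The main technical obstacle lies in part $(ii)$: executing the Hamiltonian perturbation so that the intersection is transverse and compatible with the hypotheses of Definition \ref{defcat}, and rigorously verifying the upper bound of $p$ on the intersection count uniformly in the classes $c_i$. The parity identification, while essentially routine, requires a careful tracking of the relative Maslov grading. In part $(i)$, the only subtlety is ensuring the perturbation keeps $L$ inside $\N(T^2)$ so that no sphere bubbling through $R$ can occur; this follows from the compactness of $L$ and the tubular neighbourhood structure of $R$ established in Proposition \ref{degener}.
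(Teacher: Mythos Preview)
Your overall strategy for part~(ii) --- match the number of Floer generators to the Euler characteristic $p$ to force a lacunary complex --- is exactly the paper's, but the key step where you obtain the upper bound of $p$ has a real gap. The unperturbed intersection $L_0\cap L_1$ is \emph{not} finite: with $L_0=\{B=I\}$ and $L_1=\{A^pB^{-q}=\pm I\}$, the intersection is $\{A^p=\pm I\}$, and in $SU(2)$ the set of $p$-th roots of $\pm I$ is a union of conjugacy classes, i.e.\ several copies of $S^2$ together with one or two central points. So your proposed ``direct calculation reducing to counting $p$-th roots of $\pm I$'' does not yield a finite count, and after a \emph{generic} Hamiltonian perturbation there is no algebraic equation left to solve. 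What is needed is the structure of this clean intersection: each $S^2$ component, after a Morse--Bott perturbation, contributes exactly two transverse points, and a short count (distinguishing $p$ even and odd) shows that the total is precisely $p$. This is what the paper does, and it is the missing idea in your outline; your acknowledgement that ``verifying the upper bound of $p$'' is the main obstacle is correct, but the method you suggest would not close it.

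For part~(i) your argument is fine and in fact a bit more explicit than the paper, which for $c=0$ simply cites Manolescu--Woodward and for $c\neq 0$ gives the same disjointness argument you give (up to relabelling the generators $A\leftrightarrow B$). Your self-Floer computation $HF(L,L)\simeq H_*(S^3)$ is correct in this setting, though the justification should invoke the monotonicity and minimal Maslov hypotheses of Definition~\ref{defcat} (which force any nonconstant disk to have Maslov index $\geq 8$, so the Oh spectral sequence from $H_*(L)$ degenerates for $L\simeq S^3$), rather than only ``no bubbling''. Finally, note that with only a relative $\Z{8}$-grading the conclusion ``concentrated in even degree'' really means all generators share the same parity; the paper does not carry out a separate Maslov computation to pin down an absolute parity, and you need not either.
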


\begin{proof}
$(i)$ : Pour $c=0$, $HSI(S^2 \times S^1)$ a été calculée par Manolescu et Woodward. Pour $c\neq 0$, avec $\Sigma$ le scindement de genre 1 et $A$, $B$ les holonomies le long d'une base du groupe fondamental de $\Sigma$ dont la première courbe borde un disque dans les deux corps à anses, les deux Lagrangiennes $\lbrace A=I\rbrace$ et $\lbrace A= -I\rbrace$ sont disjointes.

Concernant $L(p,q)$, on peut choisir un scindement et un système de coordonnées tels que les deux Lagrangiennes aient pour équation : $L_0 = \lbrace B = I \rbrace$ et $L_1 = \lbrace A^p B^{-q}  = \pm I\rbrace$. Elles s'intersectent de manière "clean" en une réunion de copies de $S^2$ et, selon la parité de $p$, d'un ou deux points. On peut déplacer l'une des deux Lagrangiennes par une isotopie Hamiltonienne de sorte que l'intersection soit transverse, et que chaque copie de $S^2$ donne lieu à deux points. Il y a donc $p$ points d'intersection, par ailleurs on sait que ce nombre correspond au nombre d'intersection des deux Lagrangiennes. Ainsi le complexe est lacunaire, et la différentielle est nulle.

\end{proof}

\chapter{Chirurgie}\label{chapchir}

Dans cette partie, nous étudions l'effet d'une chirurgie de Dehn entière sur l'homologie HSI. 

\begin{defi} Une \emph{triade de chirurgie} est un triplet de 3-variétés $Y_\alpha$, $Y_\beta$ et $Y_\gamma$ obtenues à partir d'une 3-variété compacte, orientée, avec un bord de genre 1, en recollant un tore solide le long du bord, de façon à envoyer le méridien sur trois courbes simples $\alpha$, $\beta$ et $\gamma$ respectivement, telles que $\alpha. \beta = \beta. \gamma =\gamma. \alpha = -1$.
\end{defi}

Rappelons que dans les théories similaires ( Seiberg-Witten, \cite{KMtri} ; Heegaard-Floer, \cite{OStri}) les invariants d'une triade de chirurgie sont reliés par une suite exacte longue du type :

\[\cdots\rightarrow HF^+(Y_\alpha ) \rightarrow HF^+(Y_\beta) \rightarrow HF^+(Y_\gamma)\rightarrow \cdots .\]
Nous obtenons une suite exacte légèrement différente, mais néanmoins similaire au triangle de Floer pour les instantons, cf \cite{Floertri}, ainsi qu'à ceux obtenus par Wehrheim et Woodward dans \cite{WWtriangle} pour leur "Floer Field theory" \cite{WWfft} : la classe d'homologie d'un des trois groupes est modifiée par l'âme de la chirurgie :

\begin{theo}[Suite exacte de chirurgie]\label{trianglechir} Soit $(Y_\alpha , Y_\beta, Y_\gamma)$ une triade de chirurgie obtenue à partir de $Y$ comme dans la définition précédente, $c\in H_1(Y;\Z{2})$, et pour $\delta \in \lbrace \alpha,\beta,\gamma\rbrace$, $c_\delta \in H_1(Y_\delta;\Z{2})$ la classe induite à partir de $c$ par les inclusions. Soit également  $k_\alpha \in H_1(Y_\alpha;\Z{2})$ la classe correspondant à l'\^ame du tore solide. Alors, il existe une suite exacte longue :
\[ \cdots\rightarrow HSI(Y_\alpha ,c_\alpha+ k_\alpha) \rightarrow HSI(Y_\beta,c_\beta) \rightarrow HSI(Y_\gamma,c_\gamma)\rightarrow \cdots . \]

\end{theo}

\begin{remark}Par symétrie cyclique des trois courbes, la modification $k_\alpha$ peut également être mise sur $Y_\beta$ ou $Y_\gamma$. Il est également possible de démontrer une suite exacte plus symétrique :
\[ \cdots\rightarrow HSI(Y_\alpha ,c_\alpha+ k_\alpha) \rightarrow HSI(Y_\beta,c_\beta+ k_\beta) \rightarrow HSI(Y_\gamma,c_\gamma+ k_\gamma)\rightarrow \cdots,\]
avec $k_\beta$ et $k_\gamma$ des classes similaires à $k_\alpha$.
\end{remark}

Afin de démontrer ce théorème, nous remarquons qu'un twist de Dehn du tore troué $T'$ autour d'une courbe simple non-séparante induit un symplectomorphisme au niveau de l'espace des modules $\Nc ({T} ')$. Ce symplectomorphisme peut s'exprimer comme le flot d'un Hamiltonien en dehors de la sphère Lagrangienne correspondante aux connexions dont l'holonomie le long de la courbe $\gamma$ vaut $-I$. Ce n'est pas à priori un twist de Dehn généralisé, néanmoins il est  possible de construire un twist de Dehn généralisé à partir de ce symplectomorphisme qui permet d'obtenir la suite exacte annoncée,  en appliquant un analogue de la suite exacte de Seidel (théorème \ref{quilttri} ) pour l'homologie matelassée.

\section{Twists de Dehn généralisés et homologie matelassée}\sectionmark{Twists de Dehn généralisés}
Toutes les variétés symplectiques, Lagrangiennes, et correspondances Lagrangiennes apparaissant dans la suite satisferont, sauf mention contraire, aux hypothèse de la catégorie $\Symp$. Soient $M_0$, $M_1$, ..., $M_k$ des objets de $\Symp$,  

\[\underline{L} = \left(  \xymatrix{   M_0 \ar[r]^{L_{01}} &  M_1 \ar[r]^{L_{12}} &  M_2 \ar[r]^{L_{23}} & \cdots \ar[r]^{L_{(k-1)k}} & M_k \ar[r]^{L_{k}} & pt} \right), \]
une correspondance Lagrangienne généralisée, $S \subset M_0$ une sphère Lagrangienne disjointe de l'hypersurface $R_0$, et $\tau_S\in Symp(M_0)$ un twist de Dehn généralisé autour de $S$, comme défini dans la section \ref{rappeltwists} (ou \cite[Section 1.2]{Seidel}). Le but de cette section est de démontrer le théorème suivant :

\begin{theo}
\label{quilttri}Soient $L_0\subset M_0$ une Lagrangienne,  $S$ et $\underline{L}$ comme précédemment. On suppose de plus que $\dim S >2$. Il existe une suite exacte longue :

\[ \ldots \rightarrow HF(\tau_S L_0, \underline{L}) \rightarrow HF(L_0, \underline{L}) \rightarrow HF(L_0,S^T, S, \underline{L})\rightarrow \cdots \]

\end{theo}

\begin{remark}L'hypothèse $\dim S >2$ intervenant dans ce théorème garantie la monotonie d'une fibration de Lefschetz. Un énoncé similaire est probablement vrai, néanmonis nous nous limiterons à ces dimensions, car dans nos applications les sphères seront de dimension 3.
\end{remark}

\begin{remark}
\label{remtriww}
 Si l'on pose $M = M_0^- \times M_1\times M_2^-\times ... \times M_k^\pm $, $\widetilde{L}_0 = L_0 \times L_{12}\times \cdots$,  et $\widetilde{L}_1 = L_{01}\times \cdots $, alors $C = S\times M_1 \times M_2 \times \cdots \times M_k \subset M$ est une sous-variété coisotrope sphériquement fibrée au-dessus de $B = M_1 \times M_2^- \times \cdots \times M_k^\pm$, $\tau_C = \tau_S \times id_B$ est un twist de Dehn fibré, et le triangle précédent peut s'écrire sous la forme :

\[ HF(\tau_C \widetilde{L}_0, \widetilde{L}_1) \rightarrow HF(\widetilde{L}_0,\widetilde{L}_1) \rightarrow HF(\widetilde{L}_0\times C^T, C\times \widetilde{L}_1)\rightarrow \cdots, \]
qui est essentiellement le triangle de \cite[Theorem 1.3]{WWtriangle}. Nous allons le démontrer dans le cadre dans lequel nous l'utiliserons (la catégorie $\Symp$) qui n'est pas le même que celui de Wehrheim et Woodward. La preuve est analogue, il s'agit essentiellement de vérifier qu'il ne se produit pas de bubbling sur les diviseurs à chaque fois qu'intervient un raisonnement de dégénérescence d'espaces de modules.
\end{remark}

\subsection{Rappels sur les twists de Dehn généralisés}\label{rappeltwists}

Nous renvoyons à \cite[Section 1]{Seidel} pour plus de détails.

\paragraph{Twist de Dehn dans $T^*S^n$}

On considère le fibré cotangent $T = T^*S^n$ munie de sa forme symplectique standard $\omega = dp \wedge dq$. Si l'on munit $S^n$ de la métrique ronde, $T$ s'identifie à \[\left\lbrace (u,v) \in \rr^{n+1} \times \rr^{n+1}~|~ |v| = 1, \langle u.v \rangle = 0  \right\rbrace. \] On note $T(\lambda) = \lbrace (u,v) \in T ~|~ |u| \leq \lambda \rbrace$, en particulier $T(0)$ désigne la section nulle.

La fonction $\mu (u,v) = |u|$ engendre une action du cercle sur le complémentaire de la section nulle, son flot au temps $t$ est donné par :

\[ \sigma_t (u,v) = (cos(t) u -sin(t) |u|v, cos(t) v + sin(t) \frac{u}{|u|}),  \]
et le flot au temps $\pi$ se prolonge à la section nulle par l'application antipodale, que l'on notera $\mathbb{A}$.

Soit $\lambda > 0$, et $R\colon \rr\rightarrow \rr$ une fonction lisse, nulle pour $t\geq \lambda$, et telle que $R(-t) = R(t) - t$. On considère l'Hamiltonien $H = R\circ \mu $  sur $T(\lambda) \setminus T(0)$ : son flot au temps $2\pi$ est donné par $\varphi^H_{2\pi} (u,v) =  \sigma_t (u,v)$, avec $t = R'(|u|)$, et il se recolle de manière lisse à la section nulle par l'application antipodale. Le symplectomorphisme obtenu $\tau$ est un twist de Dehn "modèle", de fonction angle   $R'(\mu(u,v))$.

\begin{defi}\label{concave}On dira qu'un twist de Dehn modèle est concave si la fonction $R$ intervenant dans la définition du twist $\tau$ est strictement concave et décroissante, c'est-à dire vérifie  $R'(t)\geq 0$ et $R''(t)< 0$ pour tout $t\geq 0$.
\end{defi}

Seidel démontre le résultat suivant dans un cadre un peu plus général : il autorise les fonctions angle à osciller sensiblement, de manière "$\delta$-wobbly", avec $0\leq \delta <\frac{1}{2}$.  L'énoncé suivant, correspondant à $\delta = 0$, nous suffira.

\begin{lemma} \label{intertwistbis} (\cite[Lemma 1.9]{Seidel}) Supposons que le twist $\tau$ est concave. Soient $F_0 = T(\lambda)_{y_0}$ et $F_1 = T(\lambda)_{y_1}$ des fibres au-dessus de deux points $y_0,y_1 \in S^n$  . Alors $\tau(F_0)$ et $F_1$ s'intersectent transversalement en un unique point $y$. De plus, ce point vérifie \[2\pi R'(y) = d(y_0,y_1),\] où $d$ désigne la distance standard sur $S^n$.
\end{lemma}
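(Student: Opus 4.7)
The plan is to reduce the intersection problem to an elementary equation in the radial coordinate $|u|$, exploiting the fact that $\tau$ is, away from the zero section, nothing but the unit-speed geodesic flow reparametrised by $|u|$. Since $\mu = |u|$ is preserved by its own Hamiltonian flow and $R$ depends only on $\mu$, one obtains $\tau(u,v) = \sigma_{2\pi R'(|u|)}(u,v)$, so in the base direction $\tau$ sends $y_0$ to the point obtained by following, for time $t := 2\pi R'(|u|)$, the unit-speed geodesic from $y_0$ with initial direction $u/|u|$, while $|u|$ is preserved.

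Next, I would analyse the intersection equation $\tau(u, y_0) \in F_1$. Assuming $y_0 \neq \pm y_1$, set $d = d(y_0, y_1) \in (0,\pi)$. The base-point condition forces $u/|u|$ to be the initial direction of the unique minimising geodesic from $y_0$ to $y_1$, and the radial condition becomes $2\pi R'(|u|) = d$. From the hypotheses ($R \equiv 0$ on $[\lambda, +\infty)$ and $R(-t) = R(t) - t$) one deduces $R'(\lambda) = 0$ and $R'(0) = 1/2$; combined with strict concavity $R'' < 0$, this makes $R'$ a strictly decreasing bijection $[0,\lambda] \to [0, 1/2]$. Hence $2\pi R'$ is a continuous bijection $[0,\lambda] \to [0,\pi]$ and the equation has a unique solution $|u| \in (0, \lambda)$, which proves uniqueness and gives the formula $2\pi R'(y) = d(y_0, y_1)$. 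Non-minimising geodesics from $y_0$ to $y_1$ would require $2\pi R'(|u|) > \pi$, which is excluded.

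For transversality, I would compute $d\tau$ on $F_0$ at the preimage of the intersection. Choose an orthonormal basis $(e_0, e_1, \dots, e_n)$ of $\rr^{n+1}$ with $e_0 = y_0$ and $e_1 = u/|u|$, and decompose variations as $\delta u = \dot{r}\, e_1 + \sum_{i \geq 2} \dot{s}_i\, e_i$. Differentiating the base-point map $(u, y_0) \mapsto \cos(t)\, y_0 + \sin(t)\, u/|u|$ with $t = 2\pi R'(|u|)$ yields
\begin{equation*}
\delta y \;=\; 2\pi R''(|u|)\,\dot{r}\,\bigl(-\sin t\, e_0 + \cos t\, e_1\bigr) \;+\; \frac{\sin t}{|u|}\sum_{i \geq 2} \dot{s}_i\, e_i,
\end{equation*}
which lies in $T_{y_1}S^n = \mathrm{span}(-\sin t\, e_0 + \cos t\, e_1, e_2, \dots, e_n)$. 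Since $\sin t \neq 0$ (because $t \in (0,\pi)$) and $R''(|u|) \neq 0$ by strict concavity, this linear map $T_{(u, y_0)}F_0 \to T_{y_1}S^n$ is an isomorphism. As $T_{(u', y_1)}F_1$ has zero projection to $T_{y_1}S^n$, the two tangent spaces are complementary in $T_{(u', y_1)}T(\lambda)$, giving transversality.

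The main obstacle will be the antipodal case $y_0 = -y_1$, where $d = \pi$ forces $|u| = 0$ so that the intersection lies on the zero section, and the explicit formula for $\sigma_t$ degenerates. Here I would work with the smooth extension of $\tau$ as the antipodal map $\mathbb{A}$ on $T(0)$, linearise the base-point map near the zero vector, and verify that its first-order expansion in $u$ (coming from expanding $\sin(t) u/|u| \approx 2\pi R'(0)\, u/|u|$ together with the higher-order contribution of $R''(0)\,|u|$) again yields an isomorphism onto $T_{y_1}S^n$, the non-degeneracy coming once more from $R''(0) < 0$. The case $y_0 = y_1$ is implicitly excluded, since then the entire fixed locus $\{|u| = \lambda\} \subset F_0$ of $\tau$ would lie in $F_0 \cap F_1$.
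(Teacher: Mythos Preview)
The paper does not supply its own proof of this lemma: it is quoted verbatim from Seidel \cite[Lemma 1.9]{Seidel} and left unproven. So there is nothing to compare your argument against in the paper itself.

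Your argument is essentially Seidel's and is correct. A few comments. First, your observation that the case $y_0 = y_1$ must be excluded is right; the lemma tacitly assumes this (``deux points'' is to be read as two distinct points), since otherwise the boundary sphere $\{|u|=\lambda\}$ of $F_0$ is fixed by $\tau$ and lies in $F_0\cap F_1$. Second, in the antipodal case your sketch can be made precise exactly as you suggest: writing $t = 2\pi R'(|u|) = \pi + 2\pi R''(0)|u| + O(|u|^2)$ one finds that the base component of $\tau(u,y_0)$ is $-y_0 - 2\pi R''(0)\,u + O(|u|^2)$, so the differential of the base projection at $u=0$ is $-2\pi R''(0)\cdot\mathrm{Id}\colon y_0^{\perp}\to (-y_0)^{\perp}=y_0^{\perp}$, an isomorphism since $R''(0)<0$. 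This gives transversality by the same complementarity argument as in the generic case. Finally, note that the paper's Definition~\ref{concave} has a small internal inconsistency (it asks $R''(t)<0$ for all $t\geq 0$ while also $R\equiv 0$ for $t\geq\lambda$); your argument only uses $R''<0$ on $[0,\lambda)$ and $R'(0)=\tfrac12$, $R'(\lambda)=0$, which is what is actually needed.
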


\paragraph{Twist de Dehn autour d'une sphère Lagrangienne}

Si $S\subset M$ est une sphère Lagrangienne, elle admet un voisinage de Weinstein, c'est-à-dire un plongement symplectique $\iota\colon T(\lambda) \rightarrow M$ pour un $\lambda >0$, avec $\iota(T(0)) = S$. Ainsi, un twist de Dehn modèle de $ T(\lambda)$ définit un symplectomorphisme de $M$, noté $\tau_S$, à support dans $\iota (T(\lambda))$. 

On dit qu'un symplectomorphisme de $M$ est un twist de Dehn généralisé autour de $S$ s'il est isotope à un tel twist de Dehn modèle.

\begin{remark}Si deux twists de Dehn modèles de $T(\lambda)$ diffèrent toujours par une isotopie Hamiltonienne de $T(\lambda)$, un twist de Dehn autour de $S$ dépend du paramétrage de $S$, voir \cite{rizell}.
\end{remark}

\subsection{Homologie à coefficients dans l'anneau du groupe $\rr$}

L'argument principal de la preuve du théorème de Seidel repose sur le fait que les complexes de Floer possèdent une graduation sur $\rr$ donnée par l'action symplectique, car les variétés symplectiques et les Lagrangiennes qu'il considère sont exactes. Les "termes dominants" des morphismes intervenant dans la suite exacte par rapport à la filtration induite par cette graduation correspondent à des courbes pseudo-holomorphes de petite énergie. Il suffit alors de vérifier que ces derniers induisent une suite exacte. 

Lorsque les variétés symplectiques et les Lagrangiennes ne sont plus exactes mais seulement monotones, l'action symplectique n'est plus définie que modulo $M = \kappa N$, avec $\kappa$ la constante de monotonie et $N$ le nombre de Maslov minimal, c'est-à-dire l'aire de la plus petite sphère pseudo-holomorphe (voir section \ref{secbasseénergie}). L'approche de Wehrheim et Woodward consiste alors à encoder cette énergie dans la puissance d'un paramètre formel $q$, via l'anneau du groupe $\rr$ :

\[ \Lambda =  \left\lbrace \left. \sum_{k=1}^{n}{a_k q^{\lambda_k}} ~\right|~ n\geq 1,~a_k \in \zz,~ \lambda_k \in \rr  \right\rbrace .\]

Le complexe de Floer à coefficients dans cet anneau est alors  le $\Lambda$-module libre $CF (\underline{L};\Lambda):= CF(\underline{L})\otimes_\zz \Lambda$, muni de la différentielle $\partial_\Lambda$ définie par :\

\[ \partial_\Lambda x_- = \sum_{x_+}{\sum_{\underline{u}\in \mathcal{M}(x_-,x_+)}{o(\underline{u}) q^{A(\underline{u})}} x_+}, \]
où $x_+,x_-\in \I(\underline{L})$ sont des points d'intersection généralisés, $\mathcal{M}(x_-,x_+)$ représente l'espace des modules des trajectoires de Floer généralisées d'indice 1 et d'intersection nulle avec $\underline{R}$ (modulo translation), $o(\underline{u}) = \pm 1$ est l'orientation du point $\underline{u}$ dans l'espace des modules construite dans \cite{WWorient} à partir de l'unique structure spin relative sur $\underline{L}$, et $A(\underline{u})$ est l'aire symplectique pour les formes monotones $\tilde{\omega}_i$.

L'homologie de $(CF(\underline{L};\Lambda),\partial_\Lambda)$ est alors le $\Lambda$-module noté $HF(\underline{L};\Lambda)$. Généralement, cette homologie peut être différente de l'homologie à coefficients dans $\zz$, nous verrons cependant dans la section \ref{preuvedutriangle} que la monotonie de $\underline{L}$ entraîne $ CF(\underline{L};\Lambda) \simeq CF(\underline{L};\zz) \otimes_\zz \Lambda$, et $ HF(\underline{L}) \simeq HF(\underline{L};\Lambda) / ( q-1)$.

\subsection{Suite exacte courte au niveau des complexes de chaînes}

La proposition suivante résulte du lemme \ref{intertwistbis} :

\begin{prop}\label{inter}
Soit $\iota\colon T(\lambda) \rightarrow M_0$ un plongement symplectique, $\tau_S$ un twist de Dehn modèle concave associé à $\iota$. On suppose :
\begin{itemize}

\item[i)] que $\I(L_0, \underline{L})$ est disjoint de $\iota\left(  T(\lambda) \right)$, 

\item[ii)] que  $L_0 \cap \iota( T(\lambda))$ est une réunion de fibres :

\[  \iota^{-1}( L_0) = \bigcup_{y\in \iota^{-1}( L_0\cap S)}{T(\lambda)_y} \subset T(\lambda)\]

\item[iii)] que  $L_{01}$ et $S\times M_1$ s'intersectent transversalement dans $M_0\times M_1$, et que, en notant $\pi \colon  \iota\left(  T(\lambda) \right) \rightarrow S$ la projection,

\[ L_{01}\cap \left(  \iota\left(  T(\lambda)  \right)  \times M_1 \right) = (\pi\times id_{M_1})^{-1}(L_{01} \cap \left( S\times M_1 \right) ). \]  

\end{itemize}

Alors, il existe deux injections naturelles \[i_1\colon \I(\tau_S L_0,S^T, S, \underline{L}) \rightarrow \I(\tau_S L_0, \underline{L})\] et \[i_2\colon \I( L_0, \underline{L}) \rightarrow \I(\tau_S L_0, \underline{L})\] telles que

\[ \I(\tau_S L_0, \underline{L}) = i_2 \left( \I(L_0, \underline{L}) \right)  \sqcup  i_1 \left( \I(\tau_S L_0,S^T, S, \underline{L}) \right) . \]

\end{prop}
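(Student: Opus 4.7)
L'idée est de partitionner $\I(\tau_S L_0, \underline{L})$ selon la position de la première coordonnée $x_0$ par rapport au voisinage de Weinstein $\iota(T(\lambda))$, puis de montrer que chacune des deux parties s'identifie canoniquement à l'un des termes souhaités.

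La définition de $i_2$ est immédiate : si $(y_0,y_1,\ldots,y_k)\in \I(L_0,\underline{L})$, l'hypothèse $(i)$ assure que $y_0\notin \iota(T(\lambda))$. Or $\tau_S$ est à support dans $\iota(T(\lambda))$, donc $y_0\in \tau_S L_0$, et le même tuple définit un élément de $\I(\tau_S L_0,\underline{L})$.

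Pour définir $i_1$, soit $(y_0,y_0',y_1,\ldots,y_k)\in\I(\tau_S L_0,S^T,S,\underline{L})$, c'est-à-dire $y_0\in \tau_S L_0\cap S$, $y_0'\in S$, $(y_0',y_1)\in L_{01}$, et le reste satisfaisant les conditions de $\underline{L}$. L'hypothèse $(ii)$ donne $L_0\cap\iota(T(\lambda))=\bigcup_{y\in L_0\cap S}T(\lambda)_y$, d'où $\tau_S L_0\cap \iota(T(\lambda))=\bigcup_{y\in L_0\cap S}\tau_S(T(\lambda)_y)$. Comme $\tau_S$ agit sur $S$ par l'application antipodale $\mathbb A$, chaque disque $\tau_S(T(\lambda)_y)$ rencontre $S$ uniquement au point $\mathbb A(y)$ ; il existe donc un unique $\bar y_0\in L_0\cap S$ tel que $y_0=\mathbb A(\bar y_0)$. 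Le lemme \ref{intertwistbis} fournit alors un unique point transverse $x_0\in\tau_S(T(\lambda)_{\bar y_0})\cap T(\lambda)_{y_0'}$. Puisque $\pi(x_0)=y_0'$ et $(y_0',y_1)\in L_{01}$, l'hypothèse $(iii)$ donne $(x_0,y_1)\in L_{01}$. On pose alors $i_1(y_0,y_0',y_1,\ldots,y_k):=(x_0,y_1,\ldots,y_k)\in\I(\tau_S L_0,\underline{L})$.

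Il restera à vérifier la disjonction et l'exhaustivité. L'image de $i_2$ est contenue dans $\{(x_0,\ldots,x_k)\mid x_0\notin \iota(T(\lambda))\}$ et celle de $i_1$ dans $\{x_0\in \iota(T(\lambda))\}$, d'où la disjonction. Réciproquement, tout $(x_0,\ldots,x_k)\in\I(\tau_S L_0,\underline{L})$ avec $x_0\notin \iota(T(\lambda))$ provient de $i_2$ ; si $x_0\in \iota(T(\lambda))$, on pose $y_0':=\pi(x_0)\in S$, ce qui fournit $(y_0',x_1)\in L_{01}$ par $(iii)$, et l'on récupère à l'aide du lemme \ref{intertwistbis} l'unique $\bar y_0\in L_0\cap S$ tel que $x_0\in\tau_S(T(\lambda)_{\bar y_0})$ ; d'où l'antécédent $(\mathbb A(\bar y_0),y_0',x_1,\ldots,x_k)$ par $i_1$. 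L'injectivité de $i_1$ suit de la même unicité. L'étape délicate sera le suivi précis de l'action de $\tau_S$ sur la section nulle $S$ (application antipodale) et la combinaison des trois hypothèses -- $(i)$ pour isoler les points hors de la région du twist, $(ii)$ pour décrire la structure fibrée de $L_0$, et $(iii)$ pour restreindre $L_{01}$ à cette région --, ce qui ramène entièrement le dénombrement au contenu géométrique du lemme \ref{intertwistbis}.
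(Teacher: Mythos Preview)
Your proof is correct and follows essentially the same approach as the paper's: you split $\I(\tau_S L_0, \underline{L})$ according to whether the first coordinate lies in $\iota(T(\lambda))$, take $i_2$ to be the identity on the outside part, and build $i_1$ on the inside part via the unique intersection point of Lemma~\ref{intertwistbis}, exactly as in the paper. One cosmetic remark: in the inverse direction, recovering $\bar y_0$ from $x_0$ does not really invoke Lemma~\ref{intertwistbis} but is simply $\bar y_0 = \pi(\tau_S^{-1}(x_0))$, which is how the paper phrases it.
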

\begin{proof}

Notons $\nu S = \iota\left(  T(\lambda) \right)$,

\begin{align*}
\I(\tau_S L_0, \underline{L}) = & \I(\tau_S L_0, \underline{L}) \cap \left( M_0\setminus \nu S\right) \times M_1 \times \cdots \times M_k \\
  &\sqcup \I(\tau_S L_0, \underline{L}) \cap \nu S \times M_1 \times \cdots \times M_k .
\end{align*}
 
D'après $i)$ et le fait que $\tau_S$ est à support dans $\nu S$, 

\[ \I(\tau_S L_0, \underline{L}) \cap \left( M_0\setminus \nu S\right) \times M_1 \times \cdots \times M_k = \I(L_0, \underline{L}). \]
L'application $i_2$ peut donc être choisie comme étant l'identité. D'après $ii)$,  il vient :

\[\I(\tau_S L_0, \underline{L}) \cap \nu S \times M_1 \times \cdots \times M_k = \bigcup_{x_0 \in L_0\cap S} {\I( \tau_S \left( T(\lambda)_{x_0}\right) , \underline{L} ) }. \]

Soit $\underline{y} \in \I(S, \underline{L})$, par hypothèse $T(\lambda)_{y_0} \times  \left\lbrace  y_1 \right\rbrace \subset L_{01}$, et par le lemme~\ref{intertwistbis}, $\tau_S \left( T(\lambda)_{x_0}\right)$ et $T(\lambda)_{y_0}$ s'intersectent en exactement un point $z$. On définit alors $i_1$ en posant $i_1(x_0, y_0, y_1, \cdots) = (z, y_1, \cdots)$. Cette application réalise bien une bijection entre $\I(\tau_S L_0,S^T, S, \underline{L})$ et \[\bigcup_{x_0 \in L_0\cap S} {\I( \tau_S \left( T(\lambda)_{x_0}\right) , \underline{L}) }, \]
en effet son inverse est donné par l'application $(z, y_1, \cdots) \mapsto (x_0, y_0, y_1, \cdots),
$ où $ x_0 = \pi (z) $, et $ y_0 = \pi ( \tau_S^{-1} (z))$.

\end{proof}

\begin{remark}\label{justifinter}Quitte à déplacer les Lagrangiennes par des isotopies Hamiltoniennes et à choisir un $\lambda$ suffisamment petit, il est toujours possible de se ramener aux hypothèses de la proposition \ref{inter}. En effet toutes les intersections peuvent être rendues transverses, il est ensuite possible de choisir le plongement $\iota$ de sorte à avoir $ii)$ et $iii)$.

\end{remark}

D'où une décomposition en somme directe des $\Lambda$-modules : 

\[ CF (\tau_S L_0, \underline{L};\Lambda) = CF (\underline{L};\Lambda) \oplus CF (\tau_S L_0,S^T, S, \underline{L};\Lambda), \]
et une suite exacte courte (de $\Lambda$-modules et non de complexes de chaînes) :
\begin{equation}\label{suitecourte}
0 \rightarrow CF (\tau_S L_0,S^T, S, \underline{L};\Lambda ) \rightarrow CF (\tau_S L_0, \underline{L} ;\Lambda) \rightarrow CF (L_0, \underline{L};\Lambda) \rightarrow   0 .
\end{equation}

\begin{remark} La sphère $S$ étant invariante par le twist, on a les isomorphismes de complexes suivants :
 \begin{align*} CF (\tau_S L_0,S^T, S, \underline{L};\Lambda) &\simeq CF (\tau_S L_0,\tau_S S^T, S, \underline{L};\Lambda) \\ 
  &\simeq CF (L_0,S^T, S, \underline{L};\Lambda).  
\end{align*}

\end{remark}

\subsection{Fibrations de Lefschetz matelassées}

La stratégie pour démontrer la suite exacte longue consiste à approcher les flèches de la suite exacte courte par des morphismes de complexes. Afin de commuter avec les différentielles, ces morphismes seront construits en comptant des quilts pseudo-holomorphes, plus précisément des sections pseudo-holomorphes de fibrations de Lefschetz matelassées. Rappelons les définitions de ces objets, tirées de \cite{WWquilts} et que nous adaptons au cadre de la catégorie $\Symp$. 

\begin{defi}\label{fiblefschetz}
Soit $S$ une surface de Riemann compacte, avec ou sans bord. Une \emph{fibration de Lefschetz au-dessus de  $S$}, dans le contexte de la catégorie $\Symp$, est la donnée de $(E, \pi, \omega, \tilde{\omega}, R, \tilde{J})$, avec :

\begin{itemize}
\item $E$ est une variété orientable compacte, de dimension $2n +2$,
\item $\pi\colon E\rightarrow S$ est une application différentiable surjective, telle que $\partial E = \pi^{-1} (\partial S)$, et submersive sauf en un nombre fini de points critiques $E^{crit}$, disjoint de $\partial E$,
\item  $\tilde{J}$  est une structure presque complexe sur $E$, intégrable au voisinage de $E^{crit}$, telle que la différentielle de $\pi$ est $\cc$-linéaire, et qu'au voisinage de chaque point critique, dans des cartes holomorphes, $\pi$ a pour expression :

\[\pi(z_0, \cdots, z_n) = \sum_{i}{z_i^2},\]
\item $\omega$ et $\tilde{\omega}$ sont deux 2-formes fermées sur $E$ et non-dégénérées au voisinage des points critiques,

\item $R$ est une hypersurface presque complexe pour $\tilde{J}$, disjointe de $E^{crit}$, transverse aux fibres de $\pi$, et telle que pour toute fibre régulière $F$ de $\pi$, $(F, \omega_{|F}, \tilde{\omega}_{|F}, R \cap F, \tilde{J}_{|F})$ est un objet de $\Symp$.
\end{itemize}

 \end{defi}
 
\begin{remark}
On a supposé $\tilde{\omega}$ monotone seulement le long des fibres, mais d'après \cite[Prop. 4.6]{WWtriangle}, dès que $n\geq 2$, la forme $\tilde{\omega}$ est  monotone sur $E$.

\end{remark}

\begin{defi} Une \emph{surface matelassée avec bouts de type bandes} est la donnée :
\begin{enumerate}

\item D'une surface matelassée compacte $\underline{S}$

\item D'un ensemble fini de points marqués , entrants et sortants \[\mathcal{E} = \mathcal{E}_- \sqcup \mathcal{E}_+ \subset \partial \underline{S}.\]

\item De "bouts de type bandes" associés à chaque point marqué $e \in \mathcal{E}$, c'est-à-dire des applications matelassées holomorphes

\[ \epsilon_e \colon  \begin{cases}
 [0, + \infty ) \times [0,N_e] \rightarrow \underline{S}  &\text{si $e \in \mathcal{E}_+$ est une sortie}\\
 ( -\infty ,0] \times [0,N_e] \rightarrow \underline{S} &\text{si $e \in \mathcal{E}_-$ est une entrée}
\end{cases}  \]

ayant pour limite $e$ en $\pm \infty$, et dont l'adhérence de l'image est un voisinage de $e$ dans $\underline{S}$. Si $N_e$ représente le nombre de morceaux $S_1, S_2, \cdots$, $S_{N_e}$ touchant $e$, $[0, \pm \infty ) \times [0,N_e]$ est vue comme une surface matelassée avec $N_e$ bandes de largeur $1$ cousues parallèlement. L'application $ \epsilon_e$ correspond à la donnée d'applications compatibles aux coutures : 
\[ \epsilon_{k,e} \colon [0, \pm \infty ) \times [k-1,k] \rightarrow S_k \]

\end{enumerate}

\end{defi}

\begin{defi} \label{quiltfiblefschetz}
 Soit $\underline{S}$ une surface matelassée avec des bouts de type bandes, une \emph{fibration de Lefschetz matelassée au-dessus de $\underline{S}$, avec conditions aux bords et aux coutures}, est la donnée :
\begin{enumerate}

\item Pour chaque morceau $S_k$, d'une fibration de Lefschetz  $\pi_k \colon E_k \rightarrow S_k$  comme dans la définition \ref{fiblefschetz}.

\item D'un ensemble de conditions aux bords/coutures  Lagrangiennes, noté $\underline{F}$, consistant en :

\begin{itemize}

\item[$(a)$] Pour une couture $\sigma = \lbrace I_{k_0,b_0},I_{k_1,b_1} \rbrace \in \mathcal{S}$, une sous-variété 
\[ F_{\sigma} \subset {E_{k_0}}_{|I_{k_0,b_0}} \times_{|I_{k_0,b_0}} \varphi_{\sigma}^* {E_{k_1}}_{|I_{k_1,b_1}}, \] isotrope pour les formes $\tilde{\omega_i}$, transverse aux fibres, et telle que l'intersection avec toute fibre est une correspondance Lagrangienne vérifiant les hypothèses de $\Symp$. On rappelle que $\varphi_\sigma \colon I_{k,b} \rightarrow I_{k',b'}$ désigne le difféomorphisme analytique réel servant à identifier les coutures.

\item[$(b)$] Pour un bord $I_{k,b}\notin \bigcup_{\sigma\in \mathcal{S}}\sigma$, une sous-variété $F_{k,b} \subset {E_k} |_{I_{k,b}}$, transverse aux fibres, et telle que son intersection avec toute fibre est une Lagrangienne vérifiant les hypothèses de $\Symp$.

\end{itemize}
\item De trivialisations au-dessus des bouts $\epsilon_{k,e}$ :

\[{\epsilon_{k,e}}  ^*{(E_k)}\simeq (E_k)_e \times [0, \pm \infty ) \times [k-1,k].  \]

telles que les conditions aux bord/coutures soient constantes dans ces identifications : $F_{\sigma} \simeq (F_{\sigma})_e \times [0,\pm \infty ) \times \lbrace k\rbrace$,  et $F_{k,b} \simeq (F_{k,b})_e \times [0,\pm \infty ) \times \lbrace k\rbrace$.

\end{enumerate}

\end{defi}

\paragraph{Invariant relatif associé à une fibration de Lefschetz matelassée}  

Soit $\underline{\pi}\colon (\underline{E},\underline{F}) \rightarrow \underline{S}$ une fibration de Lefschetz matelassée comme précédemment, et $\underline{J} = ( J_k)_k$ une famille de structures presque complexes sur $\underline{E}$, qui coïncide avec les structures de référence $\widetilde{\underline{J}}$ au voisinage des hypersurfaces $\underline{R}$, et qui rend les projections pseudo-holomorphes, et compatibles avec les formes $\omega_k$ le long des fibres,

Si $u\colon (\underline{S},\mathcal{S})\rightarrow (\underline{E},\underline{F})$ est une section pseudo-holomorphe, le linéarisé de l'opérateur de Cauchy-Riemann  est défini par :
\[
 D_u \colon \begin{cases}  \Omega^0(u^*T^{vert}\underline{E}, u^*T^{vert}\underline{F} ) \rightarrow &  \Omega^{0,1}(u^*T^{vert}\underline{E}) \\

  \xi \mapsto & \frac{d}{dt}_{t=0} \Pi_{t\xi}^{-1}\overline{\partial}_J \exp_u (t \xi) ,

\end{cases}
\]
où $\Omega^0(u^*T^{vert}\underline{E}, u^*T^{vert}\underline{F} )$,  désigne l'espace des sections matelassées de la fibration $u^*T^{vert}\underline{E}$ à valeurs dans $u^*T^{vert}\underline{F}$ au-dessus des coutures (pour des normes de Sobolev adéquates),  $\Omega^{0,1}(u^*T^{vert}\underline{E})$ désigne les $(0,1)$-formes à valeurs dans cette fibration, $\overline{\partial}_J u = \frac{1}{2} (du + J(u)\circ du \circ j)$ désigne l'opérateur de Cauchy-Riemann associé à $J$, et $\Pi_{t\xi} \colon T_{u(x)}{M}\rightarrow T_{\exp_{u(x)} (t\xi)}{M}$ désigne le transport parallèle.

Dès que les conditions aux bouts sont transverses, $D_u$ est un opérateur Fredholm, voir \cite[Lemma 3.5]{WWquilts}, et surjectif pour des structures presque complexes génériques, voir \cite[Theorem 4.11]{WWtriangle}.

Pour de telles structures presque complexes, l'espace des modules  des sections $  \underline{s}\colon  \underline{S} \rightarrow \underline{E}$ pseudo-holomorphes, de conditions aux bords et coutures $\underline{F}$, d'intersection nulle avec la famille d'hypersurfaces $\underline{R}$ , et ayant pour limites \[\underline{x} \in \prod_{e\in \mathcal{E}_-(\underline{S})} \I(L^{(k_{e,0},b_{e,0})}  , \cdots , L^{(k_{e,l(e)},b_{e,l(e)})}), \]
et  
\[\underline{y} \in \prod_{e\in \mathcal{E}_+(\underline{S})} \I(L^{(k_{e,0},b_{e,0})}  , \cdots , L^{(k_{e,l(e)},b_{e,l(e)})})\] aux bouts correspondants est une réunion de variétés lisses $ \mathcal{M}(\underline{E},\underline{F},\underline{J},\underline{x},\underline{y})_k$ de dimensions $k\geq 0$. Leur dimension est donnée par l'indice de l'opérateur $D_u$. Cet indice généralise l'indice de Maslov et peut-être calculé à partir de données topologiques, voir \cite{WWorient}.

Dans ces conditions, $ \mathcal{M}(\underline{E},\underline{F},\underline{J},\underline{x},\underline{y})_0$ est une variété compacte de dimension 0, ce qui permet de définir l'application :

\begin{align*}
C\Phi_{\underline{E},\underline{F}} \colon & \bigotimes_{e\in \mathcal{E}_-(\underline{S})} CF(L^{(k_{e,0},b_{e,0})}  , \cdots , L^{(k_{e,l(e)},b_{e,l(e)})})\\
&\rightarrow \bigotimes_{e\in \mathcal{E}_+(\underline{S})} CF(L^{(k_{e,0},b_{e,0})} , \cdots , L^{(k_{e,l(e)},b_{e,l(e)})} )
\end{align*}
par la formule suivante :

\[ C\Phi_{\underline{E},\underline{F}}( \otimes_{e\in \mathcal{E}_-(\underline{S})}(x_e^0, \cdots x_e^{l(e)}) ) = \sum_{\otimes y_e^i} \sum_{\underline{s}\in \mathcal{M}(\underline{E},\underline{F},\underline{x},\underline{y})}{o(\underline{s}) q^{A(\underline{s})} \otimes_e (y_e^i)_i}, \]

Pour des structures presque complexes génériques, cette application commute avec les différentielles. Pour montrer cela, on applique un raisonnement standard en théorie de Floer : on observe que les coefficients de  $\partial C\Phi_{\underline{E},\underline{F}} - C\Phi_{\underline{E},\underline{F}} \partial$ représentent le cardinal du bord d'une variété compacte de dimension 1.

\begin{lemma} Il existe un ensemble $G_\delta$-dense de structures presque complexes sur $\underline{E}$ pour lesquelles la compactification de Gromov de l'espace des modules $ \mathcal{M}(\underline{E},\underline{F},\underline{J},\underline{x},\underline{y})_1$ est une variété compacte à bord de dimension 1, et son bord s'identifie à :

\begin{align*}
 \partial \mathcal{M}(\underline{E},\underline{F},\underline{J},\underline{x},\underline{y})_1  = &\bigcup_{\underline{x}'}{  \widetilde{\mathcal{M}}(\underline{x},\underline{x}')_1 \times  \mathcal{M}(\underline{E},\underline{F},\underline{J},\underline{x}',\underline{y})_0  } \\
 \cup& \bigcup_{\underline{y}'}{     \mathcal{M}(\underline{E},\underline{F},\underline{J},\underline{x},\underline{y}')_0 \times \widetilde{\mathcal{M}}(\underline{y}',\underline{y})_1}  ,
\end{align*}
où $\underline{x}'$ (resp. $\underline{y}'$) parcourt l'ensemble des générateurs du complexe de départ (resp. d'arrivée),  $\widetilde{\mathcal{M}}(\underline{x},\underline{x}')_1 $ et $\widetilde{\mathcal{M}}(\underline{y}',\underline{y})_1$ sont les quotients par $\rr$ des espaces de trajectoires de Floer matelassées d'indice 1, c'est-à-dire les coefficients intervenant dans la différentielle du complexe.

\end{lemma}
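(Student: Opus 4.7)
La stratégie est de suivre l'argument standard en théorie de Floer : combiner transversalité et compacité de Gromov pour identifier la compactification topologique de $\mathcal{M}(\underline{E},\underline{F},\underline{J},\underline{x},\underline{y})_1$. Par un recollement standard, il faut ensuite vérifier que les seules dégénérescences qui peuvent apparaître sont des cassures aux bouts de type bandes, et que réciproquement toute telle cassure s'obtient comme limite d'une unique famille à un paramètre.

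La transversalité ayant déjà été mentionnée (par \cite[Theorem 4.11]{WWtriangle}), on choisit $\underline{J}$ générique de sorte que $\mathcal{M}(\underline{E},\underline{F},\underline{J},\underline{x},\underline{y})_k$ soit une variété lisse de dimension $k$ pour tout $k\geq 0$, et il en est de même pour les espaces de trajectoires de Floer matelassées intervenant aux bouts. Par compacité de Gromov appliquée au cadre matelassé (cf. \cite{WWquilts}), toute suite $(\underline{s}_n)$ dans $\mathcal{M}(\underline{E},\underline{F},\underline{J},\underline{x},\underline{y})_1$ admet une sous-suite convergeant vers une \emph{section matelassée brisée avec bulles} : un assemblage d'une section limite éventuellement avec cassures aux bouts, de sphères pseudo-holomorphes bullées dans des fibres, de disques et de disques matelassés bullés aux bords/coutures.

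Le c{\oe}ur de la preuve consiste à exclure le bubbling grâce aux hypothèses de $\Symp$ et à la contrainte $\underline{s}\cdot \underline{R} = 0$. D'après la remarque \ref{monotonie} et la monotonie de la fibration (valide pour $n\geq 2$), l'aire symplectique d'une sphère ou d'un disque bullé est un multiple strictement positif de $\frac{\omega(A)}{N}$, et par les hypothèses $(xii)$ et celles sur les correspondances, toute sphère $\tilde{J}$-holomorphe d'aire nulle, tout disque pseudo-holomorphe d'aire nulle à bord dans une correspondance $L_{i(i+1)}$, et tout cylindre matelassé d'aire nulle à conditions aux coutures dans $L_{(i-1)i}\circ L_{i(i+1)}$ intersectent $\underline{R}$ en un multiple strictement négatif de $2$. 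Par conservation du nombre d'intersection avec $\underline{R}$ dans la limite, et puisque la section limite doit elle-même avoir un nombre d'intersection positif ou nul avec $\underline{R}$ (positivité locale d'intersection entre courbes pseudo-holomorphes et hypersurfaces presque-complexes), la seule possibilité est qu'il n'y ait pas de bulle, et que la section limite ait $\underline{s}_\infty\cdot \underline{R} = 0$.

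Il reste donc à traiter les dégénérescences par cassure aux bouts de type bandes. Par le résultat de compacité adapté à notre cadre, la limite est alors la donnée d'une section $\underline{s}'\in \mathcal{M}(\underline{E},\underline{F},\underline{J},\underline{x}',\underline{y}')_0$ avec possiblement des trajectoires de Floer matelassées brisées à chaque bout, la somme des indices étant égale à $1$. Comme les indices sont positifs, une seule trajectoire de Floer d'indice $1$ apparaît, soit à un bout entrant (contribuant à la première réunion), soit à un bout sortant (contribuant à la seconde). Inversement, le théorème de recollement matelassé standard (analogue à celui de \cite{WWquilts} dans le cadre monotone, adapté au contexte $\Symp$ en contrôlant les intersections avec $\underline{R}$ comme ci-dessus) fournit, pour tout couple d'une section d'indice $0$ et d'une trajectoire de Floer d'indice $1$, une unique famille à un paramètre de sections d'indice $1$ de $\underline{E}$ convergeant vers la cassure. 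La principale difficulté dans cette esquisse est précisément l'adaptation de la compacité de Gromov et du recollement au cadre avec diviseurs $\underline{R}$, que l'on traite exactement comme dans la preuve de \cite[Lemma 6.4]{MW}, afin de garantir la préservation de la condition $\underline{s}\cdot \underline{R} = 0$ tout au long de la dégénérescence et du recollement.
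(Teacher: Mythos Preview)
Your overall strategy matches the paper's: reduce to standard gluing (for the inclusion of broken configurations into the boundary) plus Gromov compactness, and then rule out bubbling using the intersection number with $\underline{R}$. However, the heart of the bubbling exclusion is not carried out correctly.

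The gap is in the sentence invoking remarque~\ref{monotonie}: you assert that the area of a bubbled sphere or disc is a ``multiple strictement positif'' of some quantity, and then immediately appeal to the hypotheses of $\Symp$ about \emph{zero-area} curves. These two clauses are inconsistent, and neither one alone closes the argument. The hypotheses $(xii)$ and the disc/cylinder conditions in $\Symp$ say nothing about bubbles of positive area; you must first prove that every bubble has zero area. The paper does this via an index argument (its lemme~\ref{nobubbling}): each bubble lives in a single fibre (it arises from rescaling at a point of the base), so by fibrewise monotonicity it has nonnegative index; if the index were positive it would be at least the minimal Maslov number, which exceeds the total index $1$ of the configuration; hence every bubble has index zero and therefore area zero for $\tilde{\omega}$. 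Only then can one conclude that the bubbles lie in $\underline{R}$, that there are no disc bubbles (since the Lagrangians are disjoint from $\underline{R}$), and that each sphere bubble contributes $\leq -2$ to $\underline{u}\cdot\underline{R}$.

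Two smaller points. First, your mention of ``cylindres matelassés d'aire nulle à conditions aux coutures dans $L_{(i-1)i}\circ L_{i(i+1)}$'' is misplaced: those cylinders enter the equivalence relation defining $\Symp$, not the bubbling analysis for sections of $\underline{E}$; the relevant seam bubbles here are quilted spheres (figure-eight bubbles), which are handled by the same index argument. Second, the final contradiction requires more than abstract positivity of intersection: one needs that for generic $\underline{J}$ the principal component intersects $\underline{R}$ \emph{transversally} (the paper isolates this as lemme~\ref{codimtangence}), so that a limit of sections with $\underline{s}\cdot\underline{R}=0$ cannot acquire a positive transverse intersection away from the bubble points.
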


\begin{proof} Le fait que le second membre est inclus dans le premier est un résultat classique de recollement, voir \cite[Theorem 3.9]{WWquilts}. Le fait qu'il ne se produit pas d'autre type de dégénérescences provient du théorème de compacité de Gromov et du lemme \ref{nobubbling} qui suit.
\end{proof}
\begin{lemma}\label{nobubbling}Il existe un ensemble $G_\delta$-dense de structures presque complexes sur $\underline{E}$ pour lesquelles il ne se produit pas de bubbling dans les espaces de modules de sections d'indice inférieur ou égal à 1.

\end{lemma}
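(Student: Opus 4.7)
Mon plan est d'appliquer le théorème de compacité de Gromov aux espaces de sections pseudo-holomorphes et d'éliminer chaque type de dégénérescence en utilisant les hypothèses de monotonie de la définition \ref{defcat} ainsi que la structure du lieu de dégénérescence $\underline{R}$ fournie par la proposition \ref{degener}. D'abord, j'établirai par un argument de transversalité universelle, analogue à celui de \cite[Theorem 4.11]{WWtriangle} et \cite[Theorem 6.5]{MW}, qu'il existe un $G_\delta$-dense $\mathcal{J}^{reg}$ de structures presque complexes $\underline{J}$ coïncidant avec les structures de référence $\widetilde{\underline{J}}$ près de $\underline{R}$, pour lesquelles les sections simples, les sphères simples dans les fibres, les disques simples et les évaluations aux points d'attachement sont simultanément transverses.

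Par compacité de Gromov, toute suite dans $\mathcal{M}(\underline{E},\underline{F},\underline{J},\underline{x},\underline{y})_k$ avec $k\leq 1$ converge, modulo extraction, vers une configuration stable formée d'une section, éventuellement brisée aux bouts, avec un arbre de bulles. Les dégénérescences possibles se classent en : bulles sphériques dans une fibre (la différentielle de $\pi_k$ étant $\cc$-linéaire), bulles disques attachées à une couture ou à une composante de bord à condition Lagrangienne, et dégénérescences près des points critiques de la fibration de Lefschetz, traitées localement dans des cartes holomorphes. Les cassures aux bouts correspondent au bord attendu décrit dans le lemme précédent et ne sont pas à exclure.

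Pour les bulles d'aire symplectique strictement positive pour $\tilde{\omega}_i$, la monotonie fibrée (hypothèses $(iv)$ et $(v)$) et la borne $N_{M\setminus R}\geq 4$ (hypothèse $(x)$) impliquent que toute sphère bulle $v$ vérifie $c_1(v)\geq 4$ et toute bulle disque a indice de Maslov au moins $8$. La dimension virtuelle de la configuration stable obtenue avec une telle bulle est donc inférieure d'au moins $8$ à $k$, et pour des $\underline{J}$ génériques ces strates sont vides dès que $k\leq 1$.

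La principale difficulté réside dans le traitement des bulles d'aire $\tilde{\omega}_i$-nulle. D'après la proposition \ref{degener} et l'hypothèse $(xii)$ de la définition \ref{defcat}, une telle sphère est contenue dans $R_i$ comme revêtement ramifié d'une fibre de la sphère-fibration, et son nombre d'intersection avec $R_i$ est un multiple strictement négatif de $2$. Or la condition $\underline{u}\cdot \underline{R}=0$ imposée à nos espaces de modules est préservée par la convergence de Gromov, et par positivité locale d'intersection au point d'attachement, la contribution $\leq -2$ de la bulle impose une contrainte d'incidence additionnelle sur la composante principale. En combinant ce défaut avec un comptage de dimensions (les évaluations sur $\underline{R}$ étant transverses pour $\underline{J}$ générique), on constate que la strate correspondante a codimension $\geq 2$ dans l'espace des configurations et est donc vide pour $k\leq 1$. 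Le cas des cylindres matelassés pseudo-holomorphes d'aire nulle reliant plusieurs $R_i$ se traite de façon analogue grâce à la condition correspondante imposée sur les morphismes de $\Symp$ dans la définition \ref{defcat}. L'intersection de toutes ces conditions génériques fournit le $G_\delta$-dense recherché.
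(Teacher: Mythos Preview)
Your approach is broadly correct and follows the same overall strategy as the paper: Gromov compactness, then elimination of bubbles via monotonicity and the intersection-number constraint with $\underline{R}$. However, your treatment of the zero-area bubbles is vaguer than the paper's and misses its sharpest step.

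The paper does not argue by a codimension count on strata of configurations with incidence constraints. Instead it first proves a Cieliebak--Mohnke type lemma (Lemma~\ref{codimtangence}): for a $G_\delta$-dense set of $\underline{J}$, the locus of sections meeting $\underline{R}$ with a tangency of order $k$ has codimension $2k$. In particular, every section in a $0$- or $1$-dimensional moduli space meets $\underline{R}$ transversally. The argument then runs as a direct contradiction: since each zero-area bubble (necessarily contained in $\underline{R}$) contributes $\leq -2$ to $\underline{s}_\infty\cdot\underline{R}=0$, the principal component $\underline{u}_\infty$ must meet $\underline{R}$ in strictly positive intersection at points where no bubble is attached, and transversally by the lemma just quoted; but $\underline{u}_\infty$ is a limit of sections disjoint from $\underline{R}$, which is impossible. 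This is cleaner than your ``codimension $\geq 2$'' claim, which you do not justify and which would in any case require exactly the transversality-to-$\underline{R}$ statement above.

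Two smaller points: you do not treat zero-area disk bubbles explicitly (the paper rules them out because the boundary Lagrangians are disjoint from $\underline{R}$, while for seam conditions the hypothesis on zero-area disks in the definition of $\Symp$ gives the same $\leq -2$ intersection bound); and your claim that a positive-$\tilde\omega$-area sphere bubble has $c_1\geq 4$ implicitly assumes the bubble lies in $M_i\setminus R_i$, whereas the paper instead invokes \cite[Lemma~2.9]{MW} to get nonnegative index for \emph{all} bubbles directly, then forces index zero by the total-index bound.
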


L'argument est analogue à celui de la preuve de \cite[Prop. 2.10]{MW}. Nous le rappelons ici dans notre cadre. Il se base sur le lemme suivant, qui assure que toutes les sections pseudo-holomorphes des espaces de dimension inférieure ou égale à 1 intersectent les diviseurs de manière transverse.

\begin{lemma}(voir \cite[Lemma 2.3]{MW})\label{codimtangence}
Il existe un sous-ensemble $G_\delta$-dense de structures presque complexes régulières sur $\underline{E}$ pour lesquelles les espaces de modules de sections pseudo-holomorphes sont lisses, et les sous-espaces consistant en les sections rencontrant  $\underline{R}$ en des points de tangence d'ordre $k$ avec $\underline{R}$ sont contenus dans des réunions de sous-variétés de codimension $2k$.
\end{lemma}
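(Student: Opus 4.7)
La démarche naturelle consiste à appliquer un argument de transversalité universelle à la Sard-Smale, dans l'esprit de McDuff--Salamon, adapté au contexte des fibrations de Lefschetz matelassées et des hypersurfaces presque complexes. Je commencerais par fixer une régularité $C^\ell$ suffisamment grande, et à considérer l'espace de Banach $\mathcal{J}^\ell$ des structures presque complexes admissibles sur $\underline{E}$ (c'est-à-dire coïncidant avec $\widetilde{\underline{J}}$ dans un voisinage fixé de $\underline{R}$, rendant $\underline{\pi}$ pseudo-holomorphe, et compatibles avec $\underline{\omega}$ le long des fibres). On forme alors l'espace universel
\[ \mathcal{M}^{univ}(\underline{x},\underline{y}) = \lbrace (\underline{J}, \underline{s}) ~|~ \underline{J}\in \mathcal{J}^\ell, \  \underline{s}\in \mathcal{M}(\underline{E},\underline{F},\underline{J},\underline{x},\underline{y}) \rbrace . \]

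L'argument standard montrant que la variation de $\underline{J}$ sur le complémentaire ouvert de $\underline{R}$ suffit à rendre surjectif l'opérateur linéarisé universel (en s'appuyant sur le prolongement unique pour l'équation de Cauchy-Riemann : aucune section d'intersection nulle avec $\underline{R}$ n'est entièrement contenue dans $\underline{R}$) garantit que $\mathcal{M}^{univ}(\underline{x},\underline{y})$ est une sous-variété de Banach lisse. Le lissage des espaces de modules $\mathcal{M}(\underline{E},\underline{F},\underline{J},\underline{x},\underline{y})$ résulte alors du théorème de Sard-Smale appliqué à la projection sur $\mathcal{J}^\ell$, puis d'un passage à la limite $C^\infty$ par l'argument d'intersection $G_\delta$-dense de Taubes.

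Pour la seconde assertion, je stratifierais les sections selon leur ordre de tangence avec $\underline{R}$. L'hypersurface $\underline{R}$ étant presque complexe de codimension réelle $2$, la condition qu'un $k$-jet de section soit tangent à $\underline{R}$ d'ordre $k$ en un point $x$ impose l'annulation de $k$ composantes complexes : la valeur en $x$, puis les dérivées transverses jusqu'à l'ordre $k-1$ le long de la direction holomorphe de $\underline{S}$. Ceci définit une sous-variété $\mathcal{R}^{(k)} \subset J^k\underline{E}$ de codimension réelle $2k$. L'évaluation des $k$-jets fournit une application matelassée
\[ \mathrm{ev}_k \colon \mathcal{M}^{univ}(\underline{x},\underline{y}) \times \underline{S} \to J^k\underline{E}, \]
et le sous-espace recherché est l'image réciproque de $\mathcal{R}^{(k)}$ par $\mathrm{ev}_k$ projetée sur $\mathcal{M}(\underline{E},\underline{F},\underline{J},\underline{x},\underline{y})$.

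L'obstacle principal sera d'établir la transversalité de $\mathrm{ev}_k$ à $\mathcal{R}^{(k)}$ en variant uniquement $\underline{J}$ hors d'un voisinage de $\underline{R}$, puisque les perturbations admissibles sont justement contraintes à s'annuler près du lieu où l'on cherche à contrôler les tangences. Cette difficulté se contourne par le fait qu'un $k$-jet de section pseudo-holomorphe en un point de tangence est déterminé, via l'équation de Cauchy-Riemann et ses dérivées, par la restriction de $\underline{s}$ à un voisinage arbitrairement petit du point, voisinage que l'on peut choisir disjoint du lieu où $\underline{J}$ est imposée (à condition que la section ne soit pas une fibre entière de la fibration de $\underline{R}$ décrite dans la proposition \ref{degener}, cas que l'on traite séparément par un argument d'indice). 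Une seconde application de Sard-Smale à la restriction convenable de $\mathrm{ev}_k$ fournit alors un $G_\delta$-dense de $\underline{J}$ pour lesquelles les strates de tangence d'ordre $k$ sont bien des sous-variétés de codimension $2k$ dans $\mathcal{M}(\underline{E},\underline{F},\underline{J},\underline{x},\underline{y}) \times \underline{S}$, et donc a fortiori contenues dans des réunions de sous-variétés de cette codimension après projection.
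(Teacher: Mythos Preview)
Votre démarche globale --- espace des modules universel, strate de tangence d'ordre $k$ vue comme image réciproque d'une sous-variété de codimension $2k$ dans un espace de jets, puis Sard--Smale sur la projection $(\underline{u},\underline{J})\mapsto \underline{J}$ --- est exactement celle de l'article, qui se contente de renvoyer à \cite[Proposition 6.9]{CieliebakMohnke} appliquée morceau par morceau.

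Il y a cependant une véritable lacune dans votre résolution de l'obstacle ``$\underline{J}$ fixé près de $\underline{R}$''. Vous écrivez que le $k$-jet au point de tangence est déterminé par la restriction de $\underline{s}$ à un voisinage \emph{disjoint du lieu où $\underline{J}$ est imposé}. Or le point de tangence est dans $\underline{R}$, et $\underline{J}$ est imposé sur tout un voisinage de $\underline{R}$ : aucun voisinage du point de tangence, aussi petit soit-il, n'évite cette région. Le prolongement unique ne vous donne pas non plus le contrôle voulu, car il agit dans l'autre sens (il propage des contraintes, pas de la liberté).

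L'argument correct, celui de Cieliebak--Mohnke (voir leur Lemme 6.6), ne cherche pas à varier $\underline{J}$ près du point de tangence : il varie la \emph{section} $\underline{u}$ elle-même. Dans l'espace des modules universel, les variations infinitésimales de $\underline{u}$ (sections du fibré vertical satisfaisant l'équation linéarisée à inhomogénéité près) peuvent réaliser un jet normal arbitraire en un point donné de $\underline{R}$, indépendamment du fait que $\underline{J}$ y soit figé. C'est cette surjectivité sur les jets normaux qui établit que la strate de tangence d'ordre $k$ est de codimension $2k$ dans l'espace universel ; Sard--Smale conclut alors comme vous l'indiquez.
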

\begin{proof} C'est un analogue de \cite[Proposition 6.9]{CieliebakMohnke}, appliquée à chaque morceau du quilt. La proposition est énoncée pour des surfaces sans bord, mais la preuve s'adapte à notre cadre : le sous-ensemble de l'espace des modules universel $\lbrace (\underline{u},\underline{J})  ~ | ~ \overline{\partial}_{\underline{J}} \underline{u} = 0   \rbrace $ des couples de courbes et structures presque complexes telles que $u_i$ admet une tangence d'ordre $k$ en $R_i$ est une sous-variété de Banach de codimension $2k$. Il s'agit alors d'appliquer le théorème de Sard-Smale à la projection $(u,J)\mapsto J$ définie sur cet espace.

\end{proof}

\begin{proof}[Preuve du lemme \ref{nobubbling}]

D'après le lemme \ref{codimtangence}, pour des structures presque-complexes génériques, toutes les courbes des espaces de modules de dimension 0 ou 1 intersectent les hypersurfaces transversalement.

Un théorème de compacité analogue à celui concernant les courbes pseudo-holomorphes non-matelassées reste valable, voir \cite[Theorem 3.9]{WWquilts}. Soit $\underline{s}_\infty$ une limite de sections matelassées : c'est une application matelassée nodale consistant en une composante principale $\underline{u}_\infty$ éventuellement brisée, à laquelle est attachée des bulles à l'intérieur des patch, ainsi que des bulles matelassées (sphères matelassées consistant en deux disques cousus par leur bord, voir figure \ref{bubbling}), attachées au niveau des coutures, et des disques attachés aux bords.

\begin{figure}[!h]
    \centering
    \def\svgwidth{\textwidth}
    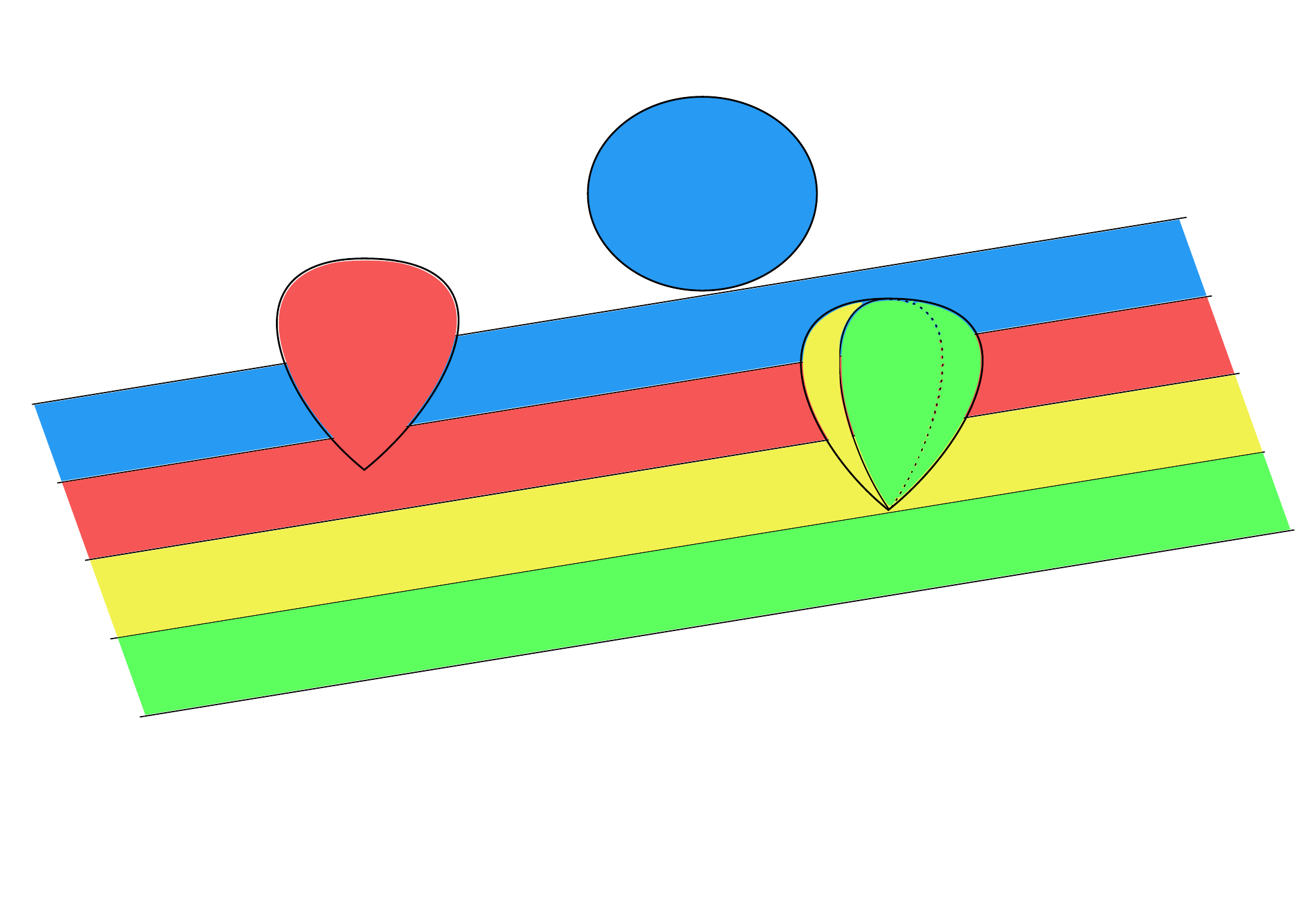
      \caption{Une section avec bubbling.}
      \label{bubbling}
\end{figure}

Chaque disque et chaque bulle, qui proviennent d'un zoom au voisinage d'un point de la base, est nécessairement contenu dans une fibre de $\underline{E}$. Ainsi, d'après le lemme 2.9 de \cite{MW}, tous les disques et toutes les bulles, ont un indice positif ou nul. Il est donc nul, sinon il serait plus grand que 4 (qui divise le nombre de maslov minimal), or la configuration initiale est d'indice plus petit que 2. Ainsi l'aire de ces disques ou de ces bulles pour les formes monotones $\tilde{\omega_i}$ est nulle : ils sont donc contenus dans les hypersurfaces $\underline{R}$. Les Lagrangiennes étant disjointes des hypersurfaces, il n'y a donc pas de disques.
Il n'y a donc à priori que des sphères,  matelassées ou non, d'aire nulle et donc contenues dans les hypersurfaces. Elles ont chacune un nombre d'intersection avec les hypersurfaces inférieur à -2, par définition de la catégorie $\Symp$. Or le nombre d'intersection total $\underline{s}_\infty . \underline{R}$ est nul, ainsi $\underline{u}_\infty$ intersecte $\underline{R}$ en des points auxquels aucune bulle n'est attachée, et de manière transverse, ce qui est impossible pour une limite de courbes qui n'intersectaient pas $\underline{R}$.

\end{proof}

Ainsi, l'application $C\Phi_{\underline{E},\underline{F}}$ commute avec les différentielles des complexes, et induit un morphisme $\Phi_{\underline{E},\underline{F}} $ au niveau des groupes d'homologie, indépendant des structures presque complexes régulières $\underline{J}$, et invariant par isotopies Hamiltoniennes. La preuve de ces deux faits est un argument habituel, similaire à celui donné dans la partie \ref{homotopie}, et consiste à relier deux structures presque complexes par un chemin et à considérer un espace de modules paramétré de dimension un, se compactifiant en une variété à bord et fournissant une homotopie entre les applications définies au niveau des complexes de chaînes.

\paragraph{Fibration de Lefschetz associée à un twist de Dehn généralisé}

Une fibration de Lefschetz est munie de sa connexion symplectique canonique \cite[formule (2.1.5.)]{Seidel}, 
\[T^hE = (\mathrm{Ker}~D_e\pi)^{\omega}.\]
On peut alors définir la monodromie le long d'un lacet de la base ne passant pas par des valeurs critiques.

Comme remarqué par Arnold dans \cite{Arnold}, la monodromie d'une fibration de Lefschetz autour d'une valeur critique est un twists de Dehn généralisé. Réciproquement, si $\tau _S$ est un twist de Dehn modèle autour d'une sphère Lagrangienne $S\subset M$ (disjointe de l'hypersurface $R$), il existe une fibration de Lefschetz $E_S$, dite fibration standard associée à $\tau _S$, au-dessus du disque, avec un seul point critique au-dessus de $0$, dont la fibre au-dessus de $1$ est $M$, et la monodromie autour de $0$ correspond à ce twist, voir par exemple  \cite[Lemma 1.10, Prop. 1.11]{Seidel}. Si $M$ est monotone, $E_S$ l'est aussi dès que $S$ est de dimension plus grande que 2, d'après \cite[Prop. 4.9]{WWtriangle}. Nous renvoyons à  \cite[Lemma 1.10]{Seidel} pour la construction de cette fibration. 


Rappelons les deux définitions suivantes, tirées de \cite{Seidel} :

\begin{defi}Une structure presque complexe $J$ sur $E$ est dite horizontale si elle préserve la décomposition $TE = T^vE \oplus T^hE$.
\end{defi}

\begin{defi}Une fibration de Lefschetz matelassée est dite à courbure positive si pour tout vecteur tangent horizontal $v$, $\omega(v, J v) \geq 0$. 
\end{defi}

Elles garantissent la proposition suivante :

\begin{prop}\label{aireposit}Soit $(\underline{E},\underline{F})$ une fibration de Lefschetz matelassée à courbure positive, $\underline{J}$ une famille de structures presque complexes horizontales, et $\underline{u}$ une section $\underline{J}$-holomorphe. Alors $\underline{u}$ est d'aire positive : $\sum_i\int  u_i^* \omega_i \geq 0$.
\end{prop}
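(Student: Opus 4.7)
Le plan est de réduire l'inégalité à une vérification pointwise : montrer que $u_i^*\omega_i \geq 0$ sur chaque morceau $S_i$, puis intégrer et sommer. Les conditions aux coutures et aux bords n'interviendront pas dans l'argument, puisqu'on additionnera simplement des intégrales de 2-formes positives sur des morceaux distincts. Pour la positivité pointwise, je m'appuierais sur la décomposition canonique $TE_i = T^{vert}E_i \oplus T^hE_i$ de la connexion symplectique, définie hors des points critiques de $\pi_i$. Deux faits sont essentiels et tous deux immédiatement disponibles : les sous-espaces vertical et horizontal sont $\omega_i$-orthogonaux, par définition même de $T^hE_i$ comme orthogonal symplectique du vertical (qui est bien définie car $\omega_i$ est non-dégénérée sur les fibres régulières, lesquelles sont des objets de $\Symp$) ; et $J_i$ préserve cette décomposition, par l'hypothèse d'horizontalité.

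Concrètement, je fixerais $z_0\in S_i$ dont l'image par $u_i$ n'est pas un point critique de $\pi_i$, et une base orthonormée directe $(e_1, e_2 = j_S e_1)$ de $T_{z_0}S_i$. Je décomposerais $du_i(e_1) = v + h$ en parties verticale $v\in T^{vert}E_i$ et horizontale $h\in T^hE_i$ ; la $J_i$-holomorphie de $u_i$ donne $du_i(e_2) = J_i v + J_i h$, et les composantes de droite sont bien respectivement verticale et horizontale par l'horizontalité de $J_i$. Dans le calcul de $u_i^*\omega_i(e_1, e_2) = \omega_i(v+h,\, J_i v + J_i h)$, les termes croisés $\omega_i(v, J_i h)$ et $\omega_i(h, J_i v)$ s'annulent par l'$\omega_i$-orthogonalité, laissant $\omega_i(v, J_i v) + \omega_i(h, J_i h)$. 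Le premier terme est positif par compatibilité de $J_i$ avec $\omega_i$ le long des fibres (axiome de la définition \ref{quiltfiblefschetz}), et le second l'est par l'hypothèse de courbure positive.

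Le point qui demandera un peu d'attention est le comportement aux points critiques de $\pi_i$, où le sous-espace horizontal dégénère et le raisonnement précédent ne s'applique plus tel quel. Mais ces points forment un ensemble fini dans $E_i$ par définition d'une fibration de Lefschetz, et leur préimage par la section $u_i$ est donc finie dans $S_i$. Ces points ne contribueront pas à l'intégrale $\int_{S_i} u_i^*\omega_i$, qui sera par conséquent positive par la positivité pointwise sur le complémentaire. La sommation sur les morceaux conclura. L'obstacle principal, en résumé, est surtout conceptuel : il faut organiser la décomposition vertical/horizontal avec soin et vérifier que les trois hypothèses (courbure positive, horizontalité de $J_i$, compatibilité le long des fibres) interviennent exactement pour donner la positivité de chacun des deux termes survivants.
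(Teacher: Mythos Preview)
Your proposal is correct and follows essentially the same approach as the paper: both decompose a tangent vector into vertical and horizontal parts, use the $\omega_i$-orthogonality of $T^{vert}E_i$ and $T^hE_i$ together with the horizontality of $J_i$ to kill the cross terms, and invoke fibrewise compatibility and positive curvature for the remaining two terms. The paper phrases this slightly more abstractly by showing directly that the bilinear form $\omega_i(\cdot, J_i\cdot)$ is positive semi-definite on $T_xE_i$, from which the positivity of $u_i^*\omega_i$ for a $J_i$-holomorphic $u_i$ follows immediately; your explicit choice of basis and your separate treatment of critical points are harmless elaborations of the same idea.
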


\begin{proof}
Soit $v+h \in T_x E_i = T_x E_i^v \oplus T_x E_i^h$ un vecteur tangent à l'espace total. $\omega_i (v+h, J_i (v+h)) \geq 0$, en effet c'est la somme des 4 termes suivants :

$\omega_i (v, J_i v) \geq 0$, car $\omega_i$ est symplectique en restriction aux fibres, et $J_i$ est compatible à $\omega_i$.

$\omega_i (h, J_i h) \geq 0$, car la fibration est de courbure positive.

$\omega_i (v, J_i h)  = \omega_i (h, J_i v) = 0$, car $J_i$ est horizontale, et par définition   $T_xE_i^h$ est l'orthogonal de $T_xE_i^v$ pour $\omega_i$.

Il s'en suit que la forme bilinéaire $\omega_i(., J_i .)$ est positive, ce qui entraîne le résultat annoncé.
\end{proof}

\begin{remark}Les fibrations de Lefschetz standard $E_S$ associées à des twists de Dehn modèle sont à courbure positive, d'après \cite[Lemma 1.12, (iii)]{Seidel}.
\end{remark}

\paragraph{Composition d'invariants relatifs}

Soient $\underline{\pi}_1 \colon \underline{E}_1 \rightarrow \underline{S}_1$ et $\underline{\pi}_2 \colon \underline{E}_2 \rightarrow \underline{S}_2$ des fibrations de Lefschetz  matelassées comme dans la définition \ref{quiltfiblefschetz}, de conditions aux bords et coutures respectives $\underline{F}_1$ et $\underline{F}_2$. Supposons qu'il existe une bijection entre les entrées $\mathcal{E}_{2,-}$ de $\underline{S}_2$ et les sorties $\mathcal{E}_{1,+}$ de $\underline{S}_1$ telle que $\underline{\pi}_1$ et $\underline{\pi}_2$ coïncident au niveau de chaque bouts, c'est-à-dire que le nombre de morceaux, les variétés symplectiques et les correspondances associées aux coutures correspondent.

Soit $\rho >0$ , on note $\underline{S}_1 \cup_\rho \underline{S}_2$ la surface matelassée obtenue en recollant les morceaux $ [0, \rho ] \times [k-1,k] \subset [0, + \infty ) \times [k-1,k]$ et $ [- \rho,0]  \times [k-1,k] \subset  (- \infty,0] \times [k-1,k]$ , et $\underline{E}_1 \cup_\rho \underline{E}_2$ la fibration matelassée recollée. 

La proposition suivante est l'analogue de  \cite[Theorem 4.18]{WWtriangle}, sa preuve est identique. 
\begin{prop}\label{compoinvrelat} Pour $\rho$ assez grand, il existe un ensemble $G_\delta$-dense de structures presque complexes produits pour lesquelles les espaces de sections pseudo-holomorphes d'indice 0 et 1 sont lisses et s'identifient aux produits fibrés :
 \begin{align*} \mathcal{M}(\underline{E}_1 \cup_\rho \underline{E}_2, \underline{F}_1 \cup_\rho \underline{F}_2)_0  & \simeq \mathcal{M}(\underline{E}_1, \underline{F}_1)_0 \times_{ev_1, ev_2} \mathcal{M}(\underline{E}_2, \underline{F}_2)_0, \\
\mathcal{M}(\underline{E}_1 \cup_\rho \underline{E}_2, \underline{F}_1 \cup_\rho \underline{F}_2)_1 & \simeq \mathcal{M}(\underline{E}_1, \underline{F}_1)_0 \times_{ev_1, ev_2} \mathcal{M}(\underline{E}_2, \underline{F}_2)_1 \\ &\cup \mathcal{M}(\underline{E}_1, \underline{F}_1)_1 \times_{ev_1, ev_2} \mathcal{M}(\underline{E}_2, \underline{F}_2)_0 
, \end{align*}
où $ev_i\colon \mathcal{M}(\underline{E}_i, \underline{F}_i)\rightarrow \I (\mathcal{E}_{1,+})$ est l'application qui à une section associe la limite en les bouts entrants (resp. sortants) pour $\underline{E}_2$ (resp.  $\underline{E}_1$).  

Il s'en suit que $C\Phi_{\underline{E}_1 } \circ C\Phi_{\underline{E}_2 }= C\Phi_{\underline{E}_1 \cup_\rho \underline{E}_2}$

\end{prop}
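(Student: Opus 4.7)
Le plan est d'adapter la démonstration du théorème \cite[Theorem 4.18]{WWtriangle} de Wehrheim et Woodward au cadre de la catégorie $\Symp$, en s'assurant que le contrôle du bubbling sur les hypersurfaces $\underline{R}$ est compatible avec l'opération de recollement. Plus précisément, je voudrais établir simultanément une procédure de pré-recollement, qui à un couple de sections se correspondant aux bouts associe une section pseudo-holomorphe de la fibration recollée pour $\rho$ grand, et un résultat de compacité à la Gromov, selon lequel toute suite $(\underline{s}_{\rho_n})$ de sections pseudo-holomorphes avec $\rho_n \to \infty$ admet une sous-suite convergeant vers un tel couple. Ces deux résultats combinés donneront l'identification ensembliste des espaces de modules d'indice $0$ et $1$; la structure lisse et le comptage orienté s'en déduiront de façon standard, et l'identité $C\Phi_{\underline{E}_1 } \circ C\Phi_{\underline{E}_2 }= C\Phi_{\underline{E}_1 \cup_\rho \underline{E}_2}$ en résultera immédiatement par définition de ces applications.

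Pour le pré-recollement, je commencerais par observer que, grâce à la transversalité aux bouts garantie par la régularité générique, les applications d'évaluation $ev_i\colon \mathcal{M}(\underline{E}_i,\underline{F}_i)_0 \rightarrow \I(\mathcal{E}_{i,\pm})$ sont des difféomorphismes locaux vers des orbites discrètes. Étant donné un couple $(\underline{s}_1,\underline{s}_2)$ dans le produit fibré, je construirais via les trivialisations de la définition \ref{quiltfiblefschetz} et des fonctions de troncature une section approchée $\underline{s}_1 \#_\rho \underline{s}_2$ au-dessus de $\underline{S}_1 \cup_\rho \underline{S}_2$, dont la norme $\overline{\partial}$ dans un espace de Sobolev à poids exponentiels décroît en $O(e^{-\delta \rho})$. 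Comme le linéarisé $D_{\underline{s}_i}$ est surjectif à noyau donné par $T\mathcal{M}(\underline{E}_i,\underline{F}_i)_0$, on peut construire une inverse à droite pour le linéarisé de la section approchée, uniformément bornée en $\rho$, puis appliquer une itération de Newton (type Floer--Hofer--Salamon) pour corriger $\underline{s}_1 \#_\rho \underline{s}_2$ en une unique véritable section pseudo-holomorphe $\underline{s}_\rho$ à proximité.

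Pour la compacité, soit $(\underline{s}_{\rho_n})$ une suite avec $\rho_n \to \infty$. Le théorème de compacité de Gromov dans sa version matelassée \cite[Theorem 3.9]{WWquilts} assure, quitte à extraire, une limite sous la forme d'une configuration nodale éventuellement brisée au niveau du col. L'argument du lemme \ref{nobubbling}, reposant sur la positivité de l'indice des disques dans la catégorie $\Symp$ ainsi que sur l'hypothèse $(xii)$ contrôlant le nombre d'intersection des sphères d'aire nulle avec $\underline{R}$, exclut toute bulle interne, aux coutures ou aux bords, ainsi que toute composante d'aire nulle disjointe de la composante principale. Il ne reste donc que d'éventuelles brisures le long du col, qui correspondent précisément à des couples $(\underline{s}_1,\underline{s}_2) \in \mathcal{M}(\underline{E}_1,\underline{F}_1) \times_{ev_1,ev_2} \mathcal{M}(\underline{E}_2,\underline{F}_2)$; dans les dimensions considérées, au plus une telle brisure est possible pour des raisons d'indice.

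L'obstacle principal est d'assurer que l'exclusion du bubbling du lemme \ref{nobubbling} reste valable \emph{uniformément en $\rho$} au cours du processus de dégénérescence $\rho_n\to\infty$, puisque la structure presque complexe varie avec le paramètre de recollement. On s'en sortira comme dans \cite[Prop. 2.10]{MW} en remarquant que le nombre d'intersection $\underline{s}_{\rho_n} \cdot \underline{R}$ est identiquement nul, donc préservé à la limite, ce qui combiné à l'hypothèse $(xii)$ interdit toute concentration d'aire dans $\underline{R}$. Cet argument requiert néanmoins le choix générique d'une famille compacte de structures presque complexes produits, étape qui se traite par un théorème de Sard--Smale paramétré analogue au lemme \ref{codimtangence}.
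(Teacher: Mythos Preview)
Your proposal is correct and follows the same approach as the paper: the paper simply states that this is the analogue of \cite[Theorem 4.18]{WWtriangle} and that ``sa preuve est identique'', then places a QED box without further detail. You have essentially unpacked what that adaptation entails---pre-gluing, Newton iteration, Gromov compactness, and the bubbling exclusion via the lemma~\ref{nobubbling} argument---which is more than the paper itself provides, but entirely in line with its intent.
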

\arnaque

\subsection{Construction des flèches}\label{flechestri}

Afin de construire les deux morphismes de complexes 
\begin{align*}
C\Phi_1 & \colon CF (\tau_S L_0,S^T, S, \underline{L};\Lambda) \rightarrow CF (\tau_S L_0, \underline{L};\Lambda) \\
C\Phi_2 & \colon CF (\tau_S L_0, \underline{L};\Lambda) \rightarrow CF (L_0, \underline{L};\Lambda) 
\end{align*}
qui  approcheront les flèches de la suite exacte courte \ref{suitecourte} induite par les inclusions des points d'intersection de la proposition \ref{inter}, nous appliquons la construction précédente aux deux fibrations de Lefschetz matelassées décrites plus bas.

\paragraph{Définition de $C\Phi_1$ :} 

L'application $C\Phi_1$ est définie comme étant l'invariant relatif associé à la fibration de Lefschetz matelassée $(\underline{E}_1, \underline{F}_1) \rightarrow \underline{S}_1$ décrite dans la figure \ref{quiltedpants} : la surface matelassée $\underline{S}_1$ est composée de $k$  bandes  $[0,1]\times \rr$ cousues parallèlement, et d'une bande en pantalon cousue aux autres bandes le long d'un bord, voir figure \ref{quiltedpants}. La fibration $\underline{E}_1$ est triviale sur chaque morceau, les différentes fibres $M_0$, ... ,  $M_k$ sont spécifiées comme dans la figure. Les conditions Lagrangiennes $\underline{F}_1$ sont prises constantes dans les trivialisations et valent $\underline{L}$ entre les bandes parallèles, $S$ au niveau de la composante de bord joignant les deux entrées, et $\tau_S L_0$ sur le dernier bord du pantalon.

\begin{figure}[!h]
    \centering
    \def\svgwidth{\textwidth}
    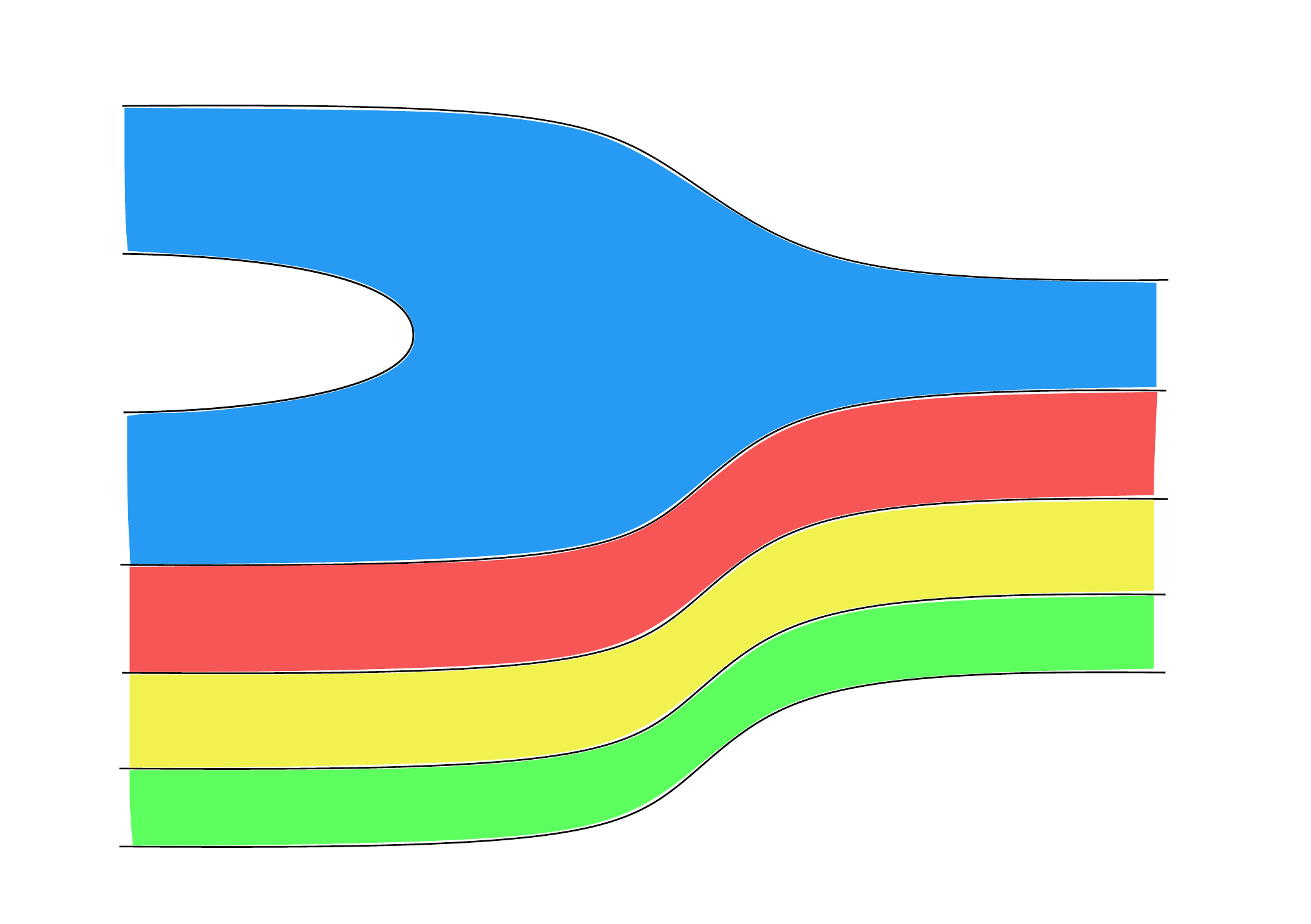
      \caption{Surface matelassée définissant $C\Phi_1$.}
      \label{quiltedpants}
\end{figure}

On note $\Phi_1\colon HF (\tau_S L_0,S^T, S, \underline{L};\Lambda) \rightarrow HF (\tau_S L_0, \underline{L};\Lambda) $ la flèche induite par  $C\Phi_1$ en homologie.

\paragraph{Définition de $C\Phi_2$ :}

L'application $C\Phi_2$ est définie comme étant l'invariant relatif associé à la fibration de Lefschetz matelassée $(\underline{E}_2, \underline{F}_2) \rightarrow \underline{S}_2$ décrite dans la figure \ref{quiltedlefschetz} : $\underline{S}_2$ est composée de $k + 1$ bandes cousues parallèlement. La restriction de $\underline{E}_2$ au dessus de la première bande est $E_S$, la fibration standard associée à $S$, et est triviale sur les autres bandes, les fibres sont résumées dans le dessin. Les conditions Lagrangiennes $\underline{F}_2$ sont prises constantes dans les trivialisations, valent $\underline{L}$ entre les bandes parallèles. En ce qui concerne le patch correspondant à $M_0$, conformément à Seidel, on a dessiné en pointillés un chemin reliant le point critique à un point du bord, et trivialisé la fibration sur le complémentaire de ce chemin. Ainsi, vues dans cette trivialisation, les conditions Lagrangiennes de part et d'autre du chemin diffèrent de la monodromie de cette fibration, c'est-à-dire du twist $\tau_S$.

\begin{figure}[!h]
    \centering
    \def\svgwidth{\textwidth}
    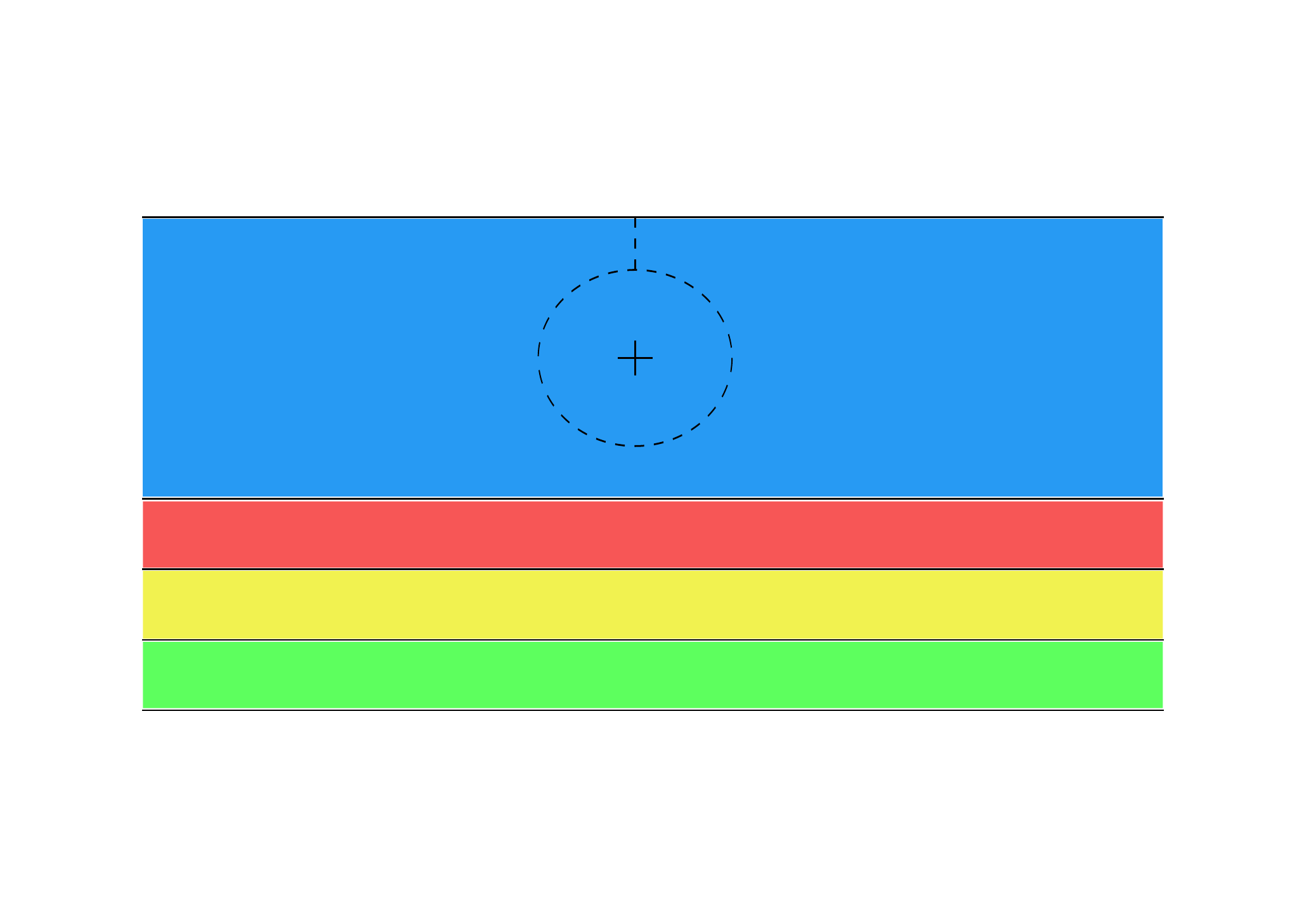
      \caption{Fibration de Lefschetz matelassée définissant $C\Phi_2$.}
      \label{quiltedlefschetz} 
\end{figure}

On note $\Phi_2\colon HF (\tau_S L_0, \underline{L};\Lambda) \rightarrow HF (L_0,\underline{L};\Lambda) $ la flèche induite en homologie.

\subsection{La composée est homotope à zéro}\label{homotopie}

D'après la proposition \ref{compoinvrelat}, la composée $C\Phi_{2} \circ C\Phi_{1}$ correspond à l'invariant relatif associé à la fibration recollée $\underline{S}_1 \cup_\rho \underline{S}_2$, pour un paramère de recollement $\rho$ suffisamment grand. En déformant la surface base, nous allons montrer que $C\Phi_{2} \circ C\Phi_{1}$ est homotope à une composée $C\Phi_{4} \circ C\Phi_{3}$ de deux invariants relatifs, puis nous verrons que le morphisme $C\Phi_{3}$ est homotope à 0.

\begin{figure}[!h]
    \centering
    \def\svgwidth{\textwidth}
    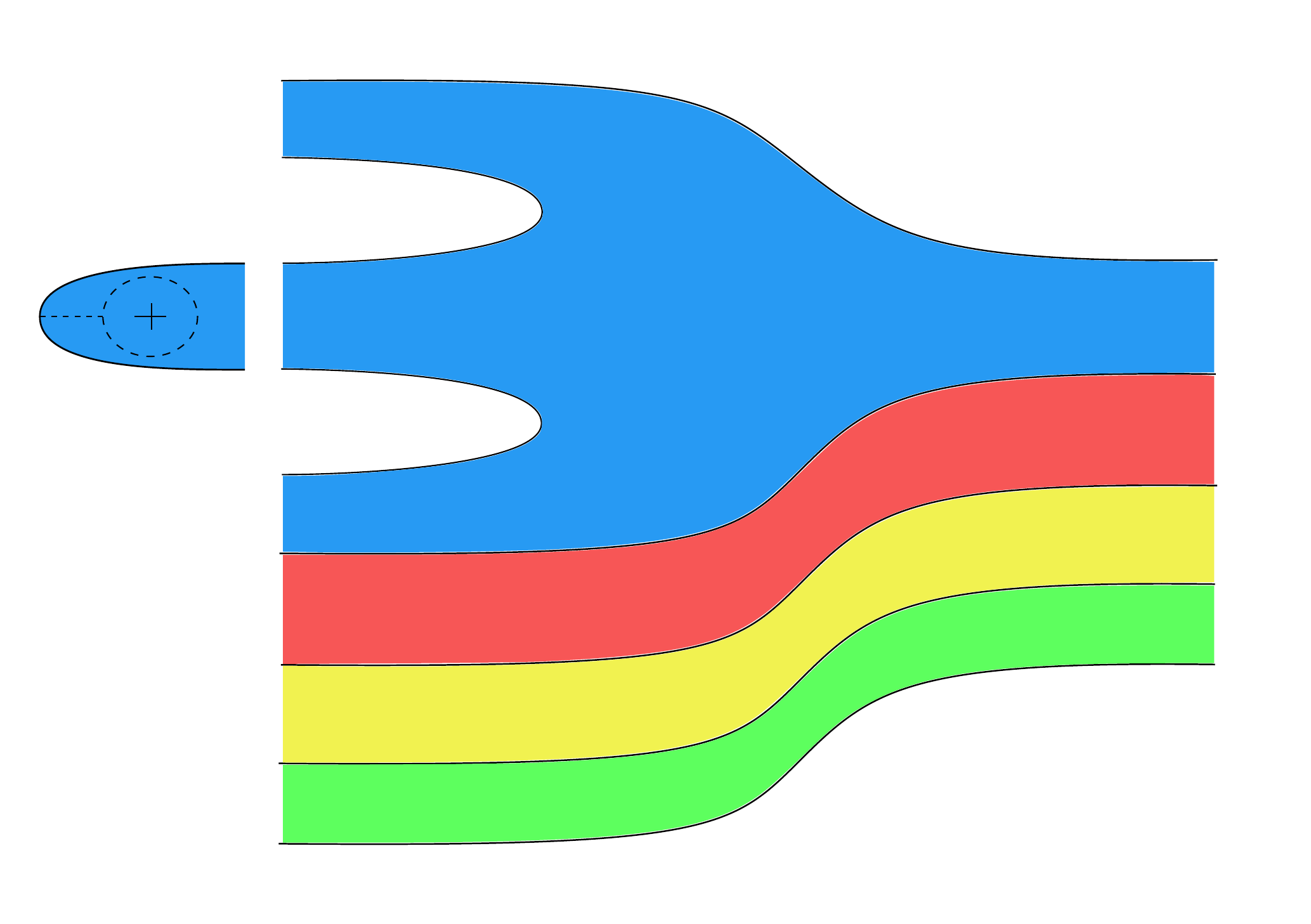
      \caption{Fibrations matelassées définissant $C\Phi_4$ et $C\Phi_3$.}
      \label{PHI_4_PHI_3}
\end{figure}

Soient $\underline{E}_3 \rightarrow \underline{S}_3$ et $\underline{E}_4 \rightarrow \underline{S}_4$ comme spécifiées dans la figure \ref{PHI_4_PHI_3}, et $\rho'$ un paramètre assez grand de telle sorte que, d'après la proposition \ref{compoinvrelat}, $C\Phi_{4} \circ C\Phi_{3} = C\Phi_{\underline{E}_3 \cup_{\rho'} \underline{E}_4 } $. 

Les fibrations $ \underline{E}_1 \cup_{\rho} \underline{E}_2$ et $ \underline{E}_3 \cup_{\rho'} \underline{E}_4$ sont difféomorphes. En tant que variétés lisses, notons $\underline{E}$ leur espace total commun, $\underline{S}$ leur base commune et $\underline{\pi}$ leur projection commune.

On va décrire une famille à un paramètre de structures presque complexes sur cette fibration, $(\underline{E}_t, \underline{S}_t)_{t\in [0,1]}$, qui interpolent entre $ \underline{E}_1 \cup_{\rho} \underline{E}_2$ et $ \underline{E}_3 \cup_{\rho'} \underline{E}_4$ :

Soit $(\underline{j}_t)_{t\in [0,1]}$ une famille à un paramètre de structures complexes sur $ \underline{S}_1 \cup_\rho \underline{S}_2 \simeq \underline{S}_3 \cup_{\rho'} \underline{S}_4 $ telles que $\underline{j}_0$ corresponde à la structure complexe de  $ \underline{S}_1 \cup_\rho \underline{S}_2$ et $\underline{j}_1$ corresponde à celle de $\underline{S}_3 \cup_{\rho'} \underline{S}_4$. 

Soit $(\underline{J}_t)_{0\leq t \leq 1}$ une famille de structures presque complexes sur l'espace total $\underline{E}_t$ telle que $\underline{J}_0$ corresponde à la structure presque complexe de $ \underline{E}_1 \cup_{\rho} \underline{E}_2$, $\underline{J}_1$ corresponde à la structure presque complexe de $ \underline{E}_3 \cup_{\rho'} \underline{E}_4$, et telle que pour tout $t$, la projection $\pi$ soit $(\underline{J}_t,\underline{j}_t)$-holomorphe.

Le raisonnement standard suivant, voir par exemple \cite[Theorem 3.1.6]{McDuSal}, permet de montrer qu'une telle déformation générique induit une homotopie entre les applications $C\Phi_{2} \circ C\Phi_{1}$ et $C\Phi_{4} \circ C\Phi_{3}$. Il s'agit de considérer les espaces des modules paramétrés suivants : pour $k = -1$ ou $0$, soit $\mathcal{M}^k_{param} = \bigcup_{t}{ \lbrace t\rbrace \times \mathcal{M}_t^k  }$, où $\mathcal{M}_t^k $ désigne la réunion sur tous les $\underline{x}\in \I(L_0,S,S^T, \underline{L})$, $ \underline{y}\in \I(L_0,\underline{L})$, des espaces de modules  $\mathcal{M}_t(\underline{x},\underline{y})_k $ de sections pseudo-holomorphes de $\underline{E}_t$ d'indice $k$,  ayant pour limites $\underline{x}$ et $\underline{y}$ aux bouts. Cet espace correspond au lieu d'annulation d'une section d'un fibré de Banach, dont le linéarisé au voisinage d'une solution est un opérateur Fredholm : l'opérateur de Cauchy-Riemann linéarisé paramétré, voir \cite[Def. 3.1.6]{McDuSal}). Pour un choix générique des familles $\underline{j}_t$ et $\underline{J}_t$, il est surjectif. Dans ces conditions,  $\mathcal{M}^k_{param}$ est une variété à bord de dimension $k+1$.

Ainsi $\mathcal{M}^{-1}_{param}$, de dimension zéro, permet de définir une application \[h\colon CF(L_0,S,S^T, \underline{L};\Lambda) \rightarrow CF(L_0,\underline{L};\Lambda)\] 
par :
\[ h ( \underline{x} ) = \sum_{\underline{y}}{\sum_{\underline{u}\in \mathcal{M}^{-1}_{param}(\underline{x},\underline{y})}{o(\underline{u}) q^{A(\underline{u})}} \underline{y}}, \]
et $\mathcal{M}^{0}_{param}$, de dimension 1, permet de montrer que $h$ est une homotopie. En effet il se compactifie en une variété à bord compacte, dont le bord s'identifie à la réunion :

\[\mathcal{M}_0 \sqcup \mathcal{M}_1 \bigsqcup_{\underline{x}'}{ \widetilde{\mathcal{M}}(\underline{x},\underline{x}') \times \mathcal{M}_{par}^0(\underline{x}',\underline{y}) } \bigsqcup_{\underline{y}'}{\mathcal{M}_{par}^0(\underline{x},\underline{y}') \times \widetilde{\mathcal{M}}(\underline{y}',\underline{y})}, \]
où $\widetilde{\mathcal{M}}$ désigne l'espace des modules des bandes pseudoholomorphes intervenant dans les différentielles,   et $\underline{x}'$, $\underline{y}'$ parcourent l'ensemble des générateurs des complexes de départ et d'arrivée.

\begin{proof}Il ne se produit pas de bubbling sur les Lagrangiennes, ni sur les hypersurfaces, pour les mêmes raisons que dans la preuve du lemme. \ref{nobubbling}.
\end{proof}

Il s'en suit que $ C\Phi_{\underline{E}_1 \cup_{\rho} \underline{E}_2}+ C\Phi_{\underline{E}_3 \cup_{\rho'} \underline{E}_4} +  \partial h + h \partial = 0$, ce qui prouve que $C\Phi_{2} \circ C\Phi_{1}$ et $C\Phi_{4} \circ C\Phi_{3}$ sont homotopes.

Il reste à voir que $C\Phi_{3}$ est homotope à $0$. Cela résulte de \cite[Cor. 4.23]{WWtriangle} : d'une part, pour $r>0$ assez petit, la fibration standard sur le disque de rayon $r$ n'admet pas de sections pseudo-holomorphes d'indice nul, car il existe une famille de sections d'indice $c-1$, avec $c$ la dimension de la sphère $S$, qui est strictement plus grande que 2, et dont l'aire tend vers $0$ lorsque $r\rightarrow 0$. Par monotonie, toute autre section d'indice plus petit est d'aire négative et ne peut être pseudo-holomorphe, la fibration étant de courbure positive. Ainsi les sections au-dessus du disque de rayon fixées sont cobordantes à l'ensemble vide, un cobordisme est donné par un espace des modules paramétré  $\bigcup_{r\in [r_0, 1]}{ \lbrace r\rbrace \times \mathcal{M}_r  }$ réunion des espaces des modules correspondants aux sections d'indice nul de la fibration standard au-dessus du disque de rayon $r$, et $r_0$ est assez petit de sorte à avoir $\mathcal{M}_{r_0} = \emptyset$. 

\subsection{Contributions de basse énergie}\label{secbasseénergie}
L'objectif de ce paragraphe, proposition \ref{basseénergie}, est de décrire les parties de bas degré (en $q$) des applications $C\Phi_1$ et $C\Phi_2$ lorsque le twist de Dehn est "suffisamment fin". Un énoncé analogue dans le cadre de Wehrheim et Woodward est \cite[Theorem 5.5]{WWtriangle}. Dans notre cadre, nous démontrons la proposition \ref{basseénergie} en suivant la preuve originale de Seidel (\cite[Parag. 3.2-3.3]{Seidel}).

\paragraph{Préliminaires}
Alors que la preuve de Wehrheim et Woodward est basée sur des arguments de nature analytique comme l'inégalité de la valeur moyenne, la preuve de Seidel dans le cas exact repose sur des calculs d'aire à priori, faisant intervenir des fonctionnelles d'action $a_{L_0,L_1}$ associées à des paires de Lagrangiennes $(L_0,L_1)$. En homologie de Floer matelassée, l'analogue de ce type  de fonctionnelles est l'action matelassée, \cite[Parag. 5.1]{WWqfc}. Rappelons sa définition :

\begin{defi}[Action matelassée]

Soit $\widetilde{\underline{L}}\colon pt\rightarrow M_0 \rightarrow \cdots \rightarrow pt$ une correspondance Lagrangienne généralisée  vérifiant les hypothèses de la définition \ref{defcat}.

\begin{itemize}
\item[$(i)$] On note $\mathcal{P}(\widetilde{\underline{L}}) = \lbrace \underline{\alpha} = (\alpha_i \colon [0,1] \rightarrow M_i\setminus R_i)_i\ |\ (\alpha_i(1),\alpha_{i+1}(0))\in L_{i,i+1}  \rbrace $. Les points d'intersection $\I(\widetilde{\underline{L}})$ s'identifient notamment aux lacets constants.
Notons que $\mathcal{P}(\widetilde{\underline{L}})$ est connexe par arcs car les correspondances Lagrangiennes le sont, et les variétés $M_i$ sont simplement connexes.

\item[$(ii)$] L'\emph{action symplectique} est la fonctionnelle $a_{\widetilde{\underline{L}}}\colon \mathcal{P}(\widetilde{\underline{L}})\rightarrow \rr/M\zz$, où $M = \kappa N $\footnote{N est le nombre de Maslov minimal, $\kappa = \frac{1}{4}$ est la constante de monotonie} est l'aire minimale d'une sphère pseudo-holomorphe, définie comme suit. 

Fixons un lacet base $\underline{\alpha}^{bas}$ dans $\mathcal{P}(\widetilde{\underline{L}})$. Si $\underline{\alpha}\in \mathcal{P}(\widetilde{\underline{L}})$, choisissons un chemin $\underline{\alpha}_t$ reliant $\underline{\alpha}^{bas}$ et $\underline{\alpha}$ dans $\mathcal{P}(\widetilde{\underline{L}})$, qui peut être vu comme une surface matelassée 

\[ \widetilde{\underline{\alpha}} = (\widetilde{\alpha_i}\colon [0,1]\times[0,1]\rightarrow M_i\setminus R_i). \]

Posons alors \[a_{\widetilde{\underline{L}}}(\underline{\alpha}) = \sum_{i}{\int_{[0,1]^2}{\widetilde{\alpha_i}^{*}\tilde{\omega}_i}}, \] où $\tilde{\omega}_i$ désigne la forme monotone de $M_i$.

\end{itemize}
\end{defi}

En vertu de la monotonie de $M_i\setminus R_i$ et de la simple connexité des $L_{i,i+1}$, cette quantité est bien définie modulo $M\zz$. La fonctionnelle d'action est donc bien définie, à une constante près, dépendant du choix du lacet base.

Ainsi, si $\underline{u}$ est une bande matelassée reliant $\underline{x},\underline{y}\in \I(\widetilde{\underline{L}})$, son aire symplectique modulo $M$ est donnée par la différence d'action :
\[ A(\underline{u}) =  a_{\widetilde{\underline{L}}}(\underline{y}) - a_{\widetilde{\underline{L}}}(\underline{x}).\]

Dans notre cadre, définissons $a_{L_0,\underline{L}}$, $a_{S,\underline{L}}$ et $a_{L_0,S}$ de manière à ce que, si $\tilde{x}_0\in L_0 \cap S$, $\underline{x}\in \I(S,\underline{L})$ et $\underline{y}\in \I(L_0, \underline{L})$, la quantité \[ \chi (\tilde{x}_0, \underline{x}, \underline{y}) = a_{L_0,\underline{L}}(\underline{y}) -  a_{L_0,S}(\tilde{x}_0) - a_{S,\underline{L}}(\underline{x}) \] coïncide modulo $M$ avec l'aire d'un triangle matelassé dont les conditions aux coutures sont spécifiées dans le dessin \ref{trianglechi}. Ceci est vérifié dans le cas suivant : en choisissant un lacet base pour $a_{L_0,S}$, puis un autre pour $a_{S,\underline{L}}$ dont le point de départ coïncide avec le point d'arrivée du précédent, puis en prenant le lacet concaténé comme lacet base pour $a_{L_0,\underline{L}}$.

\begin{figure}[!h]
    \centering
    \def\svgwidth{\textwidth}
    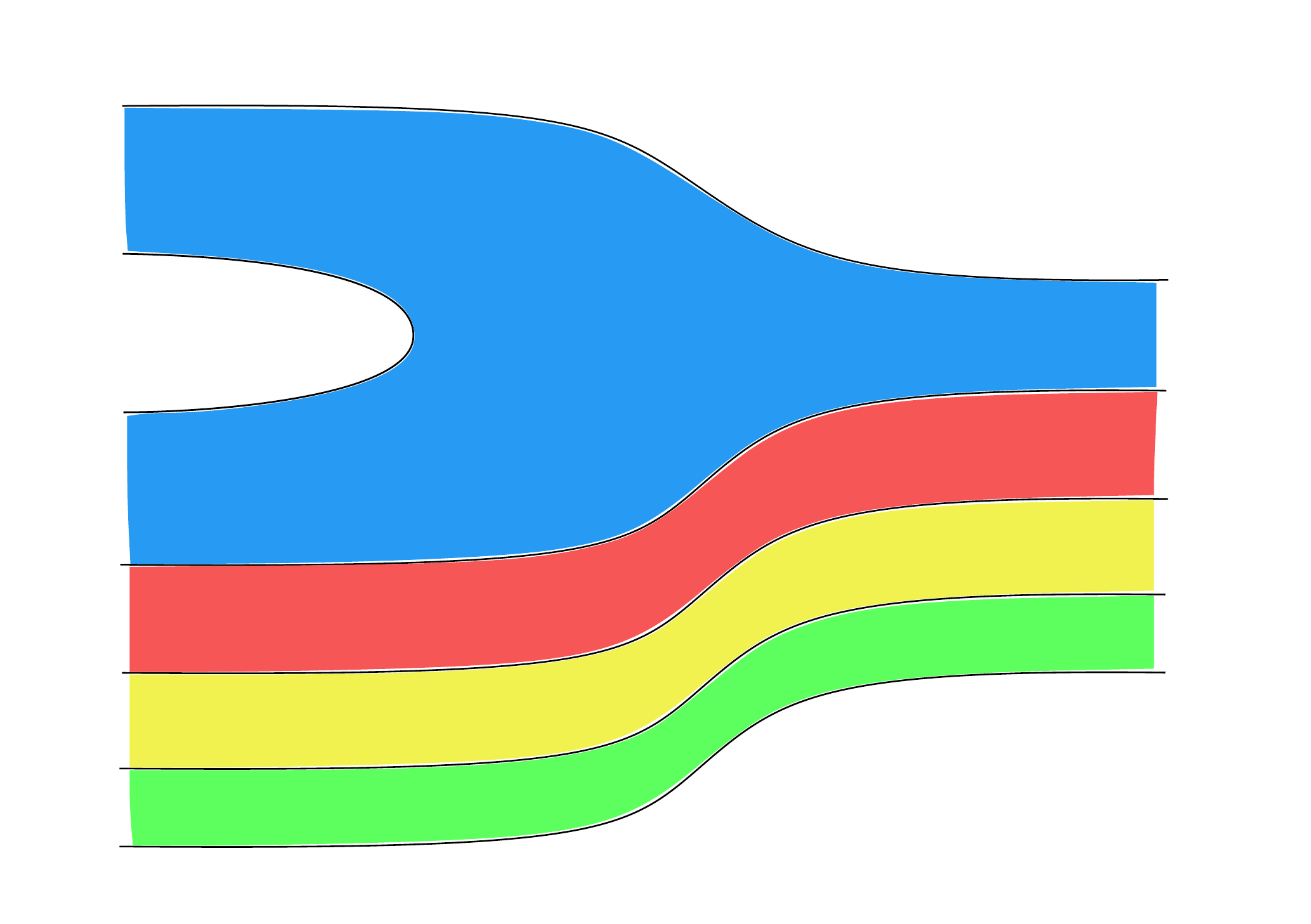
      \caption{triangle d'aire $ \chi (\tilde{x}_0, \underline{x}, \underline{y})$ modulo $M$.}
      \label{trianglechi}
\end{figure}

Définissons a présent $a_{\tau_S L_0,S}$ et $a_{\tau_S L_0,\underline{L}}$ de sorte qu'elles coïncident avec $a_{ L_0,S}$ et $a_{ L_0,\underline{L}}$ pour des lacets dont la composante de $M_0$ est en dehors de $\iota(T(\lambda))$. De cette manière, si $\tilde{x}_0\in \tau_S L_0 \cap S$, $\underline{x}\in \I(S,\underline{L})$ et $\underline{y}\in \I(\tau_S L_0,\underline{L})$, la quantité

\[ \chi_{\tau_S} (\tilde{x}_0, \underline{x}, \underline{y}) = a_{\tau_S L_0,\underline{L}}(\underline{y}) -  a_{\tau_S L_0,S}(\tilde{x}_0) - a_{S,\underline{L}}(\underline{x}) \] 
représente l'aire d'un triangle matelassé comme dans la figure \ref{quiltedpants} définissant l'application $C\Phi_1$. C'est cette quantité que l'on cherche à exprimer en fonction de la fonction $R$ et des données avant le twist.

\begin{prop}\label{airetriangles} On suppose :
\begin{itemize}
\item[$(i)$] Que les hypothèses de la proposition \ref{inter} sont vérifiées, de manière à avoir : 

\[ \I(\tau_S L_0,\underline{L}) = i_2(I( L_0,\underline{L})) \cup i_1( (L_0 \cap S)\times \I(S,\underline{L})).\]

\item[$(ii)$] Que $L_{01}$ est un produit au voisinage de chaque point d'intersection de $\I(S,\underline{L})$, soit : 

\[\forall \underline{x}\in \I(S,\underline{L}), \exists U_0, U_1 \colon U_0\times U_1 \cap L_{01} = T(\lambda)_{x_0} \times L_1(\underline{x}),\]
avec $\underline{x} = (x_0, x_1, \cdots )$, $U_0$ (resp. $U_1$)  un voisinage de $x_0$ dans $M_0$ (resp. de $x_1$ dans $M_1$), et $L_1(\underline{x})\subset U_1$ une Lagrangienne (dépendant de $\underline{x}$).
\end{itemize}
Alors, 
\begin{enumerate}

\item Si $\tilde{x}_0\in \tau_S L_0 \cap S$, $\underline{x}\in \I(S,\underline{L})$ et $y_0$ est la coordonnée en $M_0$ de $i_1(\tilde{x}_0, \underline{x})$, 

\[\chi_{\tau_S} (\tilde{x}_0, \underline{x}, i_2(\tilde{x}_0, \underline{x}) ) = K_{\tau_S}(y_0) - 2\pi R(0) \ \ (\text{mod }M),\]
avec $K_{\tau_S}(y_0) = 2\pi ( R'(\mu(y_0)) \mu(y_0) - R(\mu(y_0)))$ la fonction associée au twist comme dans \cite{Seidel}, et comme dans la sections \ref{rappeltwists}, $\mu$ et $R$ désignant respectivement la norme d'un covecteur et la fonction utilisée pour définir le twist (primitive de la fonction angle). 

De plus, $K_{\tau_S}(y_0) - 2\pi R(0)$ est exactement l'aire d'un triangle d'indice nul.

\item Si $\tilde{x}_0\in \tau_S L_0 \cap S$, $\underline{x}\in \I(S,\underline{L})$ et $\underline{y}\in \I(L_0,\underline{L}) = i_2(\I(L_0,\underline{L}))$,

\[ \chi_{\tau_S} (\tilde{x}_0, \underline{x}, \underline{y}) = \chi (\mathbb{A}(\tilde{x}_0), \underline{x}, \underline{y}) - 2\pi R(0) \ \ (\text{mod }M).\]

\item Si $\tilde{x}_0\in \tau_S L_0 \cap S$, $\underline{x}\in \I(S,\underline{L})$ et $\underline{y} = i_2(\tilde{z}_0, \underline{z})\in  i_2(\I(L_0,\underline{L}))$,

\begin{align*}
\chi_{\tau_S} (\tilde{x}_0, \underline{x}, \underline{y}) &= \chi_{\tau_S} (\tilde{z}_0, \underline{z}, i_2(\tilde{z}_0, \underline{z})) + a_{L_0,S}(\mathbb{A}(\tilde{z}_0)) + a_{S,\underline{L}}(\underline{z}) \\ 
 &- ( a_{L_0,S}(\mathbb{A}(\tilde{x}_0)) + a_{S,\underline{L}}(\underline{x}) ) \ \ (\text{mod }M).
\end{align*}

\end{enumerate}

\end{prop}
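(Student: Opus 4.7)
The plan is to adapt Seidel's explicit area calculations from \cite[Sections 3.2-3.3]{Seidel} to the quilted monotone setting of $\Symp$. The key simplification provided by hypothesis $(ii)$ is that near each $\underline{x} \in \I(S, \underline{L})$ the correspondence $L_{01}$ splits as a product $T(\lambda)_{x_0^S} \times L_1(\underline{x})$, so one can build reference quilted paths in which all components in $M_1, \ldots, M_k$ are constant. Every relevant area then concentrates in the $M_0$-patch, and the problem reduces to planar area computations inside the Weinstein neighborhood $U = \iota(T(\lambda))$ of the Lagrangian sphere $S$.

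First I would fix on $U$ the canonical primitive $\theta$ of $\tilde{\omega}_0$ obtained from the tautological 1-form on $T^*S^n$, and choose base loops for the twisted actions $a_{\tau_S L_0, S}$ and $a_{\tau_S L_0, \underline{L}}$ that agree with the untwisted ones outside $U$ -- this is possible because $\tau_S$ is the identity there, and it guarantees that the only difference between twisted and untwisted actions comes from Hamiltonian contributions inside $U$. For assertion $(1)$ I would then exhibit an explicit triangle in $M_0$ with vertices $\tilde{x}_0, x_0^S, y_0$ and sides on $\tau_S L_0$, $S$ and the cotangent fiber $T(\lambda)_{x_0^S}$, pulled back to $T^{*}S^n$ via $\iota^{-1}$. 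Its area is computed by integrating $\theta$ along the boundary using Stokes; on the $\tau_S L_0$ edge, $\theta$ differs from its untwisted counterpart by the Hamiltonian action $\int_0^{2\pi} (\theta(X_H) - H)\,dt$ of the model flow of $H = R \circ \mu$, which one evaluates to $K_{\tau_S}(y_0) - 2\pi R(0)$ exactly as in \cite[Prop. 3.3]{Seidel}. The crucial input here is Lemma \ref{intertwistbis}, which identifies $y_0$ as the unique intersection point and pins down $\mu(y_0)$ via $2\pi R'(\mu(y_0)) = d(\mathbb{A}(\tilde{x}_0), x_0^S)$.

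For $(2)$, since $\underline{y} \in i_2(\I(L_0, \underline{L}))$ lies outside $U$ by hypothesis $(i)$, I would split the desired quilted triangle along a cotangent fiber into two pieces: an untwisted triangle sitting outside $U$ with corners $\mathbb{A}(\tilde{x}_0), \underline{x}, \underline{y}$ and sides on $L_0, S, \underline{L}$, which by definition contributes $\chi(\mathbb{A}(\tilde{x}_0), \underline{x}, \underline{y})$; plus a thin collar inside $U$ connecting $\tau_S L_0$ to $L_0$ through the antipodal map $\mathbb{A} = \tau_S|_S$. The collar concentrates on the zero section where $\mu = 0$, so its contribution reduces to the Hamiltonian term $-\int_0^{2\pi} H\,dt = -2\pi R(0)$. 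Assertion $(3)$ is then a direct cocycle computation: expanding both $\chi_{\tau_S}$'s in terms of the twisted actions and cancelling the common $a_{\tau_S L_0, \underline{L}}(\underline{y})$ term yields the stated identity modulo $M$. The main technical point I expect is the sign and normalization bookkeeping in Step $3$ -- matching Seidel's conventions for the tautological form, the generating function $R$, and the orientation of the model triangle -- but once these are fixed the remaining identities are formal manipulations of the action functionals.
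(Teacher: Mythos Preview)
Your proposal is correct and follows essentially the same approach as the paper: use hypothesis $(ii)$ to make the $M_1,\ldots,M_k$ components of the reference quilted triangles constant, so that only the $M_0$ patch contributes, and then invoke Seidel's explicit area formulas in the Weinstein neighborhood of $S$. The paper's own proof is a two-sentence deferral to formula (3.7) in the proof of \cite[Lemma 3.2]{Seidel}; you spell out the Stokes/Hamiltonian-action mechanism in more detail and cite \cite[Prop.~3.3]{Seidel} instead, but the substance is the same. One small caveat on your treatment of $(3)$: it is not a purely formal cocycle identity from the definition of $\chi_{\tau_S}$, since after cancelling $a_{\tau_S L_0,\underline{L}}(\underline{y})$ you still need the relation $a_{\tau_S L_0,S}(\tilde{x}_0) = a_{L_0,S}(\mathbb{A}(\tilde{x}_0)) + \text{const}$, which is again Seidel's computation on the zero section; your earlier normalization of base loops sets this up implicitly, but it is worth making that dependence explicit.
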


\begin{proof} Etant donné que $L_{01}$ est un produit au voisinage des points de $\I(S,\underline{L})$, la partie dans $M_1, M_2, \cdots, M_k$ des triangles matelassés intervenant dans le calcul de $\chi_{\tau_S}$ est la même que celle intervenant dans $\chi$ : seule la partie dans $M_0$ change d'aire, et le calcul se ramène à celui de Seidel, voir formule (3.7) dans la preuve de \cite[Lemma 3.2]{Seidel}.
\end{proof}

\begin{remark}Ces formules sont illustrées dans la figure \ref{triangle} :  la quantité $\chi_{\tau_S} (\tilde{x}_0, \underline{x}, \underline{y})$ représente l'aire d'un triangle matelassé comme dans la figure \ref{quiltedpants}. C'est la somme de l'aire d'un polygone  indépendant du twist (le polygone vide pour le triangle mauve, un triangle pour le triangle vert, et un rectangle pour le triangle jaune) et d'une petite quantité dépendante de la primitive $R$ de la fonction angle du twist.
\end{remark}

\begin{figure}[!h]
    \centering
    \def\svgwidth{\textwidth}
    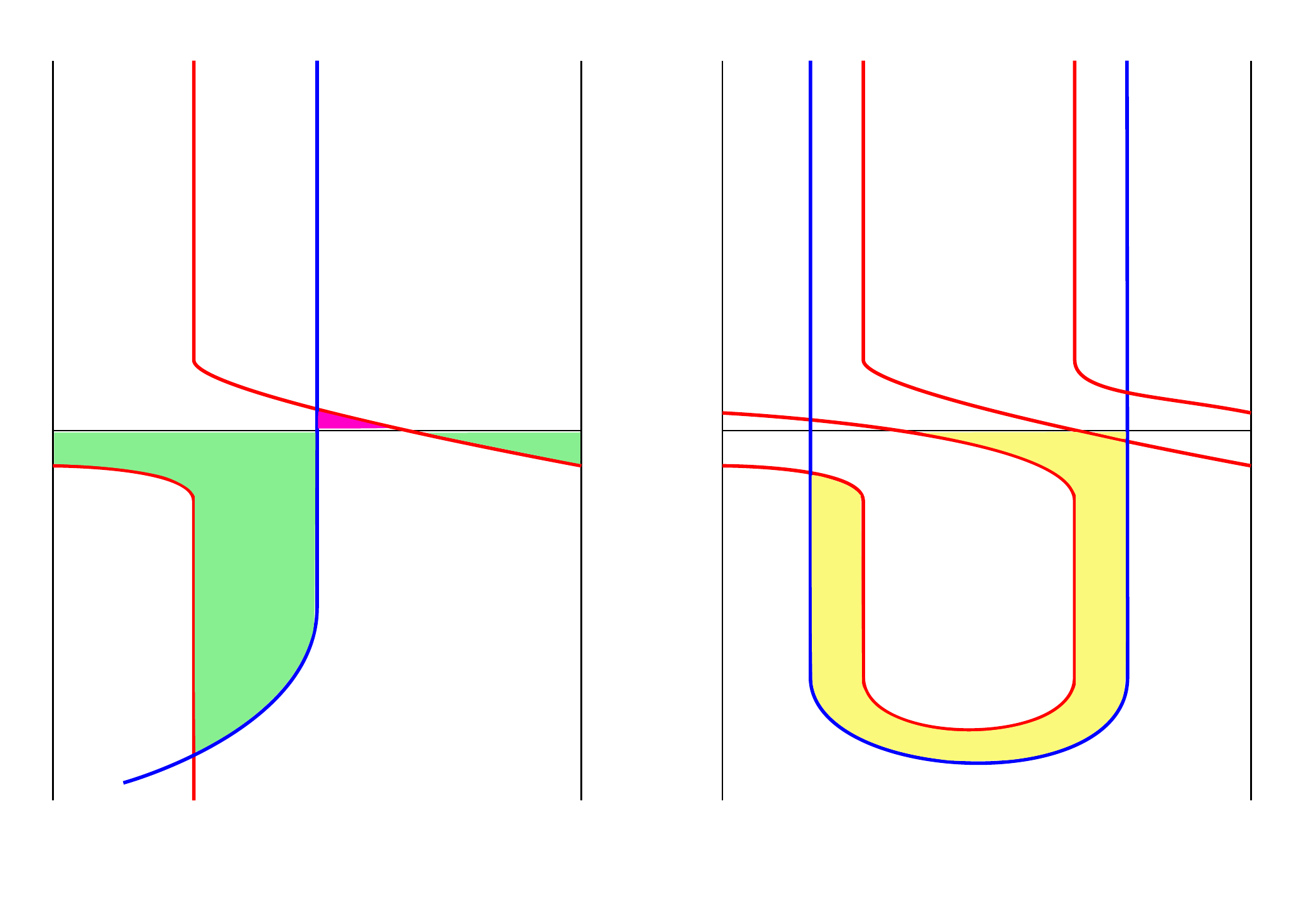
      \caption{Trois triangles dont l'aire est donnée par $\chi_{\tau_S}$.}
      \label{triangle}
\end{figure}

\begin{prop}[Contributions de basse énergie] \label{basseénergie}
Soit $\epsilon >0$  assez petit, on suppose :
\begin{itemize}

\item[$(i)$] que les hypothèses de la proposition \ref{airetriangles} sont vérifiées,

\item[$(ii)$] \begin{itemize} 

\item[$(a)$] $\forall \underline{x} \neq \underline{y} \in \I(L_0,\underline{L}),\ a_{L_0,\underline{L}}(\underline{x}) - a_{L_0,\underline{L}}(\underline{y})\notin (-3\epsilon,3\epsilon),$ 

\item[$(b)$]$\forall (\tilde{x}_0, \underline{x}) \neq (\tilde{z}_0, \underline{z}) \in (L_0 \cap S) \times \I(S,\underline{L}),$
\[a_{L_0,S}(\tilde{z}_0) + a_{S,\underline{L}}(\underline{z}) - ( a_{L_0,S}(\tilde{x}_0) + a_{S,\underline{L}}(\underline{x}) )\notin (-3\epsilon,3\epsilon),\]
 
\item[$(c)$]$\forall \tilde{x}_0 \in L_0 \cap S,  \underline{x} \in  \I(S,\underline{L}), \underline{y} \in  \I(L_0,\underline{L})$,  \[  a_{L_0,\underline{L}}(\underline{y}) - ( a_{L_0,S}(\tilde{x}_0) + a_{S,\underline{L}}(\underline{x}) )\notin (-5\epsilon,5\epsilon).\] 

\end{itemize}

\item[$(iii)$] $0 \geq 2\pi R(0) > -\epsilon,$ et $\tau_S$ est concave, au sens de la définition \ref{concave}.

\end{itemize}

Alors, sous ces hypothèses,
\begin{itemize}

\item[$(a)$] $C\Phi_1 = C\Phi_{1,\leq \epsilon} + C\Phi_{1,\geq 2\epsilon} $, avec :
\begin{itemize}

\item[$(i)$] $C\Phi_{1,\leq \epsilon}(\underline{x}) =\pm q^{A(\underline{x})}i_{1}(\underline{x})$, où $A(\underline{x})$ est un nombre vérifiant $ 0 \leq A(\underline{x}) \leq \epsilon ,$

\item[$(ii)$] $C\Phi_{1,\geq 2\epsilon}$ est d'ordre supérieur à $2\epsilon$.

\end{itemize}

\item[$(b)$] $C\Phi_2 = C\Phi_{2,\leq \epsilon} + C\Phi_{2,\geq 2\epsilon} $, avec :
\begin{itemize}
\item[$(i)$] $C\Phi_{2,\leq \epsilon}(i_1(\underline{x})) = 0 $ et $\tilde{C\Phi_2} (i_2(\underline{x})) =  \pm \underline{x} $, 

\item[$(ii)$] $C\Phi_{2,\geq 2\epsilon} $ est d'ordre supérieur à $2 \epsilon$.

\end{itemize}
\item[$(c)$] L'homotopie $h$, ainsi que les trois différentielles sont d'ordre supérieur à $2\epsilon$.
\end{itemize}

\end{prop}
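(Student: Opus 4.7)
Le plan est de borner inférieurement l'aire symplectique des sections pseudo-holomorphes contribuant à $C\Phi_1$, $C\Phi_2$ et à l'homotopie $h$, en combinant les formules explicites d'aire de la proposition~\ref{airetriangles} avec la positivité de l'aire garantie par la proposition~\ref{aireposit}.

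\textbf{Analyse de $C\Phi_1$.} Un générateur du complexe source s'écrit $(\tilde{x}_0, \underline{x})$. D'après la proposition~\ref{inter}, les générateurs du complexe but se décomposent en $i_2(\I(L_0,\underline{L})) \sqcup i_1((L_0\cap S)\times \I(S,\underline{L}))$, imposant d'étudier trois classes de triangles matelassés dont l'aire modulo $M$ est donnée par les trois énoncés de la proposition~\ref{airetriangles}. Vers $i_1(\tilde{x}_0,\underline{x})$, l'aire vaut $K_{\tau_S}(y_0) - 2\pi R(0)$~; la concavité de $R$ au sens de la définition~\ref{concave} jointe à l'hypothèse~$(iii)$ assure $0\leq K_{\tau_S}(y_0)\leq -2\pi R(0)<\epsilon$, donnant une contribution d'ordre $A(\underline{x})\in [0,\epsilon]$. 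Vers $i_2(\underline{y})$ ou $i_2(\tilde{z}_0,\underline{z})$, l'aire est une différence d'action décalée de $-2\pi R(0)\in (-\epsilon,0]$~; les hypothèses $(iib)$ et $(iic)$ placent la différence d'action hors de $(-3\epsilon,3\epsilon)$ (resp. $(-5\epsilon,5\epsilon)$), et par positivité de l'aire son ordre en $q$ est au moins $2\epsilon$.

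\textbf{Analyse de $C\Phi_2$.} On raisonne de manière parallèle avec la fibration de Lefschetz standard $E_S$ de monodromie $\tau_S$. Pour un générateur source de la forme $i_1(\tilde{x}_0,\underline{x})$, toute section pseudo-holomorphe aboutissant à un générateur $\underline{y}\in \I(L_0,\underline{L})$ se projette sur une configuration quilted dont l'aire est encore une différence d'action décalée~; l'hypothèse $(iic)$ donne une minoration au-delà de $2\epsilon$, d'où $C\Phi_{2,\leq \epsilon}(i_1(\underline{x})) = 0$. Pour un générateur $i_2(\underline{x})$, la section constante égale à $\underline{x}$ passe à l'extérieur de la singularité de $E_S$, est d'aire nulle, et est régulière pour un choix générique de structures presque complexes~; un argument d'unicité analogue à celui de \cite[Prop. 3.3]{Seidel} montre que c'est l'unique contribution d'ordre inférieur à $\epsilon$, donnant le coefficient $\pm\underline{x}$. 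Les autres sections ont une aire minorée par $(iia)$ et la positivité, et apparaissent dans $C\Phi_{2,\geq 2\epsilon}$.

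\textbf{Partie $(c)$ et obstacle principal.} Chaque différentielle de Floer compte des bandes matelassées pseudo-holomorphes entre générateurs distincts, dont l'aire est la différence d'action correspondante, minorée par $(iia)$, $(iib)$ ou $(iic)$ au-delà de $2\epsilon$. Quant à l'homotopie $h$ de la section~\ref{homotopie}, elle est définie via un espace de modules paramétré dont les éléments se décomposent asymptotiquement en la concaténation d'un triangle de type $C\Phi_1$ et d'une section de type $C\Phi_2$~; la somme des aires est minorée exactement comme dans les deux étapes précédentes, en utilisant la condition $(iic)$ appliquée à la composition totale. L'obstacle principal ne réside pas dans ces majorations, mais dans les cas $(a)(i)$ et $(b)(i)$~: il faut y établir \emph{l'existence, l'unicité et la régularité} d'une section pseudo-holomorphe de l'aire minimale attendue, et déterminer son signe. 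Ceci reprend l'argument de Seidel via la description locale du twist de Dehn modèle~: l'hypothèse de produit $(ii)$ sur $L_{01}$ au voisinage des points de $\I(S,\underline{L})$ permet de découpler la composante dans $M_0$ (qui se réduit au lemme~\ref{intertwistbis}) des composantes dans les autres facteurs, lesquelles sont des sections constantes régulières.
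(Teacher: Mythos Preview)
Your overall strategy---bounding areas of pseudo-holomorphic sections via the action formulas of Proposition~\ref{airetriangles} together with positivity---is the right one and matches the paper's approach. The area estimates for the off-diagonal terms of $C\Phi_1$, for $C\Phi_2$, and for the three differentials are essentially as in the paper. However, there are two genuine gaps.

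\textbf{The count $\pm 1$ in $(a)(i)$.} Your decoupling proposal does not work as stated. The product hypothesis on $L_{01}$ near $\I(S,\underline{L})$ lets you compute \emph{areas} componentwise (this is how Proposition~\ref{airetriangles} is proved), but it does not decouple the \emph{moduli problem}: the almost complex structure is not a product, and a pseudo-holomorphic quilted triangle need not stay in the product neighbourhood. Moreover, Lemma~\ref{intertwistbis} only identifies the unique intersection point $\tau_S(T(\lambda)_{x_0})\cap T(\lambda)_{y_0}$; it says nothing about holomorphic triangle counts. The paper instead uses Seidel's cobordism argument \cite[Prop.~3.4]{Seidel}: one deforms the parametrisation of $S$ through a family $(f_t)_{t\in[0,1]}$ so that at $t=1$ the three Lagrangians become concurrent and the triangle is constant (hence regular and unique). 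The parametrised moduli space gives a compact $1$-cobordism---compactness holding precisely because the energy is below the threshold for breaking, by hypotheses $(ii)$ and $(iii)$ and Lemma~\ref{nobubbling}---so $\#\mathcal{M}_0=\#\mathcal{M}_1=\pm 1$. This step is the heart of $(a)(i)$ and is missing from your sketch.

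\textbf{The homotopy $h$ in $(c)$.} Your description of $h$ as counting configurations that ``se décomposent asymptotiquement en la concaténation d'un triangle de type $C\Phi_1$ et d'une section de type $C\Phi_2$'' is incorrect: that concatenation is $C\Phi_2\circ C\Phi_1$, not $h$. The homotopy counts index $-1$ sections of the interpolating family $(\underline{E}_t)$ directly, and one bounds their area by a single action computation. The paper obtains
\[
a_{L_0,S}(y_0)-a_{L_0,S}(\hat{x}_0)+a_{S,\underline{L}}(\underline{y})-a_{S,\underline{L}}(\underline{x}),
\]
which is $\geq 2\epsilon$ by hypothesis $(ii)(b)$, not $(ii)(c)$ as you claim.
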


\begin{remark}\label{justif}
Il est toujours possible de se ramener aux hypothèses de la proposition \ref{basseénergie}. En effet, il suffit dans un premier temps de perturber $L_0$, $L_{01}$ et $S$ par des isotopies Hamiltoniennes et prendre $\epsilon$ assez petit afin de garantir les inégalités, puis reperturber légèrement et éventuellement diminuer $\lambda$ pour garantir les hypothèses de la proposition \ref{inter}.

\end{remark}

\begin{proof}
(a) Notons $\mathcal{M}(\tilde{x}_0, \underline{x},\underline{y})_0$ l'espace des modules des triangles matelassés d'indice nul comme dans la figure \ref{quiltedpants}, ayant pour limites $\tilde{x}_0$, $\underline{x}$ et $\underline{y}$ au niveau des bouts. Supposons $\underline{y} = i_1(\tilde{x}_0, \underline{x})$, et  $ \underline{u} \in \mathcal{M}(\tilde{x}_0, \underline{x},\underline{y})_0$. On a 

\[ \langle C\Phi_1 (\tilde{x}_0, \underline{x}),\underline{y} \rangle = \# \mathcal{M}(\tilde{x}_0, \underline{x},\underline{y}) q^{A(\underline{u})}.\]

D'une part, $A(\underline{u}) = \chi_{\tau_S} (\tilde{x}_0, \underline{x}, i_2(\tilde{x}_0, \underline{x}) ) = K_{\tau_S}(y_0) - 2\pi R(0) \in [0, \epsilon)$, en vertu de la proposition \ref{airetriangles}.

D'autre part, $\# \mathcal{M}(\tilde{x}_0, \underline{x},\underline{y})_0 = \pm 1$. Cela peut être prouvé par un argument de cobordisme similaire à celui intervenant dans la preuve du fait que $C\Phi_{2} \circ C\Phi_{1}$ et $C\Phi_{4} \circ C\Phi_{3}$ sont homotopes. Nous rappelons brièvement cet argument, et renvoyons à \cite[Prop. 3.4]{Seidel} pour plus de détails. On considère une famille  $(f_t\colon S^n\rightarrow S)_{t\in [0,1]}$ de paramétrages de $S$ tels que $f_0$ coïncide avec le plongement $\iota\colon T(0)\rightarrow M$ et $f_1$ envoie l'antipode $\mathbb{A}(x_0)$ sur le point  $\tilde{x}_0$. Cette famille permet de définir un espace des modules paramétré $(\mathcal{M}_t)_{t\in [0,1]}$, où $\mathcal{M}_0 = \mathcal{M}(\tilde{x}_0, \underline{x},\underline{y})_0$ et $\mathcal{M}_t$ désigne l'espace des modules correspondant au twist modèle induit par $f_t$. Pour une famille $f_t$ générique, cet espace est un cobordisme de dimension 1 entre $\mathcal{M}_0$ et $\mathcal{M}_1$. De plus ce cobordisme est compact, d'après le lemme \ref{nobubbling} et parce qu'il n'y a pas assez d'énergie pour former des brisures d'aires strictement positive, d'après les hypothèses. Par ailleurs $\mathcal{M}_1$ consiste en un seul point, le triangle constant.

Supposons à présent $\underline{y} \neq i_1(\tilde{x}_0, \underline{x})$. D'après les hypothèses et la proposition \ref{airetriangles}, 

\[ \chi_{\tau_S} (\tilde{x}_0, \underline{x},  \underline{y} ) \notin [0, \epsilon) \subset \rr/M\zz . \]

Ainsi, l'aire de tout triangle pseudo-holomorphe, nécessairement positive, est plus grande que $\epsilon$.

(b) Pour $C\Phi_2$, le raisonnement de Seidel s'applique tel quel, voir \cite[Section 3.3]{Seidel}. En voici brièvement l'idée : si $\underline{x}\in \I(\tau_S L_0,\underline{L})$ et  $\underline{y}\in \I( L_0,\underline{L})$, quitte à prendre une structure presque complexe horizontale, ce qui est possible d'après \cite[Lemma 2.9]{Seidel}, la fibration étant de courbure strictement positive, les seules sections J-holomorphes d'aire nulle sont des sections constantes, c'est-à-dire des points d'intersection de $\I(L_0, S^T,S, \underline{L} )$. Il y en a un lorsque $\underline{y} = i_2(\underline{x})$, et zéro si $\underline{y}\neq i_2(\underline{x})$. Toutes les autres sections ont une aire strictement positive, d'après la proposition \ref{aireposit},  car la fibration est de courbure positive, et cette aire est donnée par $a_{L_0,\underline{L}}(\underline{y}) - a_{\tau_S L_0,\underline{L}}(\underline{x}) $. Si l'on note $\underline{\tilde{x}}$ le point dont la première coordonnée est l'image de celle de $\underline{x}$ par l'application antipodale, cette quantité vaut, d'après la formule (3.2) de \cite{Seidel} :
\[ a_{L_0,\underline{L}}(\underline{y}) - a_{ L_0,\underline{L}}(\underline{\tilde{x}}) - 2\pi R(0) + 2\pi \int_0^{||y||}{(R'(||y||) - R'(t)    )dt},\]
où $y = \iota^{-1}(\tilde{x}_0)\in T(\lambda)$ . D'après les hypothèses $(ii)$ $(a)$ et $(iii)$, cette quantité est plus grande que $2\epsilon$.

(c) De même, un calcul d'action permet de conclure pour l'ordre de l'homotopie : soient $(\tilde{x}_0 ,\underline{x})\in \I(\tau_S L_0,S,S,\underline{L})$ et $\underline{y}\in \I(L_0,\underline{L})$, l'aire d'une section contribuant au coefficient $(h(\tilde{x}_0 ,\underline{x}), \underline{y})$ vaut :
\begin{align*}
 & a_{S,\underline{L}}(\underline{y}) + a_{\tau_S L_0,S}(\tilde{y}_0) - a_{S,\underline{L}}(\underline{x}) - a_{\tau_S L_0,S}(\tilde{x}_0)  \\ 
 & = a_{ L_0,S}({y}_0)- a_{ L_0,S}(\hat{x}_0) + a_{S,\underline{L}}(\underline{y})- a_{S,\underline{L}}(\underline{x}),  \end{align*}
en notant $\tilde{y}_0$ l'antipode de la première coordonnée $y_0$ de $\underline{y}$, et $\hat{x}_0$ l'antipode de $\tilde{x}_0$. Cette quantité est supérieure à $2\epsilon$ d'après $(ii)(b)$.
Enfin, les trois différentielles sont d'ordre $\geq 2\epsilon$ d'après   $(ii)(b)$ pour celle de $CF(\tau_S L_0,S,S,\underline{L})$, et $(ii)(a)$ pour les celles de $CF(\tau_S L_0,\underline{L})$ et $CF( L_0,\underline{L})$.
\end{proof}

\subsection{Preuve du triangle}\label{preuvedutriangle}
On est à présent en mesure de démontrer le théorème \ref{quilttri} en suivant le même raisonnement que dans \cite[Parag. 5.2.3]{WWtriangle}. Supposons  à présent que les hypothèses de la proposition \ref{basseénergie} sont vérifiées. Introduisons les notations suivantes : désignons les trois complexes à coefficients dans $\zz$ par :
\begin{align*}
 A_0 = & CF (\tau_S L_0,S^T, S, \underline{L};\zz) \\ 
 A_1 = & CF (\tau_S L_0, \underline{L};\zz) \\
 A_2 = & CF (L_0, \underline{L};\zz ),
\end{align*}
et notons $C_i$ les complexes à coefficients dans $\Lambda$, $C_i = A_i \otimes_\zz \Lambda$ en tant que modules, et munis de leurs différentielles respectives $\partial_0$, $\partial_1$ et $\partial_2$. Les applications $C\Phi_1$, $C\Phi_2$ et l'homotopie $h$ entre $C\Phi_2 \circ C\Phi_1$ et l'application nulle construite dans le paragraphe \ref{homotopie} sont spécifiées dans le diagramme suivant :

\begin{equation*} 
\xymatrix{
 {C_0} \ar[r]^{C\Phi_1} \ar@/_1.5pc/[rr]^{ h }& {C_1} \ar[r]^{C\Phi_2} & {C_2.}
}
\end{equation*}

Le mapping cone $\mathrm{Cone}~C\Phi_1$ est le complexe $C_0\oplus C_1$ dont la différentielle est donnée matriciellement par :\[ \partial_{\mathrm{Cone}~C\Phi_1} =
\left[ \begin{matrix}
 - \partial_0 & 0  \\
 - C\Phi_1 & \partial_1  
 
\end{matrix}\right] .  \] 
D'après le lemme du serpent, la suite exacte courte au niveau des complexes induit une suite exacte longue :
\[ \cdots \rightarrow H_*(C_0)\rightarrow H_*(C_1)\rightarrow H_*(\mathrm{Cone}~C\Phi_1 ) \rightarrow \cdots ,\]
où la première flèche est $\Phi_1$. Il suffit donc de montrer que le morphisme de complexes \[(h, - C\Phi_2)\colon \mathrm{Cone}~C\Phi_1 \rightarrow C_2\]  induit un isomorphisme en homologie. Ceci est le cas si et seulement si son cone est acyclique. En tant que $\Lambda$-module, $\mathrm{Cone}~(h, - C\Phi_2) = C_0\oplus C_1\oplus C_2 $. Dans cette décomposition, sa différentielle a pour expression  : 
\[  \partial = 
\left[ \begin{matrix}
\partial_0 & 0 & 0 \\
 C\Phi_1 & - \partial_1 & 0 \\
 - h & C\Phi_2 & \partial_2 
 
\end{matrix}\right]. \]
Le lemme \ref{lemmeperutz} suivant  permet de démontrer l'acyclicité d'un complexe de chaînes sur $\Lambda$ en ne connaissant que le terme dominant de sa différentielle. La conclusion ne vaut que dans la complétion $q$-adique de $\Lambda $, i.e. l'anneau de Novikov universel :

\[\hat{\Lambda} = \left\lbrace \sum_{k = 0}^{\infty}{ a_k q^{\lambda_k}} : a_k\in \zz,\, \lambda_k \in \rr,\, \lim_{k\to +\infty} \lambda_k = +\infty \right\rbrace\] 

Commençons par rappeler le vocabulaire des modules $\rr$-gradués.
\begin{defi} Un \emph{module $\rr$-gradué} $A$ est un module muni d'une décomposition $A = \bigoplus_{r\in\rr }{A_r}$. Son \emph{support} est défini par $Supp A =  \lbrace r: A_r\neq 0 \rbrace$. Soit $I\subset \rr$, $A$ est dit \emph{$I$-lacunaire} si $\forall r,r' \in Supp A, r-r' \notin I$.

Si $r'\in \rr$, on note $A[r]$ le \emph{shift} défini par $A[r]_s = A_{r+s}$. On a $Supp A[r] = Supp A -r$.

Une application linéaire $f\colon A\to B$ entre deux modules gradués est dite 
\begin{itemize}
\item \emph{d'ordre $I$} si pour tout $r$, $f(A_r) \subset \bigoplus_{i\in I}{B_{r+i}}$.

\item \emph{$I$-lacunaire} si pour tout $r$, l'image $f(A_r)$ est $I$-lacunaire.
\end{itemize}
\end{defi}

\begin{lemma}( \cite[Lemma 5.3]{Perutzgysin})\label{lemmeperutz}
Soient $\epsilon >0$, $(A,d)$ un module $\rr$-gradué $[\epsilon, 2\epsilon)$-lacunaire, équipé d'une différentielle $d$ d'ordre $[0, \epsilon)$. Soit $D = A\otimes_\zz \Lambda$, $\hat{D}= A\otimes_\zz \hat{\Lambda}$ son complété, et $\partial$ une différentielle sur $\hat{D}$ telle que :
\begin{itemize}
\item[$(i)$] $\partial$ est $\hat{D}$-linéaire et continue.
\item[$(ii)$]  $\partial(A) \subset A\otimes_\zz \hat{\Lambda}_+ $, où $\hat{\Lambda}_+ = \left\lbrace \sum_{k = 0}^{\infty}{ a_k q^{\lambda_k}} \in \hat{\Lambda} :  \lambda_k \in \rr_+ \right\rbrace $.
\item[$(iii)$] $\partial = \partial_{\leq \epsilon} + \partial_{\geq 2 \epsilon}$, où $\partial_{\leq \epsilon} = d\otimes \hat{\Lambda} $ est la différentielle induite par $d$, et $\partial_{\geq 2 \epsilon}$  d'ordre $[2\epsilon, +\infty)$.
\item[$(iv)$] $(A,d)$ est acyclique.
\end{itemize}
Alors,  $(\hat{D},\partial)$ est acyclique.
\end{lemma}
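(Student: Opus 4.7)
L'idée est classique : partant d'une homotopie contractante pour la différentielle ``dominante'' $d$, on la perturbe en une homotopie contractante pour $\partial$ par un procédé de type série de Neumann dans l'anneau complété $\hat{\Lambda}$. La lacunarité $[\epsilon,2\epsilon)$ sert à garantir que cette perturbation strictement positive est contrôlée à un ordre uniforme, ce qui légitime la convergence.

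D'abord, je voudrais exploiter la lacunarité pour construire une contraction $h\colon A\to A$ de degré $-1$ telle que $dh + hd = \mathrm{id}_A$, avec un contrôle d'ordre. L'hypothèse $(iv)$ fournit l'acyclicité de $(A,d)$, et comme $d$ est d'ordre $[0,\epsilon)$ et $\mathrm{Supp}~A$ est $[\epsilon,2\epsilon)$-lacunaire, on peut regrouper $\mathrm{Supp}~A$ en composantes connexes pour la relation ``distance $<\epsilon$''. Chaque composante est un intervalle de longueur strictement inférieure à $\epsilon$, séparée des autres par une distance $\geq \epsilon$. Le complexe $(A,d)$ se scinde en somme directe sur ces composantes (puisque $d$ est d'ordre $<\epsilon$), chaque facteur est acyclique, et on y choisit une contraction. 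La contraction globale $h$ obtenue est alors d'ordre $(-\epsilon,\epsilon)$, car elle respecte la décomposition en composantes.

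Je prolonge ensuite $h$ en une application $\hat{h}\colon\hat{D}\to\hat{D}$ par $\hat\Lambda$-linéarité et continuité. Un calcul direct donne
\[ \partial\hat{h}+\hat{h}\partial = (d\hat h + \hat h d) + (\partial_{\geq 2\epsilon}\hat h + \hat h\,\partial_{\geq 2\epsilon}) = \mathrm{id}_{\hat D} + E, \]
où $E := \partial_{\geq 2\epsilon}\hat h + \hat h\,\partial_{\geq 2\epsilon}$. Puisque $\hat h$ est d'ordre $(-\epsilon,\epsilon)$ et $\partial_{\geq 2\epsilon}$ d'ordre $[2\epsilon,+\infty)$, l'application $E$ est d'ordre $[\epsilon,+\infty)$ ; de plus, l'hypothèse $(ii)$ garantit que $E$ envoie $A$ dans $A\otimes_\zz \hat\Lambda_+$, de sorte que $E$ augmente strictement la $q$-valuation d'au moins $\epsilon$ sur chaque composante homogène.

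La conclusion : par complétude $q$-adique de $\hat\Lambda$, la série $\sum_{k\geq 0}(-E)^k$ converge vers un inverse continu $(\mathrm{id}+E)^{-1}$ de $\mathrm{id}+E$ sur $\hat D$ (la convergence se vérifie coefficient par coefficient en utilisant que chaque itéré augmente l'ordre d'au moins $\epsilon$). Comme $E$ commute avec $\partial$ à homotopie près\footnote{Plus précisément, $E = \partial\hat h + \hat h\partial - \mathrm{id}$ commute avec $\partial$ car $\partial^2=0$.}, l'application $H := \hat{h}\circ (\mathrm{id}+E)^{-1}$ satisfait $\partial H + H\partial = \mathrm{id}_{\hat D}$, fournissant la contraction cherchée. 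Le point délicat à vérifier, et principal obstacle technique, est le contrôle uniforme de l'ordre de $E$ garantissant la convergence : c'est ici que l'hypothèse de lacunarité $[\epsilon,2\epsilon)$ sur $A$ intervient de manière essentielle, car elle empêche une ``accumulation'' de termes autour d'une même valuation qui compromettrait l'estimation.
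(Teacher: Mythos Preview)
Le papier ne démontre pas ce lemme : il est simplement cité depuis \cite[Lemma 5.3]{Perutzgysin} et appliqué tel quel. Ta démarche --- construire une homotopie contractante $h$ pour $(A,d)$ à l'aide de la lacunarité, puis corriger par une série de Neumann $(\mathrm{id}+E)^{-1}$ --- est précisément l'approche standard de type ``lemme de perturbation homologique'', et c'est effectivement celle de Perutz. L'algèbre est correcte : $E = \partial\hat h + \hat h\partial - \mathrm{id}$ commute bien avec $\partial$, et $H = \hat h\,(\mathrm{id}+E)^{-1}$ donne la contraction voulue.

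Un point mérite toutefois d'être précisé. Tu affirmes que $E$ ``augmente strictement la $q$-valuation d'au moins $\epsilon$'', mais ce qui découle des hypothèses est seulement que $E$ augmente le \emph{degré total} (degré dans $A$ plus exposant de $q$) d'une quantité $>\epsilon$, tandis que l'hypothèse $(ii)$ garantit uniquement que la $q$-valuation ne décroît pas. Pour passer de ``degré total croissant'' à ``convergence $q$-adique'' il faut que le support de $A$ soit borné : si $\mathrm{Supp}\,A \subset [m,M)$, alors $E^k(a)$ pour $a\in A_r$ n'a que des termes $b\,q^\nu$ avec $\deg b + \nu > r + k\epsilon$ et $\deg b < M$, donc $\nu > r + k\epsilon - M$, ce qui tend vers $+\infty$. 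Dans l'application du papier on a $\mathrm{Supp}\,A \subset [0,\epsilon)$ (et $A$ de rang fini), donc tout fonctionne ; mais dans ta rédaction, ce passage gagnerait à être explicité plutôt que d'invoquer directement une croissance de la $q$-valuation qui n'est pas immédiate.
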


Afin d'appliquer ce lemme au double mapping cone, équipons $A= A_0 \oplus A_1 \oplus A_2$ de la graduation suivante\footnote{Ici appara\^it la différence avec la preuve de Seidel, qui se place dans le cadre exact : il n'est pas possible de graduer le complexe par l'action, qui n'est définie que modulo $M$. La conclusion est donc à priori plus faible : l'acyclicité n'est donc que dans la complétion. Nous verrons néanmoins que les hypothèses de monotonie permettent  de récupérer l'acyclicité dans $\zz$.} :
\begin{itemize}
\item $A_0$ est concentré en degré 0.
\item $A_1$, qui est isomorphe à $A_0\oplus A_2$, est gradué comme suit : sa première composante est graduée de sorte que $C\Phi_{2,\leq \epsilon}$ soit de degré 0 : si $(\tilde{x}_0, \underline{x})\in \I(L_0, \underline{L})$, on pose $\deg i_2(\tilde{x}_0, \underline{x}) = \chi_{\tau_S} (\tilde{x}_0, \underline{x}, i_2(\tilde{x}_0, \underline{x}) )$. La seconde composante est concentrée en degré 0.
\item $A_2$ est concentré en degré 0.
\end{itemize}

D'après la propositon \ref{basseénergie}, $\supp A\subset [0, \epsilon)$. De plus, par construction, la différentielle
\[ d =\begin{pmatrix}
 0 & 0 & 0 \\
 C\Phi_{1,\leq \epsilon} & 0 & 0 \\
 0 & C\Phi_{2,\leq \epsilon} & 0 \end{pmatrix},\]
respecte cette graduation.

Le module $\hat{D}$ s'identifie alors à $C = \mathrm{Cone }(h,-C\Phi_2)$. Sa différentielle $\partial$ définie plus haut vérifie les hypothèses $(i)$, $(ii)$ et $(iii)$ du lemme \ref{lemmeperutz}, d'après la proposition \ref{basseénergie}. De plus, $d$ est acyclique. En effet, en décomposant $A_1 = A_0 \oplus A_2$, toujours d'après la proposition \ref{basseénergie},

\[ C\Phi_{1,\leq \epsilon} =  \begin{pmatrix}
  D_0 \\0
  \end{pmatrix} \text{ et } C\Phi_{2,\leq \epsilon} = \begin{pmatrix}
  0 & D_1  
  \end{pmatrix},
\]
avec $D_0$ et $D_1$ deux matrices diagonales pour lesquelles les coefficients diagonaux valent $\pm 1$. Il vient donc $d = \begin{pmatrix}
0 & 0 & 0 & 0 \\
D_0 & 0 & 0 & 0 \\
0 & 0 & 0 & 0 \\
0 & 0 & D_1& 0 \end{pmatrix}$, qui est clairement acyclique.

Expliquons enfin pourquoi la monotonie des correspondances Lagrangiennes permet d'obtenir l'acyclicité sur $\zz$, et donc la suite exacte du théorème \ref{quilttri}. Rappelons que si $x,y$ désignent des générateurs d'un même complexe $C_i$, l'aire symplectique d'une bande $u$ connectant $x$ et $y$ est donnée par $ A(u) = \frac{1}{8} I(u) + c(x,y)$\footnote{$\frac{1}{8} = \frac{1}{2} \kappa $, où $\kappa = \frac{1}{4}$ est la constante de monotonie des formes $\tilde{\omega}$.}, où $c(x,y)$  est une quantité indépendante de la bande $u$. Ces quantités vérifient $ c(x,z) = c(x,y)+ c(y,z)$. De même, si cette fois $x$ et $y$ désignent des générateurs de complexes différents $C_i$ et $C_j$, il existe des quantités $c(x,y)$ similaires donnant l'aire d'une section  des fibrations définissant $C\Phi_1$ et $C\Phi_2$, et ces quantités vérifient la même relation d'additivité, par additivité de l'aire et de l'indice.

Rappelons également que les différentielles $\partial_0$, $\partial_1$ et $\partial_2$, comptent des bandes d'indice 1, $C\Phi_1$ et $C\Phi_2$ des sections d'indice 0, et $h$ des sections d'indice $-1$. Ainsi, en posant $d(x,y) = c(x,y) +i(x) -i(y)$, où $i(x)  = 0,1,2$ désigne le sous-complexe auquel $x$ appartient, le coefficient $(\partial x, y)$ est de la forme $m(x,y) q^\frac{d(x,y)}{2}$. 

Fixons $x_0$ un générateur quelconque de $C$, et soit $f\colon C\to C$ définie sur les générateurs par : $f(x) = q^{d(x_0,x)} x$, où $i(x)  = 0,1,2$ désigne le sous-complexe auquel $x$ appartient.

Un calcul élémentaire montre alors que $\partial f = q^\frac{1}{8} f \partial_\zz$, avec  $(\partial_\zz x, y) = m(x,y)$. Il s'en suit que $H_*(\hat{D},\partial)$ et $H_*(\hat{D},\partial_\zz)$ sont isomorphes. Ainsi, d'après le théorème des coefficients universels, 
\[ H_*(\hat{D},\partial) \simeq H_*(A,\partial_\zz)\otimes_\zz \hat{\Lambda} \oplus \mathrm{Tor}_\zz ( H_*(A,\partial_\zz), \hat{\Lambda} ) [-1].\]
Or, $H_*(\hat{D},\partial) =0$ d'après le lemme \ref{lemmeperutz}, donc $H_*(A,\partial_\zz) =0$ d'après la classification des groupes abéliens de type finis, et du fait que $(\Z{n})\otimes_\zz \hat{\Lambda} \neq 0$.

\section{Action d'un twist de Dehn sur la surface}\label{sectiontwistespmod}


Le but de cette section et d'étudier la nature géométrique de la transformation induite par un twist de Dehn sur une surface $\Sigma$ le long d'une courbe non-séparante au niveau des espaces des modules $\N(\Sigma)$. On verra suite à la proposition \ref{expressionflot} que cette transformation peut s'exprimer comme un flot Hamiltonien sur le complémentaire d'une sous-variété coisotrope, puis on montrera dans le théorème \ref{twistinduit} que, lorsque $\Sigma$ est un tore privé d'un disque, cette transformation correspond presque à un twist de Dehn, à l'exception du fait que son support n'est pas compact dans $\N(\Sigma)$, mais on construira à partir de ce symplectomorphisme un twist de Dehn généralisé qui nous permettra de démontrer la suite exacte de chirurgie du théorème \ref{trianglechir}.



Le groupe des difféomorphismes de $\Sigma$ valant l'identité sur le bord agit de manière naturelle sur $\N (\Sigma )$ par tiré en arrière. Dans ce paragraphe nous montrons que l'action d'un twist de Dehn $\tau_K$ le long d'une courbe $K \subset  \Sigma$ s'exprime comme le temps 1 du flot d'un Hamiltonien lisse en dehors de la sous-variété coisotrope sphériquement fibrée $C_- = \{[A]|\mathrm{Hol}_K (A) = -I\}$.

La stratégie que nous adoptons, inspirée par \cite[Section 3]{WWtriangle}, est de découper la surface le long de $K$, puis d'introduire un espace des modules intermédiaire $\N (\Sigma_{cut} )$ associé à la surface découpée $\Sigma_{cut}$ (voir figure \ref{sigma_sigmacut}), dont la réduction symplectique pour une action $SU(2)$-Hamiltonienne naturelle s'avère être le complémentaire $\N(\Sigma) \setminus C_-$ (voir paragraphe \ref{liencut}).

Le fait que le twist de Dehn $\tau_K$ est isotope à l'identité dans $\Sigma_{cut}$ nous permettra d'exprimer son pull-back comme un flot Hamiltonien dans $\N (\Sigma_{cut} )$ invariant pour l'action de $SU(2)$ précédente, définissant ainsi un flot Hamiltonien dans le quotient symplectique $\N(\Sigma) \setminus C_-$.


\subsection{Groupes fondamentaux de $\Sigma$ et $\Sigma_{cut}$}

\begin{figure}[!h]
    \centering
    \def\svgwidth{\textwidth}
    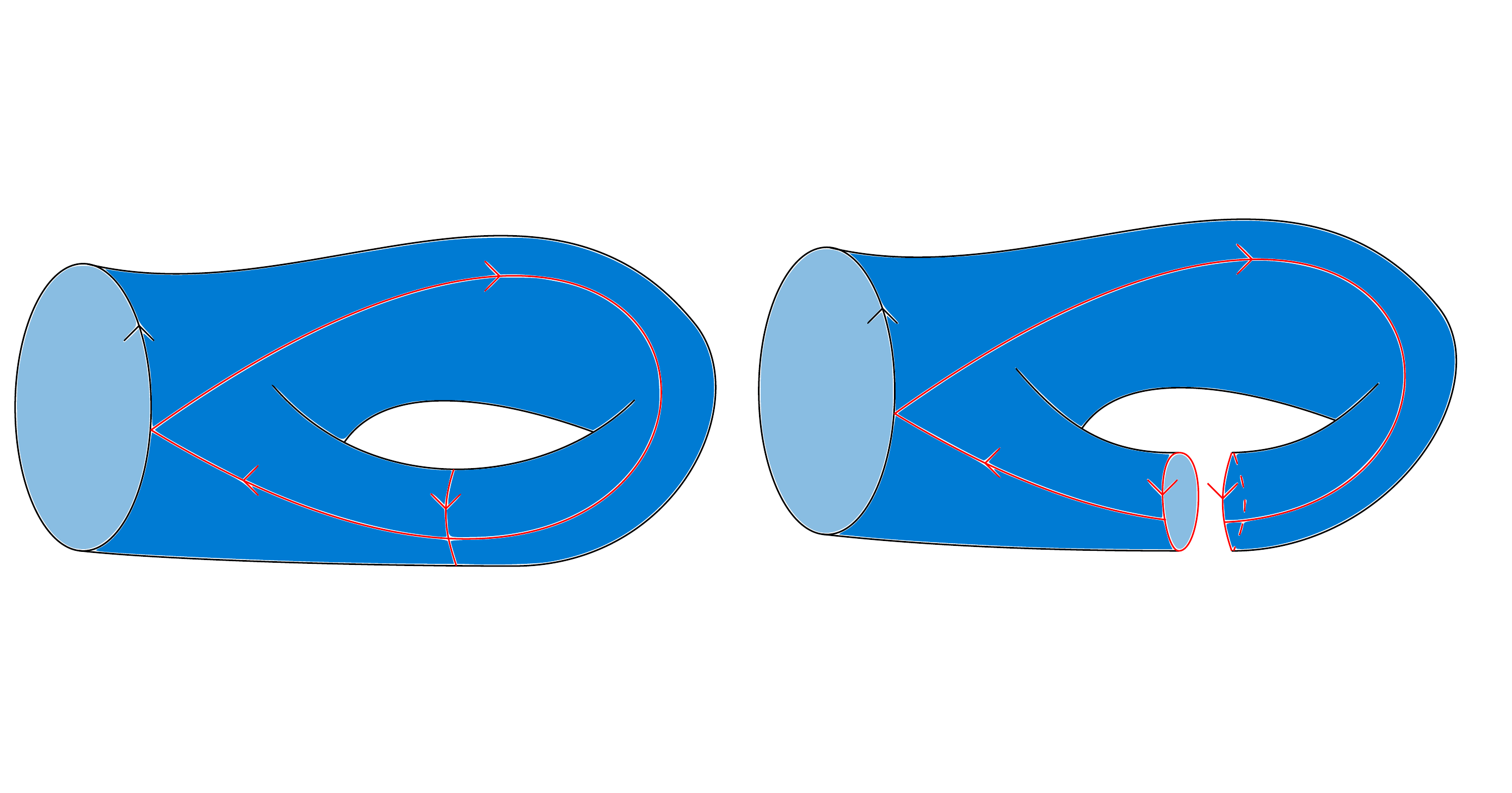
      \caption{Surfaces $\Sigma$ et $\Sigma_{cut}$.}
      \label{sigma_sigmacut}
\end{figure}

Soit $p\colon \rr/\zz \rightarrow \partial \Sigma$ le paramétrage du bord, et $* = p(0)$ le point base. La courbe $K$ étant non-séparante, il existe une courbe simple $\alpha\colon \rr/\zz \rightarrow  \Sigma$ basée en $*$ et intersectant transversalement $K$ en un seul point $\alpha (\frac{1}{2} )$. On note $\alpha_1 = \alpha_{[0,\frac{1}{2}]}$ et $\alpha_2 = \alpha_{[\frac{1}{2}, 1]}$. Notons $\beta\colon \rr/\zz \rightarrow  \Sigma$ un paramétrage de $K$ basé en $\alpha(\frac{1}{2} )$ et orienté de sorte que $\alpha . \beta = +1$ (et non comme des courbes d'une triade de chirurgie), et $\tilde{\beta} = \alpha_2^{-1} \beta \alpha_2 $. La surface $\Sigma \setminus (\alpha \cup \beta)$ est de genre $h -1$, soient  $u_2 , v_2 , \cdots , u_h , v_h$ des générateurs de son groupe fondamental, tels que dans $\Sigma$, en notant $\gamma = [p]$ la courbe du bord, $\gamma  = [\alpha , \tilde{\beta} ]\prod_{i=2}^{h}{[u_i , v_i ]}$. Les courbes  $\alpha , \tilde{\beta}, u_2 , v_2 , \cdots , u_h , v_h$ forment alors un système de générateurs de $\pi_1(\Sigma, *)$, et l'espace des modules étendu admet la description usuelle :

\[  \N(\Sigma,p) = \left\lbrace  (g, A, \tilde{B}, U_2 , V_2 , \cdots , U_h , V_h )|e^g = [A, \tilde{B}]\prod_{i=2}^{h}{[U_i , V_i ]} \right\rbrace
,\]
où $g\in \mathfrak{su(2)}$, de norme $< \pi \sqrt{2}$, est tel que la connexion vaut $g ds$ au voisinage du bord, $A, \tilde{B}, U_2 , V_2 , \cdots , U_h , V_h$ sont les holonomies le long des  courbes génératrices $\alpha , \tilde{\beta}, u_2 , v_2 , \cdots , u_h , v_h$.

Soit $\Sigma_{cut}$ la surface compacte obtenue en découpant $\Sigma$ le long de $K$. Notons $\beta_1$ et $\beta_2$ des paramétrages des nouvelles composantes de bord, coïncidant avec $\beta$ dans $\Sigma$, $\beta_1$ touchant $\alpha_1$ et $\beta_2$ touchant $\alpha_2$, voir la figure \ref{sigma_sigmacut}. On associe alors à $\Sigma_{cut}$ l'espace des modules suivant, défini dans \cite[Parag. 5.2]{jeffrey} par :
 \[\mathscr{M}^{\mathfrak{g},3}(\Sigma_{cut}) = \mathscr{A}_F^\mathfrak{g}(\Sigma_{cut}) /\Gc (\Sigma_{cut} ),\] 
où l'exposant $3$ fait référence aux nombre de composantes de bord de $\Sigma_{cut}$, $\mathscr{A}_F^\mathfrak{g}(\Sigma_{cut})$ est l'espace des connexions plates ($F$ pour "flat") sur $SU(2)\times\Sigma_{cut}$ de la forme $g ds$, $b_1 ds$ et $b_2 ds$ aux voisinages de $\gamma$,   $\beta_1$, $\beta_2$, et $s\in \rr/\zz$ représente le paramètre du bord. Le groupe  $\Gc (\Sigma_{cut} )$  des transformations de jauge triviales au voisinage du bord agit de manière naturelle sur $\mathscr{A}_F^\mathfrak{g}(\Sigma_{cut})$. 

On se restreindra à l'ouvert $\N(\Sigma_{cut}) \subset  \mathscr{M}^{\mathfrak{g},3}(\Sigma_{cut})$ des connexions pour lesquelles les vecteurs $g$, $b_1$ et $b_2$ sont dans la boule de rayon $\pi \sqrt{2}$. 

Cet espace admet la description suivante (voir \cite[Prop. 5.3]{jeffrey}) :

\begin{multline*}\N(\Sigma_{cut} ) \simeq \\ \left\lbrace (g, A_1 , A_2 , b_1 , b_2 , U_2 , V_2 , \cdots )~\left| ~e^g = A_1 e^{b_1} A_1^{-1} A_2^{-1} e^{b_2} A_2 \prod_{i=2}^{h}{[U_i , V_i ]}\right\rbrace \right. , \end{multline*}
où $g, b_1, b_2 \in \mathfrak{su(2)}$ sont les valeurs de la connexion le long des bords (éléments de la boule de rayon $\pi \sqrt{2}$), et $A_1 , A_2 ,  U_2 $ ,$ V_2 , \cdots , U_h , V_h \in SU(2)$ les holonomies le long des courbes $\alpha_1, \alpha_2, u_2 , v_2 , \cdots $, $u_h , v_h$.

Cet espace est muni d'une forme symplectique définie comme celle de $\N(\Sigma)$ :  si $[A]\in\N(\Sigma_{cut}) $ est l'orbite d'une connexion plate, et $\eta$, $\xi$ sont des 1-formes $\mathfrak{su(2)}$-valuées représentant des vecteurs tangents de $T_{[A]} \N(\Sigma_{cut})$, c'est-à-dire proportionelles à $ds$ sur chaque bord, et $d_A$-fermées, alors

\[ \omega_{[A]}([\eta],[\xi]) = \int_{\Sigma'} \langle \eta \wedge\xi \rangle.  \]

\subsection{Lien entre $\N(\Sigma )$ et $\N(\Sigma_{cut} )$ }\label{liencut}

Le groupe $SU(2)^3$ agit de manière Hamiltonienne sur $\N(\Sigma_{cut} )$, en effet $SU(2)^3$ s'identifie au quotient $\G ^{const} (\Sigma_{cut} ) / \Gc (\Sigma_{cut} )$, où $\G ^{const} (\Sigma_{cut} )$ est le groupe des transformations de jauge constantes au voisinage du bord. Le moment de cette action est donné par :

\[ \Psi = (\Phi_\gamma,\Phi_1,\Phi_2) \colon \N(\Sigma_{cut} ) \rightarrow \mathfrak{su(2)}^3,  \]
où $\Phi_\gamma ([A]) = g$, $\Phi_1([A]) = -b_1$ et $\Phi_2([A]) = b_2$, si $A$ est une connexion plate de la forme $g ds$, $b_1 ds$ et $b_2 ds$ aux voisinages de $\gamma$, $\beta_1$ et $\beta_2$ (le signe négatif dans $\Phi_1$ vient du fait que $\beta_1$ est orienté comme $\beta$, et non par normale sortante). Dans la description  holonomique de $\N(\Sigma_{cut} )$, cette action a pour expression :

\begin{multline*} (G,G_1 , G_2 ).(g, A_1 , A_2 , b_1 , b_2 , U_2 , V_2 , \cdots , U_h , V_h ) = \\ 
(ad_{G} g, G A_1 G_1^{-1} , G_2 A_2 G^{-1} , ad_{G_1} b_1 , ad_{G_2} b_2 ,G U_2 G^{-1},G V_2 G^{-1}, \cdots ).
\end{multline*}

En particulier, l'action de $SU(2)$ définie par $G.([A]) = (1,G,G).([A])$ est également Hamiltonienne, a pour moment $\Phi  = \Phi_1 + \Phi_2$, et pour expression :

\begin{multline*}G.(g, A_1 , A_2 , b_1 , b_2 , U_2 , V_2 , \cdots , U_h , V_h ) = \\
(g, A_1 G^{-1} , G A_2 , ad_G b_1 , ad_G b_2 , U_2 , V_2 , \cdots , U_h , V_h ).\end{multline*}

Notons $\N(\Sigma_{cut} )\red SU(2) $ le quotient symplectique pour cette action, et définissons une application $\N(\Sigma_{cut} )\red SU(2) \rightarrow \N(\Sigma)$ de la manière suivante : si $A$ est une connexion sur $\Sigma_{cut}$ telle que $\Phi([A]) = 0$, alors $b_1 = b_2$ et $A$ se recolle en une connexion sur $\Sigma$. Ceci définit une application $ \Phi^{-1} (0) \rightarrow \N(\Sigma)$. Si $G\in SU(2)$, et $\varphi \in \G ^{const}(\Sigma_{cut})$ est une transformation de jauge correspondant à $(1,G,G)$, $\varphi$ coïncide sur $\beta_1$ et $\beta_2$, et se recolle en une transformation de jauge de $\G^0(\Sigma)$, si bien que $A$ et $\varphi . A$ définissent le même élément de $\N(\Sigma)$, autrement dit l'application précédente passe au quotient en une application de $\N(\Sigma_{cut} )\red SU(2) $ vers $\N(\Sigma)$.

\begin{prop}\label{actioncut} Dans la description holonomique de $\N(\Sigma_{cut} )$ et $\N(\Sigma)$, cette application a pour expression :

\begin{multline*}[(g, A_1 , A_2 , b_1 , b_2 , U_2 , V_2 , \cdots , U_h , V_h )] \mapsto \\
(g, A = A_1 A_2 , \tilde{B} = A_2^{-1} e^{b_1} A_2 , U_2 , V_2 , \cdots , U_h , V_h ).\end{multline*}

Cette application réalise un symplectomorphisme sur son image $\N(\Sigma' ) \setminus C_-$ , où $C_- = \lbrace\tilde{B} = -I\rbrace$.
 
\end{prop}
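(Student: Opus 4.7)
The plan is to verify the formula directly via a holonomy computation, then establish the bijection onto $\N(\Sigma')\setminus C_-$ by explicit inversion, and finally check the symplectic property by a Stokes argument analogous to the one used in Proposition \ref{correspcobelem}.

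First I would compute the holonomies of a glued connection. Given $[A]\in \Phi^{-1}(0)$, the condition $\Phi_1([A]) + \Phi_2([A]) = 0$ gives $b_1 = b_2$, so $A$ extends across $K$ to a flat connection on $\Sigma$. Taking $* = p(0)$ as the basepoint, the loop $\alpha = \alpha_1 \cdot \alpha_2$ has holonomy $A_1 A_2$; this gives the coordinate $A$. To compute the holonomy along $\tilde\beta = \alpha_2^{-1}\beta\alpha_2$, note that the holonomy along $\beta$ starting at $\alpha(\tfrac12)$ is obtained by integrating the constant $\mathfrak{su}(2)$-valued $1$-form $b_1\,ds$ along $\beta_1$, giving $e^{b_1}$. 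Conjugating by $A_2$ to move the basepoint back to $*$ yields $\tilde B = A_2^{-1} e^{b_1} A_2$. The remaining generators $u_i,v_i$ lie away from $K$, so their holonomies are unchanged. This establishes the holonomy formula.

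Next I would verify that the image is exactly $\N(\Sigma')\setminus C_-$. The key point is the bound $|b_1| < \pi\sqrt 2$ built into $\N(\Sigma_{cut})$: one has $e^{b_1} = -I$ if and only if the eigenvalues of $b_1$ are $\pm i\pi$, which forces $|b_1| = \pi\sqrt 2$. Hence $\tilde B = A_2^{-1}e^{b_1}A_2 \ne -I$ always, so the image lies in $\N(\Sigma')\setminus C_-$. For surjectivity, given any $(g,A,\tilde B, U_i,V_i)$ with $\tilde B \ne -I$, the conjugate $A_2\tilde B A_2^{-1}$ (for any $A_2\in SU(2)$) is different from $-I$, so it has a well-defined logarithm $b_1\in\mathfrak{su}(2)$ with $|b_1| < \pi$; taking $A_1 = AA_2^{-1}$ and $b_2 = b_1$ furnishes a preimage. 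Injectivity of the induced map on the quotient $\Phi^{-1}(0)/SU(2)$ follows because two preimages differ precisely by the $SU(2)$-action described in the statement: changing $A_2$ to $GA_2$ forces $A_1\to A_1 G^{-1}$ and conjugates $b_1$ by $G$, which is exactly the action on $\Phi^{-1}(0)$.

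Finally I would check that the bijection is a symplectomorphism. The tangent space $T_{[A]}\N(\Sigma'_{cut})/SU(2)$ at a point of $\Phi^{-1}(0)$ is represented by closed $\mathfrak{su}(2)$-valued $1$-forms $\eta$ on $\Sigma_{cut}$ that are proportional to $ds$ near $\gamma$ and agree on the two copies of $K$; such $\eta$ precisely glue to closed $\mathfrak{su}(2)$-valued $1$-forms on $\Sigma'$ representing tangent vectors to $\N(\Sigma')$. The Huebschmann--Jeffrey form is defined by integrating $\langle\eta\wedge\xi\rangle$ over the surface, and since gluing preserves the integrand while simply joining the two domains, the reduced form on $\N(\Sigma'_{cut})\red SU(2)$ coincides with the form on $\N(\Sigma')\setminus C_-$ under the identification. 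The main obstacle I expect is bookkeeping: making sure the orientations of $\beta_1$, $\beta_2$ and the base-point identifications are consistent so that the $SU(2)$-action, the moment map signs, and the holonomy formulas all match up coherently; once this is set, each step reduces to an explicit computation.
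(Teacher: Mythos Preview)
Your approach is correct and essentially identical to the paper's: the holonomy formula comes from $\alpha=\alpha_1\alpha_2$ and $\tilde\beta=\alpha_2^{-1}\beta\alpha_2$, the bijection is established by an explicit inverse (the paper simply picks the representative $A_2=I$, $A_1=A$, $b_1=b_2=\log\tilde B$, which is your construction with a specific choice of $A_2$), and the symplectic property follows because both forms are given by the same integral of $\langle\eta\wedge\xi\rangle$. One small slip: with the inner product $\langle a,b\rangle=-\mathrm{Tr}(ab)$ used in the paper, the bound on the logarithm should read $|b_1|<\pi\sqrt 2$, not $|b_1|<\pi$.
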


\begin{proof} La description vient du fait que $\alpha  = \alpha_1 \alpha_2$ et $\tilde{\beta}  = \alpha_2^{-1} \beta_2 \alpha_2$.

L'exponentielle réalise un difféomorphisme entre la boule \[\{b_1 \in \mathfrak{su(2)} \ |\ |b_1 | < \pi \sqrt{2} \}\] et $G \setminus \{-I\}$, on note  $\log$ sa réciproque. On vérifie aisément que l'application réciproque est donnée par :
\begin{multline*}(g, A, \tilde{B}, U_2 , V_2 , \cdots , U_h , V_h ) \mapsto \\
 [(g, A_1 = A, A_2 = I, b_1 = \log(\tilde{B}), b_2 = b_1 , U_2 , V_2 , \cdots , U_h , V_h )]\end{multline*}
Ceci prouve le caractère bijectif. Enfin cette application préserve les formes symplectiques, car ces dernières sont définies de manière analogue, par intégration des formes sur $\Sigma$ et $\Sigma_{cut}$.
\end{proof}

\subsection{Description du twist de Dehn dans les espaces de modules}

Commençons par décrire l'action d'un twist de Dehn dans $\N(\Sigma_{cut} )$. Pour tout $t \in [0, 1]$, notons $\tau_t$ le difféomorphisme de $ \Sigma_{cut}$ valant l'identité en dehors d'un voisinage de la courbe $ \beta_1$ , et sur $\nu \beta_1\simeq \rr/\zz \times  [0, 1]$, $\tau_t (s, x) = (s + t\psi (x), x)$, où $\psi : [0, 1] \rightarrow [0, 1]$ est une fonction lisse valant 1 sur $[0, \frac{1}{3}]$ et 0 sur $[\frac{2}{3}, 1]$.

\begin{figure}[!h]
    \centering
    \def\svgwidth{0.65\textwidth}
    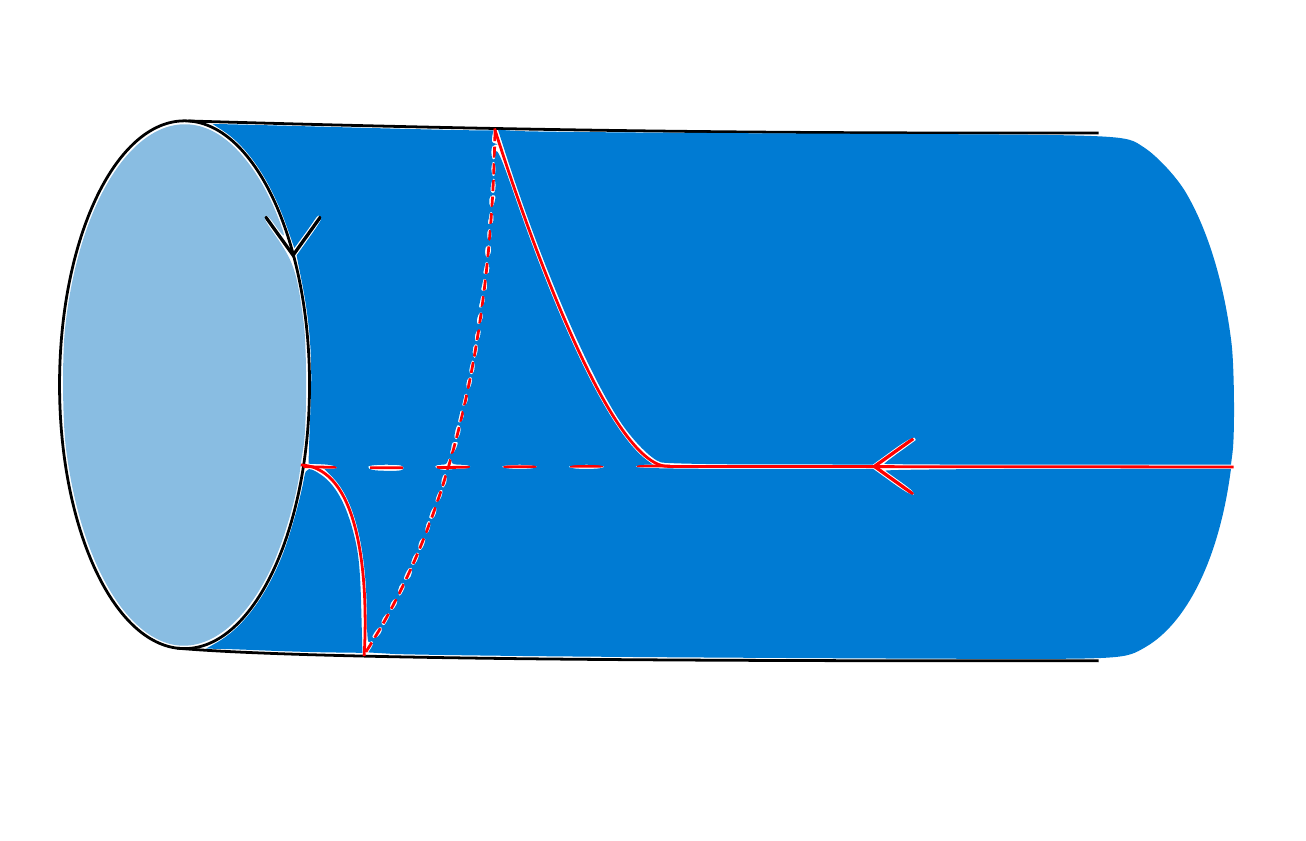
      \caption{Le twist $\tau_1$ au voisinage de $\beta_1$.}
      \label{twist}
\end{figure}

Seuls $\tau_0$ et $\tau_1$ se recollent en des difféomorphismes de $\Sigma$, respectivement en l'identité et en un twist de Dehn le long de $\beta$. Notons alors le tiré en arrière $\varphi_t = \tau_t^*: \N(\Sigma_{cut} ) \rightarrow \N(\Sigma_{cut} )$, défini par $\varphi_t ([A]) = [\tau_t^* A]$. 

\begin{prop}\label{expressionflot}
\begin{itemize}

\item[$(i)$] Dans la description holonomique de $\N(\Sigma_{cut} )$,  le tiré en arrière $\varphi_t$ a pour expression :

\begin{multline*}\varphi_t (g, A_1 , A_2 , b_1 , b_2 , U_2 , V_2 , \cdots , U_h , V_h ) = \\
(g, A_1 e^{tb_1} , A_2 , b_1 , b_2 , U_2 , V_2 , \cdots , U_h , V_h )\end{multline*}

\item[$(ii)$] Pour tout $t\in [0,1]$, $\varphi_t$ est le flot Hamiltonien au temps $t$ de la fonction $H : \N(\Sigma_{cut} ) \rightarrow \rr$ définie par $H([A]) =  \frac{1}{2}|\Phi_1([A]) |^2$.

\end{itemize}
\end{prop}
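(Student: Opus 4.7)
Proof plan. Both parts rest on an explicit holonomy computation in the trivialization where $A$ equals $b_1\,ds$ on a tubular neighbourhood $\nu\beta_1\cong\rr/\zz\times[0,1]$ of $\beta_1$.

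For part $(i)$, I would first observe that $\tau_t$ is supported in $\nu\beta_1$, and that among the loops/arcs $\alpha_2, u_i, v_i$ representing the generators, only $\alpha_1$ enters $\nu\beta_1$. Therefore the holonomies $A_2, U_i, V_i$, as well as the boundary values $g$, $b_2$, are unchanged. For $b_1$, note that on $\beta_1$ itself the map $\tau_t$ is the translation $s\mapsto s+t$, which preserves the 1-form $b_1\,ds$, so $\tau_t^*A$ is again of the form $b_1\,ds$ near $\beta_1$. To compute $\mathrm{Hol}_{\alpha_1}(\tau_t^*A)=\mathrm{Hol}_{\tau_t\circ\alpha_1}(A)$, parametrize $\alpha_1$ inside $\nu\beta_1$ as the vertical segment from $(s_0,1)$ to $(s_0,0)$; the deformed path becomes $x\mapsto(s_0+t\psi(x),x)$, $x\in[1,0]$, and is homotopic rel endpoints (using that the flat connection $A=b_1\,ds$ has trivial holonomy on any contractible region) to the concatenation of the original $\alpha_1$ with the arc of $\beta_1$ of length $t$ from $(s_0,0)$ to $(s_0+t,0)$. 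With the convention $\mathrm{Hol}(\gamma_1\cdot\gamma_2)=\mathrm{Hol}(\gamma_1)\mathrm{Hol}(\gamma_2)$, which is the one making the relation $e^g=A_1 e^{b_1}A_1^{-1}A_2^{-1}e^{b_2}A_2\prod_i[U_i,V_i]$ consistent, this gives $\mathrm{Hol}_{\tau_t\circ\alpha_1}(A)=A_1\cdot e^{tb_1}$, establishing $(i)$.

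For part $(ii)$, since $\Phi_1([A])=-b_1$, we have $H=\tfrac12|b_1|^2$, and this Hamiltonian is invariant under the $SU(2)$-action generated by $\Phi_1$, namely $G\mapsto(1,G,1)$. The standard moment map identity $dH_x(v)=\omega(X_{\Phi_1(x)^\sharp}(x),v)$ (using the metric to identify $\mathfrak{su}(2)^*\simeq\mathfrak{su}(2)$) implies that $X_H([A])$ equals the infinitesimal action of $\Phi_1([A])=-b_1$. Differentiating
\[(1,e^{s\xi},1).(g,A_1,A_2,b_1,b_2,U_2,V_2,\ldots)=(g,A_1 e^{-s\xi},A_2,\mathrm{Ad}_{e^{s\xi}}b_1,b_2,U_2,V_2,\ldots)\]
at $s=0$ with $\xi=-b_1$ yields
\[X_H([A])=(0,\,A_1 b_1,\,0,\,[{-b_1},b_1],\,0,\,0,\ldots)=(0,\,A_1 b_1,\,0,\,0,\ldots).\]
Since $b_1$ (and every other coordinate except $A_1$) is preserved along the flow, the ODE $\dot A_1=A_1 b_1$ integrates to $A_1(t)=A_1(0)e^{tb_1}$. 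Comparing with $(i)$ gives $\varphi_t=\varphi_t^H$.

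The main obstacle is the careful setup of $(i)$: $\alpha_1$ is an arc (not a loop) whose endpoint on $\beta_1$ is displaced by $\tau_t$, so one must work with holonomies of paths between boundary points and be vigilant about composition order and orientation conventions. Once the homotopy between $\tau_t\circ\alpha_1$ and the concatenation $\alpha_1\cdot(\beta_1|_{[s_0,s_0+t]})$ is identified, the rest is bookkeeping, and $(ii)$ becomes a direct consequence of the moment-map formalism.
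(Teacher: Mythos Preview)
Your argument is correct and follows essentially the same route as the paper: for $(i)$ you use the homotopy of $\tau_t\circ\alpha_1$ with $\alpha_1$ followed by an arc of $\beta_1$ to get $A_1 e^{tb_1}$, and for $(ii)$ you invoke the standard identity $\nabla^\omega(f\circ\Phi_1)=X_{\nabla f(\Phi_1)}$ (which the paper isolates as a separate lemma) to identify $X_H$ with the infinitesimal action of $-b_1$, then integrate. The only cosmetic difference is that the paper concludes via the flow property $\varphi_{t+h}=\varphi_t\circ\varphi_h$ rather than explicitly solving the ODE $\dot A_1=A_1 b_1$, but this is equivalent.
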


Afin de prouver la proposition, rappelons le fait suivant :

\begin{lemma}\label{gradcompo} Soit $G$ un groupe de Lie, $\mathfrak{g}$ son algèbre de Lie, $(M, \omega , \Phi )$ une variété $G$-Hamiltonienne ($\Phi  : M \rightarrow \mathfrak{g} \simeq \mathfrak{g}^*$) et $f : \mathfrak{g} \rightarrow \rr$ une fonction lisse, alors le gradient symplectique de $f \circ  \Phi  : M \rightarrow \rr$ est donné par :
\[\bigtriangledown^\omega (f \circ  \Phi )_m = X_{\bigtriangledown f (\Phi (m))} (m)\]
où $\bigtriangledown f$ est le gradient de $f$ pour un produit scalaire sur $\mathfrak{g}$ réalisant l'isomorphisme $\mathfrak{g} \simeq \mathfrak{g}^*$ et, pour $\eta \in \mathfrak{g}$, $X_\eta$ désigne le champs de vecteur sur $M$ correspondant à l'action infinitésimale de $G$.
\end{lemma}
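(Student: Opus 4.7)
L'idée est de vérifier directement l'identité caractérisant le gradient symplectique, à savoir que pour tout $m\in M$ et tout $v\in T_m M$,
\[ \omega_m\bigl(\bigtriangledown^\omega(f\circ\Phi)_m,\, v\bigr) = d(f\circ\Phi)_m(v). \]
Il suffit donc de montrer que le champ $Y(m) := X_{\bigtriangledown f(\Phi(m))}(m)$ satisfait $\iota_Y \omega = d(f\circ\Phi)$.

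La première étape consiste à appliquer la règle de la chaîne au membre de droite : pour $v\in T_mM$,
\[ d(f\circ\Phi)_m(v) = df_{\Phi(m)}\bigl(d\Phi_m(v)\bigr) = \langle \bigtriangledown f(\Phi(m)),\, d\Phi_m(v)\rangle, \]
où la dernière égalité découle de la définition du gradient de $f$ relatif au produit scalaire de $\mathfrak{g}$ fixé, via lequel on identifie $\mathfrak{g}$ et $\mathfrak{g}^*$.

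La seconde étape consiste à utiliser la définition du moment : pour tout $\xi\in\mathfrak{g}$,
\[ \iota_{X_\xi}\omega = d\langle \Phi,\xi\rangle, \]
ce qui, évalué en $m$ sur $v$, donne $\omega_m(X_\xi(m), v) = \langle d\Phi_m(v),\xi\rangle$. En spécialisant à $\xi = \bigtriangledown f(\Phi(m))$, on obtient
\[ \omega_m\bigl(X_{\bigtriangledown f(\Phi(m))}(m), v\bigr) = \langle d\Phi_m(v),\, \bigtriangledown f(\Phi(m))\rangle, \]
ce qui, comparé au calcul de la première étape, conclut. Il n'y a pas de véritable obstacle ici : le seul point à surveiller est de bien distinguer les évaluations ponctuelles (on utilise $\xi = \bigtriangledown f(\Phi(m))$ qui dépend de $m$, donc l'identité $\iota_{X_\xi}\omega = d\langle \Phi,\xi\rangle$ n'est utilisée qu'en un point fixé, et non en tant qu'égalité de formes globales avec un champ $\xi$ variable), mais cela suffit puisque la caractérisation du gradient symplectique est ponctuelle.
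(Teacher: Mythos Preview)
Your proof is correct and follows essentially the same route as the paper: both verify the defining identity for the symplectic gradient by applying the chain rule to $d(f\circ\Phi)$ and then invoking the moment map equation $\iota_{X_\xi}\omega = d\langle\Phi,\xi\rangle$ at the fixed value $\xi=\bigtriangledown f(\Phi(m))$. The paper packages this last step by introducing the auxiliary function $f_m(m')=\langle\bigtriangledown f(\Phi(m)),\Phi(m')\rangle$, which is exactly your $\langle\Phi,\xi\rangle$ for fixed $\xi$; your explicit remark that the argument is pointwise makes the same point.
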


\begin{proof}[Preuve du lemme]
Par définition, $\bigtriangledown^\omega (f \circ  \Phi )$ est tel que, pour tout $m \in M$ et $y \in T_m M$ ,
\[\omega_m (\bigtriangledown^\omega (f \circ  \Phi )_m , y) = \mathrm{D}_m (f \circ  \Phi ).y\]
or,
\begin{align*}
\mathrm{D}_m (f \circ  \Phi ).y &= \mathrm{D}_{\Phi  (m)}f \circ \mathrm{D}_m \Phi .y\\
 &= \left\langle \bigtriangledown f (\Phi (m)), \mathrm{D}_m \Phi .y \right\rangle \\
 &= \mathrm{D}_m (f_m).y \\
 &= \omega_m (X_{\bigtriangledown f (\Phi (m))}(m), y),
\end{align*}
où $f_m$ est la fonction sur $M$ qui à $m'$ associe $\langle \bigtriangledown f (\Phi (m)), \Phi (m') \rangle$, pour $m$ fixé.

\end{proof} 

\begin{proof}[Preuve de la proposition \ref{expressionflot}] $(i)$ Comme $\tau_t$ est l'identité aux voisinages de $\gamma$ et $\beta_2$, et est une rotation au voisinage de $\beta_1$, les valeurs de $g$, $b_1$ et $b_2$ ne sont pas modifiées par $\varphi_t$. Par ailleurs, $\tau_t$ laisse inchangées les courbes $\alpha_2 , u_2 , \cdots , v_h$ : les holonomies ne sont donc pas modifiées. De plus, il envoie $\alpha_1$ sur une courbe homotope à $\alpha_1 \cup \beta_1([0, t])$ , d'où \[\mathrm{Hol}_{\alpha_1} (\tau_t^* A) = \mathrm{Hol}_{\alpha_1 \cup \beta_1([0,t])} (A) = \mathrm{Hol}_{\alpha_1} (A) e^{tb_1}.\] 

$(ii)$ Tout d'abord on remarque d'après le point précédent que $\varphi_t ([A]) = (1,e^{-tb_1} , 1)[A] = (1,e^{t \Phi_1 ([A])} , 1)[A]$ pour l'action de $SU(2)^3$ définie précédemment.

Appliquons le lemme \ref{gradcompo} à $M=\N(\Sigma_{cut})$, muni de l'action de $SU(2)$ de moment $\Phi_1$, avec $f (\xi ) = \frac{1}{2} |\xi |^2$. $\bigtriangledown f (\Phi_1 ([A])) = \Phi_1 ([A]) = - b_1 ([A])$. De la première observation il vient $\frac{\partial \varphi}{\partial t} |_{t=0} = X_{\Phi_1 ([A])} ([A])$, et d'après le lemme, $X_{\Phi_1 ([A])} ([A]) = X_{\bigtriangledown f (\Phi_1 ([A]))} ([A]) = \bigtriangledown^\omega H([A])$. Ceci, et le fait que $\varphi_t$ vérifie la propriété de flot $\varphi_{t+h} = \varphi_t \circ \varphi_h$ achève la démonstration du point (ii).

\end{proof}

Rappelons la proposition suivante, dont la preuve repose essentiellement sur une version équivariante du théorème de plongement coisotrope (\cite[Theorem 39.2]{guilleminsternberg}) :


\begin{prop}(\cite[Prop. 2.15]{WWtriangle})
On suppose que $(M,\omega,\Phi)$ est une variété $SU(2)$-Hamil\-tonienne telle que l'application moment $\Phi \colon M \rightarrow \mathfrak{su(2)}^*$ prend ses valeurs dans la boule $\lbrace \xi \in \mathfrak{su(2)}~|~ |\xi |<\pi \sqrt{2}\rbrace$, et telle que le stabilisateur de l'action en chaque point de $\Phi^{-1}(0)$ est trivial (resp. $U(1)$). Soit $\psi \in C^\infty ([0,+\infty))$ telle que $\psi ' (0) = \pi \sqrt{2} $, à support compact, et dont le temps $1$ du flot Hamiltonien de $\psi \circ |\Phi|$ s'étend de manière lisse à  $\Phi^{-1}(0)$.

Alors  $\Phi^{-1}(0)$  est une sous-variété coisotrope sphériquement fibrée, de codimension 3 (resp. 2), et le temps $1$ du flot de $\psi \circ |\Phi|$ est un twist de Dehn fibré autour de  $\Phi^{-1}(0)$.
\end{prop}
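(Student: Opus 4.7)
La preuve que j'envisage comporte trois étapes : établir la structure coisotrope et la fibration sphérique à partir de la théorie du moment, construire un modèle local équivariant via le théorème de plongement coisotrope de \cite[Theorem 39.2]{guilleminsternberg}, et enfin identifier le flot de $\psi\circ|\Phi|$ à un twist de Dehn fibré modèle au sens de la section~\ref{rappeltwists}.

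Pour le premier point, je fixerais $m\in\Phi^{-1}(0)$ et j'utiliserais les propriétés élémentaires du moment : le noyau de $d\Phi_m$ est l'orthogonal symplectique de $T_m(SU(2)\cdot m)$, et cette orbite est contenue dans $\Phi^{-1}(0)$ par équivariance (le point $0\in\mathfrak{su(2)}^*$ étant fixé par la coaction). Il en résulterait que $T_m(SU(2)\cdot m)\subset T_m\Phi^{-1}(0)$, soit la relation coisotrope. L'hypothèse sur les stabilisateurs fournirait la codimension : si $G_m=\lbrace I\rbrace$, $d\Phi_m$ est surjective et la codimension vaut $3$ ; si $G_m=U(1)$, l'image de $d\Phi_m$ est l'annulateur de $\mathrm{Lie}(U(1))$, de dimension $2$, d'où la codimension $2$. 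La projection $\Phi^{-1}(0)\to\Phi^{-1}(0)/SU(2)$ serait alors une fibration sphérique, de fibre $S^3$ ou $S^2$ selon le cas.

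Ensuite, j'appliquerais la version équivariante du théorème de plongement coisotrope pour identifier symplectiquement, de manière $SU(2)$-équivariante, un voisinage tubulaire de $\Phi^{-1}(0)$ à un voisinage de la section nulle d'un fibré modèle au-dessus de la réduction symplectique, les fibres étant modelées sur $T^*S^3$ (resp. $T^*S^2$). L'hypothèse $|\Phi|<\pi \sqrt{2}$ servirait à contrôler la taille de ce voisinage et à garantir que la composante radiale du moment donne un difféomorphisme bien défini. Dans ces coordonnées, $|\Phi|$ se lirait comme la norme du covecteur fibre. D'après le lemme~\ref{gradcompo} appliqué à $\psi$, le gradient symplectique de $\psi\circ|\Phi|$ correspondrait à l'action infinitésimale de $\psi'(|\Phi|)\,\Phi/|\Phi|\in\mathfrak{su(2)}$, c'est-à-dire à une rotation d'axe $\Phi$ et de vitesse angulaire $\psi'(|\Phi|)$. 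Le flot au temps $1$ coïnciderait alors, fibre par fibre, avec un twist de Dehn modèle de fonction angle $\psi'$ ; la condition $\psi'(0)=\pi \sqrt{2}$ est précisément celle (analogue à la relation $R(-t)=R(t)-t$ rappelée au paragraphe~\ref{rappeltwists}) qui assure que le demi-flot autour de la section nulle réalise l'application antipodale sur la sphère, autorisant le prolongement lisse à $\Phi^{-1}(0)$ supposé dans les hypothèses.

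La principale difficulté sera de vérifier la cohérence globale : il faut s'assurer que les identifications fibre-à-fibre avec le modèle de Seidel se recollent en un symplectomorphisme équivariant bien défini sur tout un voisinage de $\Phi^{-1}(0)$, et que le difféomorphisme obtenu au temps $1$ respecte effectivement la structure de sous-variété coisotrope sphériquement fibrée au sens où on l'entend dans la proposition~\ref{decoupcoiso}. Cette cohérence devrait découler de l'équivariance du plongement coisotrope et du caractère $SU(2)$-invariant de $|\Phi|$, mais demandera une vérification soigneuse des transitions entre cartes locales, notamment dans le cas où le stabilisateur est $U(1)$ et où les fibres du fibré coisotrope sont des sphères $S^2$ et non $S^3$.
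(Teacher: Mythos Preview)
Your outline is sound and matches what the paper indicates: the thesis does not give its own proof of this proposition but cites it from \cite[Prop.~2.15]{WWtriangle}, noting only that ``la preuve repose essentiellement sur une version équivariante du théorème de plongement coisotrope (\cite[Theorem 39.2]{guilleminsternberg})''. Your second step is precisely this equivariant coisotropic embedding, and your first and third steps are the natural preparation and conclusion around it, so you have reconstructed the intended strategy.
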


\begin{remark}
Dans \cite[Prop. 2.15]{WWtriangle}, le résultat est énoncé pour $\psi ' (0) = \frac{1}{2} $ et le temps $2\pi$ du flot, mais le produit scalaire sur $\mathfrak{su(2)}$  utilisé est différent.
\end{remark}

Cette proposition s'applique pour l'action de $SU(2)$ sur $\N(\Sigma_{cut})$ de moment $\Phi_1$. En effet, d'une part $Im \Phi_1 \subset \lbrace |\xi |<\pi \sqrt{2}\rbrace$ par définition de $\N(\Sigma_{cut})$. D'autre part, d'après sa description holonomique, cette action est libre, et le flot se prolonge. Ainsi :

\begin{cor} \label{cortwist} Soit $R\colon \rr\rightarrow \rr$ une fonction nulle pour $t>\frac{\pi \sqrt{2}}{2}$ et telle que $R(-t) = R(t) -2 \pi \sqrt{2} t$. Alors le flot de $H = R\circ |\Phi_1|$ au temps $1$ se prolonge en un twist de Dehn fibré de $\N(\Sigma_{cut})$ autour de $C_+ = \Phi_1^{-1}(0)$, qui est une sous-variété coisotrope sphériquement fibrée.

\end{cor}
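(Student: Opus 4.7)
The plan is to realize this as a direct application of the proposition quoted immediately above (\cite[Prop. 2.15]{WWtriangle}), using the $SU(2)$-Hamiltonian structure on $\N(\Sigma_{cut})$ with moment map $\Phi_1$.

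First I would verify the two structural hypotheses of that proposition. By the very definition of $\N(\Sigma_{cut})$, the boundary value $b_1$ lies in the open ball of radius $\pi\sqrt{2}$ of $\mathfrak{su(2)}$, so $\Phi_1 = -b_1$ takes values in $\{\xi \in \mathfrak{su(2)}\,|\,|\xi| < \pi\sqrt{2}\}$. Next, the holonomy formula for the $SU(2)$-action,
\[G.(g,A_1,A_2,b_1,b_2,U_2,V_2,\ldots) = (g, A_1 G^{-1}, G A_2, \mathrm{Ad}_G b_1, \mathrm{Ad}_G b_2, U_2, V_2, \ldots),\]
shows that $A_1 G^{-1} = A_1$ forces $G = I$, so the stabilizer of the action is trivial everywhere; a fortiori at points of $\Phi_1^{-1}(0)$.

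Next I would extract from $R$ a function $\psi \in C^\infty([0,+\infty))$ as required by the proposition, simply by setting $\psi := R|_{[0,+\infty)}$. The compact support of $\psi$, contained in $[0, \pi\sqrt{2}/2]$, is the first assumption on $R$. The normalization $\psi'(0) = \pi\sqrt{2}$ is obtained by differentiating the reflection relation $R(-t) = R(t) - 2\pi\sqrt{2}\,t$ at $t=0$: this gives $-R'(0) = R'(0) - 2\pi\sqrt{2}$, hence $R'(0) = \pi\sqrt{2}$. Finally, the smooth extension of the time-$1$ flow of $H = \psi\circ|\Phi_1|$ across $\Phi_1^{-1}(0)$ is also a consequence of this reflection relation: in an equivariant Weinstein tubular neighborhood of $\Phi_1^{-1}(0)$ the $SU(2)$-action together with the moment $\Phi_1$ provides a local model isomorphic to a neighborhood of the zero section in $T^*S^n$ (cf.\ Section~\ref{rappeltwists}), and the reflection condition on $R$ is precisely the one that in Seidel's cotangent model guarantees that the time-$1$ flow glues smoothly to the antipodal involution on the zero section.

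Once these three points are checked, the conclusion of \cite[Prop. 2.15]{WWtriangle} applies and yields that $C_+ = \Phi_1^{-1}(0)$ is a spherically fibered coisotropic submanifold of codimension $3$ and that the time-$1$ flow of $H$ extends to a fibered Dehn twist around $C_+$. There is no genuine obstacle here; the only point requiring care is the bookkeeping of the normalization constant $\pi\sqrt{2}$ coming from the radius of the ball in which $\Phi_1$ takes values, which is precisely why the reflection relation in the hypothesis carries the coefficient $2\pi\sqrt{2}$.
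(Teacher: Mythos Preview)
Your approach is exactly the paper's: verify the hypotheses of \cite[Prop.~2.15]{WWtriangle} for the $SU(2)$-action with moment $\Phi_1$ and conclude. One small slip: the action formula you wrote is that of the diagonal $(1,G,G)$-action, whose moment is $\Phi_1+\Phi_2$; the action with moment $\Phi_1$ alone is the second factor $(1,G_1,1)$, which sends $(g,A_1,A_2,b_1,b_2,\ldots)$ to $(g,A_1 G_1^{-1}, A_2, \mathrm{Ad}_{G_1} b_1, b_2,\ldots)$. Fortunately your freeness argument (via $A_1 G^{-1}=A_1$) survives unchanged, so the proof stands.
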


Rappelons maintenant le résultat suivant  afin d'établir le résultat pour $\N(\Sigma)$.
\begin{prop}(\cite[Theorem 2.10]{WWtriangle})\label{twistred}
Soient $G$ un groupe de Lie, $(M,\omega, \Phi)$ une variété $G$-Hamiltonienne telle que $0$ est une valeur régulière du moment $\Phi$. Soit $C\subset M$ une sous-variété coisotrope sphériquement fibrée au-dessus d'une base $B$ et stable sous l'action de $G$. On suppose que $C$ intersecte $\Phi^{-1}(0)$ transversalement, et que, en notant $\Phi_B \colon B\rightarrow \mathfrak{g}$ le moment induit sur $B$, l'action induite sur la base  $\Phi_B^{-1}(0)\subset B$ est libre. Soit $\tau_C \in Diff(M,\omega)$ un twist de Dehn fibré autour de $C$ qui est $G$-équivariant. 

Alors, le symplectomorphisme induit $[\tau_C]\colon M /\!\!/ G \rightarrow M /\!\!/ G $ est un twist de Dehn  fibré le long de $C /\!\!/ G$

\end{prop}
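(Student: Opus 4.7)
Plan de preuve. Comme l'énoncé est attribué à Wehrheim et Woodward, l'argument consistera essentiellement à adapter les constructions standards (voisinage de Weinstein équivariant, descente des actions Hamiltoniennes) au contexte des sous-variétés coisotropes sphériquement fibrées. On procédera en quatre étapes.

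\textbf{Étape 1 : descente de la structure coisotrope sphériquement fibrée.} On commence par vérifier que $C \cap \Phi^{-1}(0)$ est une sous-variété lisse $G$-stable grâce à l'hypothèse de transversalité. Comme $C$ est fibré en sphères au-dessus de $B$, sa projection envoie $C\cap \Phi^{-1}(0)$ sur $\Phi_B^{-1}(0)$, sur lequel $G$ agit librement. On en déduit que $G$ agit également librement sur $C\cap \Phi^{-1}(0)$, et que $C /\!\!/ G := (C \cap \Phi^{-1}(0))/G$ est une sous-variété lisse de $M /\!\!/ G$. Un calcul direct utilisant la réduction symplectique montre que $C/\!\!/G$ est coisotrope, et que sa fibration sphérique sur $\Phi_B^{-1}(0)/G$ est induite par $\pi_B\colon C\to B$.

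\textbf{Étape 2 : voisinage de Weinstein équivariant et modèle local.} Rappelons qu'un twist de Dehn fibré $\tau_C$ est, par définition, le temps 1 d'un flot Hamiltonien $\psi \circ \mu$ où $\mu$ est le moment d'une action du cercle définie dans un voisinage tubulaire de $C$, modelé sur un fibré en $T^*S^n$. L'équivariance de $\tau_C$ sous $G$ et la stabilité de $C$ permettent, via le théorème de voisinage tubulaire équivariant (appliqué au couple $(G, C)$), de choisir ce modèle local de sorte que l'action de $G$ commute avec celle du cercle, et que $\mu$ soit $G$-invariante.

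\textbf{Étape 3 : passage au quotient.} L'invariance de $\mu$ sous $G$ entraîne que $H = \psi \circ \mu$ est $G$-invariante, et donc descend en une fonction $\bar H$ sur un voisinage de $C/\!\!/G$ dans $M/\!\!/G$. La compatibilité des structures symplectiques dans la réduction assure que le flot Hamiltonien de $\bar H$ correspond, au temps 1, au symplectomorphisme induit $[\tau_C]$ : ceci résulte de la commutation entre flots Hamiltoniens $G$-invariants et réduction (le champ $X_H$ est tangent à $\Phi^{-1}(0)$ et $G$-équivariant, donc se projette sur $X_{\bar H}$). Enfin, l'action induite du cercle sur le voisinage réduit de $C/\!\!/G$ fait de $\bar H$ une fonction du type $\bar\psi \circ \bar\mu$, ce qui identifie $[\tau_C]$ à un twist de Dehn fibré le long de $C/\!\!/G$.

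\textbf{Point le plus délicat.} La difficulté principale réside dans l'étape 2 : il faut s'assurer que l'on peut choisir le modèle local du twist de manière simultanément $S^1$-équivariante (pour le flot définissant le twist) et $G$-équivariante. Ceci requiert une version équivariante du théorème de voisinage coisotrope pour des sous-variétés sphériquement fibrées, appliquée au groupe $G\times S^1$, ainsi qu'une vérification que la fonction angle $\psi$ peut être supposée $G$-invariante (ce qui est immédiat car elle ne dépend que de la norme $|\mu|$, qui l'est déjà). Les autres étapes sont essentiellement de la comptabilité entre réduction symplectique et flots Hamiltoniens invariants.
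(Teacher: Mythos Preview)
The paper does not give its own proof of this proposition: it is quoted verbatim from \cite[Theorem 2.10]{WWtriangle} and then applied to the situation $M=\N(\Sigma_{cut})$. There is therefore no ``paper's proof'' to compare against; your sketch is a reconstruction of the argument in Wehrheim--Woodward.

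That said, your outline is essentially the right one and matches the strategy indicated in the surrounding text of the paper (see the remark preceding Proposition~\ref{cortwist}, where the key input is identified as an equivariant version of the coisotropic embedding theorem, \cite[Theorem 39.2]{guilleminsternberg}). Your \'Etape~2 is indeed the heart of the matter: once one has a $G$-equivariant identification of a neighbourhood of $C$ with the standard model (a bundle of $T^*S^n$'s), the Hamiltonian $\psi\circ|\mu|$ generating $\tau_C$ is automatically $G$-invariant and descends, and \'Etapes~1 and~3 are routine symplectic reduction. One small point worth making explicit in \'Etape~1: to conclude that $C/\!\!/G$ is \emph{spherically} fibred (and not merely fibred with some quotient of a sphere as fibre), you need that the $G$-action on $C\cap\Phi^{-1}(0)$ is transverse to the sphere fibres, i.e.\ that $G$ acts freely on the base $\Phi_B^{-1}(0)$ and lifts trivially along each fibre; this is exactly what the hypothesis on $\Phi_B^{-1}(0)$ guarantees, but it deserves a sentence.
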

On considère $M = \N(\Sigma_{cut})$, munie de l'action de moment $\Phi =  \Phi_1 + \Phi_2$. La sous-variété $C = \Phi_1^{-1}(0)$ est une sous-variété coisotrope sphériquement fibrée au-dessus de \[B = \lbrace (g, A_2, b_2, \cdots)\rbrace \simeq \N(\Sigma_{cut, cap1}),\] où la surface $\Sigma_{cut, cap1}$ est obtenue à partir de $\Sigma_{cut}$ en collant un disque sur la composante de bord $\beta_1$. Le temps 1 $\tau_C$ du flot  de $R\circ |\Phi_1|$, où $R$ est une fonction comme dans le corollaire précédent, est un twist de Dehn fibré. 
On peut alors appliquer la proposition \ref{twistred} à cette situation. En effet, $\N(\Sigma_{cut})$ s'identifie à l'ouvert suivant de $\mathfrak{su(2)}^2\times SU(2)^{2h}$ des éléments $ (b_1,b_2, A_2, A_2, U_2, V_2, \cdots U_h, V_h)$ vérifiant 
\begin{align*}
 &\abs{b_1}<\pi \sqrt{2}, \ \abs{b_2}<\pi \sqrt{2},  \\ & A_1 e^{b_1} A_1^{-1} A_2^{-1} e^{b_2} A_2 \prod_{i=2}^{h}{[U_i , V_i ]} \neq -I .
\end{align*}
Sous cette identification, $\Phi$ et $\Phi_1$ correspondent respectivement à la différence des deux premières coordonnées et à l'opposé de la projection sur la première coordonnée. le vecteur nul $0\in \mathfrak{su(2)}$ est ainsi une valeur régulière de $\Phi$, et $C$ intersecte $\Phi^{-1}(0)$ transversalement le long de $\lbrace \Phi_1 = \Phi_2 = 0\rbrace$. De plus, l'action induite sur $\Phi_B^{-1}(0)$ est libre (elle affecte l'holonomie $A_2$ par multiplication à gauche), et le twist  $\tau_C$ est $SU(2)$-équivariant, en effet $\tau_C$ a pour expression :
\[ \tau_C (g, A_1, A_2, b_1, b_2, \cdots) = (\theta, A_1 e^{-t b_1} ,  A_2,  b_1,  b_2, \cdots),\]
où $t =  R'(|b_1|)$. Si $H\in G$ et $H.$ désigne l'action de moment $\Phi$, on a :
\begin{align*}
\tau_C \left( H.(g, A_1, A_2, b_1, b_2, \cdots)\right)  &=  H.\tau_C(g, A_1, A_2, b_1, b_2, \cdots) \\
 &= (g, A_1 e^{-t b_1} H^{-1}, H A_2, ad_H b_1, ad_H b_2, \cdots).
\end{align*}

Ainsi, il vient d'après le corollaire \ref{cortwist} et la proposition \ref{twistred} :
\begin{prop}\label{twistsigma}
Soit $R$ comme dans le corollaire \ref{cortwist}. Le temps 1 du flot Hamiltonien de la fonction $R(|\log ( \tilde{B})|  )$ est un twist de Dehn fibré de $\N(\Sigma)$ autour de $\lbrace \tilde{B} = I\rbrace$.
\end{prop}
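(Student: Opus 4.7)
Le plan est d'appliquer la proposition \ref{twistred} afin de descendre le twist de Dehn fibré obtenu dans le corollaire \ref{cortwist} sur $\N(\Sigma_{cut})$ via la réduction symplectique au niveau zéro du moment $\Phi = \Phi_1 + \Phi_2$ de l'action diagonale de $SU(2)$. Plus précisément, la proposition \ref{actioncut} identifie symplectiquement $\N(\Sigma_{cut}) /\!\!/ SU(2)$ avec l'ouvert $\N(\Sigma)\setminus C_-$, et sous cette identification le coisotrope $C = \Phi_1^{-1}(0) = \lbrace b_1 = 0\rbrace$ se projette sur $\lbrace \tilde{B} = I\rbrace$, puisque $\tilde{B}$ correspond à $e^{b_1}$ (en utilisant le représentant canonique $A_2 = I$, $b_1 = \log(\tilde{B})$ donné par l'inverse de l'exponentielle sur $SU(2)\setminus\lbrace -I\rbrace$).

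Je commencerais par remarquer que d'après le corollaire \ref{cortwist} le temps $1$ du flot de $H = R\circ |\Phi_1|$ fournit un twist de Dehn fibré $\tau_C$ autour de $C$. Ensuite je vérifierais les hypothèses de la proposition \ref{twistred} appliquée à $(M,\omega,\Phi) = (\N(\Sigma_{cut}), \omega, \Phi_1 + \Phi_2)$ et au coisotrope $C$ : dans la description holonomique, $\Phi = \Phi_2 + \Phi_1$ correspond (au signe près) à la différence $b_2 - b_1$, sa différentielle est donc surjective en $0$, et $C\cap \Phi^{-1}(0) = \lbrace b_1 = b_2 = 0\rbrace$ est décrit par des équations transverses. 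L'action induite sur la base $\Phi_B^{-1}(0)\subset \N(\Sigma_{cut,cap1})$ est libre car elle s'exerce par multiplication à gauche sur $A_2$, et l'équivariance de $\tau_C$ se vérifie directement à partir de l'expression explicite $\tau_C(g,A_1,A_2,b_1,b_2,\dots) = (g, A_1 e^{-t b_1}, A_2, b_1, b_2,\dots)$ avec $t = R'(|b_1|)$, chaque ligne se conjuguant de manière cohérente avec l'action de $SU(2)$.

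Il resterait à constater que l'Hamiltonien $R\circ |\Phi_1|$ est $SU(2)$-invariant pour cette action diagonale (il ne dépend que de $|b_1|$, préservé par conjugaison), donc descend au quotient symplectique en une fonction bien définie. Sous l'identification de la proposition \ref{actioncut}, cette fonction descendue est précisément $[A]\mapsto R(|\log(\tilde{B})|)$, et son flot Hamiltonien coïncide avec le flot induit par $\tau_C$ sur le quotient. La proposition \ref{twistred} donne alors que ce flot au temps $1$ est un twist de Dehn fibré autour de $C/\!\!/ SU(2) = \lbrace\tilde B = I\rbrace$, ce qui est exactement l'énoncé souhaité.

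La difficulté principale me semble ne pas résider dans un argument géométrique nouveau — le travail conceptuel a déjà été fait dans les propositions \ref{twistred}, \ref{actioncut} et \ref{expressionflot} — mais plutôt dans la gestion soigneuse des signes et des identifications holonomiques pour garantir que le coisotrope $\lbrace b_1 = 0\rbrace$ de $\N(\Sigma_{cut})$ corresponde bien à $\lbrace \tilde{B} = I\rbrace$ (et non $\lbrace \tilde{B} = -I\rbrace$) dans $\N(\Sigma)$, et que le prolongement lisse du flot au coisotrope (via l'application antipodale de la fibration en sphères) soit compatible avec le prolongement effectué dans le corollaire \ref{cortwist}. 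L'hypothèse $R(-t) = R(t) - 2\pi\sqrt{2} t$ est précisément ce qui assure cette compatibilité de recollement au bord du découpage symplectique.
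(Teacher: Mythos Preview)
Your proposal is correct and follows essentially the same approach as the paper: the argument preceding the proposition applies Proposition~\ref{twistred} to $M = \N(\Sigma_{cut})$ with the action of moment $\Phi = \Phi_1 + \Phi_2$ and the coisotropic $C = \Phi_1^{-1}(0)$, verifying the same hypotheses (regularity of~$0$, transversality of~$C$ with $\Phi^{-1}(0)$, freeness of the base action via left multiplication on~$A_2$, and equivariance of~$\tau_C$ via the explicit holonomic formula). Your additional remark that the Hamiltonian $R\circ|\Phi_1|$ is $SU(2)$-invariant and descends to $R(|\log(\tilde{B})|)$ on the quotient makes explicit a step the paper leaves implicit, but the overall strategy is identical.
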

Notons toutefois que lorsque $\Sigma$ est de genre supérieur ou égal à 2, la sous-variété $\lbrace \tilde{B} = I\rbrace$ n'est pas compacte dans $\N(\Sigma)$ : son adhérence dans $\Nc(\Sigma)$ intersecte l'hypersurface $R$. En revanche si $\Sigma$ est de genre 1, elle est contenue dans le  niveau $\theta = 0$. Ainsi :
\begin{theo}
\label{twistinduit}
 Soit $H$ un tore solide bordé par un tore $T$, et $T'$ la surface obtenue en retirant un petit disque. Notons $i\colon T \rightarrow \partial H$ l'inclusion, et $L(H)\subset \N(T')$ la Lagrangienne associée. Soit $\tau_K$ un twist de Dehn le long d'une courbe $K \subset T'$ non-séparante, $i' = i\circ \tau_K$ et $H' = (H, i')$ le cobordisme entre $\emptyset$ et  $T'$, et $L(H')\subset \N(\Sigma)$. Alors il existe un twist de Dehn le long de $S = \lbrace \mathrm{Hol}_K = -I\rbrace$ envoyant $L(H')$ sur $L(H)$. 
\end{theo}



\begin{remark} Le symplectomorphisme induit par le twist sur la surface n'est pas à priori un twist de Dehn de $\N(T')$ car l'Hamiltonien qui l'engendre n'est pas à support compact, mais on va construire un twist de Dehn (que l'on appellera $tw$) en tronquant l'Hamiltonien.
\end{remark}

\begin{proof} Rappelons que l'on a identifié $\N(T')$ à la partie 
\[\left\lbrace (g, A, \widetilde{B}) \in \mathfrak{su(2)}\times SU(2)^{2}  : e^g = [A,\widetilde{B}] \right\rbrace ,\] 
où $A$ et $\widetilde{B}$ désignent les holonomies le long des lacets $\alpha$ et $\widetilde{\beta}$.  Définissons trois fonctions \[H^f,H^{tw},H^\tau \colon\N(T') \rightarrow \rr\] par : 
\begin{align*}
H^f  (A, \widetilde{B}) & = \frac{1}{2} |\log(\tilde{B})|^2,\text{ en posant }|\log(-I)| = \pi \sqrt{2} \\
H^{tw}  (A, \tilde{B}) & = \phi(A, \tilde{B}) H^f  (A, \tilde{B}), \\
H^\tau  (A, \tilde{B}) & = R(|\log (- \tilde{B})|  ),
\end{align*}
où $\phi$ est une fonction à support compact et valant 1 sur un voisinage de $\lbrace g = 0\rbrace$, par exemple $\lbrace |g| < \epsilon\rbrace$ pour $\epsilon$ assez petit, $R\colon \rr_+\rightarrow \rr$ est nulle pour $t> \frac{\pi \sqrt{2}}{2}$, et telle que $R(t) = \pi^2 - \pi \sqrt{2} t + \frac{1}{2} t^2$ pour $t< \frac{\pi \sqrt{2}}{4}$. 

Ces trois fonctions coïncident au voisinage de $\lbrace \tilde{B} = -I\rbrace$ : ceci est clair pour $H^f$ et $H^{tw}$ car $\lbrace \tilde{B} = -I\rbrace \subset \lbrace g = 0 \rbrace$, et si $-\tilde{B}$ est conjuguée à $\begin{pmatrix}
e^{i\alpha}& 0 \\
0 & e^{-i \alpha}
\end{pmatrix}$,  
avec $\alpha\in [0,\pi]$, alors $\tilde{B}$ est conjuguée à $\begin{pmatrix}
e^{i(\pi -\alpha)}& 0 \\
0 & e^{-i (\pi -\alpha)}
\end{pmatrix}$, 
et $\frac{1}{2} |\log ( \tilde{B}) |^2 = (\pi -\alpha)^2 = R(|\log (- \tilde{B})| )$, car $|\log (- \tilde{B})| = \alpha \sqrt{2}$. Ainsi, $H^{tw} = H^\tau$ au voisinage de $\lbrace \tilde{B} = -I\rbrace$.

D'après la proposition  \ref{expressionflot}, le temps 1 du flot de $H^f$ est induit par le twist géométrique et se prolonge de manière lisse à $\N(\Sigma)$, il en va donc de même pour les flots de $H^{tw}$ et $H^\tau$ : notons alors $f$, $tw$ et $\tau$ ces prolongements respectifs.

D'une part, l'ensemble $ \lbrace g = 0 \rbrace$ est invariant par le flot  de $H^f$ en tout temps, il s'en suit que $f$ et $tw$ y coïncident, et donc $L(H') = tw(L(H))$, car  $L(H') = f(L(H))$ et $L(H)$ est contenue dans $ \lbrace g = 0 \rbrace$.

Par ailleurs, d'après la proposition \ref{twistsigma}, $\tau$ est l'inverse d'un twist de Dehn  le long de $\lbrace \tilde{B} = -I\rbrace$. En effet, en notant $\varphi$ l'involution \[(A, \tilde{B}) \mapsto (A, -\tilde{B})\] de $\N(T')$, l'application $\varphi \tau \varphi^{-1}$ est un twist de Dehn fibré le long de $\lbrace \tilde{B} = I\rbrace$.

Ainsi, $tw$ s'écrit comme la composée $(tw\circ \tau^{-1})\circ\tau$, avec $\tau$ un twist de Dehn le long de  $\lbrace \tilde{B} = -I\rbrace$. Remarquons enfin que  $tw\circ \tau^{-1}$ est une isotopie Hamiltonienne à support compact : en effet, en dehors de $\lbrace \tilde{B} = -I\rbrace$ c'est le temps 1 du flot de l'Hamiltonien 

\[H^{comp}(t,x) = H^{tw}(x) - H^\tau( \phi_{tw}^{t} (x)  ),\]
où $\phi_{tw}^{t}$ est le flot au temps $t$ de $H^{tw}$. Or, au voisinage de $\lbrace \tilde{B} = -I\rbrace$, $\phi_{tw}^{t}$ coïncide avec le flot de $H^\tau$, donc $H^{comp}(t,x) = H^{tw}(x) - H^\tau (x)$ au voisinage de $\lbrace \tilde{B} = -I\rbrace$, et $H^{comp}$ se prolonge de manière lisse à $\lbrace \tilde{B} = -I\rbrace$.
\end{proof}

\subsection{Preuve du triangle de chirurgie}

Dans ce paragraphe, nous prouvons le théorème \ref{trianglechir}. 

\begin{proof}

Soient $\alpha$, $\beta$ et $\gamma$ des courbes sur le tore épointé $T' = \partial Y \setminus \lbrace\mathrm{petit~disque}\rbrace$ formant une triade, on a  $\beta^{-1} = \tau_\alpha \gamma$, où $\tau_\alpha$ est un twist de Dehn autour de $\alpha$. Ainsi, en notant 
\begin{align*}  L_{\alpha} ^{-} &= \lbrace \mathrm{Hol}_\alpha = -I \rbrace, \\
 L_\beta &= \lbrace \mathrm{Hol}_\beta = I \rbrace,  \\
L_\gamma &= \lbrace \mathrm{Hol}_\gamma = I \rbrace
\end{align*} les trois sphères Lagrangiennes de $\Nc(T')$, d'après le théorème \ref{twistinduit}, il existe un twist de Dehn généralisé $\tau_S$ de $\Nc(T')$ autour de $S = L_{\alpha} ^{-}$ qui envoie $L_\gamma$ sur $L_\beta$. En effet, soit $H$  le corps à anses de genre 1 dans lequel $\beta^{-1}$ borde un disque, et $i \colon T'\rightarrow \partial H$ l'inclusion, on a $i(\beta^{-1}) = \partial D^2$. Si $i' = i\circ \tau_\alpha$, on a  $ i'(\tau_\alpha ^{-1} \beta ^{-1}) = i'(\gamma)$.

En notant $\underline{L} = L(Y,c)$, $S = L_{\alpha} ^{-}$ et $L_0 = L_\beta$, Le théorème \ref{quilttri} fournit alors la suite exacte :
\[ \ldots \rightarrow HF(\tau_S L_0, \underline{L}) \rightarrow HF(L_0, \underline{L}) \rightarrow HF(L_0,S^T, S, \underline{L})\rightarrow \cdots . \]

Il reste à identifier les termes : les Lagrangiennes $L_\beta$ et $L_\gamma$ étant associées au cobordisme consistant à l'ajout d'une 2-anse le long de $\beta$ (resp. $\gamma$) et sans classe d'homologie, il vient pour les deux premiers termes : $HF(\tau_S L_0, \underline{L}) = HF(L_\beta, \underline{L}) = HSI(Y_\beta,  c_\beta)$, et $ HF(L_0, \underline{L})= HF( L_\gamma, \underline{L}) = HSI(Y_\gamma,  c_\gamma)$. Enfin, $S = L_{\alpha} ^{-}$ correspond au cobordisme consistant à l'ajout d'une 2-anse le long de $\alpha$, avec classe d'homologie $k_\alpha$, il vient d'après la formule de Künneth (proposition \ref{sommecnx}) et en utilisant que $HF(L_0,S) = HSI(S^3) = \zz$ : $ HF(L_0,S^T, S, \underline{L}) = HF(L_0,S) \otimes_\zz HF( S, \underline{L}) = HSI(Y_\alpha, k_\alpha + c_\alpha)$, ce qui achève la preuve.

\end{proof}

\section{Applications de la suite exacte}\label{sectionapplic}

Dans cette section nous donnons quelques applications immédiates de la suite exacte du théorème \ref{trianglechir}. Elles ne supposent aucune propriété des morphismes qui y interviennent, et se basent sur une observation due à Ozsv{\'a}th et Szab{\'o}. Nous commençons par la rappeler, puis nous donnons des classes de variétés pour lesquelles l'homologie HSI est minimale, toutes ces variétés sont des L-espaces en théorie d'Heegaard-Floer.

\subsection{L'observation d'\OSz }Le fait suivant a été remarqué par Ozsv{\'a}th et Szab{\'o}, voir par exemple \cite[Exercice 1.13 ]{OSzlectures}. Il peut être démontré directement, ou se déduire de la suite exacte de chirurgie (pour $\widehat{HF}$ ou $HSI$) en prenant la caractéristique d'Euler.

\begin{lemma} Soient $Y_\alpha$, $Y_\beta$ et $Y_\gamma$ une triade de chirurgie. En notant, pour un ensemble $H$, la quantité :
\[ \abs{H} = \begin{cases} \mathrm{Card} H \text{ si $H$ est fini} \\ 0 \text{ sinon} \end{cases},\]
on a, quitte à réordonner les variétés, $ \abs{H_1(Y_\alpha;\zz)} = \abs{H_1(Y_\beta;\zz)} + \abs{H_1(Y_\gamma;\zz)} $.
\end{lemma}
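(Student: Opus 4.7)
Mon plan est le suivant. J'applique le théorème \ref{trianglechir} avec la classe triviale $c=0$, obtenant ainsi la suite exacte longue
\[
\cdots \to HSI(Y_\alpha, k_\alpha) \to HSI(Y_\beta, 0) \to HSI(Y_\gamma, 0) \to HSI(Y_\alpha, k_\alpha)[-1] \to \cdots .
\]
Les trois groupes étant de type fini (engendrés par des ensembles finis de points d'intersection généralisés) et munis d'une graduation relative à valeurs dans $\Z{8}$, leurs caractéristiques d'Euler $\chi_\delta := \chi(HSI(Y_\delta,\cdot))$ sont bien définies, au signe près.

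Je prends ensuite la caractéristique d'Euler de ce triangle exact. La relation classique $\chi(B) = \chi(A) + \chi(C)$ pour un triangle exact $A \to B \to C \to A[k]$, appliquée ici, livre une identité de la forme
\[ \epsilon_\alpha\, \chi_\alpha + \epsilon_\beta\, \chi_\beta + \epsilon_\gamma\, \chi_\gamma = 0 \]
pour certains signes $\epsilon_\delta \in \{\pm 1\}$. L'indétermination globale du signe de chaque $\chi_\delta$, provenant du caractère seulement relatif de la graduation, est absorbée dans les $\epsilon_\delta$.

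La proposition \ref{eulercara} donne alors $|\chi_\delta| = |H_1(Y_\delta;\zz)|$, indépendamment de la classe choisie dans $H_1(Y_\delta;\Z{2})$, en adoptant la convention $|H|=0$ lorsque $H$ est infini (cas $b_1(Y_\delta) > 0$, pour lequel $\chi_\delta = 0$). Notant $a, b, c$ les trois entiers positifs ou nuls $|H_1(Y_\alpha;\zz)|$, $|H_1(Y_\beta;\zz)|$, $|H_1(Y_\gamma;\zz)|$, l'équation ci-dessus devient $\pm a \pm b \pm c = 0$, ce qui, les trois quantités étant positives ou nulles, force l'un des trois à être la somme des deux autres. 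Quitte à réordonner les trois variétés, on obtient l'énoncé voulu.

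L'obstacle principal anticipé est le traitement rigoureux du décalage de degré introduit par l'homomorphisme de connexion dans le cadre d'une graduation relative $\Z{8}$. Ce décalage, comme l'indétermination de signe des $\chi_\delta$, n'intervient cependant que via les signes $\epsilon_\delta$ de la relation précédente, et reste sans incidence sur la conclusion en valeur absolue.
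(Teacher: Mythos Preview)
Your proof is correct and follows precisely the approach the paper indicates: the paper remarks that this fact can be deduced from the surgery exact sequence (for $\widehat{HF}$ or $HSI$) by taking Euler characteristics, cites Ozsv\'ath--Szab\'o for a direct argument, and leaves the details to the reader. Your write-up carries out that Euler-characteristic argument in full, correctly handling the sign ambiguity coming from the relative $\Z{8}$-grading.
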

\arnaque

Définissons les variétés HSI-minimales, analogues des L-espaces en théorie d'Heegaard-Floer :
\begin{defi}Une 3-variété $Y$ est dite \emph{HSI-minimale} si pour toute classe $c\in H_1(Y;\Z{2})$,  $HSI(Y,c)$ est un groupe abélien libre de rang $\abs{H_1(Y;\zz)}$.
\end{defi}

\begin{remark}D'après la proposition \ref{genreun},  $S^2\times S^1$ n'est pas HSI-minimale, les lenticulaires le sont.
\end{remark}

Il découle alors de la suite exacte de chirurgie (théorème \ref{trianglechir})  et de la formule donnant la caractéristique d'Euler de l'homologie HSI (proposition \ref{eulercara}) :

\begin{prop}\label{triadehsimin}Soit $(Y_\alpha, Y_\beta, Y_\gamma)$ une triade de chirurgie, avec $Y_\beta$ et $Y_\gamma$ des variétés HSI-minimales, et $\abs{H_1(Y_\alpha;\zz)} = \abs{H_1(Y_\beta;\zz)} + \abs{H_1(Y_\gamma;\zz)}$. Alors $Y_\alpha$ est également HSI-minimale.
\end{prop}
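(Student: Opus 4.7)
The plan is to exploit the surgery exact triangle (Theorem~\ref{trianglechir}) together with the Euler characteristic calculation (Proposition~\ref{eulercara}). Fix a class $c_\alpha^0 \in H_1(Y_\alpha;\Z{2})$. A Mayer--Vietoris computation for the decomposition $Y_\alpha = Y \cup (\text{tore solide})$ shows that $H_1(Y_\alpha;\Z{2})$ is generated by $k_\alpha$ together with the image of $H_1(Y;\Z{2})$, so I can realise $c_\alpha^0$ as either $c_\alpha + k_\alpha$ or as $c_\alpha$ for some $c \in H_1(Y;\Z{2})$ by invoking, respectively, the triangle of Theorem~\ref{trianglechir} as stated, or the cyclically permuted version where the twist is placed on $Y_\beta$. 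In either case I obtain an exact triangle of the form $A \xrightarrow{f} B \xrightarrow{g} C \xrightarrow{h} A[-1]$ with $A = HSI(Y_\alpha, c_\alpha^0)$ and $B, C$ two HSI groups for $Y_\beta, Y_\gamma$ that are free abelian of respective ranks $|H_1(Y_\beta)|$ and $|H_1(Y_\gamma)|$ by hypothesis.

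The core of the argument is a rank count over $\qq$. Setting $a = \mathrm{rk}(\mathrm{im}\, f)$, $b = \mathrm{rk}(\mathrm{im}\, g)$, $c = \mathrm{rk}(\mathrm{im}\, h)$, exactness yields $\mathrm{rk}(A) = a+c$, $\mathrm{rk}(B) = a+b$, $\mathrm{rk}(C) = b+c$, hence
\[ \mathrm{rk}(A) \;=\; \mathrm{rk}(B) + \mathrm{rk}(C) - 2\,\mathrm{rk}(g) \;\leq\; |H_1(Y_\beta)| + |H_1(Y_\gamma)| \;=\; |H_1(Y_\alpha)|. \]
On the other hand, Proposition~\ref{eulercara} gives $|\chi(A)| = |H_1(Y_\alpha)|$, and the elementary bound $|\chi(A)| \leq \mathrm{rk}(A)$ forces equality throughout: $\mathrm{rk}(A) = |H_1(Y_\alpha)|$ and $\mathrm{rk}(g) = 0$. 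Since $C$ is free abelian, $\mathrm{im}(g) \subset C$ is torsion free of rank $0$, so actually $g = 0$.

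With $g = 0$, the long exact sequence decouples into the short exact sequence $0 \to C \xrightarrow{h} A \xrightarrow{f} B \to 0$, which splits because $B$ is free abelian. Therefore $A \simeq B \oplus C$ is free abelian of rank $|H_1(Y_\alpha)|$, and since $c_\alpha^0$ was arbitrary, $Y_\alpha$ is HSI-minimal.

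The only delicate point I anticipate is to verify carefully that every class $c_\alpha^0$ is realised by one of the two versions of the triangle; this is a bookkeeping exercise via Mayer--Vietoris, but it must be made explicit to be sure no class is missed. Beyond this, the rank computation is purely formal, and the step $\mathrm{rk}(g) = 0 \Rightarrow g = 0$ crucially uses the torsion freeness of $C$ coming from the HSI-minimality hypothesis, so no further analytic input is needed.
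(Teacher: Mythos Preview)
Your argument is correct and follows essentially the same approach as the paper: both use the surgery triangle together with the Euler characteristic bound $|\chi| \leq \mathrm{rk}$ to force the map $HSI(Y_\beta) \to HSI(Y_\gamma)$ to vanish, then split the resulting short exact sequence using freeness of $B$. You are more explicit than the paper on two points it leaves implicit---realising an arbitrary $c_\alpha^0$ via the cyclic-symmetry remark after Theorem~\ref{trianglechir}, and the step $\mathrm{rk}(g)=0 \Rightarrow g=0$ from torsion-freeness of $C$---but the strategy is the same.
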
 
\begin{proof}Soit $c_\alpha \in H_1(Y_\alpha;\Z{2})$, et $c_\beta$, $c_\gamma$ deux autres classes sur $Y_\beta$ et $Y_\gamma$ pour lesquelles le théorème \ref{trianglechir} fournit une suite exacte entre les trois groupes d'homologie HSI. Supposons par l'absurde que la flèche entre $HSI(Y_\beta,c_\beta)$ et $HSI(Y_\gamma,c_\gamma)$ est non nulle, alors on aurait \[\mathrm{rk}HSI(Y_\alpha,c_\alpha) < \mathrm{rk}HSI(Y_\beta,c_\beta) + \mathrm{rk}HSI(Y_\gamma,c_\gamma) = \chi ( HSI(Y_\alpha,c_\alpha) ), \]
ce qui est impossible. Ainsi la suite exacte est une suite exacte courte, et  $HSI(Y_\alpha,c_\alpha)$ un groupe abélien libre de rang  $\abs{H_1(Y_\alpha;\zz)}$.

\end{proof}

\subsection{Quelques familles de variétés HSI-minimales}
Mentionnons à présent quelques applications classiques de l'observation précédente : 
\paragraph{Plombages}
Soit $(G,m)$ un graphe pondéré : $m$ est une fonction définie sur l'ensemble des sommets du graphe $G$, à valeurs dans $\zz$. Rappelons que l'on peut associer à $(G,m)$ une 4-variété à bord obtenue en plombant des fibrés en disques au-dessus de sphères associés aux sommets, dont le nombre d'Euler vaut $m(v)$. Son bord est une 3-variété fermée orientée $Y(G,m)$.

Dans \cite{OSplumbing}, \OSz ont calculé l'Homologie de Heegaard-Floer d'une grande famille de telles 3-variétés, en termes de vecteurs caractéristiques de la forme d'intersection associée. Leur calcul  a été ensuite étendu par Némethi dans \cite{Nemethi}. S'il nous manque un ingrédient principal de leur calcul, la formule d'adjonction, la proposition \ref{triadehsimin} permet néanmoins de montrer que les variétés suivantes sont HSI-minimales.  

\begin{prop}\label{bonplombage}Supposons que $G$ soit une réunion disjointe d'arbres, et que, en notant $d(v)$ le nombre d'arrêtes incidentes à un sommet $v$, la fonction $m$ vérifie, pour tout sommet $v$ de $G$, $m(v)\geq d(v)$, avec au moins un sommet pour lequel l'inégalité est stricte. Alors $Y(G,m)$ est HSI-minimale.
\end{prop}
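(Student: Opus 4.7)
Plan: We proceed by double induction on $(|V(G)|, \sum_v m(v))$ with lexicographic order. By the Künneth formula (Proposition~\ref{sommecnx}) and the fact that a tensor product of free abelian groups of the right ranks is again free abelian of the right rank (the Tor term vanishing under the rank hypothesis), we may reduce to the case where $G$ is a single tree (assuming, as the statement implicitly requires for this reduction, strict inequality in each component). The base case $|V(G)|=1$ is a single isolated vertex, where the hypothesis $m \geq d = 0$ with strict inequality forces $m \geq 1$; then $Y(G,m) \cong L(m,1)$ (with the convention $L(1,1) = S^3$), which is HSI-minimal by Proposition~\ref{genreun}.

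For the inductive step, $G$ being a nonempty tree admits a leaf $v$, and the hypothesis yields $m(v) \geq d(v) = 1$. In the Kirby diagram presenting $Y(G,m)$ as surgery on a framed link in $S^3$, the component $K_v$ attached to $v$ is an unknot with framing $n = m(v)$, linked once with $K_w$ where $w$ is the unique neighbor of $v$. The three Dehn surgeries on $K_v$ with slopes $n$, $n-1$ and $\infty$ form a chirurgie triad
\begin{align*}
Y_\alpha &= Y(G,m), \\
Y_\beta &= Y(G, m - \mathbf{1}_v), \\
Y_\gamma &= Y(G \setminus v, m|_{G \setminus v}),
\end{align*}
as one verifies by computing the intersection numbers $\pm 1$ among the three slope-curves on $\partial \nu K_v$ and choosing orientations appropriately.

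When $n \geq 2$, both $Y_\beta$ and $Y_\gamma$ have strictly smaller complexity and still satisfy the hypotheses of the proposition: in $Y_\beta$ the inequality $m(v) - 1 \geq 1 = d(v)$ survives and the excess elsewhere is unchanged, while in $Y_\gamma$ deleting $v$ decreases $d(w)$ by one, so $m(w) - d_{G \setminus v}(w) = m(w) - d_G(w) + 1 \geq 1$, providing the required strict inequality automatically. By the inductive hypothesis they are HSI-minimal, and the additivity $|H_1(Y_\alpha)| = |H_1(Y_\beta)| + |H_1(Y_\gamma)|$ follows from cofactor expansion of the determinant of the plumbing matrix along the $v$-row. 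Proposition~\ref{triadehsimin} then gives that $Y(G,m)$ is HSI-minimal. When every leaf of $G$ has weight exactly $1$, we instead apply a Kirby blow-down at a leaf $v$: one has $Y(G,m) \cong Y(G', m')$ with $G' = G \setminus v$ and $m'(w) = m(w) - 1$, which strictly decreases $|V(G)|$ while preserving the good-plumbing hypothesis because $d_{G'}(w) = d_G(w) - 1$.

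The main obstacle is the bookkeeping at the borderline case where the total excess $E(G,m) = \sum_v (m(v) - d(v))$ equals exactly $1$ and is concentrated at a single leaf of weight $2$: then in $Y_\beta$ the excess disappears altogether, so the inductive step cannot be applied to $Y_\beta$ directly; one must show that $Y_\gamma$ still supplies the reduction (it does, since removing $v$ transfers the excess to $w$), and conclude via a refined subcase analysis — separating whether the unique excess sits at a leaf or at an internal vertex — so that in every configuration either the triad or a blow-down closes the induction.
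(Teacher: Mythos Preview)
Your approach uses the same ingredients as the paper (the triad at a leaf with slopes $n$, $n-1$, $\infty$; blow-down at weight-$1$ leaves; Proposition~\ref{triadehsimin}), but the borderline case you flag in your last paragraph is a real gap, not just bookkeeping. When the total excess is $1$ and sits at the chosen leaf $v$ with $m(v)=2$, the graph $(G, m-\mathbf{1}_v)$ has $m=d$ everywhere, so by the remark following the proposition $Y_\beta \cong S^2\times S^1$, which is \emph{not} HSI-minimal. Proposition~\ref{triadehsimin} requires \emph{both} $Y_\beta$ and $Y_\gamma$ to be HSI-minimal, so ``$Y_\gamma$ still supplies the reduction'' does not close the triad, and the promised subcase analysis is never carried out.

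The paper's organization sidesteps this entirely. Rather than demanding that every intermediate weighted graph satisfy the proposition's hypothesis, it sub-inducts on the weight at the new leaf, with base $m(v)=1$ handled as a blow-up of $Y(G',m')$; and $(G',m')$ always satisfies the hypothesis, since deleting a leaf creates strict excess at the neighbour. Thus the intermediate $(G,m)$ with small $m(v)$ need not satisfy the hypothesis --- only its \emph{manifold} must be HSI-minimal, which the blow-up identification and the sub-induction provide directly. If you prefer your downward scheme, the clean fix is to swap the priority of your two cases: whenever \emph{some} leaf has weight $1$, blow it down first (this preserves the hypothesis and reduces $|V|$); only when \emph{every} leaf has weight $\geq 2$ run the triad, and then $Y_\beta$ still has another leaf of weight $\geq 2$ (a tree with at least two vertices has at least two leaves), so the hypothesis survives for $Y_\beta$ and the obstacle disappears.
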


\begin{remark} Si $m(v)= d(v)$ pour tous les sommets de $G$, alors une succession de contractions permet de montrer que $Y(G,m) \simeq S^2\times S^1$.
\end{remark}
\begin{proof} La preuve est analogue à celle de \cite[Theorem 7.1]{OSsymplectic4} : on procède par induction sur le nombre de sommets et les poids. Tout d'abord, si le graphe $G$ consiste en un seul sommet, alors $Y(G,m)$ est un lenticulaire, et le résultat découle de la proposition \ref{genreun}.
Montrons l'induction sur le nombre de sommets. L'ajout d'une feuille $v$ avec $m(v) = 1$ correspond à un éclatement, et ne change pas le type topologique de $Y(G,m)$.

Montrons finalement l'induction sur le poids d'une feuille. Soient $(G,m)$ un graphe satisfaisant les hypothèses du théorème, $v$ une feuille de $G$, $G'$ le graphe  obtenu en retirant $v$, $m'$ la restriction de  $m$ à $G'$, et $\tilde{m}$ coïncidant avec $m$ en dehors de $v$, et telle que $\tilde{m}(v) = m(v)+1$. Supposons que  $(G,m)$ et $(G',m')$ satisfont l'hypothèse d'induction.

Les variétés $Y(G,\tilde{m})$, $Y(G,m)$ et $Y(G',m')$ forment une triade de chirurgie, et $| H_1(Y(G,\tilde{m});\zz)| = | H_1(Y(G,m);\zz)| + | H_1(Y(G',m');\zz)|$, voir \cite[proof of Th. 7.1]{OSsymplectic4}. Ainsi l'induction découle de la proposition \ref{triadehsimin}.
\end{proof} 

\paragraph{Les revêtements doubles ramifiés de $S^3$ au-dessus d'entrelacs quasi-alternés}
Dans \cite[Def. 3.1]{OSzdoublecover}, \OSz  ont défini la classe suivante d'entrelacs, appelés "quasi-alternés" : il s'agit de la plus petite classe d'entrelacs vérifiant :

\begin{enumerate}
\item le noeud trivial est quasi-alterné,

\item   Soit $L$ un entrelacs. S'il existe une projection et un croisement de $L$ tels que les deux résolutions soient quasi-alternés,  $ \mathrm{det} L_0, \mathrm{det} L_1 \neq 0$ , et $  \mathrm{det} L =\mathrm{det} L_0 + \mathrm{det} L_1 $, alors $L$ est également quasi-alterné.
\end{enumerate}
D'après  \cite[Lemma 3.2]{OSzdoublecover}, cette classe contient les entrelacs admettant une projection connexe alternée. Il vient immédiatement de la proposition \ref{triadehsimin} :

\begin{prop}Les revêtements doubles d'entrelacs quasi-alternés sont des variétés HSI-minimales.
\end{prop}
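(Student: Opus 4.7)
La stratégie est de procéder par récurrence sur la définition inductive des entrelacs quasi-alternés, en utilisant la proposition \ref{triadehsimin}.

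Pour le cas de base, le n{\oe}ud trivial a pour revêtement double ramifié $S^3$, qui est HSI-minimale d'après la proposition \ref{genreun} (puisque $S^3 \simeq L(1,0)$ est un lenticulaire).

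Pour l'étape inductive, soit $L$ un entrelacs quasi-alterné, et supposons qu'il existe un croisement de $L$ dont les deux résolutions $L_0$ et $L_1$ sont quasi-alternées, avec $\det L_0, \det L_1 \neq 0$ et $\det L = \det L_0 + \det L_1$. Le fait clé, classique, est que les revêtements doubles ramifiés $\Sigma(L)$, $\Sigma(L_0)$, $\Sigma(L_1)$ forment une triade de chirurgie : en effet, au voisinage du croisement considéré, le revêtement double est un tore solide, et les trois résolutions (le croisement initial, et les deux lissages) correspondent à trois pentes de chirurgie formant une triade au sens de la définition donnée en introduction (leurs nombres d'intersection étant $\pm 1$ deux à deux). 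D'autre part, on a $|H_1(\Sigma(L);\zz)| = \det L$ lorsque $\det L \neq 0$ (et $|H_1(\Sigma(L);\zz)| = 0$ sinon selon la convention adoptée), d'où la relation
\[|H_1(\Sigma(L);\zz)| = |H_1(\Sigma(L_0);\zz)| + |H_1(\Sigma(L_1);\zz)|.\]

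Par hypothèse d'induction, $\Sigma(L_0)$ et $\Sigma(L_1)$ sont HSI-minimales. La proposition \ref{triadehsimin} s'applique alors directement à la triade $(\Sigma(L), \Sigma(L_0), \Sigma(L_1))$ pour conclure que $\Sigma(L)$ est HSI-minimale.

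Le point le plus délicat, mais standard dans la théorie (et déjà utilisé de manière identique par \OSz dans \cite{OSzdoublecover} pour démontrer que les revêtements doubles d'entrelacs quasi-alternés sont des L-espaces), est l'identification des trois revêtements doubles ramifiés aux trois termes d'une triade de chirurgie : il s'agit de vérifier que les trois pentes associées aux résolutions du croisement dans le tore solide remontant le croisement vérifient bien les conditions d'intersection $\alpha\cdot\beta = \beta\cdot\gamma = \gamma\cdot\alpha = -1$, ce qui est un calcul local.
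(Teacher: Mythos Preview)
Your proof is correct and follows exactly the approach intended by the paper, which simply states that the result follows immediately from Proposition~\ref{triadehsimin}. You have made explicit the induction on the definition of quasi-alternating links, the identification of the double branched covers as a surgery triad, and the relation $|H_1(\Sigma(L);\zz)| = \det L$, all of which the paper leaves implicit.
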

\arnaque

\paragraph{Chirurgies entières sur certains noeuds} Enfin, soit $K\subset S^3$ un noeud tel que, pour un certain entier $n_0>0$, la chirurgie $S^3_{n_0}(K)$ est HSI-minimale. Étant donné que pour un entier $n>0$,  $\abs{ H_1(S^3_{n}(K) ,\Z{2}) } = n$, il vient que $S^3_{n}(K)$ est HSI-minimale pour tout $n>n_0$. 

\chapter{Applications associées à un cobordisme}\label{chapcob}

Dans tout ce chapitre, afin d'éviter des complications liées aux signes, tous les groupes d'homologie de Floer considérés seront à coefficients dans $\Z{2}$. Une construction similaire à coefficients dans $\zz$ est probablement possible également.

L'objectif de ce chapitre est d'associer à un 4-cobordisme entre deux 3-variétés un morphisme entre les groupes d'homologie instanton-symplectique des variétés. Soient $(Y,c)$ et $(Y',c')$ deux 3-variétés fermées orientées munies de classes  dans $H_1(Y;\Z{2})$ et $H_1(Y';\Z{2})$ respectivement. Soit $W$ un 4-cobordisme compact orienté de $Y$ vers $Y'$, muni d'une classe $c_W \in H^2(W; \Z{2})$ dont les restrictions à $Y$ et $Y'$ sont Poincaré-duales à $c$ et $c'$ respectivement. On va construire un  morphisme
\[ F_{W,c_W} \colon HSI(Y,c)\to HSI(Y',c').\]

Ce type de construction présente plusieurs intérets : 
\begin{itemize}
\item il permet d'obtenir des invariants pour des variétés de dimensions 4,

\item il donne une interprétation topologique des morphismes intervenant dans la suite exacte de chirurgie : voir le théorème \ref{thinterpfleches} et le corollaire \ref{corinterpfleches}. Cette dernière interprétation permet d'obtenir des critères d'annulation pour les morphismes,  en particulier une formule d'éclatement (corollaire \ref{éclatement}).
\end{itemize} 

\begin{remark}Par dualité de Poincaré-Lefschetz, la classe $c_W$ peut également être vue comme une classe d'homologie relative dans  $H_2(W,\partial W; \Z{2})$ dont l'image dans  $H_1(\partial W; \Z{2})$ par l'homomorphisme de connexion vaut $c+c'$. Il était plus commode de manipuler des classes d'homologie dans le chapitre \ref{chapfft}, mais il sera plus judicieux ici de manipuler des classes de cohomologie. 
\end{remark}

Après avoir construit ces applications et vérifié que ce sont des invariants topologiques, nous montrons que deux des morphismes intervenant dans la suite exacte de chirurgie correspondent à de telles applications, théorème \ref{thinterpfleches}. Enfin nous énonçons quelques propriétés satisfaites par ces applications.

%

\begin{remark}De même qu'en théorie d'Heegaard-Floer, le caractère bien défini de ces applications   repose en toute rigueur sur la naturalité des invariants. Dans la section \ref{sectionnaturalite}, nous conjecturons que ces groupes sont naturels, et indiquons une stratégie possible afin de démontrer cette naturalité. De même que les groupes devraient être associés à des variétés pointées, les applications, si l'on ne les regarde pas seulement à isomorphisme près, devraient être associées à des cobordismes munis de classes d'homotopie de chemins reliant les points bases.
\end{remark}

\section{Construction}

On procède de manière analogue à \cite{OSholotri} : on découpe $W$ en cobordismes élémentaires, correspondants à l'attachement d'une seule anse, puis on définit les morphismes associés à de tels cobordismes, enfin on vérifie que le morphisme obtenu en composant ne dépend pas de la décomposition.

\begin{remark} A la lumière de l'interprétation géométrique des classes $c$ et $c'$, le couple $(W,c_W)$ devrait s'interpréter comme une classe d'isomorphismes de $SO(3)$-fibrés principaux au-dessus de $W$. En effet, ces derniers sont classifiés par le leur seconde classe de Stiefel-Whitney car $W$ a le type d'homotopie d'un CW-complexe de dimension 3.
\end{remark}

\subsection{Ajout d'une 1-anse ou d'une 3-anse}

Soit $W$ un 4-cobordisme entre $Y$ et $Y'$ correspondant à l'attachement d'une 1-anse à $Y$. La variété  $Y'$ est homéomorphe à la somme connexe $(S^2\times S^1)\# Y$, et $W$ est homéomorphe à la somme connexe à bord \[ W \simeq (D^3 \times S^1)\#_\partial Y \times [0,1].\] 
 Il s'en suit que $H^2(W; \Z{2}) \simeq H_1(Y,\Z{2})$. Soit $c_W \in H^2(W; \Z{2})$, et  $c\in H_1(Y; \Z{2})$ (resp. $c'\in H_1(Y'; \Z{2})$) le dual de la restriction de $c_W$ à $Y$ (resp. $Y'$). Notons que $H_1(Y'; \Z{2})$ s'identifie à $H_1(Y; \Z{2})\oplus H_1(S^2\times S^1; \Z{2})$. Sous cette identification, $c' = c+0$.

Afin de pouvoir désigner des classes, on fixera la graduation absolue suivante  : $HSI(S^2\times S^1,0) = \Z{2}^{(3)} \oplus \Z{2}^{(0)}$, où l'exposant désigne le degré modulo 8. Sous cette identification, notons $\Theta\in \Z{2}^{(3)}$ l'élément non-trivial de $\Z{2}^{(3)}$.

D'après la formule de Künneth (proposition \ref{sommecnx}), \[HSI(Y',c') \simeq HSI(S^2\times S^1,0) \otimes HSI(Y,c)  .\]

\begin{defi} On pose $F_{W, c_W} (x) = \Theta \otimes x$.

Le cobordisme $\overline{W}$ muni de l'orientation opposée, vu comme un cobordisme de $Y'$ vers $Y$, correspond à l'ajout d'une 3-anse. On pose de même $F_{\overline{W}, c_W} (\Theta \otimes x) =  x$, et $F_{\overline{W}, c_W} (\Theta' \otimes x) = 0$, si $\Theta '$ désigne le générateur de degré 0 de $HSI(S^2\times S^1,0)$.
\end{defi}

\begin{remark} Nous verrons que le choix de $\Theta$ dans la définition précédente est imposé si l'on veut avoir invariance par naîssance/mort d'une 1-anse et d'une 2-anse.
\end{remark}

\subsection{Ajout d'une 2-anse}

Soit $Y$ une 3-variété, $K\subset Y$ un n{\oe}ud muni d'une trivialisation de son fibré normal, et $W$ le cobordisme correspondant à l'ajout d'une 2-anse le long de $K$. On note $Y'$ le deuxième bord de $W$, qui correspond donc à la chirurgie zéro le long de $K$. Notons $T = \partial (Y\setminus \nu K)$ le tore bordant le voisinage tubulaire de $K$.

D'après la suite exacte de Mayer-Vietoris appliquée à la réunion 
\[ W = \left( ( Y\setminus \nu K ) \times[0,1]\right) \cup_{\partial \nu K \times [0,1]} D^2\times D^2\] relativement à $Y \cup Y'$, il existe une classe $c\in H_1(Y\setminus \nu K; \Z{2})$ et des entiers $i, j \in \lbrace 0,1\rbrace$ tels que l'image $\partial_* PD(c_W) \in H_1(\partial W; \Z{2})$ s'écrit comme la somme de $c+ i[K] \in H_1(Y; \Z{2})$  et $ c + j[K']\in H_1(Y'; \Z{2})$, où $K'\subset Y'$ désigne l'âme de la chirurgie.

Notons $\lambda, \mu \subset T$ une longitude et un méridien de $K$ ne passant pas par un point base $z$ de $T$, $T'$ le tore $T$ éclaté en $z$, $\underline{L} = \underline{L}(Y\setminus K , c )$ la correspondance Lagrangienne généralisée de  $\Nc(T')$ vers $pt$. On considère les deux Lagrangiennes de $\Nc(T')$ suivantes :
\begin{align*}
L_0 =&  \left\lbrace  [A]\in\Nc(T') :  \mathrm{Hol}_\mu A = (-I)^i \right\rbrace \\
L_1 =&  \left\lbrace  [A]\in\Nc(T') :  \mathrm{Hol}_\lambda A = (-I)^j \right\rbrace.
\end{align*}
Des décompositions de $Y$ et $Y'$ correspondantes, il vient alors :  
\begin{align*}
HSI(Y, c+ i [K])  =&  HF(L_0, \underline{L}) \\
HSI(Y', c+ j [K'])  =&  HF(L_1, \underline{L}).
\end{align*}
Quelles que soient les valeurs de $i$ et $j$, les Lagrangiennes $L_0$ et $L_1$ s'intersectent transversalement en un point, en effet si l'on identifie $\N(T')$ à  un ouvert de $SU(2)^2$ via les holonomies $\Lambda$ et $M$ le long de $\lambda$ et $\mu$, elles ont pour équations respectives $\lbrace M = (-1)^i I \rbrace$ et $\lbrace \Lambda = (-1)^j I\rbrace$. Ainsi $HF( L_1,L_0) = \Z{2}$, on note $C$ le générateur correspondant au point d'intersection, $HF(L_1, L_0) \otimes HF(L_0, \underline{L}(Y\setminus K , c ))$ et $HF(L_0, \underline{L}(Y\setminus K , c )) $ sont alors canoniquement identifiés.

Soit  $\Phi\colon HF(L_1, L_0) \otimes HF(L_0, \underline{L}(Y\setminus K , c )) \to HF(L_1, \underline{L}(Y\setminus K , c ))$ le produit en pantalon, défini en comptant des pantalons matelassés comme pour la construction de l'application $C\Phi_1$  dans la preuve de la suite exacte du théorème \ref{quilttri} (voir figure \ref{quiltedpants}).

\begin{defi} On définit $F_{W,c_W}$ par $F_{W,c_W} (x) = \Phi(C \otimes x)$.
\end{defi}

\section{Indépendance de la décomposition}

Supposons que l'on a décomposé le cobordisme $W$ en $k$ cobordismes élémentaires :
\[ W = W_1 \cup_{Y_1} W_2 \cup_{Y_2} \cdots \cup_{Y_{k-1}} W_k,\]
avec $Y_0 = Y$, $Y_k = Y'$, et $W_i$ un cobordisme de $Y_{i-1}$ vers $Y_i$ correspondant à l'attachement d'une 1,2 ou 3-anse. Une classe $c_W \in  H^2( W; \Z{2}) $ induit des classes $ c_{W_i} \in  H^2( W_i; \Z{2})$  et des classes $ c_{Y_i} \in  H^2( Y_i; \Z{2})$, par tirés en arrière. Pour chaque bloc élémentaire $(W_i,c_{W_i})$, on vient de définir un morphisme $F_{W_i,c_{W_i}} \colon HSI(Y_{i-1},c_{Y_{i-1}}) \to HSI(Y_i,c_{Y_i})$. On définit alors $F_{W,c_W}\colon HSI(Y,c) \to HSI(Y',c')$ comme la composée :
\[F_{W,c_W} =F_{W_k,c_{W_k}}\circ \cdots \circ   F_{W_2,c_{W_2}}   \circ F_{W_1,c_{W_1}}.\]

Pour vérifier que cette application ne dépend pas de la décomposition choisie, il suffit, d'après la théorie de Cerf (Théorème \ref{GWWtheo}), de vérifier que le morphisme construit ne change pas pour une naîssance/mort, et pour une interversion de points critiques, les mouvements de difféo-équivalence et d'ajout de cobordisme trivial étant clairement vérifiés.

\subsection{Naîssance/mort} Nous allons montrer que pour l'ajout successif d'une 1-anse puis d'une 2-anse qui s'annulent, la composition des applications induites par cobordisme vaut l'identité. Le cas d'une 2-anse puis d'une 3-anse s'obtient en renversant le cobordisme. 

La situation que l'on va décrire est résumée dans la figure \ref{fig:naissancemort} :

\begin{figure}[!h]
    \centering
    \def\svgwidth{.65\textwidth}
    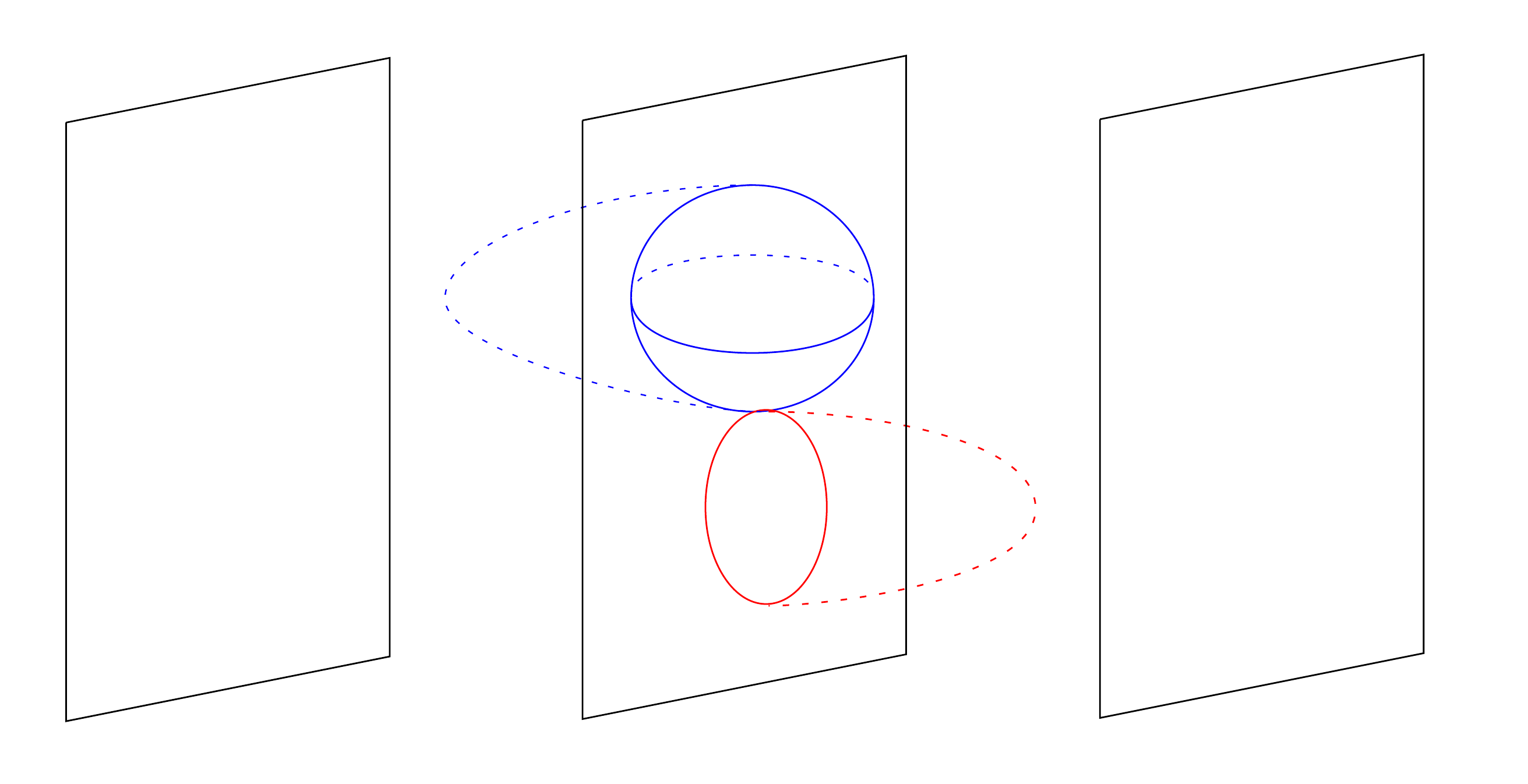
      \caption{Naîssance/Mort.}
      \label{fig:naissancemort}
\end{figure}


Soit $Y'$ une 3-variété, $S\subset Y'$ une 2-sphère et $K\subset Y'$ un noeud muni d'une trivialisation de son fibré normal, tels que $K$ et $S$ s'intersectent transversalement en un point. Soit $W_1$ l'opposé du cobordisme correspondant à l'attachement d'une 3-anse à $S$. Notons $Y$ l'autre bord de $W_1$. Soit $W_2$ le cobordisme correspondant à l'attachement d'une 2-anse le long de $K$, recollée à l'aide du framing spécifié. Notons $Y''$ l'autre bord de $W_2$. Soit $N$ un voisinage réguliers de $S\cup K$ dans $Y'$, $N\simeq (S^2\times S^1)\setminus B^3$, avec $B^3$ une 3-boule, $\partial N$ est une 2-sphère, $Y\simeq Y''\simeq (Y'\setminus N)\cup B^3$,  et $Y'\simeq Y\# (S^2\times S^1)$.

Soit $C$ un cercle de $S$ disjoint de  $p = K\cap S$ et $T\subset N$ le tore correspondant à $C\times K$ dans l'identification $N\simeq (S\times K)\setminus B^3$. Le tore $T$ sépare $N$ en respectivement un voisinage $\nu K$ de $K$, et $N\setminus \nu K$. On notera encore $T$ le tores correspondant dans $Y''$.

Soit $c$ une classe dans $H^2(W_1\cup_{Y'}W_2, \Z{2}) \simeq H^2(Y, \Z{2})$, notons $c_{W_1}$, $c_{W_2}$, $c_{Y}$, $c_{Y''}$ et $c_{Y'}$ les classes induites sur ${W_1}$, ${W_2}$, ${Y}$, $Y''$ et ${Y'}$ respectivement ($c_{Y} = c_{Y''}$ sous l'identification canonique de $H^2( Y, \Z{2})$ et $H^2(Y'' , \Z{2})$  avec  $H^2(Y'\setminus N , \Z{2})$).

On a la décomposition suivante :
\[ H^2(Y', \Z{2}) \simeq H^2(Y, \Z{2}) \oplus H^2(S^2\times S^1, \Z{2}) ,\]
dans laquelle $c_{Y'}$ correspond à $c_Y + 0$. 

Soient $\lambda,\mu\subset T$ des longitues et méridiens de $K$, et $T'$ le tore $T$ éclaté en un point ne rencontrant pas $\lambda$ et $\mu$. Notons, conformément au paragraphe précédent, les deux Lagrangiennes de $\Nc(T')$ suivantes, associées respectivement à $\nu K$ (ou $N\setminus \nu K$),  et au remplissage de Dehn de $Y''$ (ici, $i = j  =0$ car $c_{Y'}$ correspond à $c_Y + 0$) :
\begin{align*}
L_0 =&  \left\lbrace  [A]\in\Nc(T') :  \mathrm{Hol}_\mu A = I \right\rbrace \\
L_1 =&  \left\lbrace  [A]\in\Nc(T') :  \mathrm{Hol}_\lambda A = I \right\rbrace .
\end{align*}
Soit enfin $\underline{L}  = \underline{L}(Y'\setminus N , c )$ la correspondance Lagrangienne généralisée, allant de $pt$ vers $pt$. Les groupes d'homologie HSI des trois variétés sont alors donnés par :

\begin{align*}
HSI(Y, c_{Y}) =& HF(  \underline{L})  \\
HSI(Y', c_{Y'}) =& HF(L_0, L_0,  \underline{L}) \simeq HSI(S^2\times S^1,0) \otimes HSI(Y, c_{Y}) \\
HSI(Y'', c_{Y''}) =&  HF(L_1,L_0, \underline{L}) \simeq HSI(S^3,0) \otimes HSI(Y, c_{Y})\\
 \simeq & HSI(Y, c_{Y}).
\end{align*}

Par construction, si $x\in HF(  \underline{L})$,  $F_{W_1,c_{W_1}}(x) =  \Theta\otimes x$, où $\Theta$ est le générateur de degré 3. Afin de montrer que $F_{W_2,c_{W_2} } \circ F_{W_1,c_{W_1}}= Id_{HSI(Y, c_{Y})}$,  montrons que $F_{W_2,c_{W_2} }(\Theta\otimes x) = x$. 

Rappelons que $F_{W_2,c_{W_2} }$ est définie en comptant des triangles matelassés comme dans la figure \ref{quiltedpants}. Dans le contexte présent, ces triangles sont équivalents à ceux de la figure \ref{trianglebirthdeath} (on peut "découdre" le triangle supérieur): 

\begin{figure}[!h]
    \centering
    \def\svgwidth{\textwidth}
    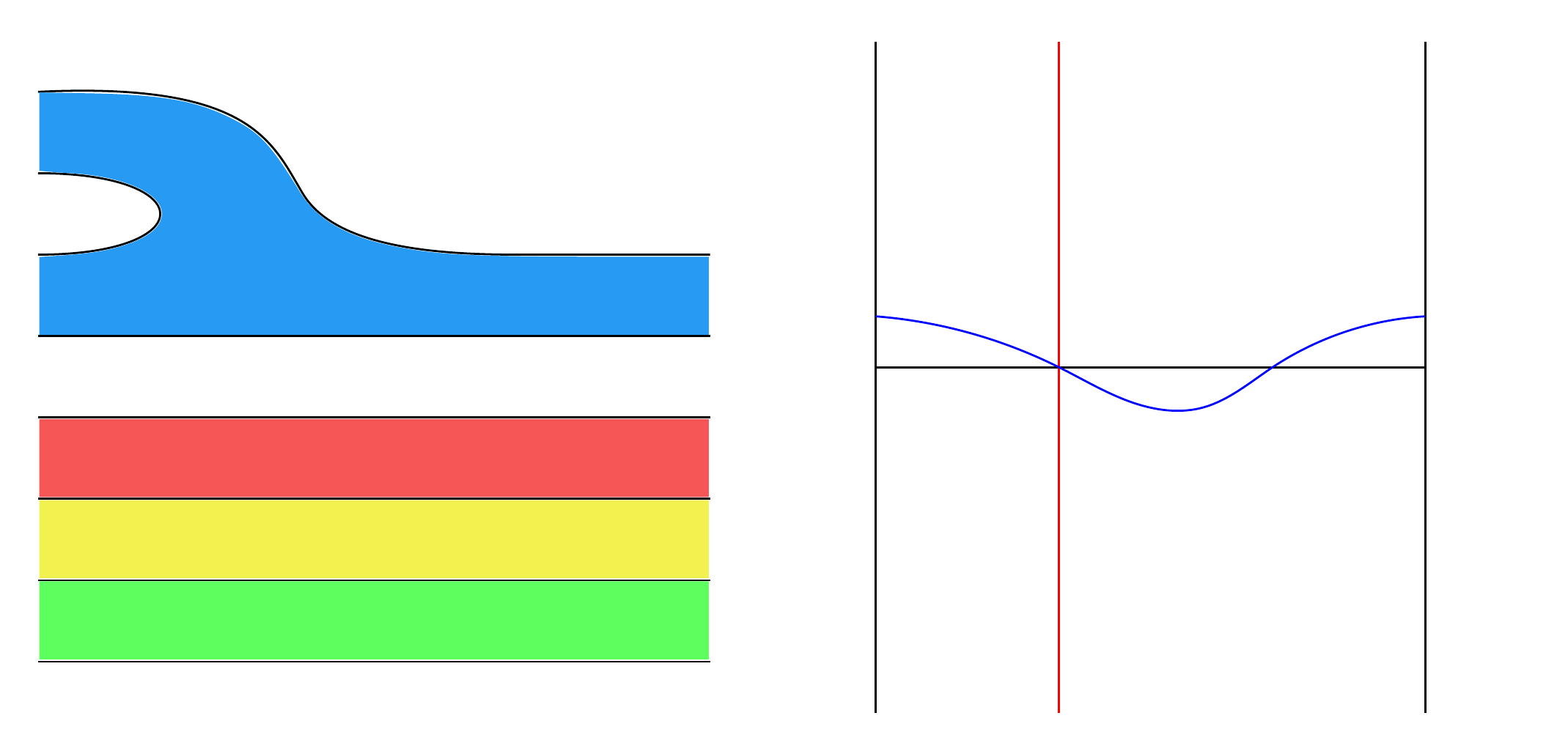
      \caption{Triangle apparaissant dans les coefficients de $CF_{W_2,c_{W_2} }$.}
      \label{trianglebirthdeath}
\end{figure}

Identifions un voisinage tubulaire de $L_0$ dans $\Nc(T')$ à un voisinage $T(\lambda)$ de la section nulle dans $T^* L_0$, tel que $L_1$ corresponde à la fibre au-dessus d'un point $n\in L_0$.  Soit $f\colon L_0\to \rr$ une fonction de Morse avec deux points critiques : un minimum en $s$  et un maximum en $n$. Notons $L_0'$ le graphe de $df$.

L'homologie $HF(L_0, L_0')$ est isomorphe à l'homologie de Morse de $f$, en effet il n'existe pas de bandes de Floer d'indice 1. Le générateur $\Theta$ en degré 3  correspond donc au maximum $n$ de $f$ sous cette identification.

Soient $\underline{x}, \underline{y}\in \I(\underline{L}) $, $x_0 \in L_0\cap L_0' $, $\tilde{x}_0 \in L_0\cap L_1 $, et $y_0 \in L_1\cap L_0'$. Supposons qu'il existe un triangle pseudo-holomorphe contribuant au coefficient \[\left( CF_{W_2,c_{W_2}}( \tilde{x}_0, x_0, \underline{x} ), (y_0, \underline{y} )  \right)\] de l'application au niveau des complexes définissant $F_{W_2,c_{W_2}}$.  D'une part, $\tilde{x}_0 =y_0 = n$,  car les Lagrangiennes s'intersectent en un seul point. 

L'indice total du triangle  et de la bande matelassée est nul : il s'en suit que l'indice du triangle  et de la bande matelassée sont nuls, par généricité. Supposons par l'absurde que $x_0 = s$, on sait qu'il existe un triangle d'indice 1 et d'aire négative valant $ \left( f(s) - f(n)\right)$. Par monotonie, un triangle d'indice nul serait d'aire strictement plus négative, ainsi $x_0 = n$.

Enfin, $ \left( CF_{W_2,c_{W_2}}( n, n, \underline{x} ), (n, \underline{x} )  \right) = 1$ : en effet les trois Lagrangiennes sont concourrantes, ainsi l'espace des modules des triangles matelassés d'indice nul est réduit à un point, correspondant à l'application constante. Par ailleurs, pour des perturbations génériques,  L'opérateur de Cauchy-Riemann linéarisé associé au triangle est injectif d'après \cite[Lemma 2.27]{Seidel}, et donc surjectif car le triangle est d'indice nul. De même, L'opérateur de Cauchy-Riemann linéarisé associé à la bande matelassée est injectif, d'après \cite[Theorem 3.2]{WWerrata}. Ainsi, le triangle matelassé constant intervenant ici est régulier.

\subsection{Interversion de points critiques}

Soit $Y$ une 3-variété. Supposons que $W_1$, un cobordisme de $Y$ vers $Y_1$,  corresponde à l'attachement d'une anse $h_1$ à $Y$, et que $W_2$, un cobordisme de $Y_1$ vers $Y_{12}$, corresponde à l'attachement d'une autre anse $h_2$ à $Y_1$ disjointe de $h_1$. Supposons que $W_2'$ et $W_1'$ correspondent à l'attachement dans l'ordre inverse, de sorte à ce que $W_1\cup_{Y_1} W_2 = W_2' \cup_{Y_2} W_1'$. On note $Y_2$ la 3-variété entre  $W_2'$ et $ W_1'$, et $Y_{21}$ l'autre bord de $W_1'$, comme résumé dans la figure \ref{interversionptcrit}. Nous allons montrer que $F_{W_2,c_{W_2}} \circ F_{W_1,c_{W_1}} = F_{W_1',c_{W_1'}} \circ F_{W_2',c_{W_2'}}$. Quatre cas sont à distinguer selon la dimension des anses : 
\begin{enumerate}
\item $h_1$ est une 1-anse et $h_2$ est une 1-anse.
\item $h_1$ est une 1-anse et $h_2$ est une 2-anse.
\item $h_1$ est une 2-anse et $h_2$ est une 2-anse.
\item $h_1$ est une 1-anse et $h_2$ est une 3-anse.
\end{enumerate}
Le cas restant (2-anse/3-anse) se déduit du second cas.

\begin{figure}[!h]
    \centering
    \def\svgwidth{.65\textwidth}
   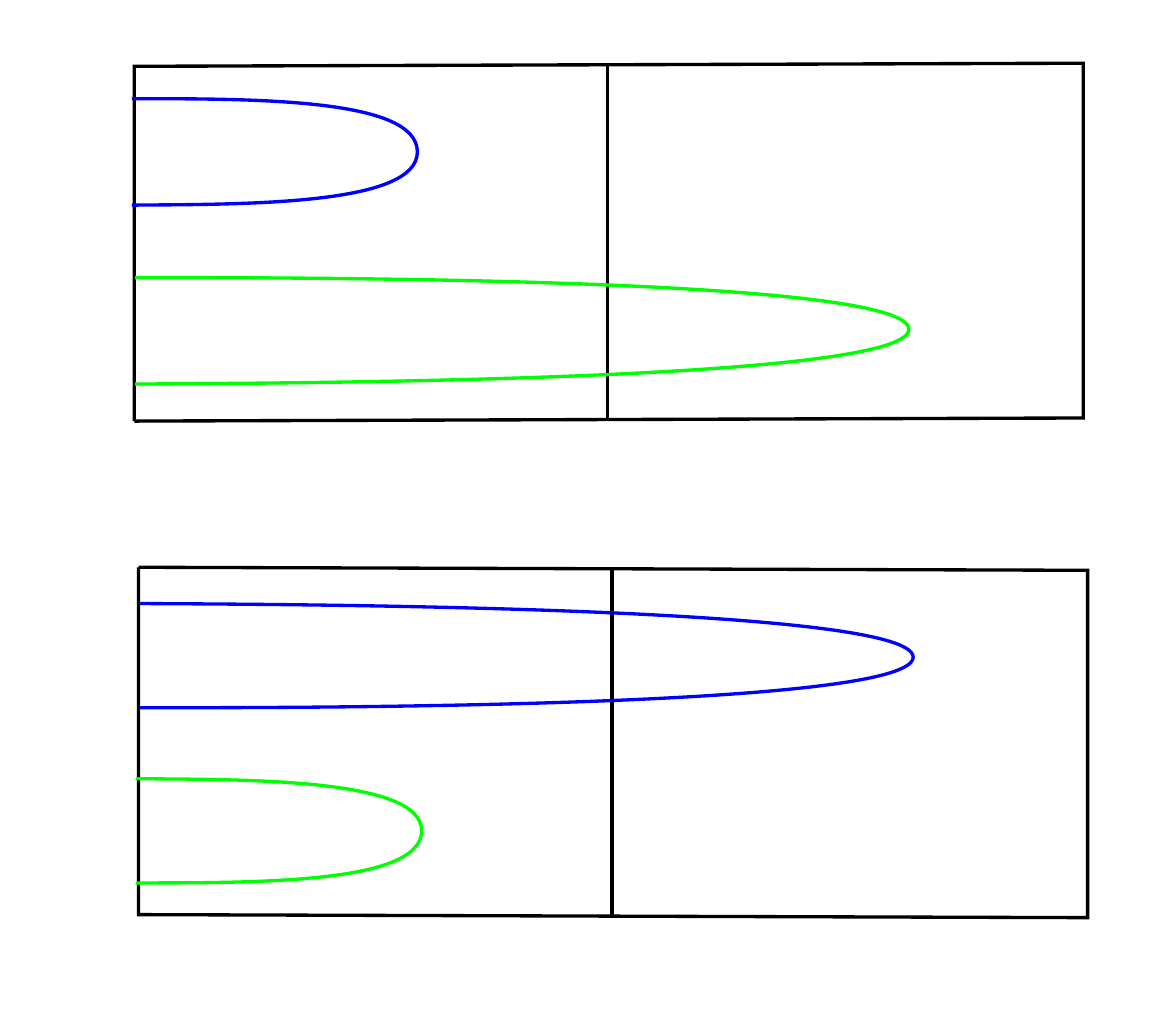
      \caption{Interversion de points critiques.}
      \label{interversionptcrit}
\end{figure}

\paragraph{Premier cas ($h_1$ est une 1-anse et $h_2$ est une 1-anse.) :}   Notons $A_1$ et $A_2$ des variétés homéomorphes à $S^2\times S^1$, de sorte que, avec $i=1$ ou 2 et $j= 3-i$, $Y_i =  A_i\#Y$ et  $Y_{ij} = A_j\#(A_i\#Y  ) $. Notons $H_Y = HSI(Y,c)$ et $H_i = HSI(A_i,0)$ , de sorte que, d'après la formule de Künneth, $HSI(Y_i,c_i) =H_i \otimes H_Y $ et $HSI(Y_{ij}) = H_j\otimes H_i \otimes H_Y$. Notons enfin $\Theta_i \in H_i$ les générateurs de degrés 3. Alors $ F_{W_2}\circ F_{W_1} \colon H_Y \to H_Y\otimes H_1 \otimes H_2$ et $ F_{W_1}'\circ F_{W_2}' \colon H_Y \to H_Y\otimes H_2 \otimes H_1$ sont données, pour $x\in H_Y$,  par : 
\begin{align*}
 F_{W_2}\circ F_{W_1}(x) =& \Theta_2\otimes \Theta_1 \otimes x \\
 F_{W_1}'\circ F_{W_2}'(x) =& \Theta_1\otimes \Theta_2 \otimes x .
\end{align*}
Elles sont donc identifiées via l'isomorphisme $HSI(Y_{12}) \simeq HSI(Y_{21}) $ induit par l'identité.

\paragraph{Second cas ($h_1$ est une 1-anse et $h_2$ est une 2-anse.) :} En reprenant les notations précédentes, $HSI(Y_1) = H_1\otimes HSI(Y) $ et $HSI(Y_{21}) =  H_1 \otimes HSI(Y_2)$.

L'assertion vient alors du fait que $F_{W_2} = Id_{H_1}\otimes F_{W_2'}  $. En effet, la dernière bande de la surface matelassée définissant $F_{W_2}$ peut être "découpée", voir la figure \ref{switch1anse2anse}.

\begin{figure}[!h]
    \centering
    \def\svgwidth{.65\textwidth}
    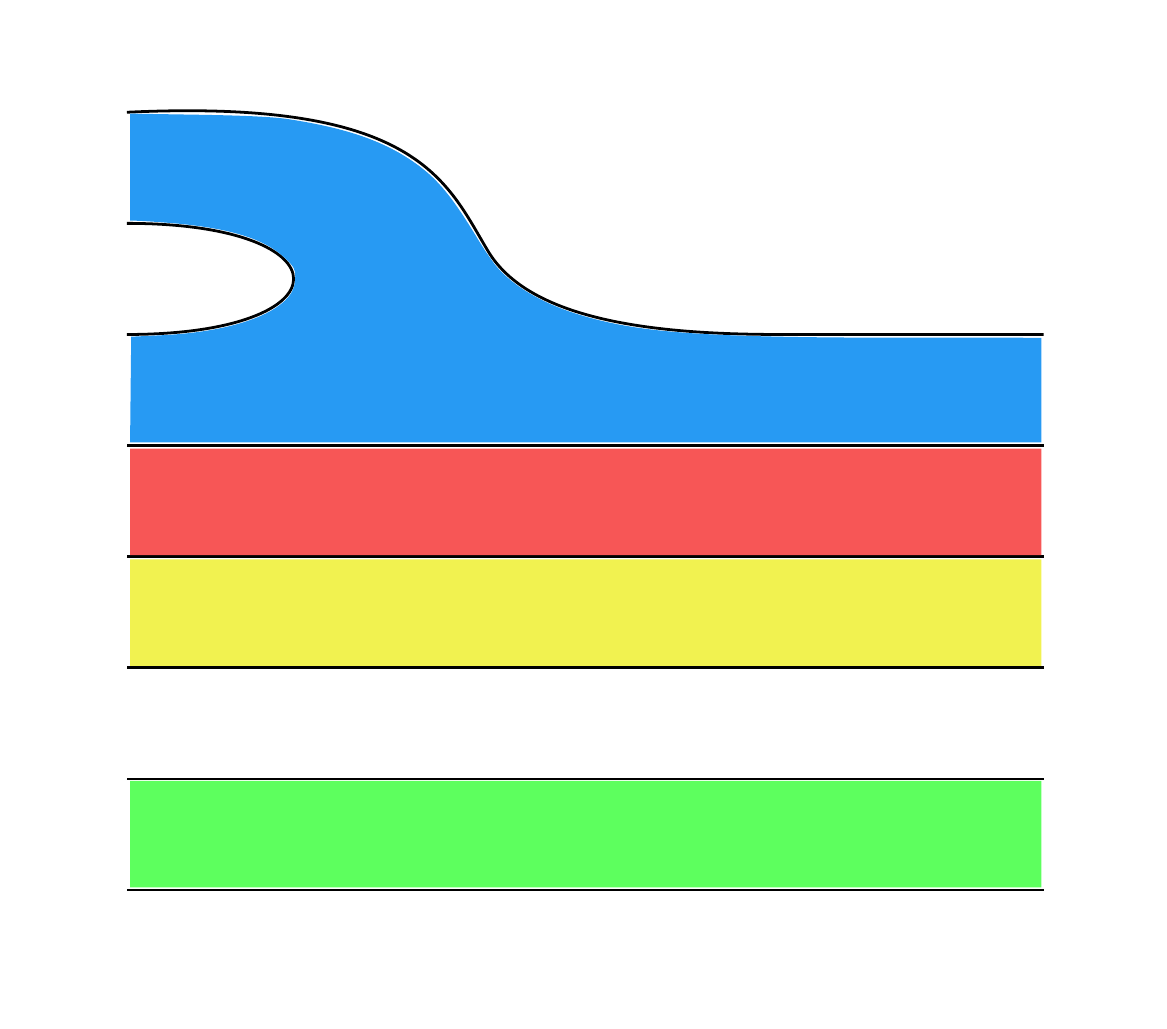
      \caption{Interversion d'une 1-anse et d'une 2-anse.}
      \label{switch1anse2anse}
\end{figure}

\paragraph{Troisième cas ($h_1$ est une 2-anse et $h_2$ est une 2-anse.) :}

Soient $K_1, K_2 \subset Y$ deux noeuds munis de framings, auxquels on attache les anses $h_1$ et $h_2$ respectivement. Soient $N_1$ et $N_2$ des voisinages tubulaires, $T_1$ et $T_2$ leurs bords, $M = Y\setminus \left( K_1 \cup K_2 \right)$ leur complémentaire, vu comme un cobordisme de $T_1$ vers $T_2$. 

On note $\underline{L}$ la correspondance Lagrangienne généralisée de $\Nc(T_1)$ vers $\Nc(T_2)$ associée à $M$, $L_1\subset\Nc(T_1)$ et $L_2 \subset\Nc(T_2)$ les Lagrangiennes correspondant à $N_1$ et $N_2$, et $L_1'\subset\Nc(T_1)$ et $L_2' \subset\Nc(T_2)$ les Lagrangiennes correspondant aux deux remplissages, de sorte que :
\begin{align*}
HSI(Y) &= HF(L_1, \underline{L}, L_2), \\
HSI(Y_1) &= HF(L_1', \underline{L}, L_2), \\
HSI(Y_2) &= HF(L_1, \underline{L}, L_2'), \\
HSI(Y_{12}) &= HF(L_1', \underline{L}, L_2').
\end{align*}
Les applications associées à $W_1$, $W_2$, $W_1'$ et $W_2'$ sont alors définies à l'aide de triangles matelassés comme dans la figure \ref{switch2anses}. Cela par définition pour $F_{W_1, c_1}$ et $F_{W_1',c_1'}$, et concernant $F_{W_2, c_2}$ et $F_{W_2',c_2'}$ par les deux observations suivantes : d'une part une variété orientée $Y$ décrite par une décomposition en anses 
\[ Y = Y_1\cup_{\Sigma_1} Y_2 \cup_{\Sigma_2} \cdots \cup_{\Sigma_{k-1}} Y_k \]
peut également être décrite par la décomposition renversée
\[ Y = Y_k\cup_{\overline{\Sigma}_{k-1}}\cdots \cup_{\overline{\Sigma}_2} Y_2 \cup_{\overline{\Sigma}_1} Y_1  ,\]
où les surfaces sont munies de leur orientations opposées. Les espaces des modules qui leur correspondent sont alors munis de la forme symplectique opposée. Par ailleurs un quilt pseudo-holommorphe dans une famille de variétés symplectiques correspond à son image mirroir dans la famille des variétés symplectiques opposées. 

Les  composées $F_{W_2, c_2}\circ F_{W_1, c_1}$ et $F_{W_1',c_1'}\circ F_{W_2',c_2'}$ sont ainsi associées aux surfaces recollées correspondantes : elles coïncident donc en vertu de l'homotopie suggérée dans la figure \ref{switch2anses}.

%

\begin{figure}[!h]
    \centering
    \def\svgwidth{\textwidth}
    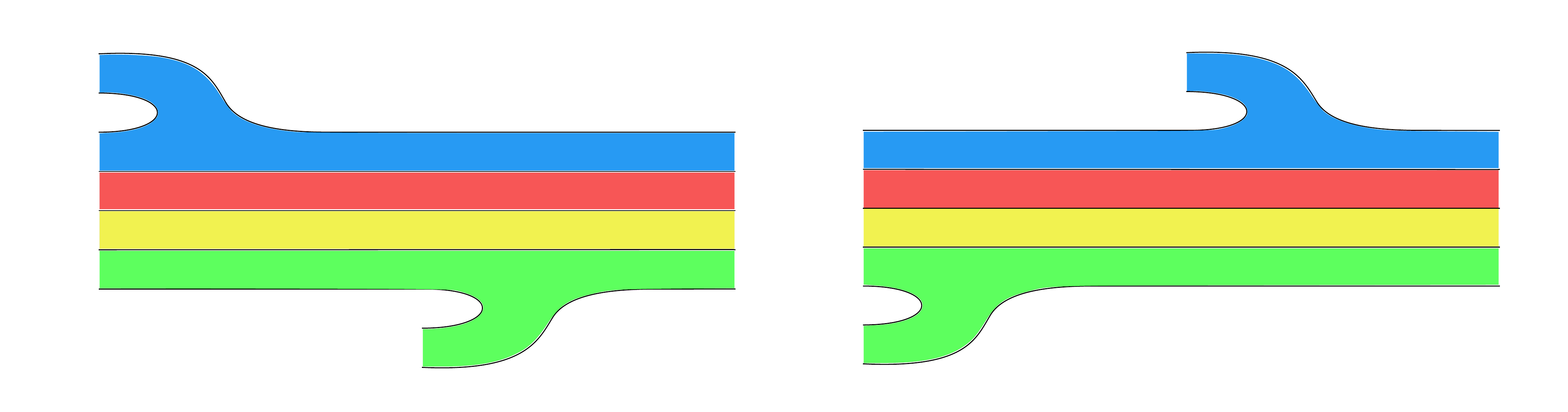
      \caption{Interversion de 2-anses.}
      \label{switch2anses}
\end{figure}

\paragraph{Quatrième cas ($h_1$ est une 1-anse et $h_2$ est une 3-anse.) :}

De même que pour le premier cas, notons $A_1$ et $A_2$ des variétés homéomorphes à $S^2\times S^1$, de sorte que 
\begin{align*}
 Y &\simeq A_2\#Y_2, \\
 Y_1 &\simeq A_1 \# Y\simeq A_1\# A_2\# Y_2, \\
 Y_{12} &\simeq Y_{21}\simeq A_1 \# Y_2 .
\end{align*}
Notons $H_{Y_2} = HSI(Y_2)$, $H_i = HSI(A_i)$, et $\Theta_i, \Theta_i' \in H_i$ les générateurs  de degrés respectifs 3 et 0.

Vérifions que $F_{W_2}\circ F_{W_1} = F_{W'_1}\circ F_{W'_2}$ : Soit $x\in H_{Y_2}$, on a d'une part : 
\begin{align*} F_{W_2}\circ F_{W_1} (\Theta_2\otimes x) &= F_{W_2} (\Theta_1 \otimes \Theta_2\otimes x) \\
&= \Theta_1 \otimes x \\
&=F_{W'_1}( x ) \\
&=F_{W'_1}\circ F_{W'_2}(\Theta_2 \otimes x ),
\end{align*}
et d'autre part :
\[F_{W_2}\circ F_{W_1} (\Theta_2'\otimes x) = F_{W'_1}\circ F_{W'_2}(\Theta_2'\otimes x) = 0 ,\]
ce qui achève la preuve de l'indépendance de la décomposition.
\arnaque

Ainsi les applications par cobordismes $F_{W,c}$ sont bien définies. La formule de composition suivante découle immédiatement de leur construction  :

\begin{prop}\label{compohoriz}(Composition horizontale) Soit $W = W_1 \cup W_2$ la composition de deux cobordismes $W_1$ et $W_2$, $c\in H^2(W,\Z{2})$ une classe, et  $c_1$, $c_2$ les classes induites sur $W_1$ et $W_2$. Alors,
\[ F_{W,c} = F_{W_2,c_2}\circ F_{W_1,c_1}. \]
\end{prop}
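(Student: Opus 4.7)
Le plan consiste à se ramener directement à la définition de $F_{W,c}$. Puisque cette application est définie via une décomposition en anses de $W$, la stratégie sera de choisir une telle décomposition qui passe par l'intersection $Y_{int} = \partial_+ W_1 = \partial_- W_2$, et d'invoquer l'invariance par choix de décomposition établie dans les sous-sections précédentes de ce chapitre.

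Je commencerai par fixer des décompositions en cobordismes élémentaires $W_1 = W_1^{(1)} \cup \cdots \cup W_1^{(k)}$ et $W_2 = W_2^{(1)} \cup \cdots \cup W_2^{(l)}$ (obtenues à partir de fonctions de Morse sur chacune, adaptées aux bords). Leur concaténation naturelle définit alors une décomposition en anses de $W$, correspondant au choix d'une fonction de Morse sur $W$ dont $Y_{int}$ est une valeur régulière. Il faudra ensuite vérifier la compatibilité des classes de cohomologie : pour chaque morceau élémentaire, la restriction de $c$ à $W_i^{(j)}$ coïncide avec la restriction de $c_i$ à $W_i^{(j)}$, ce qui découle directement de la fonctorialité des tirés en arrière sous la composition d'inclusions $W_i^{(j)} \hookrightarrow W_i \hookrightarrow W$.

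La conclusion sera alors immédiate : par définition de $F_{W,c}$ associée à cette décomposition, on obtient
\[ F_{W,c} = F_{W_2^{(l)}, c|_{W_2^{(l)}}} \circ \cdots \circ F_{W_2^{(1)}, c|_{W_2^{(1)}}} \circ F_{W_1^{(k)}, c|_{W_1^{(k)}}} \circ \cdots \circ F_{W_1^{(1)}, c|_{W_1^{(1)}}}, \]
et les $l$ derniers (resp. $k$ premiers) termes de cette composée valent par définition de $F_{W_2, c_2}$ (resp. $F_{W_1, c_1}$) associés aux décompositions fixées sur $W_2$ (resp. $W_1$). L'indépendance vis-à-vis du choix de la décomposition, démontrée dans les paragraphes précédents grâce au théorème \ref{GWWtheo} et à la vérification des cas de naîssance/mort et d'interversion de points critiques, garantit que l'identité $F_{W,c} = F_{W_2,c_2} \circ F_{W_1,c_1}$ ne dépend d'aucun choix.

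Il n'y a donc pas d'obstacle réel à franchir : la preuve est essentiellement tautologique une fois admis le travail déjà fait concernant l'invariance par mouvements de Cerf, et la seule subtilité consiste à expliciter que les classes cohomologiques se comportent bien par restriction, ce qui est automatique.
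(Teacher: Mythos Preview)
Your proposal is correct and matches the paper's approach exactly: the paper simply states that the formula ``découle immédiatement de leur construction'' and gives no further argument, which is precisely the tautology you spell out---choose a handle decomposition of $W$ subordinate to the splitting $W_1 \cup W_2$, and the definition of $F_{W,c}$ as a composite of elementary maps yields the result directly. Your explicit verification of the compatibility of the restricted cohomology classes is a useful addition, but there is nothing more to say.
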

\arnaque

\section{Flèches dans le triangle de chirurgie}

Dans cette section nous prouvons que les analogues modulo 2 de deux des flèches intervenant dans la suite exacte du théorème \ref{trianglechir} peuvent s'interpréter comme des applications induites par cobordismes.

Soient, comme dans le théorème \ref{trianglechir}, $Y$ une 3-variété à bord torique, $\alpha, \beta,\gamma$ trois courbes dans $\partial Y$ vérifiant $ \alpha. \beta = \beta.\gamma = \gamma.\alpha = -1$,  et $Y_\alpha$,  $Y_\beta$, $Y_\gamma$ les variétés obtenues par remplissage de Dehn. Si $\delta \neq \mu \in \lbrace \alpha, \beta, \gamma\rbrace$, soit $W_{\delta, \mu }$ le cobordisme de $Y_\delta$ vers $Y_\mu$ correspondant à l'attachement d'une 2-anse le long de $\delta$ avec framing $\mu$.

Soit $c\in H_1(Y;\Z{2})$, on note pour $\delta \neq \mu \in \lbrace \alpha, \beta, \gamma\rbrace$, les trois classes $c_\delta \in H_1(Y_\delta ;\Z{2})$ induites par l'inclusion, et $c_{\delta \mu} \in H_2(W_{\delta\mu}, \partial W_{\delta\mu} ;\Z{2})$ les classes induites par $c\times [0,1] \in H_2(Y\times [0,1], Y\times \lbrace 0,1\rbrace ;\Z{2} )$ via l'inclusion de $Y\times [0,1]$ dans $W_{\delta\mu}$. Notons également $d_\alpha \in H_2(W_{\alpha\beta}, \partial W_{\alpha\beta} ;\Z{2} )$ la classe fondamentale de l'âme de l'anse attachée le long de $\alpha$, de sorte que $\partial_* d_\alpha = k_\alpha \in H_1(Y_\alpha;\Z{2})$.


\begin{theo}\label{thinterpfleches} Soient $C\Phi_1$ et $C\Phi_2$ les deux morphismes de la suite exacte du théorème \ref{trianglechir} que l'on a construit dans la partie \ref{flechestri}. Les morphismes induits en homologie à coefficients dans $\Z{2}$ coïncident avec les application $F_{W_{\alpha\beta}, c_{\alpha\beta} + d_\alpha}$ et $F_{W_{\beta\gamma}, c_{\beta\gamma} }$.
\end{theo}

Le troisième morphisme de la suite exacte, construit comme un homomorphisme de connexion, n'est pas à priori induit par $W_{\gamma\alpha}$. En revanche, la même démonstration que celle utilisée par Lisca et Stipsicz pour l'homologie de Heegaard-Floer \cite[Section 2, fin]{LiscaStipsicz} permet d'obtenir : 
\begin{cor}\label{corinterpfleches} En notant $d'_\alpha \in H_2(W_{\gamma\alpha}, \partial W_{\gamma\alpha} ;\Z{2} )$ la classe fondamentale de l'anse attachée le long de $\alpha$, les applications $F_{W_{\alpha\beta}, c_{\alpha\beta} + d_\alpha}$, $F_{W_{\beta\gamma}, c_{\beta\gamma} }$, et $F_{W_{\gamma\alpha}, c_{\gamma\alpha} + d'_\alpha}$ forment une suite exacte :
\[ \xymatrix{  HSI(Y_\beta,c_\beta)\ar[rr]^{F_{W_{\beta\gamma}, c_{\beta\gamma} }} & & HSI(Y_\gamma,c_\gamma)\ar[ld]^{\ \ \ \ \ F_{W_{\gamma\alpha}, c_{\gamma\alpha} + d'_\alpha}}  \\ & HSI(Y_\alpha,c_\alpha +k_\alpha)\ar[lu]^{F_{W_{\alpha\beta}, c_{\alpha\beta} + d_\alpha}}  & ,}\]
où les groupes sont à coefficients dans $\Z{2}$.
\end{cor}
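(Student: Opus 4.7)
The plan is to follow the strategy of Lisca and Stipsicz in Heegaard--Floer theory, exploiting cyclic symmetry of the surgery triad together with two applications of Theorem~\ref{thinterpfleches}. The first application uses the original labeling $(\alpha,\beta,\gamma)$ with the given class $c$, and yields the long exact sequence
\[
\cdots \to HSI(Y_\alpha, c_\alpha + k_\alpha) \xrightarrow{F_{W_{\alpha\beta},\, c_{\alpha\beta}+d_\alpha}} HSI(Y_\beta, c_\beta) \xrightarrow{F_{W_{\beta\gamma},\, c_{\beta\gamma}}} HSI(Y_\gamma, c_\gamma) \xrightarrow{\delta} \cdots
\]
in which only the connecting map $\delta$ remains to be identified.

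Next, I would re-apply Theorem~\ref{thinterpfleches} to the cyclically relabeled triad $(Y_\gamma, Y_\alpha, Y_\beta)$, starting now from a carefully chosen class $c' \in H_1(Y;\Z{2})$. The point is to pick $c'$ so that the first three terms of the new exact sequence read
\[
HSI(Y_\gamma, c_\gamma) \xrightarrow{F'_1} HSI(Y_\alpha, c_\alpha + k_\alpha) \xrightarrow{F'_2} HSI(Y_\beta, c'_\beta),
\]
which matches the original sequence termwise at the first two positions; this amounts to arranging $c'_\gamma + k_\gamma = c_\gamma$ and $c'_\alpha = c_\alpha + k_\alpha$ simultaneously, which can be done because $k_\alpha$ and $k_\gamma$ both arise from the topology of $Y$ and the filling. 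Theorem~\ref{thinterpfleches} then identifies $F'_1$ with $F_{W_{\gamma\alpha},\, c'_{\gamma\alpha}+d'_\alpha}$, where $d'_\alpha$ is the fundamental class of the handle filling along $\alpha$ inside $W_{\gamma\alpha}$.

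A Mayer--Vietoris computation for $W_{\gamma\alpha}$, split as $(Y\setminus \nu K)\times[0,1] \cup_\partial D^2 \times D^2$ in the style of \S1 of the present chapter, would then verify that the passage from $c$ to $c'$ combined with the cyclic relabeling yields $c'_{\gamma\alpha}+d'_\alpha = c_{\gamma\alpha}+d'_\alpha$ as classes in $H^2(W_{\gamma\alpha};\Z{2})$, so that $F'_1 = F_{W_{\gamma\alpha},\, c_{\gamma\alpha}+d'_\alpha}$. Because both applications of Theorem~\ref{thinterpfleches} ultimately rest on the same instance of the quilted Seidel-type triangle (Theorem~\ref{quilttri}) applied to the same triple of Lagrangians $L_\alpha^-, L_\beta, L_\gamma \subset \Nc(T')$---differing only in which vertex of the triangle is chosen as the base---the two long exact sequences agree where they overlap. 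Hence the connecting homomorphism $\delta$ coincides with $F_{W_{\gamma\alpha},\, c_{\gamma\alpha}+d'_\alpha}$, establishing the three-term exact triangle.

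The most delicate step will be the class-matching bookkeeping: pinning down the precise class $c'$ on $Y$ that effects the desired termwise coincidence, and verifying via Mayer--Vietoris that the induced cohomology class on $W_{\gamma\alpha}$ is shifted by exactly $d'_\alpha$ rather than by some other combination of handle classes. A secondary difficulty, more conceptual, will be justifying that the two cyclically permuted instances of Theorem~\ref{thinterpfleches} produce literally the same long exact sequence (as opposed to two a priori distinct ones that merely happen to share two consecutive arrows); this requires unpacking the construction of the Seidel triangle in Theorem~\ref{quilttri} to confirm that it is canonically attached to the triple of Lagrangians and does not depend on the choice of basepoint in the triangle.
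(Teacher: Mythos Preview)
Your overall strategy---exploit cyclic symmetry of the triad, following Lisca--Stipsicz---is the right one, and it is what the paper does. But your execution diverges from the paper at a key point, and the divergence creates a genuine gap.

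You propose to run Theorem~\ref{thinterpfleches} twice (on two cyclic relabelings) and then argue that the connecting homomorphism $\delta$ of the first long exact sequence coincides with the cobordism map $F_{W_{\gamma\alpha}}$ identified in the second. Your justification is that both sequences ``rest on the same instance of the quilted Seidel-type triangle \dots applied to the same triple of Lagrangians $L_\alpha^-, L_\beta, L_\gamma$, differing only in which vertex is chosen as the base.'' This is not correct. Theorem~\ref{quilttri} is not symmetrically attached to a triple of Lagrangians; it takes as input a specific sphere $S$ and the Dehn twist $\tau_S$ about it. When you cyclically relabel the triad, the sphere changes (from $S=L_\alpha^-$ to $S=L_\beta$ or $S=L_\gamma$), the Dehn twist changes, and the resulting long exact sequence---including its connecting homomorphism, built from a mapping-cone construction---is a genuinely different object. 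Two long exact sequences that happen to share one or two consecutive arrows need not have equal third arrows, so your identification of $\delta$ does not go through.

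The paper sidesteps this entirely. It never tries to identify any connecting homomorphism. Instead it applies Theorem~\ref{quilttri} (together with Theorem~\ref{thinterpfleches}) \emph{three} times, with the three choices $(L_0,S)=(L_\gamma,L_\alpha^-)$, $(L_\alpha^-,L_\beta)$, $(L_\beta,L_\gamma)$. Each application yields a long exact sequence in which the two \emph{non-connecting} maps $\Phi_1,\Phi_2$ are identified with cobordism maps; from each one reads off a single relation $\mathrm{Ker}(\Phi_2)=\mathrm{Im}(\Phi_1)$. Tracking the classes through the three choices, these three relations are precisely
\[
\mathrm{Ker}\,F_{W_{\beta\gamma}}=\mathrm{Im}\,F_{W_{\alpha\beta}},\quad
\mathrm{Ker}\,F_{W_{\gamma\alpha}}=\mathrm{Im}\,F_{W_{\beta\gamma}},\quad
\mathrm{Ker}\,F_{W_{\alpha\beta}}=\mathrm{Im}\,F_{W_{\gamma\alpha}},
\]
which together constitute the exact triangle. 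No comparison between distinct long exact sequences is needed, and the ``class-matching bookkeeping'' you anticipate reduces to checking, in each of the three runs, which Lagrangians $L_\delta^\pm$ arise---exactly as in the proof of Theorem~\ref{trianglechir}.
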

\begin{proof}[Preuve du corollaire \ref{corinterpfleches}] On utilise la symétrie cyclique de la triade de chirurgie. On notera $L_\delta^\epsilon = \lbrace\mathrm{Hol}_\delta = \epsilon I \rbrace$. Rappelons au préalable que pour démontrer le théorème \ref{trianglechir}, on a appliqué la suite exacte \ref{quilttri} à $L_0 = L_\gamma$ et $S=L_\alpha^-$. Il s'en suit que $\mathrm{Ker}(F_{W_{\beta\gamma}, c_{\beta\gamma} }) = \mathrm{Im}(F_{W_{\alpha\beta}, c_{\alpha\beta} + d_\alpha})$.

Si l'on procède de même avec $L_0 = L_\alpha^-$ et $S= L_\beta$, on obtient une suite exacte similaire faisant intervenir $F_{W_{\beta\gamma}, c_{\beta\gamma} }$ et $F_{W_{\gamma\alpha}, c_{\gamma\alpha} + d'_\alpha}$.  Il s'en suit que $\mathrm{Ker}(F_{W_{\gamma\alpha}, c_{\gamma\alpha} + d'_\alpha}) = \mathrm{Im}(F_{W_{\beta\gamma}, c_{\beta\gamma} })$.

Enfin, le choix $L_0 = L_\beta$ et $S=L_\gamma$ permet de prouver  $\mathrm{Ker}(F_{W_{\alpha\beta}, c_{\alpha\beta} + d_\alpha}) = \mathrm{Im}(F_{W_{\gamma\alpha}, c_{\gamma\alpha} + d'_\alpha})$, d'où le résultat annoncé.
\end{proof}

\begin{proof}[Preuve du théorème]
D'une part, $\Phi_1 = F_{W_{\alpha\beta}, c_{\alpha\beta} + d_\alpha}$  par définition de l'application $F_{W_{\alpha\beta}, c_{\alpha\beta} + d_\alpha}$.

La seconde égalité s'obtient par un argument similaire à celui de la section \ref{homotopie}, en "poussant" la valeur critique sur le bord supérieur de la surface matelassée (voir figure \ref{contrac}). Il s'en suit que $C\Phi_2$ est homotope à la contraction du produit en pantalon $CF(L_0, \tau_S L_0 )\otimes CF(\tau_S L_0 , \underline{L}) \to CF( L_0 , \underline{L})$  par le cocycle $ c_{S,L_0} \in CF(L_0, \tau_S L_0 )$ défini par la fibration de Lefschetz spécifiée dans la figure.

Il reste à remarquer que $ c_{S,L_0}$ coïncide avec le générateur $C$ utilisé pour définir $F_{W_{\beta\gamma}, c_{\beta\gamma}}$. Ceci vient du fait que $L_0$ et  $\tau_S L_0$ s'intersectent en un seul point $x$, pour lequel il existe une unique section d'indice 0, la section "constante", qui est régulière d'après \cite[Lemma 2.27]{Seidel}. En effet, par monotonie, toute autre section d'indice nul aurait une aire nulle.
\end{proof}

\begin{figure}[!h]
    \centering
    \def\svgwidth{\textwidth}
    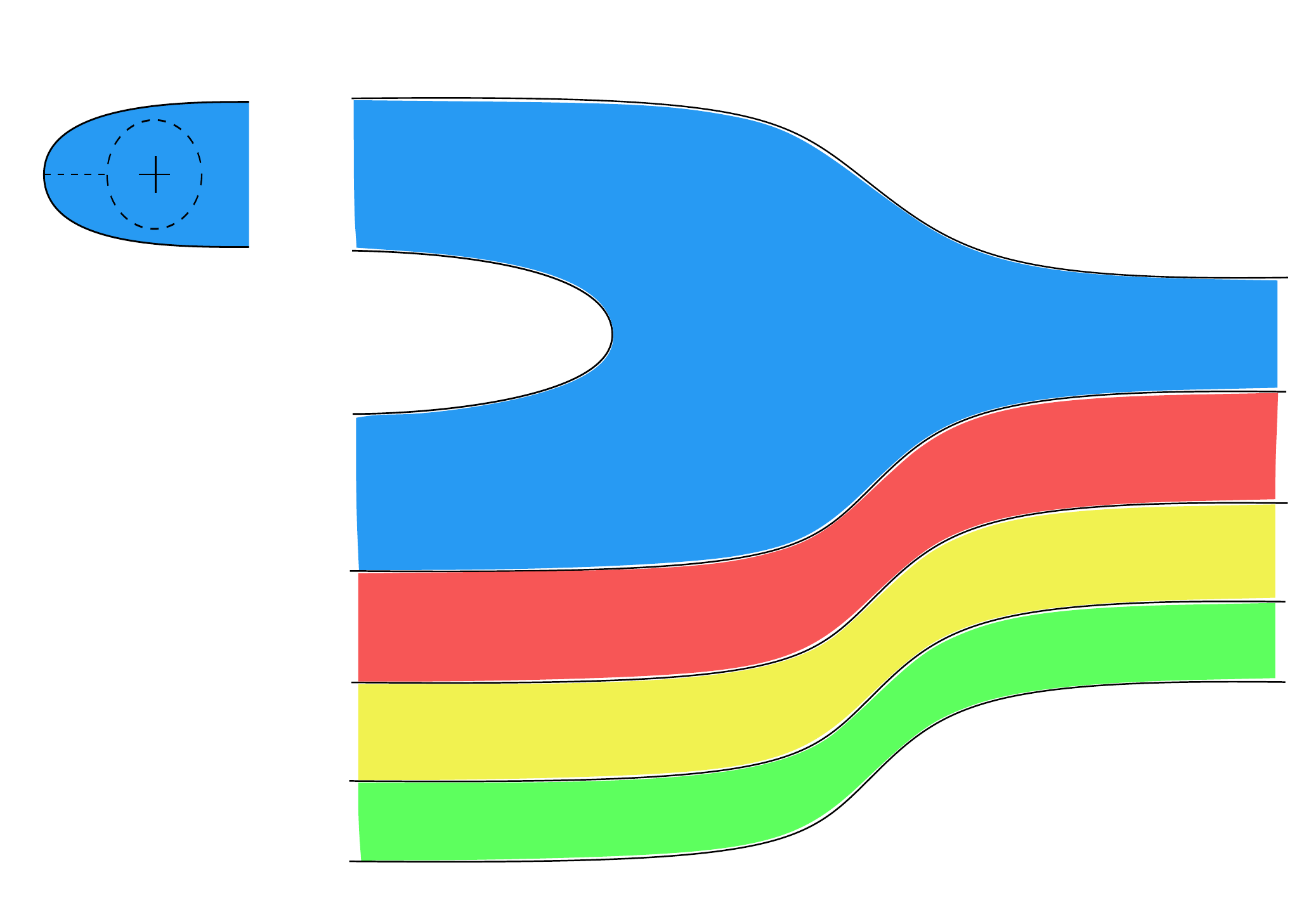
      \caption{Contraction d'un pantalon matelassé avec un cocycle.}
      \label{contrac}
\end{figure}

\section{Exemples, propriétés élémentaires}

\begin{prop}\label{cépédeux} Soit $W = \cc P^2 \setminus \lbrace \text{deux boules} \rbrace$, on note 0 et 1 les deux classes de $H^2(W;\Z{2}) \simeq \Z{2}$.
\begin{enumerate}
\item $F_{W,0} = F_{W,1} = 0 $.
\item En notant $\overline{W}$ le cobordisme muni de l'orientation opposée, $F_{\overline{W},0}$ est un isomorphisme, et $F_{\overline{W},1} = 0 $.
\end{enumerate}
\end{prop}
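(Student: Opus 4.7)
The plan is to apply the surgery exact sequence of Theorem~\ref{trianglechir} to the Dehn surgery triad on the unknot $U \subset S^3$, combined with the cobordism interpretation of Theorem~\ref{thinterpfleches} and Corollary~\ref{corinterpfleches}. Let $Y = S^3 \setminus \nu U$ be the solid torus complement, and let $\alpha, \beta, \gamma \subset \partial Y$ be the slopes corresponding to $0$-, $(+1)$-, and $\infty$-Dehn surgery respectively. For the natural orientation of $\partial Y$ these satisfy $\alpha\cdot\beta = \beta\cdot\gamma = \gamma\cdot\alpha = -1$, and the three fillings give $(Y_\alpha, Y_\beta, Y_\gamma) = (S^2\times S^1,\, S^3,\, S^3)$. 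The 2-handle cobordism $W_{\beta\gamma}$ between the two $S^3$ factors is, up to diffeomorphism, $\overline{W} = \overline{\cc P}^2 \setminus 2B^4$: it is the trace of a $(-1)$-framed 2-handle on the dual unknot, which undoes the $(+1)$-surgery passing from $Y_\beta$ back to $Y_\gamma = S^3$.

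Since $\alpha = \lambda$ is the longitude of $U$, the core of the $\alpha$-filling solid torus represents the nontrivial class $k_\alpha \in H_1(S^2\times S^1;\Z{2}) = \Z{2}$. By Proposition~\ref{genreun}$(i)$, $HSI(Y_\alpha, k_\alpha) = 0$. Taking $c = 0 \in H_1(Y;\Z{2})$, the exact sequence of Theorem~\ref{trianglechir} collapses to
\[ 0 \to HSI(S^3,0) \xrightarrow{F_{\overline{W}, 0}} HSI(S^3,0) \to 0, \]
where Theorem~\ref{thinterpfleches} identifies the middle map as $F_{\overline{W}, c_{\beta\gamma}}$ and $c = 0$ forces $c_{\beta\gamma} = 0 \in H^2(\overline{W};\Z{2}) \cong \Z{2}$. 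This proves the first claim of Part~2: $F_{\overline{W}, 0}$ is an isomorphism.

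For the remaining three vanishing statements ($F_{\overline{W}, 1} = 0$ and $F_{W, 0} = F_{W, 1} = 0$), the plan is to exploit the cyclic exact triangle of Corollary~\ref{corinterpfleches} in the two other cyclic re-labellings of the same triad -- those in which $Y_\alpha$ is one of the $S^3$ factors (so $k_\alpha = 0$) -- and to combine this with the two possible values of $c \in H_1(Y;\Z{2})\simeq\Z{2}$. Changing $c$ from $0$ to the generator $\lambda$ toggles the cohomology class $c_{\delta\mu} \in H^2(\cdot\,;\Z{2}) \cong \Z{2}$ on the corresponding 2-handle cobordism between the $0$ and nonzero element, while the correction $d_\alpha$ introduced by Theorem~\ref{trianglechir} contributes the handle's fundamental class (nontrivial in the relevant $H^2$). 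In each resulting cyclic exact sequence, the middle term $HSI(S^2\times S^1, 0) \cong \Z{2}^2$ together with the known isomorphism $F_{\overline{W}, 0}$, the Euler characteristic constraint of Proposition~\ref{eulercara}, and the total rank count of the 3-periodic sequence forces the remaining arrows to vanish. In particular, placing $F_{\overline{W}, 1}, F_{W, 0}, F_{W, 1}$ successively as specific arrows of appropriately chosen exact triangles yields each vanishing.

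The main obstacle will be the orientation and cohomology-class bookkeeping: in each of the three cyclic labellings and for each of the two choices of $c$, one must determine whether the 2-handle cobordism at hand is $W$ or $\overline{W}$, and whether the class $c_{\delta\mu}$ (respectively $c_{\delta\mu} + d_\delta$) represents $0$ or $1$ in $H^2 \cong \Z{2}$. The asymmetry between Parts~1 and~2 reflects the chirality of the theory: with the fixed orientation of $\partial Y$, only one of the would-be triads $(\infty, 0, +1)$ or $(\infty, 0, -1)$ is actually admissible, so $W$ and $\overline{W}$ appear in genuinely distinct positions of the exact sequence and are constrained differently.
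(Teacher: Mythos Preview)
Your argument for the first half of Part~2 is exactly the paper's: in the triad $(S^2\times S^1,\,S^3,\,S^3)$ with the twisting class $k_\alpha$ landing on $S^2\times S^1$, one has $HSI(S^2\times S^1,k_\alpha)=0$ by Proposition~\ref{genreun}$(i)$, and exactness forces the map $F_{\overline W,0}\colon\Z{2}\to\Z{2}$ to be an isomorphism. Your plan for $F_{\overline W,1}=0$ via a cyclic relabelling (so that $k_\alpha$ now hits one of the $S^3$ factors and the $d_\alpha$ correction shifts the class on $\overline W$ to the nontrivial element) also works, by the same rank count $1+1=2$ as in the paper.

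The gap is in Part~1. In your triad $(0,1,\infty)$ and in all of its cyclic relabellings, the unique arrow between the two $S^3$'s is always the cobordism $W_{\beta\gamma}$ from $S^3_{+1}$ to $S^3_\infty$, which is $\overline W$, never $W$. Cyclic relabelling only permutes which vertex carries the $k_\alpha$-twist; it does not reverse any arrow. Varying $c\in H_1(Y;\Z{2})$ changes only the cohomology class decorating that same cobordism $\overline W$. So from this single triad you can extract information about $F_{\overline W,0}$ and $F_{\overline W,1}$, but nothing about $F_{W,0}$ or $F_{W,1}$. Your final paragraph correctly flags this bookkeeping as ``the main obstacle'', but once carried out it shows the plan cannot reach $F_{W,i}$.

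The paper's fix is to use a \emph{different} triad for Part~1, namely the $(\infty,1,2)$ surgeries on the unknot, giving $(S^3,S^3,L(2,1))$. Here the $S^3\to S^3$ arrow is the trace of $+1$-surgery, which is $W$ itself. The groups have ranks $1,1,2$ (Proposition~\ref{genreun}$(ii)$ for $L(2,1)$), and the same rank argument forces $F_{W,i}=0$ for both values of $i$. So the repair is not a relabelling of your triad but a genuinely new triad containing $W$ rather than $\overline W$; this is exactly where the chirality you mention actually enters.
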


\begin{proof}
Rappelons que $W$ correspond à l'attachement d'une 2-anse à $S^3$ le long du noeud trivial, avec framing 1.
\begin{enumerate}
\item  La suite exacte de chirurgie appliquée à la triade correspondant à la chirurgie $\infty$, 1,2  sur le n{\oe}ud trivial est de la forme, quelles que soient les classes $i = 0,1$ et en notant $S^3_\alpha$ la chirurgie $\alpha$ sur le noeud trivial :
\[ \xymatrix{  HSI(S^3_{1})  = \Z{2} \ar[rr] & & HSI(S^3_{2}) = \Z{2}^2 \ar[ld] \\ & {HSI(S^3 _\infty ) = \Z{2}} \ar[lu]^{F_{W,i} }  & ,  } \]
ce qui entraîne le résultat.

\item On considère cette fois la triade correspondant à la chirurgie $\infty$, -1,0  sur le n{\oe}ud trivial. Si la classe sur $S^3_0$ est non-nulle, il vient : 
\[ \xymatrix{  HSI(S^3_{-1})  = \Z{2} \ar[rr] & & HSI(S^3_{0},1) = 0 \ar[ld] \\ & {HSI(S^3 _\infty ) = \Z{2}} \ar[lu]^{F_{\overline{W},0} }  & ,  } \]
et dans les deux autres cas, on obtient :
\[ \xymatrix{  HSI(S^3_{-1})  = \Z{2} \ar[rr] & & HSI(S^3_{0},0) = \Z{2}^2 \ar[ld] \\ & {HSI(S^3 _\infty ) = \Z{2}} \ar[lu]^{F_{\overline{W},1} }  & .  } \]
\end{enumerate}

\end{proof}

Introduisons l'invariant suivant pour des 4-variétés closes :
\begin{defi}Soit $X$ une 4-variété close, et $c\in H^2(X;\Z{2})$. Notons $W$ la variété $X$ privée de deux boules ouvertes, c'est un cobordisme de $S^3$ vers $S^3$. Son application associée $F_{W,c}\colon HSI(S^3) \to HSI(S^3)$ est une multiplication par un nombre que l'on note $\Psi_{X,c}$. 
\end{defi}

\begin{remark} Cette construction n'est pas tout à fait analogue à celle de l'invariant $\Phi_{X,\mathfrak{s}}$ en théorie d'Heegaard-Floer, voir \cite{OSholotri}.
\end{remark}
\begin{remark} Contrairement au cas d'un cobordisme $W$, un $SO(3)$-fibré principal au-dessus de $X$ peut éventuellement avoir une première classe de Pontryagin non-triviale. Ces fibrés ne seront pas pris en compte par cet invariant.
\end{remark}

La proposition \ref{compovertic} suivante décrit l'effet d'une composition "verticale" de cobordismes (composition correspondant au second type de compositions dans la 2-catégorie $\textbf{Cob}_{2+1+1}$).
\begin{defi}Soient $W$ et $W'$ deux cobordismes, de $Y_1$ vers $Y_2$, et de $Y_1'$ vers $Y_2'$ respectivement, et $l$, $l'$ deux chemins dans $W$ et $W'$ reliant les deux bords. On peut former leur \emph{composition verticale} en retirant des voisinages tubulaires de $l$ et $l'$, et en recollant les deux morceaux restants le long des bords des parties retirées. On obtient un nouveau cobordisme $W\#_{vert}W'$ de $Y_1\# Y_1'$ vers $Y_2\# Y_2' $.
\end{defi}

\begin{prop}\label{compovertic} (composition verticale)
Soit $W\#_{vert}W'$ une composition verticale comme précédemment. Alors, sous les identifications suivantes (rappelons que les groupes sont à coefficients dans $\Z{2}$, d'où l'absence de termes de torsion) :
\begin{align*}
 HSI (Y_1\# Y_1', c_1 +c_1') &= HSI (Y_1, c_1 ) \otimes HSI (Y_1',c_1') \\ HSI (Y_2\# Y_2', c_2 +c_2') &= HSI (Y_2, c_2) \otimes HSI (Y_2', c_2'),\\
\end{align*}
L'application associée à $W\#_{vert}W'$ est donnée par :
\[ F_{W\#_{vert}W',c+c'} = F_{W,c}\otimes F_{W',c'} .\]
En particulier, si $X$ est une variété fermée et $W\#X$ désigne la somme connexe en un point intérieur, 
\[ F_{W\#X,c_W + c_X} = \Psi_{X,c_X} \cdot F_{W,c_W}. \]
\end{prop}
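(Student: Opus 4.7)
La stratégie consiste à ramener l'énoncé au cas où l'un des deux cobordismes est trivial, via la proposition \ref{compohoriz}. Plus précisément, décomposons $W\#_{vert} W'$ en composition horizontale
\[ W\#_{vert} W' = \bigl( (Y_2\times I) \#_{vert} W' \bigr) \circ \bigl( W \#_{vert} (Y_1'\times I) \bigr), \]
en choisissant les chemins $l$ et $l'$ de manière à ce que les points critiques des fonctions de Morse utilisées pour décomposer $W$ et $W'$ soient séparés. Notons $W_0 = W\#_{vert}(Y_1'\times I)$ et $W_1 = (Y_2\times I)\#_{vert} W'$. D'après la proposition \ref{compohoriz}, il suffit donc de montrer
\[ F_{W_0, c + c'} = F_{W,c} \otimes \mathrm{id}_{HSI(Y_1', c_1')}, \qquad F_{W_1, c + c'} = \mathrm{id}_{HSI(Y_2, c_2)} \otimes F_{W',c'}, \]
les deux égalités étant symétriques. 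La composition horizontale redonnera alors $(\mathrm{id} \otimes F_{W',c'})\circ (F_{W,c}\otimes \mathrm{id}) = F_{W,c}\otimes F_{W',c'}$.

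Pour établir la première égalité, on décompose $W$ en cobordismes élémentaires $W = W^1 \cup \cdots \cup W^k$ au moyen d'une fonction de Morse, ce qui induit une décomposition correspondante de $W_0$. Par la proposition \ref{compohoriz} il suffit de traiter le cas où $W$ est lui-même élémentaire. Trois sous-cas sont à considérer, selon que $W$ correspond à l'attachement d'une 1-anse, d'une 2-anse ou d'une 3-anse. Pour une 1-anse (respectivement 3-anse), l'application $F_{W,c}$ est donnée par $x\mapsto \Theta\otimes x$ (respectivement $\Theta\otimes x\mapsto x$, $\Theta'\otimes x\mapsto 0$); le cobordisme $W_0$ consiste alors à attacher cette même anse à $(Y_1\#Y_1')\times I$ loin de $Y_1'$, et la formule découle directement de la formule de Künneth (proposition \ref{sommecnx}) et de la définition du foncteur, en utilisant l'associativité du produit tensoriel. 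Pour une 2-anse attachée le long d'un n{\oe}ud $K\subset Y$, l'application $F_{W,c}$ est construite en contractant avec le générateur $C\in HF(L_1,L_0)$ d'un produit en pantalon matelassé ($L_0,L_1\subset \Nc(T')$). En présence du facteur trivial $Y_1'\times I$, la correspondance Lagrangienne généralisée associée se décompose en le produit cartésien de celle associée à $Y\setminus K$ et de celle associée à $Y_1'$, et un pantalon matelassé dans ce produit se sépare en un pantalon dans le premier facteur et une bande dans le second (chaque bande étant automatiquement une trajectoire de Floer calculant l'identité). La formule de Künneth pour l'homologie matelassée (proposition \ref{kunnethfloer}), appliquée dans le contexte de pantalons matelassés, fournit alors l'égalité voulue. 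Le point délicat est donc essentiellement de vérifier que la formule de Künneth pour l'homologie de Floer matelassée s'étend aux invariants relatifs associés aux surfaces matelassées avec bouts de type bandes (ce qui se fait en adaptant la preuve de la proposition \ref{kunnethfloer} à un pantalon matelassé au lieu d'une bande).

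Le cas particulier $F_{W\#X, c_W+c_X} = \Psi_{X,c_X}\cdot F_{W,c_W}$ s'en déduit en prenant $W' = W_X$, le cobordisme de $S^3$ vers $S^3$ obtenu en retirant deux boules ouvertes à $X$. Par définition $F_{W_X, c_X} = \Psi_{X,c_X}\cdot \mathrm{id}_{HSI(S^3)} = \Psi_{X,c_X}\cdot \mathrm{id}_{\Z{2}}$, et la somme connexe ponctuelle $W\# X$ correspond précisément à $W\#_{vert} W_X$, moyennant les identifications $Y_i\#S^3 = Y_i$. La formule générale donne donc $F_{W\#X, c_W+c_X} = F_{W,c_W}\otimes (\Psi_{X,c_X}\cdot \mathrm{id}) = \Psi_{X,c_X}\cdot F_{W,c_W}$, comme annoncé.
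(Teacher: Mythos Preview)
Your proposal is correct and arrives at the same conclusion, but the route differs somewhat from the paper's.

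The paper's argument is more direct: it takes decompositions of $W$ and $W'$ into elementary cobordisms \emph{of the same length} (padding with trivial cobordisms if necessary), which together induce a decomposition of $W\#_{vert}W'$. The key observation is that the generalized Lagrangian correspondence for a connected sum $Y_1\#Y_1'$ is the concatenation of $\underline{L}(Y_1)$ and $\underline{L}(Y_1')$, hence passes through the object $pt$. This ``point in the middle'' forces all quilted surfaces (bands, pants, Lefschetz-type quilts) to split as products, so the chain-level maps for each elementary piece are already of the form $CF_{W_i}\otimes CF_{W'_i}$. One then composes and invokes the naturality of the K\"unneth isomorphism (the paper cites \cite[Theorem 11.10.2]{tomDieck}) to pass to homology.

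Your approach instead first separates $W\#_{vert}W'$ horizontally as $((Y_2\times I)\#_{vert}W')\circ(W\#_{vert}(Y_1'\times I))$, then treats each half by reducing to elementary cobordisms with one side trivial. This is a valid reorganization and is essentially the special case of the paper's argument in which one of the two parallel decompositions consists entirely of trivial cobordisms; your final horizontal-composition step then recovers the general statement. The underlying geometric fact---that the $pt$ in the middle of $\underline{L}(Y_1\#Y_1')$ makes a quilted pant on the $W$-side split off from a band on the $W'$-side---is the same in both arguments; you phrase it as an extension of K\"unneth to relative invariants, whereas the paper phrases it as a chain-level product decomposition. Your version is slightly more roundabout but has the virtue of being more explicit about the $1$- and $3$-handle cases.
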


\begin{proof}Soient deux décompositions en cobordismes élémentaires de $W$ et $W'$ de même longueur, elles induisent une décomposition en cobordismes élémentaires de $W\#_{vert}W'$. Le fait qu'il y ait un point au milieu des correspondances Lagrangiennes généralisées garantit que les applications induites par les anses de $W$ n'intéragissent pas avec celles de $W'$, ainsi les applications induites au niveau des complexes de chaînes sont de la forme $CF_{W}\otimes CF_{W'}$, et donc de la forme indiquée au niveau des groupes d'homologie d'après la naturalité de la formule de Künneth (voir \cite[Theorem 11.10.2]{tomDieck}).
\end{proof}

Combinée à la propositon \ref{cépédeux}, il vient alors :

\begin{cor}\label{éclatement} 
Soit $W$ un 4-cobordisme, 
\begin{enumerate}
\item $F_{W\# \cc P^2,c} =  0 $, pour toute classe $c\in H^2(W\# \cc P^2;\Z{2})$.
\item Si $c\in H^2(W\# \overline{\cc P}^2;\Z{2})$ est non-nulle en restriction à $\overline{\cc P}^2$, alors $F_{W\# \overline{\cc P}^2,c} =  0 $, sinon $F_{W\# \overline{\cc P}^2,c} =  F_{W,c_{|W}} $.
\end{enumerate}
\end{cor}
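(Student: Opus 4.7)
La démonstration est une combinaison directe des propositions \ref{compovertic} et \ref{cépédeux}. Le plan est d'appliquer la formule de composition verticale, qui exprime l'application induite par une somme connexe $W\# X$ comme le produit d'un invariant numérique $\Psi_{X, c_{|X}}$ de la 4-variété close $X$ et de l'application $F_{W, c_{|W}}$, puis de lire les valeurs de $\Psi$ pour $X = \cc P^2$ et $X = \overline{\cc P}^2$ dans le calcul explicite déjà effectué dans la proposition \ref{cépédeux}.

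Plus en détail, je commencerai par écrire, pour $X$ une 4-variété fermée et $c \in H^2(W\# X ; \Z{2})$ dont les restrictions à $W$ et à $X$ sont notées $c_{|W}$ et $c_{|X}$, l'égalité \[ F_{W\# X, c} = \Psi_{X, c_{|X}} \cdot F_{W, c_{|W}} \] fournie par la proposition \ref{compovertic}. Il ne restera alors qu'à évaluer $\Psi_{X,\cdot}$ dans les deux cas considérés.

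Pour $X = \cc P^2$, le cobordisme $\cc P^2 \setminus \lbrace \text{deux boules}\rbrace$ est précisément le cobordisme $W$ apparaissant dans la proposition \ref{cépédeux}, dont le point (1) affirme que $F_{W,0} = F_{W,1} = 0$. Par conséquent $\Psi_{\cc P^2, c_{|X}} = 0$ pour toute classe $c_{|X}$, et l'assertion (1) du corollaire est immédiate. Pour $X = \overline{\cc P}^2$, le cobordisme $\overline{\cc P}^2 \setminus \lbrace \text{deux boules}\rbrace$ est $\overline{W}$, et le point (2) de \ref{cépédeux} donne $\Psi_{\overline{\cc P}^2, 0} = 1$ (car $F_{\overline{W}, 0}$ est un isomorphisme de $HSI(S^3) = \Z{2}$ dans lui-même) et $\Psi_{\overline{\cc P}^2, 1} = 0$. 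L'assertion (2) du corollaire en découle aussitôt.

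Il n'y a donc pas de réelle difficulté technique à ce stade : tout le contenu calculatoire a été absorbé en amont, d'une part dans la démonstration de \ref{cépédeux} qui repose sur des applications bien choisies de la suite exacte de chirurgie à des triades élémentaires sur le n{\oe}ud trivial, d'autre part dans la proposition \ref{compovertic} qui exploite la naturalité de la formule de Künneth. Le seul point à surveiller est la décomposition des classes de cohomologie sous Mayer-Vietoris pour la somme connexe en un point intérieur, afin de s'assurer que les restrictions $c_{|W}$ et $c_{|X}$ intervenant dans la formule de composition verticale correspondent bien à celles de l'énoncé.
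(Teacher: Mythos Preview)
Your proof is correct and follows exactly the approach indicated in the paper, which simply states that the corollary follows by combining Proposition~\ref{compovertic} with Proposition~\ref{cépédeux} without giving further details. Your write-up makes explicit precisely what the paper leaves implicit.
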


\begin{remark}On observe ici une  différence avec l'homologie d'Heegaard-Floer : si $(Y_\alpha, Y_\beta, Y_\gamma)$ est une triade de chirurgie et $W_{\delta \gamma}$ désignent les 4-cobordismes d'attachement d'anses, alors les trois morphismes associés à $W_{\delta \gamma}$ munis des classes nulles ne forment pas une suite exacte. En effet, pour la triade $(S^3_0, S^3_1, S^3_\infty)$ associée au noeud trivial dans la sphère, le cobordisme allant de $S^3_1$ vers $S^3_\infty$ induit un isomorphisme, mais $HSI(S^3_0) = HSI(S^2\times S^1)$ est de rang 2.
\end{remark}

\chapter{Perspectives}\label{chappersp}

\section{Naturalité}\label{sectionnaturalite}

Dans cette section, on s'intéresse au problème de la naturalité de l'homologie Instanton-Symplectique, à savoir, si le groupe abélien $\Z{8}$-relativement gradué $HSI(Y,c,z)$ est bien défini en tant que tel, ou seulement à isomorphisme près.

La preuve de la naturalité des groupes d'homologie de Heegaard-Floer est délicate : c'est un travail récent de Juhasz et Thurston dans \cite{JuhaszThurston} qui consiste à construire un système transitif de groupes sur l'ensemble des diagrammes  de Heegaard. La naturalité de l'homologie Instanton-Symplectique semble néanmoins plus simple à vérifier, car sa construction utilise la notion de scindement de Heegaard (ou de fonction de Morse), et non celle de  diagramme de Heegaard.

Dès lors que l'on a construit un foncteur $\Cob \to \Symp$, si l'on note $Gr$ la classe des groupes abéliens $\Z{8}$-relativement gradués, il suffit d'avoir une application convenable $HF\colon Hom_{\Symp}(pt,pt) \to Gr$ bien définie pour garantir la naturalité des invariants pour les 3-variétés.

L'ensemble $Hom_{\Symp}(pt,pt)$ peut être vu comme l'ensemble des classes d'isomorphismes du groupoïde suivant : soit $\mathbf{G}$ le groupoïde dont les objets sont les correspondances Lagrangiennes généralisées allant de $pt$ vers $pt$ (sujettes aux mêmes conditions que les objets de $Hom_{\Symp}(pt,pt)$) et les morphismes sont engendrés par la famille suivante : si $\underline{L} = \left(  L_{01}, \cdots\right)$, avec un indice $i$ pour lequel la composition  $L_{(i-1)i} \circ L_{i(i+1)}$ est plongée (et satisfait aux mêmes hypothèses que pour la relation d'équivalence de $\Symp$), on définit un morphisme $f_{\underline{L}, i}\colon \underline{L} \to \circ_i (\underline{L})$, où $\circ_i (\underline{L})$ désigne la correspondance $\left( \cdots , L_{(i-1)i} \circ L_{i(i+1)} , \cdots \right)  $.

Il s'agit de définir un foncteur $\mathbf{HF}\colon \mathbf{G} \to \mathbf{Gr}$ vers la catégorie des groupes abéliens relativement $\Z{8}$-gradués tel que  $\mathbf{HF}$ coïncide avec l'homologie de Floer matelassée au niveau des objets, et tel que tout endomorphisme de $\underline{L}$ soit envoyé sur l'identité de $HF(\underline{L})$.

Une possibilité pour construire un tel foncteur est de concaténer les "applications $Y$" considérées par Lekili et Lipyanskiy. Cela correspond à l'invariant relatif associé à une surface matelassée comme décrite dans la figure \ref{feuilledoignon}. Le problème est alors de vérifier qu'une succession de telles applications  allant de  $\underline{L}$ vers  $\underline{L}$ est l'identité.

Notons que le même raisonnement utilisé par Lekili et Lipyanskiy dans la preuve du fait  que les composées  $\Phi \circ \Psi$ et $\Psi \circ \Phi$ diffèrent de $Id_{HF(\underline{L})}$ et $Id_{HF(\underline{L'})}$ par des applications nilpotentes, voir \cite[Section 3.1]{LekiliLipyanskiy}, permet de montrer que la succession de telles "applications $Y$" est unipotente. 

On pourrait essayer d'utiliser un argument similaire au "strip-shrinking" de \cite{WWcompo} pour  démontrer un tel énoncé. Cela consisterait à "écraser" horizontalement la partie centrale de la surface matelassée de la figure \ref{feuilledoignon}, c'est-à-dire à faire tendre vers 0 la longueur $L$ de la partie centrale de la figure \ref{feuilledoignon}. A priori, ceci pourrait créer des bulles matelassées semblables aux "witch balls" considérées par Bottman et Wehrheim dans \cite{Bottman} et \cite{BottmanWehrheim}, comme dessinées dans la figure \ref{witchbubble},  qui sont des généralisations des bulles en figure 8. Néanmoins il semble que dans notre cadre, de telles bulles n'existent pas génériquement, en effet les hypothèses de monotonie et de grand indice de Maslov minimal, combinées au raisonnement  intervenant dans la preuve du lemme \ref{nobubbling}  sur la transversalité des disques pseudo-holomorphes avec les hypersurfaces, semblent permettre d'exclure tout bubbling de ce type. 

\begin{figure}[!h]
    \centering
    \def\svgwidth{.80\textwidth}
    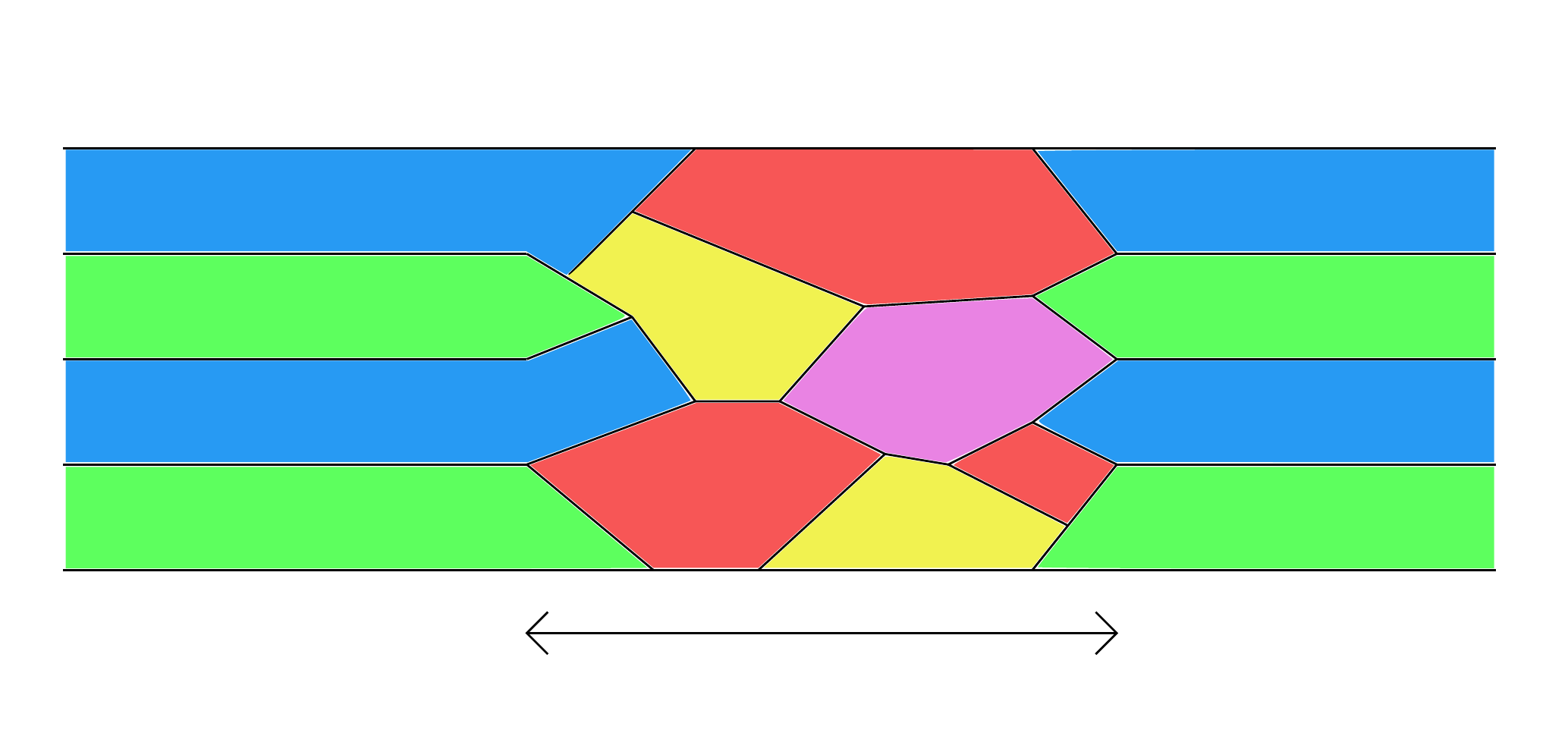
      \caption{Une surface matelassée associée à un endomorphisme de $\underline{L}$.}
      \label{feuilledoignon}
\end{figure}

\begin{figure}[!h]
    \centering
    \def\svgwidth{.80\textwidth}
    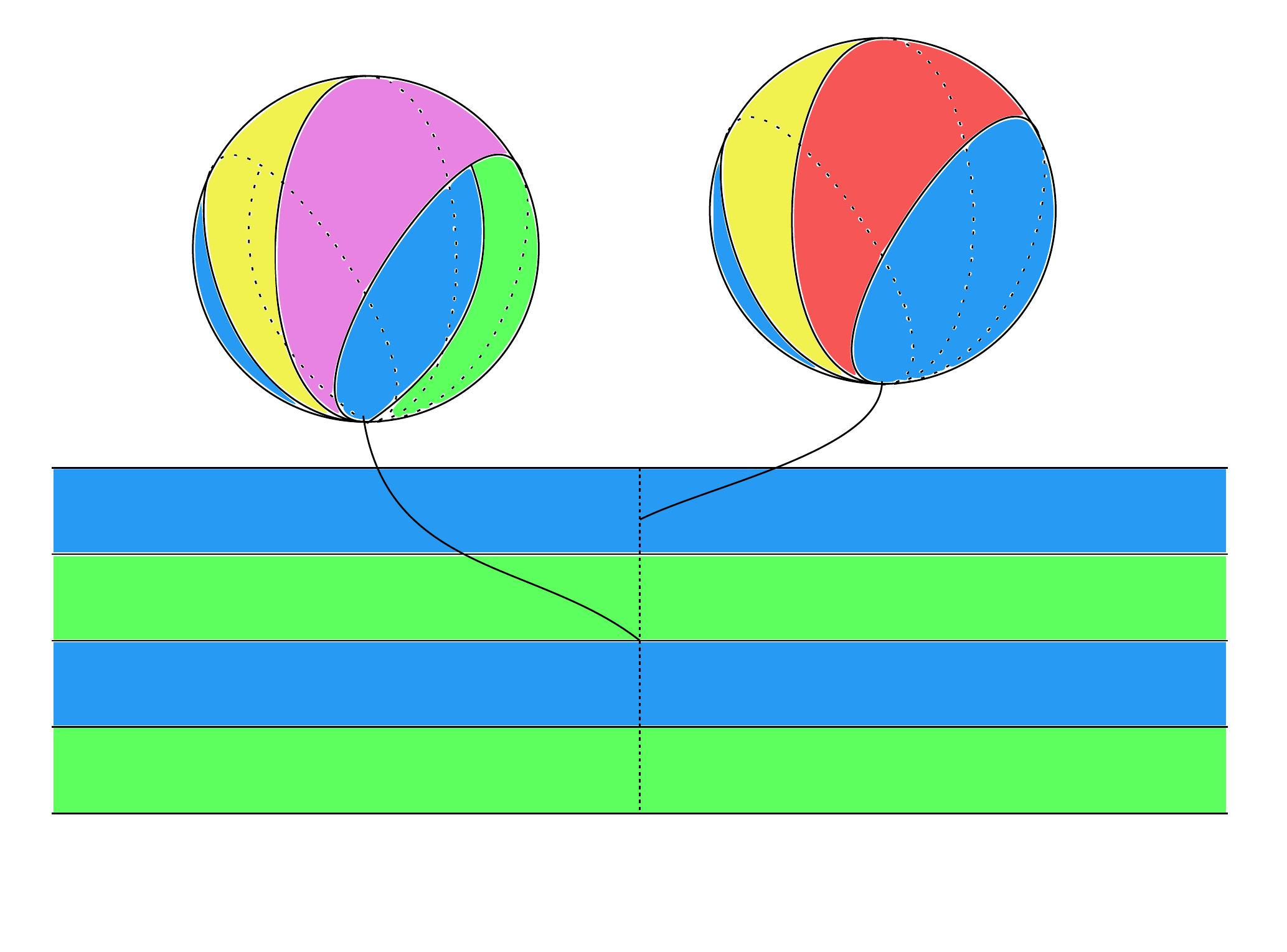
      \caption{Des "witch bubble" apparaissant lorsque $L\to 0$.}
      \label{witchbubble}
\end{figure}

\section{Un analogue des versions +,- et $\infty$ de l'homologie d'Heegaard-Floer}\label{sec:autresversions}

La construction de l'homologie $HSI$ par Manolescu et Woodward est formellement similaire à celle de la version $\widehat{HF}$ de l'homologie de Heegaard-Floer : partant d'un scindement de Heegaard $Y = H_0 \cup_\Sigma H_1$  d'une 3-variété $Y$, on associe à $\Sigma$ munie d'un point base $z$ une variété symplectique munie d'une hypersurface, et à chaque corps à anse une Lagrangienne. L'homologie est alors construite en comptant des disques de Whitney pseudo-holomorphes n'intersectant pas l'hypersurface. 

En revanche, les complexes de chaînes définissant les trois autres versions $HF^+$, $HF^-$ et $HF^\infty$ prennent en compte tous les disques de Whitney et capturent leur nombre d'intersection avec l'hypersurface. Il est alors naturel de vouloir faire de même pour l'homologie HSI, cela était d'ailleurs suggéré dans \cite[Section 2.2]{MW}. Ces raffinements présentent plusieurs intérets : d'une part les groupes $HF^+$ et  $HF^-$ contiennent plus d'informations sur les 3-variétés que $\widehat{HF}$, ce sont des $\zz[U]$-modules tels que $\widehat{HF}$ correspond au cône de l'application $U$ de $HF^+$. A l'opposé, le groupe $HF^\infty$ est plus simple : il est calculé dans \cite[Theorem 10.1]{OSzholodisk2} en fonction de données purement homologiques de $Y$, mais permet cependant de définir une graduation absolue sur les autres groupes. 
Une question naturelle est de savoir si des groupes analogues existent pour l'homologie HSI. Nous montrons que c'est le cas ci-dessous.

On se place dans le cadre de la section \ref{defhomolgiematelassée} : Soient $\underline{L} = (L_{01}, L_{12}, \cdots ) $ un morphisme de $\Symp$ de $pt$ vers $pt$, $\underline{J}\in \mathcal{J}_t(M_i, \mathrm{int}\left\lbrace \omega_i = \tilde{\omega}_i \right\rbrace ,  \tilde{J}_i)  $ et $\underline{H}$ des Hamiltoniens tels que l'intersection $\tilde{L}_{(0)} \cap \tilde{L}_{(1)}$ des produits alternés des correspondances est transverse. Notons $\widetilde{\mathcal{M}}(\underline{x},\underline{y})^k_j$ l'ensemble des applications $u_i\colon \rr \times [0,1]\to M_i$ telles que, avec $s\in\rr$ et $t\in [0,1]$ :

\[ \begin{cases} 0 = \partial_s u_i + J_t ( \partial_t u_i - X_{H_i}) \\
\mathrm{lim}_{s\rightarrow - \infty}{u_i(s,t)} = \varphi_i^t (y_i)\\
\mathrm{lim}_{s\rightarrow + \infty}{u_i(s,t)} = \varphi_i^t (x_i)\\
(u_i(s,1), u_{i+1}(s,0) )\in L_{i(i+1)} \\
\underline{u} \cdot \underline{R} = k \\
I(\underline{u}) = j.
\end{cases} \]


Posons  $\mathcal{M}(\underline{x},\underline{y})^k_j = \widetilde{\mathcal{M}}(\underline{x},\underline{y})^k_1 /\rr$. Pour des choix génériques de $\underline{J}$ et $\underline{H}$, $\mathcal{M}(\underline{x},\underline{y})^k_1$   est une variété compacte de dimension 0, on définit alors  $\partial_k \colon CF(\underline{L};\Z{2}) \to CF(\underline{L};\Z{2}) $ par $\partial_k ( \underline{x} )= \sum_{\underline{y}} {\# \mathcal{M}(\underline{x},\underline{y})^k_1 \cdot \underline{y}}$, de sorte que $\partial_0$ correspond avec la différentielle définissant l'homologie matelassée à coefficients dans $\Z{2}$ $HF(\underline{L};\Z{2})$.

\begin{remark}On se limite ici à l'homologie à coefficients dans $\Z{2}$ car les correspondances Lagrangiennes compactifiées ne sont pas spin à priori.
\end{remark}

\begin{lemma} Les applications $\partial_k$  vérifient $\sum_{i+j = k}{\partial_i\circ \partial_j} = 0$ pour tout $k\geq 0$. \end{lemma}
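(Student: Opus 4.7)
The plan is to interpret the identity $\sum_{i+j=k} \partial_i \circ \partial_j = 0$ as the count modulo 2 of the boundary points of a suitably compactified one-dimensional moduli space, and then invoke the standard Floer-theoretic fact that such a count must vanish. Concretely, for fixed $\underline{x},\underline{y} \in \I_{\underline{H}}(\underline{L})$, I would consider the moduli space $\mathcal{M}(\underline{x},\underline{y})^k_2 = \widetilde{\mathcal{M}}(\underline{x},\underline{y})^k_2 / \rr$ of matelassé Floer trajectories of Maslov index $2$ and intersection number $k$ with $\underline{R}$, modulo translation. For generic choices of $(\underline{H},\underline{J})$, a Sard--Smale transversality argument as in \cite[Theorem 6.5]{MW} combined with Lemma \ref{codimtangence} shows that this is a smooth 1-dimensional manifold.

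Next, I would analyze its Gromov--Floer compactification. The codimension-$1$ boundary strata split into two classes. The first consists of broken trajectories, i.e.\ pairs $(\underline{u}_1,\underline{u}_2) \in \mathcal{M}(\underline{x},\underline{z})^i_1 \times \mathcal{M}(\underline{z},\underline{y})^j_1$ for some intermediate $\underline{z} \in \I_{\underline{H}}(\underline{L})$ and some decomposition $i+j=k$. Here the additivity $\underline{u}_1\cdot\underline{R} + \underline{u}_2\cdot\underline{R} = k$ is immediate since the intersection number is a topological quantity preserved under Gromov limits. This stratum contributes exactly the coefficient of $\underline{y}$ in $\sum_{i+j=k}(\partial_i\circ\partial_j)(\underline{x})$.

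The main obstacle is showing that no other codimension-$1$ degeneration contributes. For this I would adapt the argument of Lemma \ref{nobubbling}: by the monotonicity assumption (iv) and the minimal Maslov assumption (x) in the definition of $\Symp$, every non-constant pseudo-holomorphic sphere in $M_i \setminus R_i$ or disk with boundary on a correspondence has index at least $8$, hence cannot bubble off a configuration of total index $2$. The only remaining possibility is a sphere bubble contained in some $R_i$; by Proposition \ref{degener}, such a sphere is a (possibly multiply covered) fiber of the sphere fibration of $R_i$ and has intersection number with $\underline{R}$ equal to a strictly negative multiple of $2$. If such a bubble of total degree $\ell \geq 1$ were to detach from a principal component $\underline{u}'$, then $\underline{u}'$ would have index $2$ (the bubble being of index $0$ since it lies in the degenerate locus and has vanishing symplectic area for $\tilde{\omega}$), intersection number $k + 2\ell$ with $\underline{R}$, and would moreover be constrained to meet $\underline{R}$ at the bubbling node. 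By Lemma \ref{codimtangence} applied to tangencies/higher-order intersections with $\underline{R}$, such configurations form a subset of codimension at least $2$ in the ambient moduli space of index-$2$ strips, for generic $(\underline{H},\underline{J})$. They therefore cannot appear as a codimension-$1$ boundary stratum of the compactification of $\mathcal{M}(\underline{x},\underline{y})^k_2$.

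Once bubbling is excluded, the compactification of $\mathcal{M}(\underline{x},\underline{y})^k_2$ is a compact topological $1$-manifold whose boundary is precisely the disjoint union, over $i+j=k$ and $\underline{z}$, of $\mathcal{M}(\underline{x},\underline{z})^i_1 \times \mathcal{M}(\underline{z},\underline{y})^j_1$. Since the number of boundary points of a compact $1$-manifold is even, its count modulo $2$ vanishes, giving $\sum_{i+j=k}\langle (\partial_i\circ\partial_j)(\underline{x}),\underline{y}\rangle = 0$ for every $\underline{x}$ and $\underline{y}$, which is the desired identity.
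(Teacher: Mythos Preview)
Your overall strategy coincides with the paper's: compactify the index-$2$, intersection-$k$ moduli space $\mathcal{M}(\underline{x},\underline{y})^k_2$, identify broken trajectories $\bigcup_{i+j=k}\mathcal{M}^i_1\times\mathcal{M}^j_1$ as the boundary, and exclude bubbling by the argument of Lemma~\ref{nobubbling}. You also correctly reduce the bubbling analysis to sphere bubbles of $\tilde{\omega}$-area zero contained in $\underline{R}$.

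The gap is in your final step. Lemma~\ref{codimtangence} controls the locus of curves \emph{tangent} to $\underline{R}$; it does not say that configurations with a bubble in $\underline{R}$ attached at a (transverse) intersection point are of codimension~$\geq 2$. In fact these sphere bubbles have first Chern number~$0$ (they have $\tilde{\omega}$-area~$0$ and the form is $\tfrac14$-monotone), so the usual stratum codimension equals $2c_1=0$, not $2$: a naive dimension count produces a bubble stratum of the same dimension as $\mathcal{M}^k_2$ itself, and hence does not rule it out.

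The paper's argument is different and avoids any gluing or codimension count. One uses Lemma~\ref{codimtangence} only to ensure that the principal component $v_\infty$ (which still has index~$2$) is transverse to $\underline{R}$. If there are $b\geq 1$ bubble points, the bubbles contribute at most $-2b$ to the total intersection number, so $v_\infty\cdot\underline{R}\geq k+2b$; by transversality and positivity of intersection, $v_\infty$ meets $\underline{R}$ in at least $k+2b$ points, of which at most $b$ are bubble points. Thus $v_\infty$ has at least $k+b>k$ transverse intersections with $\underline{R}$ away from all bubble points. Near each of these the convergence $\underline{u}_n\to v_\infty$ is uniform, so $\underline{u}_n$ also meets $\underline{R}$ nearby for large $n$; hence $\underline{u}_n$ has more than $k$ intersection points with $\underline{R}$, contradicting $\underline{u}_n\cdot\underline{R}=k$ (and transversality of $\underline{u}_n$). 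This intersection-count contradiction is what actually excludes the bubble stratum.
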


\begin{proof}On peut ici reproduire le raisonnement de la preuve du lemme \ref{nobubbling} : 

On cherche à comprendre la compactification de l'espace des modules $\widetilde{\mathcal{M}}_2^k$ des trajectoires de Floer d'indice 2 dont l'intersection avec $R$ vaut $k$. Son bord contient le produit $\bigcup_{i+j = k}{  \widetilde{\mathcal{M}}_1^i\times \widetilde{\mathcal{M}}_1^j}$,  montrons qu'il ne se produit pas d'autre type de dégénérescences : soit $u_\infty$ une trajectoire stable dans la compactification de $\widetilde{\mathcal{M}}_2^k$ : elle consiste à priori en une composante principale brisée $v_\infty$, à laquelle sont attachés des disques et des bulles. On peut de plus supposer que $v_\infty$ intersecte $R$ transversalement. Pour les mêmes raisons que dans \ref{nobubbling} (monotonie et grand indice de Maslov minimal), les disques et bulles sont d'aire nulle, donc il n'y a pas de disques, et les bulles sont contenues dans $R$ : elles ont chacune un nombre d'intersection avec $R$ inférieur ou égal à $-2$. Il s'en suit que s'il y a au moins une bulle, $v_\infty$ intersecte $R$ en un nombre de points  strictement supérieur à $k$, ce qui est impossible pour une limite de courbes de $\widetilde{\mathcal{M}}_2^k$.

\end{proof}

Ainsi, les $\Z{2} [U]$-modules suivants
\begin{align*} CF^\infty(\underline{L};\Z{2}) &= CF(\underline{L};\Z{2})\otimes_{\Z{2}} \Z{2} [U,U^-1] \\CF^+(\underline{L};\Z{2}) &= CF(\underline{L};\Z{2})\otimes_{\Z{2}} \Z{2} [U,U^-1]/U\Z{2}[U] \\CF^-(\underline{L};\Z{2}) &= CF(\underline{L};\Z{2})\otimes_{\Z{2}} \Z{2} [U] ,
\end{align*} 
sont équippés respectivement des différentielles $\partial^\infty$, $\partial^+$ et $\partial^-$, ayant pour expression commune $ \sum_{i\geq 0}{U^k \partial_k}$. On note leurs groupes d'homologie respectifs $HF^\infty(\underline{L};\Z{2})$, $HF^+(\underline{L};\Z{2})$ et $HF^-(\underline{L};\Z{2})$, et si $Y$ est une 3-variétée fermée orientée, $c\in H_1(Y,\Z{2})$ et $z$ un point base, on note $HSI^\infty(Y,c,z)$, $HSI^+(Y,c,z)$ et $HSI^-(Y,c,z)$ les groupes correspondants pour $\underline{L}(W,p,c)$, où $W = Y\setminus z \cup S^2$ et $p\colon \rr/\zz\times [0,1]\to S^2$ sont comme dans la partie \ref{defHSI}.

Dans l'espoir d'obtenir une graduation absolue sur HSI, on peut alors poser la question suivante :
\begin{question}Le groupe $HSI^\infty(Y,0)$ peut-il être calculé à partir de l'homologie singulière de $Y$, comme c'est le cas de $HF^\infty (Y)$?
\end{question}
 
\section{Relations avec la variété des représentations}

Comme nous l'avons vu, les espaces des modules intervenant dans la construction admettent des descriptions venant de la théorie des représentations. De plus, dans la construction de $HSI(Y,0)$ par Manolescu et Woodward, les Lagrangiennes s'intersectent le long de la variété des représentations de $Y$ dans $SU(2)$. Cette observation permet de relier l'homologie instanton-symplectique à l'homologie de la variété des représentations.

Dans la section \ref{vartordue}, nous définissons une version $R(Y,c)$ de la variété des représentations dans $SU(2)$ "tordue" par la classe $c$, correspondant à l'intersection des Lagrangiennes définissant $HSI(Y,c)$. Ensuite nous expliquerons les relations entre $HSI(Y,c)$ et $H_*(R(Y,c))$ dans la section \ref{sectionsspec}, puis nous proposerons d'éventuelles relations avec l'invariant de Casson dans la section \ref{relcasson}.

\subsection{Une variété des représentations tordue dans $SU(2)$}\label{vartordue}


Soient $\pi$ et $\tilde{\pi}$ deux groupes,  $\phi \colon \tilde{\pi} \to \pi$  un morphisme surjectif, et $\mu \in \tilde{\pi}$ un élément d'ordre au plus 2. 0n note 
\[\tilde{R}( \tilde{\pi} , \pi,\mu) = \lbrace \rho \colon \tilde{\pi} \to SU(2) : \rho(\mu) = -I \rbrace.\]

Notons que cet espace peut être vide si $\mu$ est l'élément neutre.

Soit maintenant $Y$  une 3-variété, $K\subset Y$ un noeud, et $*\in Y\setminus K$ un point base. Soit $\mu \in \pi_1(Y\setminus K,*) $ la classe d'un méridien de $K$. D'après le théorème de Seifert-Van Kampen, $\pi_1 (Y,*)  = \pi_1(Y\setminus K,*) / \mu $. Posons $\pi  =\pi_1 (Y,*) $ et  $\tilde{\pi} = \pi_1(Y\setminus K,*) / \mu^2 $. On peut vérifier que l'espace  $\tilde{R}( \tilde{\pi} , \pi,\mu)$ ne dépend que de $Y$ et de la classe $c$ de $K$ dans $H_1(Y,\Z{2})$, on notera $R(Y,c)$ cet espace. C'est un revêtement d'une composante de la variété des représentations de $\pi$ dans $SO(3)$.

\begin{prop}\label{interlag} Soit $\Sigma\subset Y$ un scindement de Heegaard séparant $Y$ en deux corps à anses $H_0$ et $H_1$, et $c_0 \in H_1(H_0;\Z{2})$, $c_1 \in H_1(H_1;\Z{2})$ deux classes d'homologie telles que $c = c_0 + c_1$. Alors $R(Y,c)$ s'identifie à l'intersection des deux Lagrangiennes $L(H_0,c_0)$ et $L(H_1,c_1)$ définissant $HSI(Y,c)$.
\end{prop}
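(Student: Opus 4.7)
The plan is to choose representatives $C_0 \subset H_0$ and $C_1 \subset H_1$ of the classes $c_0$ and $c_1$, set $K = C_0 \cup C_1 \subset Y$, and then identify both sides of the claimed equality with the same set of flat connections on $Y \setminus K$ with prescribed holonomy $-I$ around meridians. By Proposition~\ref{reformul}, the Lagrangian $L(H_i,c_i)$ consists of the restrictions to $\Sigma'$ of flat $SU(2)$-connections on $H_i \setminus C_i$ whose holonomy around any meridian of $C_i$ is $-I$, so a point of $L(H_0,c_0) \cap L(H_1,c_1)$ is represented by a flat connection $A$ on $\Sigma'$ that admits a simultaneous flat extension to $H_0 \setminus C_0$ and to $H_1 \setminus C_1$, with the holonomy condition around meridians on each side.

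First I would observe that two such flat extensions, which by hypothesis agree on $\Sigma'$, glue to a single flat connection $\tilde{A}$ on $Y \setminus K$ whose holonomy around each meridian of $K$ is $-I$ (this uses that $\Sigma'$ has connected boundary, so the gluing is automatic once the connections match on $\Sigma'$). Conversely, any such flat connection on $Y \setminus K$ restricts to the two handlebodies and yields a point of the intersection. Passing to the quotient by the appropriate gauge groups (trivial in a neighbourhood of the boundary disc carrying the base point), the intersection is in bijection with the space of flat connections on $Y \setminus K$ with the meridional holonomy condition, taken modulo the based gauge action.

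Then I would translate this via the Riemann--Hilbert correspondence (Proposition~\ref{riemannhilbert} applied to $Y \setminus K$ with base point $* \in \Sigma'$) into a space of representations: the intersection identifies with
\[
\{ \rho \colon \pi_1(Y \setminus K, *) \to SU(2) \mid \rho(\mu) = -I \text{ for every meridian } \mu \text{ of } K\}.
\]
Since $(-I)^2 = I$, any such $\rho$ factors through the quotient $\tilde{\pi} = \pi_1(Y \setminus K, *)/\langle\!\langle \mu^2 \rangle\!\rangle$; and by Seifert--Van Kampen, killing the meridians themselves in this quotient recovers $\pi = \pi_1(Y,*)$, so the quotient map $\phi \colon \tilde{\pi} \to \pi$ is surjective. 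Hence the intersection is exactly $\tilde{R}(\tilde\pi,\pi,\mu) = R(Y,c)$ as defined, provided the space $R(Y,c)$ really only depends on $(Y,c)$ and not on the auxiliary representative $K$.

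The main step left, and the only genuinely delicate one, will be to verify this last independence statement: if $K$ and $K'$ are two links in $Y$ whose classes in $H_1(Y;\Z{2})$ both equal $c$, then the corresponding representation varieties $\tilde{R}(\tilde\pi_K,\pi,\mu_K)$ and $\tilde{R}(\tilde\pi_{K'},\pi,\mu_{K'})$ are canonically identified. Here I would reuse the argument of Proposition~\ref{reformul}: both spaces are canonically identified with the space of flat connections on the $SO(3)$-bundle $P \to Y$ with $w_2(P)$ Poincaré-dual to $c$, modulo gauge, and this description manifestly depends only on the isomorphism class of $P$, hence only on $c$. This closes the identification and proves the proposition.
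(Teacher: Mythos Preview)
Your proof is correct and follows essentially the same route as the paper, which simply invokes Seifert--Van Kampen: the Lagrangians are described via holonomy as representations of $\pi_1(H_i\setminus C_i,*)$ sending meridians to $-I$, and their intersection over $\pi_1(\Sigma',*)$ is, by Seifert--Van Kampen, the representation space of $\pi_1(Y\setminus K,*)$ with the same meridional condition, i.e.\ $R(Y,c)$. Your write-up is a careful expansion of this one-line argument; the only extra care you take is the independence of the representative $K$ (which the paper asserted but did not prove when defining $R(Y,c)$), and there you could alternatively bypass the $SO(3)$-bundle argument by simply choosing $c_1=0$ and $C_0$ a connected knot representing $c$, as the paper itself remarks is always possible.
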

\begin{proof}
C'est une conséquence du théorème de Seifert-Van Kampen.
\end{proof}

\subsection{Une suite spectrale convergeant vers $HSI(Y,c)$}\label{sectionsspec}

Soient $L_0,L_1\subset M$ une paire de Lagrangiennes dans une variété symplectique monotone, il existe un lien entre l'homologie de Floer Lagrangienne et l'homologie de l'intersection des Lagrangiennes. Lorsque $L_0 = L_1$, Oh établit dans \cite{Ohspectralseq} une suite spectrale de seconde page $H^*(L_0;\Z{2})$ convergeant vers $HF(L_0, L_0;\Z{2})$. Pour cela, il construit une homologie de Floer "locale", prouve qu'elle est isomorphe à l'homologie de Morse, puis conclut par un argument général utilisant la filtration de $H^*(L_0;\Z{2})$.

Lorsque les Lagrangiennes ne sont plus égales mais s'intersectent de manière "clean", Po\'zniak définira une homologie locale, et prouvera qu'elle est  isomorphe à l'homologie de l'intersection.

Enfin, Fukaya, Oh, Ohta et Ono établissent une telle suite spectrale dans un cadre très général, voir \cite[Theorem 24.5BM]{FOOObook1}. Mentionnons que leur suite spectrale est établie pour une version de l'homologie à coefficients dans un anneau de Novikov. Toutefois, un argument similaire à celui que nous avons utilsé dans la section \ref{preuvedutriangle} devrait permettre d'obtenir une suite spectrale à coefficients dans $\zz$. On devrait donc avoir :

\begin{conj}\label{conj:suitespec} Soit $(Y,c)$ une 3-variété munie d'une classe $c\in H_1(Y,\Z{2})$. Supposons qu'il existe un scindement de Heegaard pour lequel des deux Lagrangiennes  $L(H_0,c_0)$ et $L(H_1,c_1)$ définissant $HSI(Y,c)$ s'intersectent de manière clean. Alors il existe une suite spectrale de première page $H_*(R(Y,c))$ qui converge vers $HSI(Y,c)$.
\end{conj}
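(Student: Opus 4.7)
La stratégie consiste à adapter au cadre de l'homologie HSI l'argument général de Fukaya-Oh-Ohta-Ono (\cite[Theorem 24.5BM]{FOOObook1}), basé sur la notion d'intersection ``clean'' introduite par Po\'zniak. On part d'un scindement de Heegaard $Y = H_0 \cup_\Sigma H_1$ pour lequel les deux Lagrangiennes $L_0 = L(H_0,c_0)$ et $L_1 = L(H_1, c_1)$ de $\Nc(\Sigma,p)$ s'intersectent proprement le long d'une sous-variété $Z$ qui, d'après la proposition \ref{interlag}, s'identifie à $R(Y,c)$. On se place sous la proposition \ref{calculscindement} pour exprimer $HSI(Y,c)$ comme $HF(L_0,L_1;R)$.

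La première étape consiste à construire, pour chaque composante connexe $Z_\alpha$ de $Z$, une \emph{homologie de Floer locale} $HF^{loc}(L_0,L_1; Z_\alpha)$ obtenue en ne comptant que les bandes pseudo-holomorphes  restant au voisinage de $Z_\alpha$, et à montrer, à la manière de Po\'zniak, qu'elle est canoniquement isomorphe à $H_*(Z_\alpha;\Z{2})$. Pour cela, il faut choisir une perturbation Hamiltonienne adaptée à $Z_\alpha$ : typiquement, une fonction de Morse sur $Z_\alpha$ étendue à un voisinage tubulaire, de sorte que les générateurs perturbés du complexe de Floer local correspondent aux points critiques de cette fonction de Morse, et que les bandes de Floer d'indice 1 restant proches de $Z_\alpha$ dégénèrent, par un argument de type ``adiabatique'', vers les trajectoires du gradient de la fonction de Morse sur $Z_\alpha$. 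Il faudra au passage vérifier que cette construction est bien définie dans le cadre monotone/divisoriel de $\Symp$, en particulier qu'aucune trajectoire n'intersecte le diviseur $R$ au voisinage de $Z$, ce qui est vrai par la condition de compatibilité des correspondances avec les hypersurfaces.

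Dans un second temps, on introduit la filtration par niveaux d'aire sur le complexe $CF(L_0,L_1;\Lambda)$ à coefficients dans l'anneau de groupe $\Lambda$ de la section précédente, combinée avec le degré de Morse. La filtration étant indexée par $\rr$ mais à sauts discrets (les différentes valeurs d'aire possibles  pour les bandes reliant deux points critiques d'une même composante $Z_\alpha$ diffèrent d'un multiple du nombre de Maslov minimal), on obtient une suite spectrale dont la première page est $\bigoplus_\alpha HF^{loc}(L_0,L_1;Z_\alpha) \simeq H_*(R(Y,c);\Z{2})$, et qui converge vers $HF(L_0,L_1;\Lambda) \simeq HSI(Y,c)\otimes_\zz \Lambda$ en vertu du lien entre $HF$ et $HF^\Lambda$ découlant de la monotonie (voir remarque \ref{monotonie}).

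La dernière étape consiste à ``désencoder'' les coefficients de Novikov pour obtenir une suite spectrale à coefficients dans $\Z{2}$ convergeant vers $HSI(Y,c)$. On applique ici un argument analogue à celui de la section \ref{preuvedutriangle} : la monotonie des correspondances (remarque \ref{monotonie}) permet, via le changement de variable $f(x) = q^{d(x_0,x)}x$, d'identifier canoniquement $CF(L_0,L_1;\Lambda)$ à $CF(L_0,L_1;\Z{2})\otimes_{\Z{2}} \Lambda$ comme complexes gradués, et ce changement de variable est compatible avec la filtration par l'aire. Cela fournit bien la suite spectrale annoncée à coefficients dans $\Z{2}$. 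Le principal obstacle technique sera la construction soigneuse de l'homologie de Floer locale en présence du diviseur symplectique $R$ : il faudra garantir qu'aucune bulle sphérique d'aire nulle dans $R$ ne vient perturber la comparaison locale avec l'homologie de Morse, ce qui devrait résulter du même type de raisonnement de transversalité utilisé dans le lemme \ref{nobubbling}, à savoir le grand indice de Maslov minimal et la condition $(xii)$ de la définition \ref{defcat} sur le nombre d'intersection des sphères avec $R$.
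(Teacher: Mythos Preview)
Your sketch follows precisely the strategy the paper itself outlines in the discussion preceding the conjecture: Po\'zniak's local Floer homology for clean intersections, the general FOOO spectral sequence \cite[Theorem~24.5BM]{FOOObook1} over a Novikov ring, and then the monotonicity argument of section~\ref{preuvedutriangle} to descend to integer (or $\Z{2}$) coefficients. Note however that the paper does \emph{not} contain a proof of this statement --- it is explicitly stated as a \emph{conjecture}, and the surrounding text only indicates why such a result ``devrait'' hold; the preamble moreover records that the conjecture has since been established in \cite{Schmaschke}. So there is no paper proof to compare against, and your outline is essentially the heuristic the author had in mind.
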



\subsection{Une inégalité entre le rang de HSI et l'invariant de Casson pour les sphères de Brieskorn}\label{relcasson}

La construction de l'homologie des instantons de Floer est intimement liée à l'invariant de Casson : en effet, la description de Taubes de cet invariant en théorie de jauge a été un des points de départ de sa catégorification par Floer : $\chi(I_*(Y)) = \lambda (Y)$. Cependant, comme nous l'avons vu, la caractéristique d'Euler de l'homologie Instanton-Symplectique n'est pas $\lambda (Y)$, mais $\abs{H_1(Y;\zz)}$. L'exemple des sphères de Brieskorn illustre ce phénomène :

Soient $p,q,r$ trois entiers premiers deux à deux, et \[Y = \Sigma(p,q,r) = \left\lbrace (x,y,z)\in \cc^3 : \abs{x}^2 + \abs{y}^2 + \abs{z}^2 =1,~ x^p + y^q + z^r = 0  \right\rbrace\] la sphère de Brieskorn correspondante. Fintushel et Stern ont démontré dans \cite{FSinstseifert} que les Lagrangiennes dans la variété des caractères d'un scindement de Heegaard s'intersectaient transversalement. Il s'en suit que la variété des caractères de $Y$ est un ensemble fini, et que l'intersection des Lagrangiennes dans la variété des représentation est clean ($PU(2)$ agit librement au voisinage d'une orbite de représentations irréductibles). 

Rappelons que l'on distingue généralement trois types de caractères dans $SU(2)$ : le caractère trivial, les caractères centraux, et les caractères irréductibles. A chacuns de ces types de caractères correspondent des orbites dans la variété des représentations, respectivement difféomorphes au point, à la sphère $S^2$, et à  $SO(3)$. Elles ont pour caractéristique d'Euler respective 1,2 et 0 : c'est pour celà que les caractères irréductibles n'apparaissent pas dans $\chi(HSI(Y))$. En revanche, bien que $\chi(SO(3)) = 0$, $H_*(SO(3)) \neq 0$, ce qui permet d'espérer que les caractères irréductibles apparaissent dans $HSI(Y)$ (ils apparaissent tout du moins dans la première page de la suite spectrale de la conjecture \ref{conj:suitespec}).

Concernant la sphère de Brieskorn $Y$, d'après ce qui précède, l'homologie de sa variété des représentations à coefficients dans $R = \qq$, $\zz$ ou $\Z{2}$ est isomorphe à \[H_*(pt;R) \oplus H_*(SO(3);R) \oplus \cdots \oplus H_*(S0(3);R),\] où il y a $2 \lambda(Y)$ copies de $H_*(SO(3);R)$, une par caractère irréductible (si l'on adopte la convention $ \lambda(\Sigma(2,3,5) = 1$). Par ailleurs, le rang de $H_*(SO(3);R)$ est égal, pour $R = \qq$, $\zz$ et $\Z{2}$ respectivement, à 2, 2 et 4. Par conséquent, sous réserve que la suite spectale soit vraie, on aurait les inégalités suivantes :

\begin{align*}
\mathrm{rk} HSI(Y, 0;\qq) &\leq 4 \lambda(Y) +1   \\
\mathrm{rk} HSI(Y, 0;\zz) &\leq 4 \lambda(Y) +1   \\
\mathrm{rk} HSI(Y, 0;\Z{2}) &\leq 8 \lambda(Y) +1   .
\end{align*} 

\section{Un invariant pour des entrelacs et des variétés suturées}

Le foncteur que nous avons construit dans le chapitre \ref{chapHSI} avait pour catégorie de départ la catégorie $\Cob$, et non  la catégorie classique des 3-cobordismes entre surfaces : en effet il est nécessaire de retirer des disques aux surfaces closes pour pouvoir considérer les espaces de modules étendus de Jeffrey, et donc considérer des cobordismes avec un tube "vertical" reliant les bords. Il est alors naturel de vouloir étendre cette construction à des surfaces à plusieurs composantes de bord, et à des cobordismes à plusieurs tels tubes verticaux, qui sont des cas particuliers de variétés suturées. 

Nous présentons dans la section \ref{sectionespmodplusieursbords}  la variante de l'espace des modules étendu de Jeffrey associée à une surface à plusieurs composantes de bord, que nous avons déjà rencontré dans la section \ref{sectiontwistespmod} ($\N (\Sigma_{cut})$). Puis, après avoir défini une catégorie $\Coincob$ élargissant la catégorie $\Cob$, nous proposons une construction possible d'un foncteur   de $\Coincob$ vers une version modifiée de $\Symp$, puis nous expliquons comment obtenir des invariants pour des entrelacs. Cette construction est proche de la construction de Juhasz \cite{Juhaszsut} pour l'homologie de Floer suturée, il n'est cependant pas clair que l'on puisse définir des groupes d'homologie associés à des variétés suturées à partir d'un tel foncteur, ce point sera discuté dans la remarque \ref{rem:groupesut}.

\subsection{Espace des modules d'une surface à plusieurs bords}\label{sectionespmodplusieursbords}
  
Soit $\Sigma$ une surface compacte orientée de genre $g$, à $N$ composantes de bord, et $p = p_1\cup \cdots \cup p_N \colon \rr/\zz \times \lbrace 1,\cdots , N \rbrace \to \partial \Sigma$ un paramétrage du bord (ne respectant pas nécessairement l'orientation). On désigne par $\partial_i \Sigma$ la i-ème composante du bord.

\begin{defi}(\cite[Section 5.2]{jeffrey}) 
On définit  $\Mg (\Sigma, p)$ comme le quotient $\mathscr{A}_F^\mathfrak{g}(\Sigma) /\Gc (\Sigma )$, où

\[ \mathscr{A}_F^\mathfrak{g}(\Sigma)  = \left\lbrace A \in \Omega^{1} (\Sigma )\otimes \mathfrak{su(2)} \ |\ F_A = 0,\  A_{|\nu \partial_i \Sigma} = \theta_i ds \right\rbrace, \]
avec $\nu \partial_i \Sigma$  un voisinage tubulaire du i-ème bord (non fixé),  $s$ désigne le paramètre de $\rr/\zz$, et le groupe
 
 \[ \Gc (\Sigma ) = \left\lbrace u \colon \Sigma \rightarrow G\ |\ u_{|\nu \partial \Sigma} = I \right\rbrace \] agit par transformations de jauge.
 
 On note $\N (\Sigma, p)$ l'ouvert défini par la condition $\abs{\theta_i} < \pi \sqrt{2}$, pour tout $i$. Notons que $\N (\Sigma, p)$  n'est pas l'espace $\N^{g,N}$ introduit par Jeffrey.
\end{defi}
 
Ces espaces admettent une description holonomique, voir \cite[Proposition 5.3]{jeffrey}. Ce sont des variétés algébriques réelles de dimension $6g + 6N -6$, et $\N (\Sigma, p)$ s'identifie à un ouvert dense de $SU(2)^{6g + 6N -6}$, et est donc lisse. Ils admettent une 2-forme fermée, qui est symplectique et monotone sur $\N (\Sigma, p)$ pour les mêmes raisons que pour une surface à une composante de bord.

Afin de pouvoir définir l'homologie de Floer dans  $\N (\Sigma, p)$, décrivons une compactification possible par découpage symplectique.  L'espace $\Mg (\Sigma, p)$ hérite d'une action Hamiltonienne de $SU(2)^{N}$, qui agit par transformations de jauge constantes sur les composantes du bord. Le moment $\Phi = (\Phi_1, \cdots , \Phi_N)$ a pour coordonnées $\Phi_i( [A] ) = \theta_i$, et les fonctions $\abs{\Phi_i}$ engendrent des actions du cercle sur le complémentaire de $\Phi_i^{-1}( 0 )$. On peut alors poser :  
\begin{align*}
\Nc (\Sigma, p) &= \Mg (\Sigma, p)_{\abs{\Phi_1} \leq \pi \sqrt{2}, \cdots ,\abs{\Phi_N} \leq \pi \sqrt{2} } \\
&=  \Nc (\Sigma, p) \bigcup R_1 \cup \cdots \cup R_N ,
\end{align*} 
où $R_i = \left\lbrace \abs{\Phi_1} = \pi \sqrt{2} \right\rbrace  \red U(1)$ est une hypersurface symplectique.

En vertu de \cite[Proposition 4.6]{MW}, la 2-forme $\tilde{\omega}$ induite sur $\Nc (\Sigma, p)$ est encore monotone, mais dégénère sur les $R_i$. Néanmoins il est possible d'obtenir une forme symplectique non-monotone $\omega$ en découpant à $\pi \sqrt{2} - \epsilon$. Il devrait alors être possible de reproduire le raisonnement de \MW et de définir l'homologie matelassée pour des correspondances compatibles avec chaque hypersurface.

Ainsi, la catégorie symplectique  d'arrivée devrait être modifiée en une catégorie $\mathbf{\widetilde{Symp}}$, dont les objets consisteraient en des variétés munies de deux 2-formes et de plusieurs hypersurfaces, et les morphismes des correspondances Lagrangiennes vérifiant une condition adéquate de compatibilité avec les hypersurfaces, ainsi que des hypothèses similaires à celles de $\Symp$.

  \subsection{Une catégorie de cobordismes à coins}
  
La catégorie de cobordisme qui se substituera à $\Cob$ sera la suivante.

\begin{defi}[Catégorie des cobordismes à coins]\label{sutcob}

On appelle \emph{catégorie des cobordismes à coins, avec classe d'homologie de degré 1 à coefficients dans $\Z{2}$}, que l'on notera $\Coincob$, la catégorie dont :
\begin{itemize}

\item les objets sont les couples $(\Sigma,p)$, où $\Sigma$ est une surface compacte, orientée, sans composante fermée, et $p\colon \rr/\zz \times \lbrace 1, N \rbrace \rightarrow \partial\Sigma$ est un difféomorphisme (paramétrage).

\item les morphismes de $(\Sigma_0,p_0)$ vers $(\Sigma_1,p_1)$ sont des classes de difféomorphismes  de 5-uplets $(W, \pi_{\Sigma_0},  \pi_{\Sigma_1}, p,  c)$, où $W$ est une 3-variété compacte orientée à bord, $\pi_{\Sigma_0}$,  $\pi_{\Sigma_1}$ et $p$ sont des plongements de $\Sigma_0$, $\Sigma_1$ et $\rr/\zz \times V $, où $V$ est une variété à bord compacte de dimension 1, dans $\partial W$, $\pi_{\Sigma_0}$ renverse l'orientation, $\pi_{\Sigma_1}$ préserve l'orientation, et tels que 
\[ \partial W = \pi_{\Sigma_0}(\Sigma_0)\cup  \pi_{\Sigma_1}(\Sigma_1)\cup  p(\rr/\zz \times V), \]
avec $\pi_{\Sigma_0}(\Sigma_0)$  et $\pi_{\Sigma_1}(\Sigma_1)$  disjoints, pour $i=0,1$, et
\begin{align*}
\left(  \pi_{\Sigma_0}(\Sigma_0) \cup \pi_{\Sigma_1}(\Sigma_1)  \right)   \cap p(\rr/\zz \times V)  &= \pi_{\Sigma_0}(p_0(\rr/\zz)) \cup \pi_{\Sigma_1}(p_1(\rr/\zz))  \\ &= p(\rr/\zz \times \partial V),
\end{align*}
pour $v\in \partial V$ et $i=0$ ou $1$ est tel que $p(s,v) \in \pi_{\Sigma_i}(p_i(\rr/\zz))$,  alors $p(s,v) = \pi_{\Sigma_i}(p_i(s))$. Enfin, $c \in H_1 (W, \Z{2}))$.
  
On appellera  $p(\rr/\zz \times V) $ partie suturée de $\partial W$, et on la notera $\partial^{sut} W$.

Deux tels 5-uplets $(W, \pi_{\Sigma_0},  \pi_{\Sigma_1}, p,  c)$ et $(W', \pi'_{\Sigma_0},  \pi'_{\Sigma_1}, p',  c')$ sont dits équivalents s'il existe un difféomorphisme $\varphi \colon W \rightarrow W'$ compatible avec les plongements et préservant la classe. 

\item la composition des morphismes consiste à recoller à l'aide des plongements, et à ajouter les classes d'homologie.

\end{itemize}

\end{defi}

\begin{remark}
Une variété suturée $(M, \gamma)$ telle que $R_+(\gamma)$ et $R_-(\gamma)$ n'admettent pas de composantes fermées peut être vue comme un morphisme de cette catégorie (pourvu que l'on se fixe un paramétrage de la suture). Notons que tout morphisme de cette catégorie n'est pas nécessairement une variété, car on autorise un cylindre à revenir sur la même surface, comme dans la figure \ref{tubeU}.
\end{remark}

\begin{remark}Cette catégorie est différente de la catégorie $\mathbf{Sut}$ apparaissant dans \cite{Juhaszcobsut}, dans laquelle les variétés suturées sont des objets et non des morphismes. Néanmoins ces deux catégories peuvent être réunies dans une 2-catégorie, et le foncteur que l'on a construit devrait pouvoir être promu en un 2-foncteur sur cette 2-catégorie, à valeur dans une version appropriée de la 2-catégorie de Wehrheim et Woodward.
\end{remark}

De même que dans le chapitre \ref{chapHSI}, on ne devrait pouvoir associer des correspondances Lagrangiennes qu'à certains cobordismes "élémentaires" au sens de la théorie de Cerf : il faudra introduire une catégorie $\Coincobelem$  analogue à la catégorie $\Cobelem$, définir un foncteur de $\Coincobelem$ vers $\mathbf{\widetilde{Symp}}$, puis vérifier qu'il se factorise par $\Coincob$. Une principale nouveauté dans la catégorie $\Coincobelem$ sera des cobordismes "en U" entre surfaces possédant un nombre différent de composantes de bord (qui ne sont pas à proprement parler des cobordismes suturés), et il faudra vérifier un  mouvement de Cerf supplémentaire, correspondant à l'annulation de deux tels tubes, voir figure \ref{tubeU}. 

\begin{figure}[!h]
    \centering
    \def\svgwidth{\textwidth}
    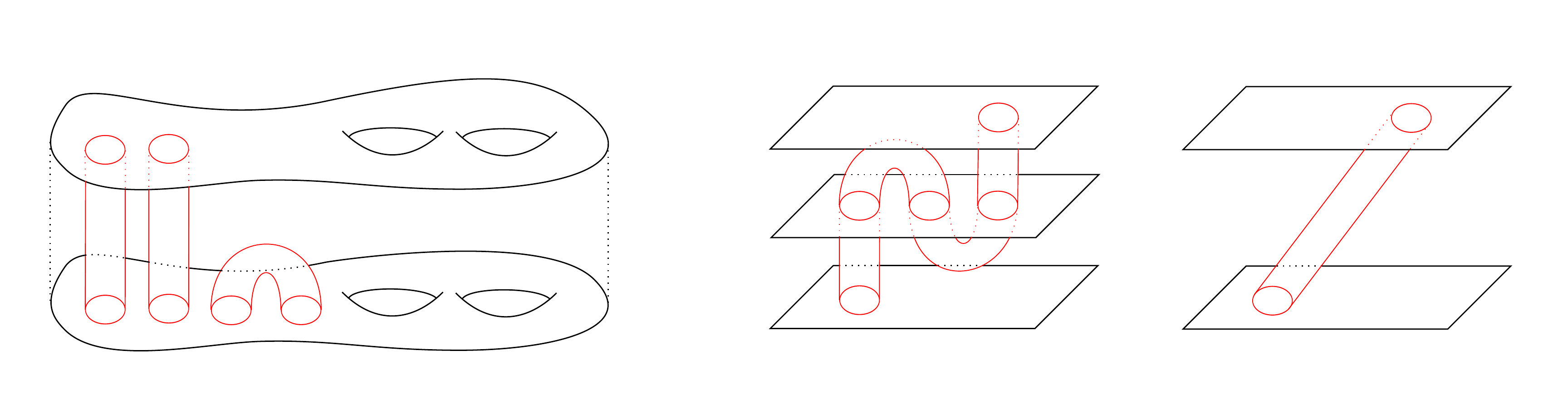
      \caption{A gauche, un cobordisme à coins élémentaire de type U, à droite le mouvement de Cerf de naîssance-mort pour de tels cobordismes.}
      \label{tubeU}
\end{figure}

  \subsection{Correspondances associées à un cobordisme élémentaire}
  
Soit $(W,\pi_{\Sigma_0},\pi_{\Sigma_1},p,c)$ un cobordisme à coins. Par souci de lisibilité, on omettra de préciser les plongements par la suite. On peut définir
$\Mg(W,c)$ et $L(W,c) \subset  \Mg(\Sigma_0)\times \Mg(\Sigma_1)$, puis $L^c(W,c) \subset  \Nc(\Sigma_0)\times \Nc(\Sigma_1)$ de manière tout à fait analogue à ce que l'on a fait pour des cobordismes à bord verticaux. Il faut alors vérifier que $L^c(W,c)$ est compatible avec chaque hypersurface $R_i$, simplement connexe, et vérifie des hypothèses adéquates afin d'exclure d'éventuels phènomènes de bubbling. On aura alors construit un foncteur de $\Coincobelem$ vers la catégorie $\widetilde{\mathbf{Symp}}$. Il faudra ensuite vérifier les six mouvements de Cerf pour obtenir un foncteur partant de $\Coincob$.

  \subsection{Homologie pour des entrelacs}

  Soit $L\subset Y$ un entrelacs, choisissons deux points $p$ et $q$ sur $L$ et retirons deux boules de $Y$ centrées en $p$ et $q$, de façon à obtenir un cobordisme $\widetilde{W}$ de $S^2$ vers $S^2$. Retirons à  présent un voisinage tubulaire de $L$,  de manière à obtenir un cobordisme à coins $W$ de $\Sigma$ vers $\Sigma$, où $\Sigma$ est la sphère privée de deux disques (on prend comme suture le bord du voisinage tubulaire de $L$). En appliquant la construction précédente, on obtient une correspondance Lagrangienne généralisée $\underline{L}(W)$ de $\Nc(\Sigma)$ vers lui-même. Par ailleurs, $\Nc(\Sigma)$ contient une Lagrangienne $L_0$ privilégiée, correspondant aux connexions nulles au voisinage du bord : ce sont exactement les connexions qui s'étendent aux deux boules que l'on a retiré. Supposons que l'on puisse définir l'homologie de Floer matelassée dans $\mathbf{\widetilde{Symp}}$, on poserait alors : \[HSI(Y,L) = HF(L_0, \underline{L}(W), L_0^T).\]

\begin{remark}Cette construction faisant intervenir les deux points $p$ et $q$, le groupe obtenu devrait en dépendre à priori. Néanmoins, le choix de la Lagrangienne $L_0$ devrait annuler cette dépendance, car rajouter $L_0$ correspond précisément à reboucher les boules.
\end{remark}

\begin{remark}\label{rem:groupesut} Dans \cite{Juhaszsut}, Juhasz parvient à définir des groupes d'homologie associés à un certain type de variétés suturées, appelées "balanced", généralisant l'homologie d'Heegaard-Floer pour les 3-variétés et pour les entrelacs. Il considère pour cela des produits symétriques $Sym^k \Sigma$ de surfaces, où $k$ n'est pas nécessairement égal au genre de $\Sigma$. Ne disposant pas de tels analogues, nous ne pouvons définir de tels groupes. Néanmoins, suivant la remarque \ref{foncteurainfini}, la construction de \WW devrait permettre d'associer  à une variété suturée qui est un morphisme de $\Coincob$ un foncteur entre des catégories de Donaldson, ou de Fukaya dérivées.
\end{remark}


\backmatter

\chapter{Notations}

\renewcommand{\arraystretch}{1.5}
\hspace{-1.5cm}\begin{tabular}{ l p{11cm} r }
$\#$ & Somme connexe, ou cardinal algébrique d'une variété compacte orientée de dimension zéro   & 17 \\

 $\odot$     & Composition dans la catégorie $\Cobelem$ & 20 \\

$\mathbb{A}$   & Application antipodale de la sphère $S^n$ & 47 \\

$a_{\underline{L}}$   & Action matelassée & 62\\ 
 
$\Cob$ & Catégorie des cobordismes à bord verticaux & 18\\

$\Cobelem$ & Catégorie des cobordismes élémentaires à bord verticaux & 19\\

$\mathcal{E}$, $\mathcal{E}_-$, $\mathcal{E}_+$  &  Bouts d'une surface matelassée, bouts entrants, bouts sortants   & 51\\

$F_{W,c_W}$     & Application associée au cobordisme $W$ et à la classe $c_W$ & 83 \\

$\I(\underline{L})$     & Points d'intersections généralisés & 11 \\

$HF(\underline{L})$     & Homologie matelassée associée à la correspondance Lagrangienne généralisée & 17\\

$\mathrm{Hol}_\gamma A$    & Holonomie de la connexion $A$ le long d'un chemin $\gamma$ & \\

$HSI(Y,c,z)$   & Homologie Instanton-Symplectique & 38 \\

$\underline{L}^T$     & Correspondance Lagrangienne généralisée avec l'ordre inversé & 10 \\

$\underline{L}(W,p,c)$     & Correspondance Lagrangienne généralisée associée au cobordisme à bord vertical $(W,p,c)$ & 36 \\

$\Lambda$    & Anneau du groupe $\rr$ & 48\\

$\hat{\Lambda}$    & Complétion de $\Lambda$ & 68 \\

$M^-$     & Variété symplectique $M$ munie de la forme symplectique opposée & 10 \\

$\Mg(\Sigma,p)$     & Espace des modules étendu & 24 \\

$\N(\Sigma,p)$    & Partie de l'espace des modules étendu correspondant à $\abs{\theta} <\pi \sqrt{2}$ & 24 \\

$\Nc(\Sigma,p)$     & Découpage de  l'espace des modules étendu en $\abs{\theta} = \pi \sqrt{2}$ & \\

\end{tabular}

\renewcommand{\arraystretch}{1.5}
\hspace{-1.5cm}\begin{tabular}{ l p{11cm} r }

$\omega$   & Forme symplectique non-monotone associée à un objet de $\Symp$ & 15\\

$\tilde{\omega}$   & Forme fermée monotone associée à un objet de $\Symp$ & 15\\

$pt$    & Variété symplectique réduite à un point & 11 \\

$R$  &  Primitive de la fonction angle d'un twist de Dehn & 47\\

$R$, $\underline{R}$  &  Hypersurface symplectique associée à un objet de $\Symp$ & 15\\

$\Symp$     & Variante de la catégorie symplectique de Weinstein & 15\\

$\sigma\in \mathcal{S}$ & Couture    & 11\\

$\Sigma_{cut}$    & Surface $\Sigma$ découpée le long d'une courbe & 71\\

$T(\lambda)$    & Covecteurs de $T^*S^n$ de norme $<\lambda$  & 47 \\

$(W, \pi_{\Sigma_0},  \pi_{\Sigma_1}, p,  c)$      & Cobordisme à bord vertical & 18 \\

$\chi (\tilde{x}_0, \underline{x}, \underline{y})$   & Quantité correspondant à l'aire d'un triangle modulo $M$ & 63\\
 $ \chi_\tau (\tilde{x}_0, \underline{x}, \underline{y})$    & Quantité correspondant à l'aire d'un triangle modulo $M$ associée à un twist $\tau$ & 63\\

 $\Z{n}$ & Anneau $\zz / n \zz$ & \\
\end{tabular}

\bibliographystyle{alpha}
\bibliography{biblio}

\end{document}